\documentclass[a4paper,11pt]{article}
\usepackage[top=3.0cm, bottom=3.0cm, inner=3.0cm, outer=3.0cm, includefoot]{geometry}

\usepackage[utf8]{inputenc}
\usepackage[T1]{fontenc}
\usepackage{authblk}

\usepackage{verbatim}
\usepackage{multirow}
\usepackage{multicol}

\usepackage{longtable}
\usepackage{hyperref}

\usepackage{amssymb}
\usepackage{amsmath}
\usepackage{mathtools,bbm}
\usepackage{graphicx,tikz}
\usepackage{amsthm}
\usepackage{caption,subcaption}

\usepackage{color}
\usepackage{enumerate}

\usepackage{cancel}

\setlength{\parindent}{0mm}
\setlength{\parskip}{2mm }

\newcommand{\K}{{\mathcal{K}}}

\renewcommand*{\v}{\mathbf{v}}

\newcommand{\eChar}{\begin{enumerate}[(i)]}
\newcommand{\eCharR}{\begin{enumerate}[(a)]}
\newcommand{\eBr}{\begin{enumerate}[(1)]}

\newcommand{\diag}{\operatorname{diag}}

\newcommand{\Abstract}

\title
{
Discrete Bakry--\'Emery curvature tensors and matrices of connection graphs
}

\author[1]{Chunyang Hu}\author[2]{Shiping Liu}
\affil[1,2]{School of Mathematical Sciences, University of Science and Technology of China, Hefei 230026, China}
\affil[1]{chunyanghu@mail.ustc.edu.cn}
\affil[2]{spliu@ustc.edu.cn}

\date{\today}

\date{\today}

\theoremstyle{plain}
\newtheorem{lemma}{Lemma}[section]
\newtheorem{theorem}[lemma]{Theorem}
\newtheorem{proposition}[lemma]{Proposition}
\newtheorem{corollary}[lemma]{Corollary}

\theoremstyle{definition}

\newtheorem{definition}[lemma]{Definition}
\newtheorem{remark}[lemma]{Remark}

\newtheorem{example}[lemma]{Example}

\numberwithin{equation}{section}

\begin{document}

\maketitle

\pagestyle{plain}

\begin{abstract}
Curvature plays a fundamental role in geometry, and its discrete analogues are powerful tools for applying geometric and analytical techniques to the study of discrete structures. Connection graphs offer a model for discrete vector bundles and are closely tied to a range of theoretical and practical topics, such as the spectral properties of covering graphs and the analysis of high-dimensional data.
Liu, M\"unch, and Peyerimhoff introduced the notion of Bakry–Émery curvature for connection graphs as a means to derive Buser-type bounds on the eigenvalues of connection Laplacians. In this work, we present a reformulation of the Bakry–Émery curvature at a vertex within a connection graph. Our approach expresses this curvature through the smallest eigenvalue of a set of unitarily equivalent curvature matrices. We interpret these matrices as representations of a newly defined curvature tensor, each corresponding to a different orthonormal basis of the vertex’s tangent space. This framework significantly extends earlier studies by Cushing et al. and Siconolfi on curvature matrices of standard graphs.
It is important to note that the Bakry–Émery curvature in connection graphs can behave very differently from that in the underlying graphs. For instance, constant functions generally fail to serve as eigenfunctions of the connection Laplacian, which poses a substantial challenge when attempting to generalize results from standard graphs to connection graphs. We address this issue by employing the Schur complement, applied twice using pseudoinverses.
Additionally, we investigate the Bakry–Émery curvature in Cartesian products of connection graphs, extending and strengthening the earlier findings of Liu, Münch, and Peyerimhoff. While our results for vertices with locally balanced structures encompass previous work, we also shed light on intriguing behaviors that arise in locally unbalanced connection structures.

\end{abstract}
\section{Introduction}
In this paper, we study the discrete Bakry--\'Emery theory on connection graphs.

The Bakry--\'Emery $\Gamma$-calculus and curvature-dimension inequalities \cite{Bakry94,Bakry,BE85,BGL14} generalize the notion of lower bounds on Ricci curvature from Riemannian geometry to broader settings. In recent years, their discrete counterpart---developed for graphs---has become a vibrant area of research. 
The framework has been initiated in \cite{Elworthy, LY10, Sch99} and led to a deep understanding of the geometric and analytic properties of the underlying graphs, including the properties of local clustering coefficients \cite{JL14}, spectra \cite{CLY14,CKLLS,KKRT16,Salez}, diameter bounds \cite{CLMP20,FS18,LMP2,LMP18}, harmonic functions \cite{Hua19}, heat semigroup behavior \cite{GL17,HL17,HMW19}, and even aspects of graph homology \cite{KMY}.
In particular, we mention the following application of Bakry--\'Emery theory to spectral graph theory: For a finite graph $G=(V,E)$ with $|V|=N$, let us list the eigenvalues of the normalized graph Laplacian with multiplicities as below:
$$0=\lambda_{1}\leq\lambda_{2}\leq\cdots\leq\lambda_{N}\leq 2.$$
It is well known that $\lambda_{2}=0$ if and only if $G$ is connected and $2-\lambda_{N}=0$ if and only if $G$ is bipartite. The Bakry--\'Emery curvature helps to estimate the spectral gap $\lambda_{2}$. Indeed, \[\lambda_{2}\geq K,\] where $K$ is the Bakry--\'Emery lower curvature bound of $G$. Moreover, if $K$ is nonnegative, we have that $\lambda_{2}$ is of the same order as $h^{2}$ up to a constant depending only on the maximal degree, where $h$ stands for the Cheeger constant of $G$. 
Recently, Cushing et al. \cite[Theorem 1.2]{CKLP22} and Siconolfi \cite[Theorem 27]{Siconolfi} independently discovered that the Bakry--\'Emery curvature at each vertex of a graph can be reformulated as the smallest eigenvalue of a symmetric matrix. This matrix is called a \emph{curvature matrix}. The concept of curvature matrices leads to an interesting curvature flow on weighted mixed graphs based on the Bakry--\'Emery curvature \cite{CKLMPS, CKLMPS23}. The curvature matrices also lead to a deeper understanding of how curvature changes under certain operations of graphs \cite{CKLP22} and to explicit calculations of the curvature of specific classes of graphs \cite{CLP,Siconolfi,Siconolfi22,Siconolfi21}.

Connection graphs extend the concept of ordinary graphs and can be viewed as simple models for discrete vector bundles. In a $d$-dimensional connection graph $(G, \sigma)$, the base graph $G = (V, E)$ is equipped with a \emph{connection map} 
\[
\sigma: E^{\mathrm{or}} \to O(d) \quad \text{or} \quad U(d),
\]
which satisfies the condition
\[
\sigma_{xy} = \sigma_{yx}^{-1}.
\]
Here, $E^{\mathrm{or}}$ denotes the set of oriented edges, meaning that for each undirected edge $\{x, y\} \in E$ we include both $(x, y)$ and $(y, x)$. For an oriented edge $(x, y)$, the value $\sigma_{xy}$ specifies the linear transformation associated with that direction. In other words, each directed edge carries a transformation that is the inverse of the transformation assigned to its reverse.


Connection graphs constitute a particular class of \emph{gain graphs}. A gain graph is a graph $G$ coupled with a gain function $\sigma:E^{or}\to H$ satisfying $\sigma_{xy}=\sigma_{yx}^{-1}$, where $H$ is an arbitrary group. Gain graphs are special cases of \emph{biased graphs} \cite{Zaslavsky89}. Gain graphs have also been referred to as \emph{voltage graphs} \cite{Gross}, the motivation of which lies in the Heawood map-coloring problem. With special choices of $H=O(d)$ or $U(d)$, the function $\sigma$ can be considered as a connection of the vector bundle on $G$ \cite{Kenyon}. We use the terminology of \emph{connection graph} following the work of Chung et al. \cite{CZK14}. Connection graphs are also referred to as \emph{metrized local systems} in \cite{JL97}, with a close relation to previous works \cite{CS92,Forman}. In the case of $H=O(1)=\{\pm 1\}$, a connection graph is simply a \emph{signed graph} introduced by Harary. Harary \cite{Harary1, Harary2} introduced the concepts of balanced and anti-balanced signed graphs as models of social networks. For a more historical review, we refer to \cite[Section 3]{LPV} and \cite[Section 2.1]{KMY}. 

Liu, M\"unch, Peyerimhoff \cite{LMP} extended the concept of Bakry--\'Emery curvature on graphs to the setting of connection graphs and derive higher order Buser-type eigenvalue estimates. Recall that Cheeger type eigenvalue estimates for the connection Laplacian have been discussed in \cite{AtayLiu,BSS13,LLPP}. The lower bounds of the Bakry--\'Emery curvature of connection graphs have been characterized in terms of the heat semigroup for functions and the heat semigroup for vector fields of the underlying graphs \cite[Theorem 3.2]{LMP}. In particular, we mention the following application of Bakry--\'Emery curvature of connection graphs to the spectral graph theory. Consider a finite graph $G=(V,E)$ with $|V|=N$, and a connection $\sigma_{0}:E^{or}\rightarrow O(1)=\{\pm1\}$ such that $\sigma_{0}\equiv -1$ \label{sym:antibalanced}. If the Bakry--\'Emery curvature of the connection graph $(G,\sigma_0)$ is lower bounded by $K$, then the maximal eigenvalue $\lambda_{N}$ of the normalized graph Laplacian of $G$ satisfies \[2-\lambda_{N}\geq K.\] If, furthermore, $K$ is nonnegative, we have that $2-\lambda_{N}$ is of the same order as $\beta^{2}$ up to a constant depending only on the maximal degree, where $\beta$ is the bipartiteness constant of Trevisan \cite{Trevisan} or one minus the dual Cheeger constant of Bauer-Jost \cite{BJ}.

It is natural to ask whether the reformulation of Bakry--\'Emery curvature as an eigenvalue problem in \cite{CKLP22} and \cite{Siconolfi} can be extended to the setting of connection graphs.

It should be noted that the Bakry–\'Emery curvature for connection graphs can behave very differently from that of the original graphs. For example, it is known that if a graph has uniformly bounded degrees and a strictly positive lower bound on its Bakry–Émery curvature, then the graph must be finite \cite{KMY,LMP18}. In contrast, we construct an example of an infinite $4$-regular connection graph that still possesses a positive Bakry–\'Emery curvature bound (see Section \ref{section:counterexample}).


An important feature used in the proofs of \cite{CKLP22} and \cite{Siconolfi} is that, in their setting, constant functions act as eigenfunctions of the Laplacian with eigenvalue zero, and similarly for the operators $\Gamma$ and $\Gamma_2$ in the Bakry--Émery calculus. In contrast, for connection Laplacians this property typically does not hold: constant functions are generally not eigenfunctions with eigenvalue zero for the Laplacian, nor for the corresponding operators $\Gamma^\sigma$ and $\Gamma_2^\sigma$ defined in \cite{LMP}. This difference creates a notable challenge when attempting to adapt the arguments from \cite{CKLP22} and \cite{Siconolfi} to the context of connection graphs.


We revisit the main approach from \cite{CKLP22}, which reformulates Bakry–Émery curvature as an eigenvalue problem. The goal is to determine the largest value $K$ for which $\Gamma_2 - K\Gamma$ remains positive semidefinite. Because constant functions correspond to the zero eigenvalue, the first row and column of the matrix representation of $\Gamma_2 - K\Gamma$ can be omitted. A crucial insight is that the matrix size can be reduced further using the Schur complement. In particular, removing the first row and column from $\Gamma$ yields a diagonal matrix, which is congruent to the identity. This allows $K$ to be expressed in terms of an eigenvalue.
However, for the connection Laplacian, constant functions generally do not correspond to the zero eigenvalue, meaning we cannot simply discard the first row and column of $\Gamma^\sigma$. To address this, we employ the Schur complement in terms of pseudoinverses, following the framework developed by Albert \cite{Albert}. Verifying that Albert’s conditions apply in our context is a subtle task (see Proposition \ref{prop:aadaggeromega}).
Ultimately, this leads to a reformulation of Bakry–Émery curvature at a vertex of a connection graph as the smallest eigenvalue of a certain curvature matrix. In fact, for each vertex, we identify a family of curvature matrices that are unitarily equivalent (see Theorem \ref{thm:curvature_eigenvalue} and Proposition \ref{prop:unitarily_equivalent}).


Building on the approach introduced by Siconolfi \cite{Siconolfi}, we can associate to each vertex $x$ both a curvature tensor and a metric tensor (see Definition \ref{def:curvature_metric_tensors}). These are constructed using the sesquilinear forms $\Gamma_2^\sigma$ and $\Gamma^\sigma$, along with function extensions from the 1-sphere to the 2-ball obtained via the Schur complement.
The tangent space $T_x(G,\sigma)$ at $x$ (see Definition \ref{def:tangent_space}) is a linear space, which becomes an inner product space once the metric tensor is applied.
A curvature matrix is then the representation of the curvature tensor with respect to an orthonormal basis of the inner product space $(T_x(G,\sigma), g_x(\cdot,\cdot))$.

We further investigate the property of the Bakry--\'Emery curvature at a vertex in a connection graph as a function of the dimension parameter $N\in (0,\infty]$. We show that this function is continuous, monotone non-decreasing and concave. This  extends the previous approach on Bakry--\'Emery curvature function of graphs \cite{CKLP22,CLP}. Moreover, we prove that the curvature function is constant on $[N,\infty]$, if the multiplicity of the smallest eigenvalue of the $N$-dimensional curvature matrix is at least equal to the dimension $d$ of the connection (see Theorem \ref{thm:curvature_function}).


We present decomposition formulas for the curvature matrices of the Cartesian product of two connection graphs. These formulas relate the curvature matrices of the product to the curvature matrices associated with the corresponding vertices in each individual graph (see Theorem \ref{thm:Adecomp} and Corollary \ref{coro:AN}). With these tools, we derive several new results concerning the Bakry–Émery curvature of Cartesian products of connection graphs (see Corollaries \ref{coro:Cartesian}, \ref{coro:U1infty}, and \ref{coro:U1N}), extending and strengthening earlier work from \cite{LMP}. In particular, consider two connection graphs $(G,\sigma)$ and $(G',\sigma')$ with one-dimensional connections, and let $x$ and $x'$ be two vertices of $G$ and $G'$, respectively.
If at least one of these vertices has a \emph{balanced} local connection structure, we prove that the curvature function of the Cartesian product at $(x,x')$ can be expressed as the star product (see \cite[Definition 7.1]{CLP} or Definition \ref{def:star_product}) of the curvature functions of of $G$ at $x$ and that of $G'$ at $x'$.



We study how the curvature at a given vertex $x$ in a connection graph is affected by two types of operations to its local structure. The first operation involves adding an edge with connection between two neighbors of $x$. The second involves merging two vertices in the 2-sphere of $x$ that share no common neighbors in the $1$-sphere. For the second operation, curvature never decreases, whereas for the first, the effect on curvature depends on the connection of the newly added edge.




Before diving into an in-depth analysis of the findings, let us start with a brief summary of what each section covers.\\
$\bullet$ Section \ref{section:pre}: Preliminaries on connection graphs, discrete Bakry--\'Emery curvature and Schur complement.\\
$\bullet$ Section \ref{section:cur_matrix}: Curvature matrices of connection graphs.\\
$\bullet$ Section \ref{section:tensor}: Curvature tensors of connection graphs.\\
$\bullet$ Section \ref{section:function}: Properties of curvature functions.\\
$\bullet$ Section \ref{section:product}: Curvature matrices of Cartesian products.\\
$\bullet$ Section \ref{section:operation}: How the curvature changes under local operations.\\
$\bullet$ Section \ref{section:counterexample}: An infinite connection graph with positive Bakry--\'Emery curvature lower bound.\\
$\bullet$ Appendix \ref{section:appendix}: Explicit calculations of the matrices $\Gamma^{\sigma}_{2}(x)$ \label{notation:gamma_2}and $Q(x)$\label{notation:schur_complement}.\\
$\bullet$ Appendix \ref{section:notation}: Glossary of notations.\\



\section{Preliminaries}\label{section:pre}
In this section, we prepare notations and  preliminaries on connection graphs, Bakry--\'Emery curvature and Schur complement.

\subsection{Connection graphs}
Let $G=(V,w,\mu)$ be a weighted graph with a vertex set $V$, a vertex measure $\mu: V\to \mathbb{R}^+$, and an edge-weight function $w: V\times V\to \mathbb{R}^+\cup \{0\}$ which satisfies $w_{xy}=w_{yx}$ and $w_{xx}=0$ for all $x,y\in V$. The edge set $E\subset V\times V$ of $G$ is defined as $E:=\{\{x,y\}: x,y\in V, w_{xy}\neq 0\}$. In particular, there are no multiple edges and self-loops in $E$. We say a vertex $x$ is adjacent to another vertex $y$, denoted by $x\sim y$, if $\{x,y\}\in E$.  For any two vertices $x$ and $y$, the distance $d(x,y)$ is the length of the shortest path connecting them. We denote the ball centered at $x$ with radius $r$ by $B_{r}(x)=\{y\in V:d(x,y)\leq r\}$\label{notation:Br_x}, and the $r$-sphere of $x$ by $S_{r}(x)=\{y\in V:d(x,y)=r\}$\label{notation:Sr_x}. A graph $G$ is \emph{locally finite} if the $1$-sphere $S_1(x)$ of every vertex $x$ contains only finitely many vertices. We restrict ourselves to locally finite weighted graphs in this article.

The degree $d_x$ of a vertex $x$ is defined as $d_x:=\sum _{y\in V} w_{xy}$. We also use the notation of transition rate $p_{xy}$ from $x$ to $y$, where
\begin{equation*}\label{notation:transition_rate}
p_{xy}:=\frac{w_{xy}}{\mu_x}.
\end{equation*}
For the case of $\mu_x=d_x$ for all $x\in V$, $p_{xy}$ is the transition probability from $x$ to $y$ of a random walk (a reversible Markov chain).

We denote by $E^{or}:=\{(x,y),(y,x):\{x,y\}\in E\}$ the set of oriented edges, one in each direction, for each edge in $E$. For an integer $d$, a connection graph $(G,\sigma)$ with $d$-dimensional connections is a weighted graph $G$ equipped with a connection function $\sigma:E^{or}\rightarrow O(d)\,\,\text{or}\,\,U(d)$ satisfying $\sigma_{xy}=\sigma_{yx}^{-1}$, where we write $\sigma_{xy}:=\sigma((x,y))$ \label{sym:connection} for short.


The \emph{signature of a cycle} $C=x_{0}\sim x_{1}\sim x_{2}\sim\dots x_{n}\sim x_{0}$ is defined as the conjugacy class of the product  $\sigma_{C}=\sigma_{x_{0}x_{1}}\sigma_{x_{1}x_{2}}\dots\sigma_{x_{n}x_{0}}$. A connection graph is called \emph{balanced} if all of its cycles have signature (the conjugacy class of) $I_d$.

A switching function is a function $\tau: V\rightarrow U(d)$ or $O(d)$. Switching the connection function $\sigma$ by the function $\tau$ means replacing $\sigma$ by $\sigma^\tau$ where $\sigma^{\tau}_{xy}:=\tau(x)^{-1}\sigma_{xy}\tau(y)$\label{notation:switching} for each edge $(x,y)\in E^{or}$. We say two connection functions $\sigma_{1}$ and $\sigma_{2}$ are \emph{switching equivalent} if there exists a switching function $\tau$ such that $\sigma_{1}=\sigma^{\tau}_{2}$. It is direct to check that switching equivalence is indeed an equivalent relation.
A property or a quantity is called \emph{switching invariant} if it is preserved by switching operations.
For example, the signature of any cycle is switching invariant. Zaslavsky \cite[Corollary 3.3 and Section 9]{Zaslavsky} obtained the following
characterization lemma of balancedness.
\begin{lemma}[Zaslavsky's switching lemma]A connection graph $(G,\sigma)$ is balanced if and
only if $\sigma$ is switching equivalent to the all $I_d$ signature.
\end{lemma}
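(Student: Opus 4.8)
The plan is to prove both implications, the forward one being immediate from switching invariance and the reverse one requiring an explicit construction of a switching function via a spanning tree. First I would record the elementary but crucial observation that the conjugacy class of the identity $I_d$ in $U(d)$ (or $O(d)$) is the singleton $\{I_d\}$, since $gI_dg^{-1}=I_d$ for every $g$. Hence the hypothesis that a cycle $C$ has signature the conjugacy class of $I_d$ is equivalent to the sharp statement $\sigma_C=I_d$, and this holds independently of the chosen base vertex and orientation of $C$. This sharp reformulation is what makes the reverse direction pin down each edge exactly rather than merely up to conjugacy.

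For the direction ``switching equivalent to all $I_d$ $\Rightarrow$ balanced'', I would invoke the switching invariance of signatures already noted in the text: if $\sigma=\sigma_0^\tau$ with $\sigma_0\equiv I_d$, then for every cycle $C$ the elements $\sigma_C$ and $(\sigma_0)_C=I_d$ lie in the same conjugacy class, so $(G,\sigma)$ is balanced. The telescoping identity $\sigma^\tau_C=\tau(x_0)^{-1}\sigma_C\tau(x_0)$ gives this directly and can be included for completeness.

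For the converse I would assume, after treating each connected component separately, that $G$ is connected and fix a spanning tree $T$ rooted at a vertex $x_0$. Define $\tau:V\to U(d)$ by parallel transport along tree paths: set $\tau(x_0)=I_d$ and, along the unique tree path $x_0=y_0\sim\cdots\sim y_k=v$, impose the recursion $\tau(y_i)=\sigma_{y_iy_{i-1}}\tau(y_{i-1})$, so that $\tau(v)=\sigma_{y_ky_{k-1}}\cdots\sigma_{y_1y_0}$. This is well defined precisely because the tree path is unique, so no consistency question arises. On each tree edge one then computes, using $\sigma_{y_{i-1}y_i}=\sigma_{y_iy_{i-1}}^{-1}$, that $\sigma^\tau_{y_{i-1}y_i}=\tau(y_{i-1})^{-1}\sigma_{y_iy_{i-1}}^{-1}\sigma_{y_iy_{i-1}}\tau(y_{i-1})=I_d$, so $\sigma^\tau$ is trivial on all edges of $T$.

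It then remains to show $\sigma^\tau_{uv}=I_d$ on each non-tree edge $\{u,v\}$. Here I would use that balancedness is switching invariant, so $(G,\sigma^\tau)$ is again balanced, and apply the sharp form of balancedness to the fundamental cycle $C$ of $\{u,v\}$ (the tree path from $u$ to $v$ closed up by the edge $\{v,u\}$). Since every tree edge contributes $I_d$ to the product $\sigma^\tau_C$, that product collapses to the single factor $\sigma^\tau_{vu}$, and balancedness forces $\sigma^\tau_{vu}=I_d$, whence $\sigma^\tau_{uv}=I_d$ as well. Thus $\sigma^\tau\equiv I_d$ on all edges, i.e.\ $\sigma$ is switching equivalent to the all $I_d$ signature. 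The main obstacle is largely bookkeeping: fixing the switching convention so that tree edges are genuinely trivialized, and justifying the passage from ``signature is the conjugacy class of $I_d$'' to ``the cycle product equals $I_d$''. Routing the transport through a spanning tree rather than arbitrary paths is the decisive simplification, since it removes any separate proof of path independence, which would otherwise be the technical heart of the argument.
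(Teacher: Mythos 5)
Your proof is correct. Note that the paper itself does not prove this lemma at all: it is stated as a quotation of Zaslavsky's result, citing \cite[Corollary 3.3 and Section 9]{Zaslavsky}, so there is no in-paper argument to compare against. Your spanning-tree argument is the standard (essentially Zaslavsky's original) proof, and you have handled the two points where such a proof could go wrong. First, the observation that the conjugacy class of $I_d$ in $U(d)$ or $O(d)$ is the singleton $\{I_d\}$ is exactly what is needed to pass from the paper's definition of balancedness (signature is the conjugacy class of $I_d$) to the sharp identity $\sigma_C=I_d$, and without it the non-tree-edge step would only give $\sigma^\tau_{vu}$ conjugate to $I_d$, which for a general group would be weaker. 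Second, your computation on tree edges is consistent with the paper's switching convention $\sigma^\tau_{xy}=\tau(x)^{-1}\sigma_{xy}\tau(y)$: with $\tau(y_i)=\sigma_{y_iy_{i-1}}\tau(y_{i-1})$ one indeed gets $\sigma^\tau_{y_{i-1}y_i}=\tau(y_{i-1})^{-1}\sigma_{y_iy_{i-1}}^{-1}\sigma_{y_iy_{i-1}}\tau(y_{i-1})=I_d$, and routing the transport through the unique tree paths does, as you say, eliminate any path-independence issue. The reduction to connected components, the use of switching invariance of balancedness to apply the hypothesis to $(G,\sigma^\tau)$, and the collapse of the fundamental-cycle product to the single factor $\sigma^\tau_{vu}$ are all sound; since the paper works with simple locally finite graphs, fundamental cycles are genuine cycles and spanning trees exist even for infinite connected components. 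In short, you have supplied a complete, self-contained proof of a statement the paper only cites.
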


\subsection{Bakry--\'Emery curvature}
\par In this section, we discuss the Bakry--\'Emery curvature of connection graphs introduced in \cite{LMP}. The definition of the curvature is motivated by the Bochner identity in Riemannian geometry and Bakry--\'Emery $\Gamma$-calculus \cite{Bakry94,Bakry, BE85}. We give a brief review here. For any smooth function $f:M\rightarrow \mathbb{R}$ on a Riemannian manifold $M$, the following Bochner identity holds:
$$\frac{1}{2}\Delta |\textrm{grad} f|^{2}(x) = |\textrm{Hess} f|^2(x) + \langle \textrm{grad} \Delta f(x),\textrm{grad} f(x)\rangle + \textrm{Ric}(\textrm{grad} f,\textrm{grad} f)(x),$$
where $\textrm{Hess} f$ is the Hessian of $f$, $\Delta$ is the Laplace-Beltrami operator and $\mathrm{Ric}$ stands for the Ricci curvature tensor. By Cauchy-Schwarz inequality, the first term on the right hand side satisfies
$$|\textrm{Hess} f|^2(x)\geq \frac{1}{n}(\Delta f(x))^{2}.$$
Suppose the Ricci curvature is bounded below by $K$ at $x\in M$, i.e.,
$$\textrm{Ric}(v,v)\geq K\langle v,v\rangle, \,\, \text{for any}\,\, v\in T_{x}M.$$
Then the above Bochner identity tells
\begin{equation}\label{eq:cdMflds}
\frac{1}{2}\Delta |\textrm{grad} f|^{2}(x)-\langle \textrm{grad} \Delta f(x),\textrm{grad} f(x)\rangle\geq\frac{1}{n}(\Delta f(x))^{2}+K|\textrm{grad} f|^{2}(x).
\end{equation}
Bakry--\'Emery \cite{BE85} introduced the following definitions of $\Gamma$ and $\Gamma_2$: For any two smooth functions $f,g:M\rightarrow \mathbb{R}$,
 \begin{align*}
 2\Gamma(f,g)&:= \Delta(fg)-f\Delta(g)-g\Delta(f),\\
  2\Gamma_2(f,g)&:=\Delta\Gamma(f,g)-\Gamma(f,\Delta g)-\Gamma(g,\Delta f).
  \end{align*}
   Noticing that
   $$\Gamma(f,g)=\langle\textrm{grad} f , \textrm{grad} g\rangle,\,\,\Gamma_2(f,f)=\frac{1}{2}\Delta |\textrm{grad} f|^{2}-\langle\textrm{grad} \Delta f,\textrm{grad} f\rangle,$$
   the inequality (\ref{eq:cdMflds}) can be reformulated as
   \begin{equation}\label{eq:cd}
   \Gamma_{2}(f,f)(x)\geq \frac{1}{n}(\Delta f(x))^{2}+K\Gamma(f,f)(x).
   \end{equation}
 On an $n$-dimensional Riemannian manifold $M$, (\ref{eq:cd}) holds at $x\in M$ for any smooth function $f$ if and only if the Ricci curvature is bounded from below by $K$ at $x$ \cite[pp.93-94]{Bakry94}.

Let $\mathbb{K}=\mathbb{R}$ or $\mathbb{C}$ \label{notation:real_complex_space}. For a function $f:V\rightarrow \mathbb{K}^d$ defined on a connection graph $(G,\sigma)$ with $d$-dimensional connections, the Laplace operator $\Delta^{\sigma}$ is defined as follows: for any vertex $x$,
$$\Delta^{\sigma} f(x):=\sum_{y,y\sim x}p_{xy}(\sigma_{xy}f(y)-f(x)).$$

This operator has been introduced as the \emph{connection Laplacian} by Singer and Wu \cite{SW12}. The special choices of $H=O(d)$ or $U(d)$ ensure that the operator $\Delta^\sigma$ is self-adjoint. For the case $H=U(1)$, this operator is known as discrete magnetic Laplacian \cite{Shubin, Sunada}; For the case $H=O(1)=\{\pm 1\}$, it is known as the signed Laplace matrix \cite{Zaslavsky10}. 

The following definition of Bakry--\'Emery curvature of connection graphs based on the Laplacian $\Delta^\sigma$ has been introduced in \cite[Definition 3.2]{LMP}.
We first define the corresponding operators $\Gamma^{\sigma}$ and $\Gamma_{2}^{\sigma}$: For any two functions $f,g:V\rightarrow \mathbb{K}^d$, we define 
\begin{align*}
2\Gamma^{\sigma}(f,g):=&\Delta(f^\top\overline{g})-f^\top\overline{\Delta^{\sigma} g}-(\Delta^{\sigma}f)^\top\overline{g};\\
2\Gamma_{2}^{\sigma}(f,g):=&\Delta(\Gamma^{\sigma}(f,g))-\Gamma^{\sigma}(f,\Delta^{\sigma}g)-\Gamma^{\sigma}(\Delta^{\sigma}f,g),
\end{align*}
where $f^\top\overline{g}: V\to \mathbb{K}$ is the function such that $(f^\top\overline{g})(x):=f(x)^\top\overline{g(x)}$ for any $x\in V$.
We also write $\Gamma^{\sigma}(f):=\Gamma^{\sigma}(f,f)$ and $\Gamma_{2}^{\sigma}(f)=\Gamma_{2}^{\sigma}(f,f)$ for simplicity.

\begin{definition}(Bakry--\'Emery curvature)\label{def:BEcurvature}
Let $(G,\sigma)$ be a connection graph. A vertex $x$ is said to satisfy the Bakry--\'Emery \emph{curvature dimension inequality} $CD^\sigma(\K,N)$, where $\K\in \mathbb{R}$ and $N\in(0,\infty]$, if for every function $f:V\rightarrow \mathbb{K}^d$ the inequality
$$\Gamma_{2}^{\sigma}(f,f)(x)\geq\frac{1}{N}|\Delta^{\sigma} f(x)|^{2}+\K\Gamma^{\sigma}(f,f)(x)$$
holds.
Here, $\K$ serves as a lower curvature bound and $N$ plays the role of a dimension parameter. The \emph{$N$-Bakry--\'Emery curvature} of $x$, denoted $\K_{G,\sigma,x}(N)$, is defined as the supremum of all $\K$ values for which the above inequality holds.
The mapping
\[\K_{G,\sigma,x}: (0,\infty]\to \mathbb{R}\]
is referred to as the \emph{Bakry--\'Emery curvature function} at $x$.
\end{definition}

When $N=\infty$, the above curvature dimension inequality becomes
$$\Gamma_{2}^{\sigma}(f,f)(x)\geq\K\Gamma^{\sigma}(f,f)(x).$$

The following switching invariance property has been shown in \cite[Proposition 3.5]{LMP}.
\begin{proposition}\label{prop:switch}
The curvature function at any vertex of a connection graph $(G,\sigma)$ is switching invariant. Indeed, for any switching function $\tau$ and functions $f,g:V\to \mathbb{K}^d$, we have
\begin{equation}
\Gamma^{\sigma^\tau}(f,g)=\Gamma^{\sigma}(\tau f, \tau g), \,\,\Gamma_2^{\sigma^\tau}(f,g)=\Gamma_2^{\sigma}(\tau f, \tau g),
\end{equation}
and
\begin{equation}
\Delta^{\sigma^\tau}=D(\tau)^{-1}\Delta^\sigma D(\tau),
\end{equation}
where $D(\tau)$ \label{notation:diagonal_tau} stands for the diagonal matrix of the function $\tau$.
\end{proposition}

Observe that the curvature function $\K_{G,\sigma,x}(\cdot)$ at a vertex $x$ depends only on the local structure of $B_2(x)$.
Let us denote
\[S_1(x)=\{y_1,\ldots,y_m\},\,\,\text{and}\,\,S_2(x)=\{z_1,\ldots,z_n\}\label{notation:x_S2}.\]
We denote by $\Delta^\sigma(x)$ the following $(m+1)d\times d$ matrix
\begin{equation}\label{eq:Delta_x}\Delta^\sigma(x)^\top:=\begin{pmatrix}
    -\frac{d_{x}}{\mu_x}I_{d} & p_{xy_1}\sigma_{xy_{1}} & p_{xy_2}\sigma_{xy_{2}} & \dots & p_{xy_m}\sigma_{xy_{m}}
\end{pmatrix}.\end{equation}
For any functions $f,g: V\to \mathbb{K}^{d}$, we denote the corresponding local $(m+1)d$-vectors by
\[\vec{f}(x)^\top:=(f(x)^\top, f(y_1)^\top, \ldots, f(y_m)^\top),\,\,\vec{g}(x)^\top:=(g(x)^\top, g(y_1)^\top, \ldots, g(y_m)^\top).\]
Then we have
\[(\Delta^\sigma f(x))^\top\overline{\Delta^\sigma g(x)}=\vec{f}(x)^\top\Delta^\sigma(x)\overline{\Delta^\sigma(x)}^\top\overline{\vec{g}(x)}.\]
That is, the matrix $\Delta^\sigma(x)\overline{\Delta^\sigma(x)}^\top$ is the Hermitian matrix corresponding to the symmetric sesquilinear form $(\Delta^\sigma f(x))^\top\overline{\Delta^\sigma g(x)}$ at $x$.
Let $\Gamma^\sigma(x)$ and $\Gamma_2^\sigma(x)$ be the Hermitian matrices corresponding to the symmetric sesquilinear forms $\Gamma^\sigma$ and $\Gamma_2^\sigma$ at $x$, respectively.

The explicit expressions of the non-trivial blocks of these two matrices for combinatorial graphs have been derived in \cite[Section 3.4]{LMP}. Recall from \cite[Proposition 3.1]{LMP} that for any $f,g:V\to \mathbb{K}^d$
\begin{equation}\label{eq:2Gamma}
    2\Gamma^\sigma(f,g)(x)=\sum_{y\in V}p_{xy}(\sigma_{xy}f(y)-f(x))^\top\overline{(\sigma_{xy}g(y)-g(x))}.
\end{equation}
Therefore, the matrix $\Gamma^{\sigma}(x)$ has a  non-trivial block of size $d|B_1(x)|\times d|B_1(x)|$:
\begin{equation}\label{eq:GammaMatrix}
    2\Gamma^{\sigma}(x)=
\begin{pmatrix}
   \sum_{i=1}^mp_{xy_i}I_{d}                 & -p_{xy_1}\overline{\sigma_{xy_{1}}} & -p_{xy_2}\overline{\sigma_{xy_{2}}} & \dots  & -p_{xy_m}\overline{\sigma_{xy_{m}}}\\
    -p_{xy_1}\sigma_{xy_{1}}^\top      & p_{xy_1}I_{d}                 & \mathbf{0}_d                            & \dots  & \mathbf{0}_d                           \\
    -p_{xy_2}\sigma_{xy_{2}}^\top      & \mathbf{0}_d                            & p_{xy_2}I_{d}                  & \dots  & \mathbf{0}_d                           \\
    \vdots                            & \vdots                       & \vdots                       & \ddots & \vdots                      \\
    -p_{xy_m}\sigma_{xy_{m}}^\top      & \mathbf{0}_d                            & \mathbf{0}_d                            & \dots  & p_{xy_m}I_{d}
\end{pmatrix}.
\end{equation}

The matrix $\Gamma_{2}^{\sigma}(x)$ has a non-trivial block of size $d|B_2(x)|\times d|B_2(x)|$:
\begin{equation}\label{eq:Gamma2}
    \Gamma^{\sigma}_{2}(x)=
\begin{pmatrix}
    \Gamma_2^{\sigma}(x)_{x,x} & \Gamma_2^{\sigma}(x)_{x,S_1} & \Gamma_2^{\sigma}(x)_{x,S_2}\\
    \Gamma_2^{\sigma}(x)_{S_1,x} & \Gamma_2^{\sigma}(x)_{S_1,S_1} & \Gamma_2^{\sigma}(x)_{S_1,S_2}\\
    \Gamma_2^{\sigma}(x)_{S_2,x} & \Gamma_2^{\sigma}(x)_{S_2,S_1} & \Gamma_2^{\sigma}(x)_{S_2,S_2}
\end{pmatrix},
\end{equation}
The explicit expression is given in the Appendix \ref{section:appendix}. We only mention here that the block $(\Gamma_2^{\sigma})_{S_2,S_2}$ is diagonal and positive definite.

Since the block $(\Gamma_2^{\sigma})_{S_2,S_2}$ is diagonal, the matrix $\Gamma_2^\sigma(x)$ and hence the curvature function $\K_{G,\sigma,x}(\cdot)$ are completely determined by the \emph{local connection structure}. Specifically, they depend on the incomplete $2$-ball $B_2^{inc}(x)$ \label{notation:B2_inc_x} around $x$. This structure is obtained from the induced subgraph of $B_2(x)$ by removing edges connecting vertices in $S_2(x)$ and assigning a restricted connection function of $\sigma$.

For the matrix $\Gamma^\sigma(x)$, we have the following observation.
\begin{proposition}\label{prop:Gamma}
Let $(G,\sigma)$ be a connection graph with $d$-dimensional connections. At any vertex $x$, the eigenvalues of the matrix $\Gamma^\sigma(x)$ can be listed with multiplicity as
\[0=\lambda_1=\cdots=\lambda_d<\lambda_{d+1}\leq \cdots \leq \lambda_{(m+1)d}.\]
\end{proposition}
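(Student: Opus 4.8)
The plan is to read the result directly off the sum-of-squares expression of $\Gamma^\sigma$ at $x$, which simultaneously reveals positive semidefiniteness and the exact kernel. First I would specialise formula \eqref{eq:2Gamma} to $g=f$, obtaining
$$2\Gamma^\sigma(f,f)(x)=\sum_{y\sim x}p_{xy}\,|\sigma_{xy}f(y)-f(x)|^2\ge 0,$$
where the positivity of each weight $p_{xy}$ is used. Since this holds for every local vector $\vec f(x)=(f(x)^\top,f(y_1)^\top,\dots,f(y_m)^\top)^\top\in\mathbb K^{(m+1)d}$, the Hermitian matrix $2\Gamma^\sigma(x)$—and hence $\Gamma^\sigma(x)$, which differs only by the factor $\tfrac12$—is positive semidefinite, so all of its eigenvalues are nonnegative and the smallest ones are precisely the zero eigenvalues. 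It then remains only to compute the dimension of the kernel.

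Next I would determine the kernel exactly. The quadratic form above is a sum of nonnegative terms, so it vanishes on a local vector if and only if $\sigma_{xy_i}f(y_i)=f(x)$ holds simultaneously for every $i\in\{1,\dots,m\}$. Because each $\sigma_{xy_i}$ lies in $O(d)$ or $U(d)$ and is therefore invertible, this is equivalent to $f(y_i)=\sigma_{xy_i}^{-1}f(x)=\sigma_{y_ix}f(x)$. Hence
$$\ker\Gamma^\sigma(x)=\bigl\{(v^\top,(\sigma_{y_1x}v)^\top,\dots,(\sigma_{y_mx}v)^\top)^\top : v\in\mathbb K^d\bigr\},$$
which is the image of the linear map $v\mapsto(v,\sigma_{y_1x}v,\dots,\sigma_{y_mx}v)$. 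This map is injective, since its first block already recovers $v$, so the kernel is a $d$-dimensional subspace.

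Combining the two steps finishes the proof: positive semidefiniteness gives $\lambda_i\ge 0$ for all $i$, while the kernel having dimension exactly $d$ forces precisely $d$ of the eigenvalues of the $(m+1)d\times(m+1)d$ matrix to equal $0$ and the remaining $md$ to be strictly positive. Listing them in increasing order yields $0=\lambda_1=\cdots=\lambda_d<\lambda_{d+1}\le\cdots$, as asserted. I do not expect any genuine obstacle; the only point deserving a line of care is the equivalence between the vanishing of the whole sum and the simultaneous vanishing of all $m$ summands, which is immediate from the nonnegativity of the terms together with $p_{xy_i}>0$.
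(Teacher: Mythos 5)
Your proof is correct and takes essentially the same approach as the paper: specializing \eqref{eq:2Gamma} to $g=f$ yields positive semidefiniteness, and the equality case $f(y_i)=\sigma_{xy_i}^{-1}f(x)$ identifies the kernel. You merely spell out a step the paper leaves implicit, namely that this kernel is the $d$-dimensional image of the injective map $v\mapsto\bigl(v,\sigma_{y_1x}v,\dots,\sigma_{y_mx}v\bigr)$, which pins down the multiplicity of the zero eigenvalue.
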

\begin{proof}
It follows directly from (\ref{eq:2Gamma}) that
\[2\Gamma^\sigma(f,f)(x)=\sum_{y\in V}p_{xy}(\sigma_{xy}f(y)-f(x))^2\geq 0,\]
where the equality holds if and only if $f(y)=\sigma_{xy}^{-1}f(x)$ for any $y\in S_1(x)$. This proves the proposition.
\end{proof}

\subsection{Schur complement}
We reformulate the Bakry--\'Emery curvature as the smallest eigenvalue of a family of curvature matrices. A fundamental tool in our reformulation is the so-called \emph{Schur complement}. Let $S$ be an Hermitian matrix
\begin{equation}\label{eq:S}S=\begin{pmatrix}S_{11} & S_{12}\\ S_{21} & S_{22}
\end{pmatrix},\end{equation}
where $S_{11}$ and $S_{22}$ are square blocks. The \emph{Schur complement} $S/S_{11}$ of $S_{11}$ in $S$ is defined as
\begin{equation}\label{eq:schur_complement}
S/S_{11}:=S_{22}-S_{21}S_{11}^{\dagger}S_{12},
\end{equation}
where $S_{11}^{\dagger}$ \label{notation:S_dagger} stands for the pseudoinverse of the matrix $S_{11}$. The following result has been proved in \cite[Theorem 1(i)]{Albert}.
\begin{theorem}[Albert]\label{thm:Albert}
Let $S$ be an Hermitian matrix defined as above. Then $S$ is positive semidefinite (denoted by $S\succeq 0$\label{notation:positive_semidefinite}) if and only if $S_{11}S_{11}^\dagger S_{12}=S_{12}$, $S_{11}\succeq 0$ and $S/S_{11}\succeq 0$.
\end{theorem}
\begin{proof}
In \cite[Theorem 1(i)]{Albert}, the proof was originally established for real symmetric matrices. The same reasoning extends naturally to Hermitian matrices without modification. For the purposes of our subsequent discussion, we provide a slightly different proof in terms of the associated sesquilinear forms.

  Assuming that
  \begin{equation}\label{eq:Schur_condition}
S_{11}\succeq 0, \,\,\text{and}\,\,S_{11}S_{11}^\dagger S_{12}=S_{12},
  \end{equation}
the following relation can be established: For any vectors $v_1, v_2$ whose dimensions coincide with the sizes of rows in $S_{11}, S_{12}$, respectively, we have
  \begin{align}
   & \begin{pmatrix}
      v_1^\top & v_2^\top
    \end{pmatrix}
  \begin{pmatrix}S_{11} & S_{12}\\ S_{21} & S_{22}
\end{pmatrix}
\begin{pmatrix}
  \overline{v_1} \\ \overline{v_2}
\end{pmatrix}
=v_2^\top (S/S_{11})\overline{v_2}+\left|(S_{11}^{1/2})^\dagger S_{12}\overline{v_2}+\overline{S_{11}^{1/2}}^\top \overline{v_1}\right|^2. \label{eq:Schur_sesquilinear_form}
  \end{align}
In the above, we use the identity $S_{11}^{1/2}(S_{11}^{1/2})^\dagger S_{12}=S_{12}$\label{notation:S_square_root}, which is a consequence of the condition $S_{11}(S_{11})^\dagger S_{12}=S_{12}$
and $S_{11}=S_{11}^{1/2}\overline{S_{11}^{1/2}}^\top$.

Given condition (\ref{eq:Schur_condition}), it follows directly from (\ref{eq:Schur_sesquilinear_form}) that if $S/S_{11}\succeq 0$, then $S\succeq 0$.

For the reverse implication, assume that $S\succeq 0$. Then there exists a matrix $H$ such that $S=H\overline{H}^\top$. Let us write
\[H=\begin{pmatrix}
  X \\ Y
\end{pmatrix},\]
where the number of rows in $X$ (in $Y$) coincides with the size of $S_{11}$ (of $S_{12}$). Thus, we have $S_{11}=X\overline{X}^\top$, $S_{12}=\overline{S_{21}}^{\top}=X\overline{Y}^\top$, and $S_{22}=Y\overline{Y}^\top$. Hence, we obtain
\[S_{11}S_{11}^\dagger S_{12}=X\overline{X}^\top\left(X\overline{X}^\top\right)^\dagger S_{12}=XX^\dagger \left(X\overline{Y}^\top\right)=X\overline{Y}^\top=S_{12},\]
by using the relations $\overline{X}^\top\left(X\overline{X}^\top\right)^\dagger=X^\dagger$ and $XX^\dagger X=X$. That is, the fact $S\succeq 0$ leads to condition (\ref{eq:Schur_condition}), and with (\ref{eq:Schur_sesquilinear_form}) applied, we conclude that $S/S_{11}\succeq 0$.
\end{proof}
\begin{remark}
If $S_{11}$ is positive definite (denoted by $S_{11}\succ 0$), then the above Theorem states that $S\succeq 0$ if and only if the Schur complement $S/S_{11}\succeq 0$.
\end{remark}
For later purposes, we prove the following technical lemma.
\begin{lemma}\label{lemma:schur_technical}
Let $S$ be an Hermitian matrix defined in (\ref{eq:S}) such that $S_{11}\succeq 0$ and $S_{11}S_{11}^\dagger S_{12}=S_{12}$. Let $v_1, v_2$ be vectors whose dimensions coincide with the sizes of rows in $S_{11}, S_{12}$, respectively. Then the norm square term  in (\ref{eq:Schur_sesquilinear_form})
\[\left|(S_{11}^{1/2})^\dagger S_{12}\overline{v_2}+\overline{S_{11}^{1/2}}^\top \overline{v_1}\right|^2\]
 vanishes if and only if 
\begin{equation}
S\begin{pmatrix}
  \overline{v_1} \\ \overline{v_2}
\end{pmatrix}=\begin{pmatrix}
  S_{11} & S_{12} \\
  S_{21} & S_{22}
\end{pmatrix}\begin{pmatrix}
  \overline{v_1}\\\overline{v_2}
\end{pmatrix}=\begin{pmatrix}
  \mathbf{0} \\ *
\end{pmatrix}.
\end{equation}
\end{lemma}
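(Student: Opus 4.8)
The plan is to reduce the whole equivalence to a single algebraic identity that expresses the vector inside the norm-square in terms of the top block of $S(\overline{v_1},\overline{v_2})^\top$. Throughout, let $B:=S_{11}^{1/2}$ denote the Hermitian principal square root of the positive semidefinite matrix $S_{11}$, so that $\overline{B}^\top=B$, $S_{11}=B^2$, and the Moore--Penrose pseudoinverse $B^\dagger$ is again Hermitian with $\operatorname{range}(B^\dagger)=\operatorname{range}(\overline{B}^\top)=\operatorname{range}(B)$. Writing
\[
w:=(S_{11}^{1/2})^\dagger S_{12}\overline{v_2}+\overline{S_{11}^{1/2}}^\top\overline{v_1}=B^\dagger S_{12}\overline{v_2}+B\overline{v_1},
\]
the norm-square term equals $|w|^2$, so it vanishes precisely when $w=\mathbf{0}$; and the asserted matrix condition says exactly that the top block $S_{11}\overline{v_1}+S_{12}\overline{v_2}$ vanishes.

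Next I would establish the key identity $Bw=S_{11}\overline{v_1}+S_{12}\overline{v_2}$. Multiplying $w$ on the left by $B$ gives $Bw=B^2\overline{v_1}+BB^\dagger S_{12}\overline{v_2}$; now $B^2=S_{11}$, and the hypothesis $S_{11}S_{11}^\dagger S_{12}=S_{12}$ yields $BB^\dagger S_{12}=S_{12}$ (this is precisely the identity $S_{11}^{1/2}(S_{11}^{1/2})^\dagger S_{12}=S_{12}$ already used in the proof of Theorem~\ref{thm:Albert}), so $Bw$ is exactly the top block. This identity immediately settles the forward direction: if $w=\mathbf{0}$, then the top block $Bw=\mathbf{0}$, which is the desired matrix condition.

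For the converse I would first observe that $w$ automatically lies in $\operatorname{range}(B)$: the summand $B\overline{v_1}$ lies there by definition, while $B^\dagger S_{12}\overline{v_2}\in\operatorname{range}(B^\dagger)=\operatorname{range}(B)$ by the range identity noted above. Assuming the top block vanishes, the key identity gives $Bw=\mathbf{0}$, i.e.\ $w\in\ker B$. Since $B$ is Hermitian, $\ker B$ and $\operatorname{range}(B)$ are orthogonal complements, so $w\in\ker B\cap\operatorname{range}(B)=\{\mathbf{0}\}$, forcing $w=\mathbf{0}$ and hence the vanishing of the norm-square term.

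The main obstacle is precisely this converse step: knowing only that $Bw=\mathbf{0}$ does not by itself give $w=\mathbf{0}$, since this would fail whenever $B$ is singular, which is the generic situation once $S_{11}$ is merely semidefinite. The crux is therefore to exploit the Hermitian structure of $B$ and $B^\dagger$ to confine $w$ to $\operatorname{range}(B)=(\ker B)^\perp$, after which its intersection with $\ker B$ is trivial. The forward direction, by contrast, is an essentially formal consequence of the single identity $Bw=S_{11}\overline{v_1}+S_{12}\overline{v_2}$.
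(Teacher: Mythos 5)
Your proof is correct and takes essentially the same route as the paper's: both reduce the equivalence to the identity $S_{11}^{1/2}w=S_{11}\overline{v_1}+S_{12}\overline{v_2}$ for $w=(S_{11}^{1/2})^\dagger S_{12}\overline{v_2}+\overline{S_{11}^{1/2}}^\top\overline{v_1}$, via $S_{11}^{1/2}(S_{11}^{1/2})^\dagger S_{12}=S_{12}$. Your explicit range--kernel orthogonality argument for the converse is simply an unpacking of the paper's terse appeal to $X^\dagger XX^\dagger=X^\dagger$, which recovers $w$ from $S_{11}^{1/2}w$ because $w$ lies in $\operatorname{range}(S_{11}^{1/2})$ and is thus fixed by the projector $(S_{11}^{1/2})^\dagger S_{11}^{1/2}$.
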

\begin{proof}
Since the identity $X^\dagger X X^\dagger=X^\dagger$ holds for any matrix $X$, we have \[(S_{11}^{1/2})^\dagger S_{12}\overline{v_2}+\overline{S_{11}^{1/2}}^\top \overline{v_1}=0\] if and only if \[S_{11}^{1/2}(S_{11}^{1/2})^\dagger S_{12}\overline{v_2}+S_{11}^{1/2}\overline{S_{11}^{1/2}}^\top \overline{v_1}=0.\] Using the assumption $S_{11}S_{11}^\dagger S_{12}=S_{12}$, the latter simplifies to
$S_{11}\overline{v_1}+S_{12}\overline{v_2}=0$.
\end{proof}

\section{Curvature matrices of connection graphs}\label{section:cur_matrix}
By Definition \ref{def:BEcurvature}, the $N$-Bakry--\'Emery curvature $\K_{G,\sigma,x}(N)$ of the vertex $x$ of a connection graph $(G,\sigma)$ is the solution of the following semidefinite programming problem:
\begin{align}
   & \mathrm{maximize}\,\,\, K\notag\\
   & \mathrm{subject\,\, to} \,\,\, \Gamma^\sigma_2(x)-\frac{1}{N}\Delta^\sigma(x)\overline{\Delta^\sigma(x)}^\top-K\Gamma^\sigma(x)\succeq 0.\label{eq:subjectto}
\end{align}
Since the block $\Gamma_2^\sigma(x)_{S_2,S_2}\succ 0$, see \eqref{eq:Gamma2S2S2}, we derive by Theorem \ref{thm:Albert} that the inequality (\ref{eq:subjectto}) holds if and only if
\begin{equation}\label{eq:Q}
    Q(x)-\frac{1}{N}\Delta^\sigma(x)\overline{\Delta^\sigma(x)}^\top-K\Gamma^\sigma(x)\succeq 0,
\end{equation}
where
\begin{equation}\label{eq:Qdef}
      Q(x):=\Gamma_2^\sigma(x)/\Gamma^\sigma_2(x)_{S_2,S_2}=\Gamma^\sigma_2(x)_{B_1,B_1}-\Gamma^\sigma_2(x)_{B_1,S_2}\Gamma^\sigma_2(x)_{S_2,S_2}^{-1}\Gamma^\sigma_2(x)_{S_2,B_1}
\end{equation}
is the Schur complement of $\Gamma^\sigma_2(x)_{S_2,S_2}$ in the matrix $\Gamma_2^\sigma(x)$. Notice that $Q(x)$ is Hermitian.

Let $B$ be an $(m+1)d\times (m+1)d$ nonsingular matrix such that
\begin{equation}\label{eq:B}
    B(2\Gamma^\sigma(x))\overline{B}^\top=\begin{pmatrix}
        \mathbf{0}_{d} & \mathbf{0}_{d\times md} \\
        \mathbf{0}_{md\times d} & I_{md}
    \end{pmatrix},
\end{equation}where we use the notation $I_{md}$ \label{notation_identity_matrix}for a $md\times md$ identity matrix, and $\mathbf{0}_{d}$\label{notation:zero_matrix} for a $d\times d$ zero matrix.

For convenience, we introduce the following notations:
\begin{align}\label{eq:generalB}
B=\begin{pmatrix}
b_0^\top \\
b_1^\top\\
\vdots\\
b_m^\top
\end{pmatrix},
\end{align}
where for each $i=0,1,\ldots, m$ we further denote the $d\times (m+1)d$ matrix $b_{i}^\top$ as below, 
\[b_i^\top=\begin{pmatrix}
    b_{i1}^\top \\
    \vdots\\
    b_{id}^\top
\end{pmatrix}
\]
with $b_{ij}\in \mathbb{K}^{(m+1)d}$, $j=1,\ldots,d$.

Recall from Proposition \ref{prop:Gamma} that the zero eigenvalue of $2\Gamma^\sigma(x)$ has multiplicity $d$ and all the other eigenvalues are positive. Hence, a matrix $B$ satisfying (\ref{eq:B}) does exist. Indeed, such a matrix is not unique.

From equation (\ref{eq:B}), we observe that  $b_0^\top 2\Gamma^\sigma(x)\overline{b_0}=\mathbf{0}_d$. Since $2\Gamma^\sigma(x)$ is a positive semidefinite Hermitian matrix, it follows that $\overline{b_0}$ consists of $d$ mutually independent eigenvectors $\overline{b_{0i}},\ i = 1, \ldots, d$, each associated with the zero eigenvalue of the matrix $2\Gamma^\sigma(x)$. By (\ref{eq:2Gamma}), any $f:V\to \mathbb{K}^d$ such that $\Gamma^{\sigma}(f,f)(x)=0$ satisfies
$$\sigma_{xy}f(y)=f(x),\text{ for }y\in S_{1}(x).$$
Hence, we derive that $b_0^\top=E_d p_0^\top$ for some nonsingular $d\times d$ matrix $E_d$, where
\begin{equation}\label{eq:p_0}
    p_0^\top:=
\begin{pmatrix}
  I_d & \overline{\sigma_{xy_1}} & \ldots & \overline{\sigma_{xy_m}}
\end{pmatrix}.
\end{equation}

The blocks $b_i, i=1,\ldots,m$ can be constructed from the nonzero eigenvalues and the corresponding eigenvectors of $2\Gamma^\sigma(x)$. They can also be given by simply setting $b_i=p_i, i=1,\ldots, m$ with
\begin{equation}\label{eq:specialb_i}
p_1^\top:=\begin{pmatrix}
   \mathbf{0}_d &  \frac{1}{\sqrt{p_{xy_1}}}I_d & \mathbf{0}_d & \cdots & \mathbf{0}_d
\end{pmatrix},\,\,\cdots,\,\, p_m^\top:=\begin{pmatrix}
   \mathbf{0}_d &  \mathbf{0}_d & \mathbf{0}_d & \cdots & \frac{1}{\sqrt{p_{xy_m}}}I_d
\end{pmatrix}.
\end{equation}

The fact that $b_0$ consists of $d$ mutually independent eigenvectors to the zero eigenvalue of $2\Gamma^\sigma(x)$ implies that
\begin{equation}\label{eq:Gamma_Laplacian}
    b_0^\top \Delta^\sigma(x)=\mathbf{0}_{d}.
\end{equation}

The inequality (\ref{eq:Q}) holds if and only if
\begin{equation}\label{eq:BQ}
    2BQ(x)\overline{B}^\top-\frac{2}{N}(B\Delta^\sigma(x))(\overline{B\Delta^\sigma(x)})^\top-K\begin{pmatrix}
        \mathbf{0}_{d} & \mathbf{0}_{d\times md} \\
        \mathbf{0}_{md\times d} & I_{md}
    \end{pmatrix}\succeq 0.
\end{equation}
Due to (\ref{eq:Gamma_Laplacian}), we can denote
\begin{equation}\label{eq:v0}
B\Delta^\sigma(x):=\begin{pmatrix}
  \mathbf{0}_d \\ \mathbf{v}_0(x)
\end{pmatrix},
\end{equation}
where $\mathbf{v}_{0}(x):=\mathbf{v}_0(G,\sigma,x,B)$ is a $md\times d$ matrix. Then, we have
\begin{equation}\label{eq:BLaplacian}
B\Delta^\sigma(x)\overline{B\Delta^\sigma(x)}^\top=\begin{pmatrix}
        \mathbf{0}_{d} & \mathbf{0}_{d\times md} \\
        \mathbf{0}_{md\times d} & \mathbf{v}_0\overline{\mathbf{v}_0}^\top
    \end{pmatrix}.
\end{equation}
 We further introduce the notation
\[2BQ(x)\overline{B}^\top=\begin{pmatrix}
    a & \omega^\top \\
    \overline{\omega} & (2BQ(x)\overline{B}^\top)_{\hat{1}}\label{notation:removing_d_rows}
\end{pmatrix},\]
where
\begin{equation}\label{eq:a}a:=a(G,\sigma,x,B)=2b_0^\top Q(x)\overline{b_0},\end{equation}
\begin{equation}\label{eq:omega}\omega^\top:=\omega(G,\sigma,x,B)^\top=
\begin{pmatrix}
  2b_0^\top Q(x)\overline{b_1} & \cdots & 2b_0^\top Q(x)\overline{b_m}
\end{pmatrix},
\end{equation}
and
\begin{equation}\label{eq:BQB}(2BQ(x)\overline{B}^\top)_{\hat{1}}=\begin{pmatrix}
    b_1^\top \\ \vdots \\b_m^\top
\end{pmatrix}2Q(x)\begin{pmatrix}
    \overline{b_1} & \cdots & \overline{b_m}
\end{pmatrix}
\end{equation}
is the $md\times md$ matrix obtained from $2BQ(x)\overline{B}^\top$ via removing the first $d$ columns and the first $d$ rows.
Therefore, (\ref{eq:BQ}) can be reformulated as
\begin{equation}\label{eq:BQmatrix}
    \begin{pmatrix}
        a & \omega^\top \\
        \overline{\omega} & (2BQ(x)\overline{B}^\top)_{\hat{1}}-\frac{2}{N}\mathbf{v}_0\overline{\mathbf{v}_0}^\top-K I_{md\times md}
    \end{pmatrix}\succeq 0.
\end{equation}

Indeed, we have $a\succeq 0$ and
$a  a^\dagger \omega^\top=\omega^\top$ (It is highly non-trivial to show this fact. For a detailed argument, see Proposition \ref{prop:aadaggeromega} in the Appendix \ref{section:appendix}).
Therefore, we apply Theorem \ref{thm:Albert} again to derive that (\ref{eq:BQmatrix}) holds if and only if
\begin{equation}
    (2BQ(x)\overline{B}^\top)_{\hat{1}}-\frac{2}{N}\mathbf{v}_0\overline{\mathbf{v}_0}^\top-K I_{md\times md}-\overline{\omega}a^\dagger \omega^\top\succeq 0.
\end{equation}
That is, the $N$-Bakry--\'Emery curvature $\K_{G,\sigma,x}(N)$ is equal to the smallest eigenvalue of the matrix \[(2BQ(x)\overline{B}^\top)_{\hat{1}}-\overline{\omega} a^\dagger\omega^\top-\frac{2}{N}\mathbf{v}_0(x)\overline{\mathbf{v}_0(x)}^\top.\]

In conclusion, we introduce the following concept of curvature matrices and reformulate the curvature as an eigenvalue problem.
\begin{definition}[Curvature matrices] \label{def:curvature_matrices}Let $x$ be a vertex in a connection graph $(G,\sigma)$ with the matrix $Q(x)$ defined in (\ref{eq:Qdef}). Let $B$ be a nonsingular matrix satisfying (\ref{eq:B}). The curvature matrix $A_\infty(G,\sigma,x, B)$ associated with the vertex $x$ and the matrix $B$ is defined as an $md\times md$ matrix as follows:
\begin{equation}\label{eq:curvature_matrix}
  A_\infty(G,\sigma,x, B):= (2BQ(x)\overline{B}^\top)_{\hat{1}}-\overline{\omega} a^\dagger\omega^\top,
\end{equation}
where $a$ and $\omega$ are defined by (\ref{eq:a}) and $(\ref{eq:omega})$.
Given $N\in (0,\infty]$, we define
\begin{equation}\label{eq:curvature_matrix_N}
   A_N(G,\sigma,x, B):= A_\infty(G,\sigma,x, B)-\frac{2}{N}\mathbf{v}_0(x)\overline{\mathbf{v}_0(x)}^\top,
\end{equation}
where $\mathbf{v}_0(x)$ is given in (\ref{eq:v0}).
\end{definition}
\begin{theorem}\label{thm:curvature_eigenvalue}
Let $x$ be a vertex in a connection graph $(G,\sigma)$. Let $B$ be a nonsingular matrix satisfying (\ref{eq:B}). Then we have for any $N\in (0,\infty]$ that
\begin{equation}\label{notation:curvature}
    \K_{G,\sigma,x}(N)=\lambda_{min}(A_N(G,\sigma,x,B)).
\end{equation}
\end{theorem}

We first observe that the curvature matrices have the following property under switching the connections.
\begin{proposition}\label{prop:Switching}
Let $x$ be a vertex in a connection graph $(G,\sigma)$. Let $\tau: V\to U(d)$ be a switching function. Then the matrix $A_N(G,\sigma,x,B)$, with $B$ being a nonsingular matrix satisfying (\ref{eq:B}) and  $N\in (0,\infty]$, has the following property
 \[A_N(G,\sigma^\tau,x,\tau_{B_1(x)}^\top B\overline{\tau_{B_1(x)}})=\tau_{S_1(x)}^\top A_{N}(G,\sigma, x, B)\overline{\tau_{S_1(x)}},\]
where \[\tau_{S_1(x)}:=\begin{pmatrix}
    \tau(y_1) & & \\
              & \ddots &\\
              &   & \tau(y_m)
\end{pmatrix} \,\,\text{and}\,\,\tau_{B_1(x)}:=\begin{pmatrix}
    \tau(x)& \\
    & \tau_{S_1(x)}
\end{pmatrix}.\]
\end{proposition}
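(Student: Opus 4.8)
The plan is to show that switching acts on every matrix entering Definition~\ref{def:curvature_matrices} as a \emph{block-diagonal unitary congruence}, and that each building block of $A_N$ is covariant under such a congruence. Throughout I abbreviate $P:=\tau_{B_1(x)}$, set $\tau_{S_2(x)}:=\diag(\tau(z_1),\dots,\tau(z_n))$ and $\tau_{B_2(x)}:=\diag(P,\tau_{S_2(x)})$, and use repeatedly that a unitary matrix $U$ satisfies $U^\top\overline U=\overline U\,U^\top=I$; the same holds blockwise for $P$, $\tau_{S_1(x)}$ and $\tau_{B_2(x)}$.

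First I would translate the switching law into congruence relations for the relevant Hermitian matrices. Writing $\sigma^\tau_{xy}=\tau(x)^{-1}\sigma_{xy}\tau(y)$ with $\tau(x)^{-1}=\overline{\tau(x)}^\top$, a direct substitution into (\ref{eq:2Gamma}) and into the definitions of $\Gamma_2^\sigma$ and $\Delta^\sigma$ gives
\begin{align*}
2\Gamma^{\sigma^\tau}(x)&=P^\top\,2\Gamma^{\sigma}(x)\,\overline{P},\qquad
\Gamma_2^{\sigma^\tau}(x)=\tau_{B_2(x)}^\top\,\Gamma_2^{\sigma}(x)\,\overline{\tau_{B_2(x)}},\\
\Delta^{\sigma^\tau}(x)&=P^\top\,\Delta^{\sigma}(x)\,\overline{\tau(x)},
\end{align*}
the last identity by factoring $\tau(x)^{-1}$ to the left and the block-diagonal $P$ to the right in $\Delta^{\sigma^\tau}(x)^\top$, using (\ref{eq:Delta_x}). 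Since $\tau_{B_2(x)}$ is block diagonal with unitary $S_2$-block $\tau_{S_2(x)}$, the Schur complement defining $Q$ in (\ref{eq:Qdef}) is covariant: the $S_2$-factors $\overline{\tau_{S_2(x)}}$ and $\tau_{S_2(x)}^\top$ cancel cleanly inside $\Gamma_2^\sigma(x)_{S_2,S_2}^{-1}$, leaving $Q(G,\sigma^\tau,x)=P^\top Q(x)\overline{P}$.

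Next I would check that $B':=P^\top B\overline{P}$ is admissible for $\sigma^\tau$, i.e.\ that it satisfies (\ref{eq:B}). Using $\overline P\,P^\top=I$ twice,
\[
B'\,(2\Gamma^{\sigma^\tau}(x))\,\overline{B'}^\top
=P^\top\big(B\,2\Gamma^\sigma(x)\,\overline B^\top\big)\overline P
=P^\top\begin{pmatrix}\mathbf 0_d&\mathbf 0\\\mathbf 0&I_{md}\end{pmatrix}\overline P
=\begin{pmatrix}\mathbf 0_d&\mathbf 0\\\mathbf 0&I_{md}\end{pmatrix},
\]
the last step because the $x$-block is killed by $\mathbf 0_d$ and the $S_1$-block yields $\tau_{S_1(x)}^\top\overline{\tau_{S_1(x)}}=I_{md}$. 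The identical computation with $Q(x)$ in place of $2\Gamma^\sigma(x)$ gives $2B'Q(G,\sigma^\tau,x)\overline{B'}^\top=P^\top\big(2BQ(x)\overline B^\top\big)\overline P$. Reading off the blocks with respect to $B_1(x)=\{x\}\cup S_1(x)$ and conjugating the $2\times 2$ block form from (\ref{eq:a})--(\ref{eq:BQB}) by $P=\diag(\tau(x),\tau_{S_1(x)})$ produces
\[
a'=\tau(x)^\top a\,\overline{\tau(x)},\quad
(\omega')^\top=\tau(x)^\top\omega^\top\overline{\tau_{S_1(x)}},\quad
(2B'Q'\overline{B'}^\top)_{\hat{1}}=\tau_{S_1(x)}^\top(2BQ(x)\overline B^\top)_{\hat{1}}\overline{\tau_{S_1(x)}}.
\]
Because $\tau(x)^\top$ and $\overline{\tau(x)}$ are unitary, the pseudoinverse is covariant, $(a')^\dagger=\tau(x)^\top a^\dagger\overline{\tau(x)}$, and the cancellation $\overline{\tau(x)}\tau(x)^\top=I$ gives $\overline{\omega'}(a')^\dagger(\omega')^\top=\tau_{S_1(x)}^\top\,\overline\omega a^\dagger\omega^\top\,\overline{\tau_{S_1(x)}}$; hence $A_\infty(G,\sigma^\tau,x,B')=\tau_{S_1(x)}^\top A_\infty(G,\sigma,x,B)\overline{\tau_{S_1(x)}}$. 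For the dimension term, $B'\Delta^{\sigma^\tau}(x)=P^\top B\Delta^\sigma(x)\overline{\tau(x)}$ together with (\ref{eq:v0}) gives $\mathbf v_0'=\tau_{S_1(x)}^\top\mathbf v_0\overline{\tau(x)}$, so $\mathbf v_0'\overline{\mathbf v_0'}^\top=\tau_{S_1(x)}^\top\mathbf v_0\overline{\mathbf v_0}^\top\overline{\tau_{S_1(x)}}$. Adding the two contributions yields the claim for all $N\in(0,\infty]$.

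The main obstacle is not any single cancellation but maintaining consistent conjugate-transpose bookkeeping; the two genuinely non-formal points are (i) that the Schur complement defining $Q$ is covariant under the block-unitary congruence, which requires the $S_2$-factors to cancel and uses unitarity of $\tau_{S_2(x)}$, and (ii) that the Moore--Penrose pseudoinverse obeys $(U^\top a\overline U)^\dagger=U^\top a^\dagger\overline U$ for unitary $U$, an identity that fails for general invertible congruences and is precisely what makes the singular term $\overline\omega a^\dagger\omega^\top$ transform correctly.
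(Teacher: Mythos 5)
Your proof is correct and takes essentially the same route as the paper's: establish the unitary congruence covariance of $\Gamma^{\sigma}(x)$, $\Gamma_2^{\sigma}(x)$, $Q(x)$ and $\Delta^{\sigma}(x)$ under switching, verify that $B^\tau=\tau_{B_1(x)}^\top B\overline{\tau_{B_1(x)}}$ again satisfies (\ref{eq:B}), and then read off how $a$, $\omega$, the $\hat{1}$-block and $\mathbf{v}_0\overline{\mathbf{v}_0}^\top$ transform. The only variation is that where the paper splits into balanced and unbalanced cases and re-invokes Lemma \ref{lemma:schur_switch} for the term $\overline{\omega}\,a^\dagger\omega^\top$, you treat it uniformly via the identity $\left(UaV\right)^\dagger=\overline{V}^\top a^\dagger\,\overline{U}^\top$ for unitary $U,V$, which is valid and in fact slightly cleaner, since it does not require any invertibility of $a$.
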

In order to show the proposition, We prepare the following lemma for the Schur complement.

\begin{lemma}\label{lemma:schur_switch}
Consider a block matrix
\[S=\begin{pmatrix}
    A & B \\
    C& D
\end{pmatrix}\]
with $D$ being an invertible square matrix. Let
\[U=\begin{pmatrix}
    U_1 & \mathbf{0} \\
    \mathbf{0} & U_2
\end{pmatrix}\]
be an invertible matrix with the blocks $U_1, U_2$ of the same sizes as $A, D$, respectively. Then the Schur complement of the bottom right block in $USU^{-1}$ equals
\[U_1(A-BD^{-1}C)U_1^{-1}.\]
\end{lemma}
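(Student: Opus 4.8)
The plan is to prove this by direct block multiplication, exploiting the fact that $U$ is block-diagonal with blocks aligned to the partition of $S$. First I would record that
\[U^{-1}=\begin{pmatrix} U_1^{-1} & \mathbf{0} \\ \mathbf{0} & U_2^{-1}\end{pmatrix},\]
and then compute the conjugate $USU^{-1}$ by multiplying the three block matrices. Since the block structure of $U$ respects the partition of $S$, the product is again a $2\times 2$ block matrix whose blocks are the separately conjugated/intertwined blocks of $S$; explicitly
\[USU^{-1}=\begin{pmatrix} U_1 A U_1^{-1} & U_1 B U_2^{-1} \\ U_2 C U_1^{-1} & U_2 D U_2^{-1}\end{pmatrix}.\]

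Next I would identify the bottom right block as $U_2 D U_2^{-1}$. Because both $D$ and $U_2$ are invertible, this block is invertible with inverse $U_2 D^{-1} U_2^{-1}$, so the Schur complement of the bottom right block in $USU^{-1}$ is well-defined and equals
\[U_1 A U_1^{-1}-(U_1 B U_2^{-1})(U_2 D^{-1} U_2^{-1})(U_2 C U_1^{-1}).\]
The crucial step is then purely algebraic: in the triple product the inner factors $U_2^{-1}U_2$ cancel twice, collapsing it to $U_1 B D^{-1} C U_1^{-1}$. Factoring $U_1$ out on the left and $U_1^{-1}$ out on the right from the two surviving terms yields $U_1(A-BD^{-1}C)U_1^{-1}$, as claimed.

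There is no genuine obstacle here; the only point requiring care is that the block-diagonal structure of $U$ matches the partition of $S$, which is exactly what guarantees both that conjugation preserves the $2\times 2$ block form and that the $U_2$ factors telescope away. I would also remark that, since $D$ is assumed invertible, the Schur complement employs the ordinary inverse rather than the pseudoinverse of Theorem \ref{thm:Albert}, so none of the range conditions such as $S_{11}S_{11}^\dagger S_{12}=S_{12}$ are relevant to this computation.
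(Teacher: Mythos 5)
Your proof is correct and follows essentially the same route as the paper: compute $USU^{-1}=\begin{pmatrix} U_1AU_1^{-1} & U_1BU_2^{-1}\\ U_2CU_1^{-1} & U_2DU_2^{-1}\end{pmatrix}$ by direct block multiplication, invert the bottom right block as $U_2D^{-1}U_2^{-1}$, and let the inner $U_2$ factors cancel. Your added remarks (that the block structure of $U$ must match the partition of $S$, and that the ordinary inverse suffices here so Albert's range condition is irrelevant) are accurate but not needed beyond what the paper's two-line computation already establishes.
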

\begin{proof}
One can directly check that
\[USU^{-1}=\begin{pmatrix}
    U_1AU_1^{-1} & U_1BU_2^{-1}\\
    U_2CU_1^{-1} & U_2DU_2^{-1}
\end{pmatrix}.\]
By definition, the Schur complement of the bottom right block in $USU^{-1}$ is
\[U_1AU_1^{-1}-(U_1BU_2^{-1})(U_2DU_2^{-1})^{-1}(U_2CU_1^{-1})=U_1(A-BD^{-1}C)U_1^{-1}.\]
This finishes the proof.
\end{proof}

\begin{proof}[Proof of Proposition \ref{prop:Switching}]
It is direct to check
for any $f,g: V\to \mathbb{C}^d$ that
\[\Gamma^{\sigma^\tau}(f,g)=\Gamma^\sigma(\tau f, \tau g), \,\,\text{and}\,\,\Gamma_2^{\sigma^\tau}(f,g)=\Gamma_2^\sigma(\tau f, \tau g).\]
Therefore, the matrices $\Gamma^{\sigma}(x)$ and $\Gamma^{\sigma}_{2}(x)$ change under the switching by $\tau:V\rightarrow U(d)$ as follows:
\begin{equation}\label{eq:Gamma}
    \Gamma^{\sigma^{\tau}}(x)=\tau_{B_1(x)}^\top\Gamma^{\sigma}(x)\overline{\tau_{B_1(x)}}\,\,\text{and}\,\,\Gamma^{\sigma^{\tau}}_{2}(x)=\tau_{B_2(x)}^\top\Gamma^{\sigma}_{2}(x)\overline{\tau_{B_2(x)}},
\end{equation}
where
\[\tau_{B_2(x)}=\begin{pmatrix}
    \tau_{B_1(x)}& & \\
     & \tau(z_1)& & \\
     &&\ddots&\\
     &&&\tau(z_n)
\end{pmatrix}.\]
Let us denote by $B^\tau:=\tau_{B_1(x)}^\top B\overline{\tau_{B_1(x)}}$.
 By the first identity in (\ref{eq:Gamma}), we check that
 \[B^\tau2\Gamma^{\sigma^\tau}(x)\overline{B^\tau}^\top=\begin{pmatrix}
        \mathbf{0}_{d\times d} & \mathbf{0}_{d\times md} \\
        \mathbf{0}_{md\times d} & I_{md\times md}
    \end{pmatrix}.\]
By Lemma \ref{lemma:schur_switch} and the second identity in (\ref{eq:Gamma}), we obtain
\begin{equation}\label{eq:Qswitch}
Q(\Gamma_2^{\sigma^\tau}(x))=\tau_{B_1(x)}^\top Q(\Gamma_2^\sigma(x))\overline{\tau_{B_1(x)}},
\end{equation}
which further implies that
\begin{equation}\label{eq:BQswitch}
 B^\tau 2Q(\Gamma_2^{\sigma^\tau}(x))\overline{B^\tau}^\top=\tau_{B_1(x)}^\top B2Q(\Gamma_2^\sigma(x))\overline{B}^\top\overline{\tau_{B_1(x)}}.
\end{equation}

If the local connection structure $B_2^{inc}(x)$ (that is, the induced subgraph of $B_2(x)$ with spherical edges on $S_2(x)$ deleted,) is balanced,  then the first $d$ rows and first $d$ columns of the matrices $2Q(\Gamma_2^{\sigma^\tau}(x))$ and $2Q(\Gamma_2^\sigma(x))$ vanish, and, hence, the curvature matrices satisfy
\[A_{\infty}(G,\sigma^\tau, x, B^\tau)=2Q(\Gamma_2^{\sigma^\tau}(x))_{\hat{1}}=\tau_{S_1(x)}^\top A_\infty(G,\sigma,x, B)\overline{\tau_{S_1(x)}}.\]

If, otherwise, the local connection structure $B_2^{inc}(x)$ is unbalanced, we apply Lemma \ref{lemma:schur_switch} to conclude that
\begin{equation}\label{eq:Ainfty}
A_{\infty}(G,\sigma^\tau, x, B^\tau)=\tau_{S_1(x)}^\top A_\infty(G,\sigma,x, B)\overline{\tau_{S_1(x)}}.\end{equation}

By definition, we calculate straightforwardly that
\begin{equation}\label{eq:Laplacian}
    \Delta^{\sigma^\tau}(x)=\tau_{B_1(x)}^\top\Delta^\sigma(x)\overline{\tau(x)}.
\end{equation}
This leads to
\[\Delta^{\sigma^\tau}(x)\overline{\Delta^{\sigma^\tau}(x)}^\top=\tau_{B_1(x)}^\top\Delta^{\sigma}(x)\overline{\Delta^{\sigma}(x)}^\top\overline{\tau_{B_1(x)}}.\]
We derive that
\[B^\tau\Delta^{\sigma^\tau}(x)\overline{B^\tau\Delta^{\sigma^\tau}(x)}^\top=\tau_{B_1(x)}^\top B\Delta^\sigma(x)\overline{B\Delta^\sigma(x)}^\top\overline{\tau_{B_1(x)}}.\]
Recall that the first $d$ rows and first $d$ columns of both sides vanish. Then we have
\begin{equation}\label{eq:dimension}
(B^\tau\Delta^{\sigma^\tau}(x)\overline{B^\tau\Delta^{\sigma^\tau}(x)}^\top)_{\hat{1}}=\tau_{S_1(x)}^\top (B\Delta^\sigma(x)\overline{B\Delta^\sigma(x)}^\top)_{\hat{1}}\overline{\tau_{S_1(x)}}.
\end{equation}
That is,
\begin{equation}\label{eq:dimensionv}
\mathbf{v}_0(\sigma^\tau)\overline{\mathbf{v}_0(\sigma^\tau)}^\top=\tau_{S_1(x)}^\top \mathbf{v}_0(\sigma)\overline{\mathbf{v}_0(\sigma)}^\top\overline{\tau_{S_1(x)}}.
\end{equation}
Combining (\ref{eq:Ainfty}) and (\ref{eq:dimensionv}) completes the proof.
\end{proof}

Next, we show that different choices of the matrix $B$ lead to unitarily equivalent curvature matrices.
\begin{proposition}\label{prop:unitarily_equivalent}
Let $x$ be a vertex in a connection graph $(G,\sigma)$. Let $B_1, B_2$ be two nonsingular matrices satisfying (\ref{eq:B}). Then the two curvature matrices $A_N(G,\sigma,x,B_1)$ and $A_N(G,\sigma,x,B_2)$ are unitarily equivalent. Indeed, we have
\begin{equation}
    A_N(G,\sigma,x,B_1)=(B_1B_2^{-1})_{\hat{1}}A_N(G,\sigma,x,B_2)\overline{(B_1B_2^{-1})_{\hat{1}}}^\top.
\end{equation}
\end{proposition}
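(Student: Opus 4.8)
The plan is to set $P := B_1B_2^{-1}$ and to track how each ingredient of the curvature matrix transforms when $B_2$ is replaced by $B_1 = PB_2$. The first step is to pin down the block structure of $P$. Since both $B_1$ and $B_2$ satisfy (\ref{eq:B}), writing $J := \begin{pmatrix}\mathbf{0}_{d} & \mathbf{0}_{d\times md}\\ \mathbf{0}_{md\times d} & I_{md}\end{pmatrix}$ we have $B_1(2\Gamma^\sigma(x))\overline{B_1}^\top = J = B_2(2\Gamma^\sigma(x))\overline{B_2}^\top$, and substituting $B_1 = PB_2$ forces $PJ\overline{P}^\top = J$. Writing $P=\begin{pmatrix}P_{11} & P_{12}\\ P_{21} & P_{22}\end{pmatrix}$ conformally with $J$, the $(1,1)$ block of $PJ\overline{P}^\top$ is $P_{12}\overline{P_{12}}^\top$ and its $(2,2)$ block is $P_{22}\overline{P_{22}}^\top$; comparing with $J$ gives $P_{12}\overline{P_{12}}^\top=\mathbf{0}$, hence $P_{12}=\mathbf{0}$, and $P_{22}\overline{P_{22}}^\top=I_{md}$, hence $P_{22}$ is unitary. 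Nonsingularity of $P$ then makes $P_{11}$ invertible. In particular $(B_1B_2^{-1})_{\hat{1}}=P_{22}$ is unitary, so once the displayed congruence identity is established the unitary equivalence follows at once.

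The second step records the transformation of the underlying Hermitian matrix. Set
\[\Phi(B):=2BQ(x)\overline{B}^\top-\frac{2}{N}(B\Delta^\sigma(x))\overline{(B\Delta^\sigma(x))}^\top.\]
Because $Q(x)$ and $\Delta^\sigma(x)$ do not depend on $B$ and $B_1=PB_2$, we immediately get $\Phi(B_1)=P\,\Phi(B_2)\,\overline{P}^\top$. Moreover, by (\ref{eq:v0}) the $\frac{1}{N}$-term has vanishing first $d$ rows and columns, so the top-left $d\times d$ block of $\Phi(B)$ is exactly the matrix $a$ of (\ref{eq:a}) and its top-right block is $\omega^\top$ of (\ref{eq:omega}). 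Consequently the pseudoinverse Schur complement $\Phi(B)/a=[\Phi(B)]_{\hat{1}}-\overline{\omega}\,a^\dagger\omega^\top$ is precisely $A_N(G,\sigma,x,B)$.

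The crux is to show that this Schur complement transforms by $P_{22}$. Rather than manipulate pseudoinverses directly — the main obstacle, since $a$ is only positive semidefinite — I would use the variational description coming from the sesquilinear identity (\ref{eq:Schur_sesquilinear_form}). By Proposition \ref{prop:aadaggeromega}, for each $B\in\{B_1,B_2\}$ we have $a\succeq 0$ and $a\,a^\dagger\omega^\top=\omega^\top$, so the hypotheses (\ref{eq:Schur_condition}) hold and (\ref{eq:Schur_sesquilinear_form}) applies to $S=\Phi(B)$ with $S_{11}=a$, $S_{12}=\omega^\top$ and $S/S_{11}=A_N(G,\sigma,x,B)$. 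Since $a^{1/2}$ is Hermitian with range equal to that of $a$ and $(a^{1/2})^\dagger\omega^\top\overline{w_2}$ always lies in that range, for fixed $w_2$ the nonnegative norm-square term in (\ref{eq:Schur_sesquilinear_form}) can be made to vanish by a suitable choice of $w_1$. Hence
\[w_2^\top\big(A_N(G,\sigma,x,B)\big)\overline{w_2}=\min_{w_1}\begin{pmatrix}w_1^\top & w_2^\top\end{pmatrix}\Phi(B)\begin{pmatrix}\overline{w_1}\\ \overline{w_2}\end{pmatrix}\]
for every $w_2$.

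Finally, I would combine Steps 2 and 3. From $\Phi(B_1)=P\,\Phi(B_2)\,\overline{P}^\top$ one checks $\begin{pmatrix}w_1^\top & w_2^\top\end{pmatrix}\Phi(B_1)\begin{pmatrix}\overline{w_1}\\ \overline{w_2}\end{pmatrix}=\begin{pmatrix}v_1^\top & v_2^\top\end{pmatrix}\Phi(B_2)\begin{pmatrix}\overline{v_1}\\ \overline{v_2}\end{pmatrix}$ with $v=P^\top w$, so that $v_1=P_{11}^\top w_1+P_{21}^\top w_2$ and $v_2=P_{22}^\top w_2$. For fixed $w_2$, as $w_1$ ranges over all vectors $v_1$ ranges over all vectors (because $P_{11}$ is invertible) while $v_2$ stays fixed; taking minima and applying the variational formula to both $\Phi(B_1)$ and $\Phi(B_2)$ yields
\[w_2^\top A_N(G,\sigma,x,B_1)\overline{w_2}=(P_{22}^\top w_2)^\top A_N(G,\sigma,x,B_2)\overline{P_{22}^\top w_2}=w_2^\top P_{22}A_N(G,\sigma,x,B_2)\overline{P_{22}}^\top\overline{w_2}\]
for all $w_2$. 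As both sides are Hermitian forms, the matrices coincide, which is exactly the asserted identity with $(B_1B_2^{-1})_{\hat{1}}=P_{22}$; unitarity of $P_{22}$ then gives the unitary equivalence. The one point demanding care throughout is the validity of the minimum formula for the pseudoinverse Schur complement, which is precisely where the range condition $a\,a^\dagger\omega^\top=\omega^\top$ of Proposition \ref{prop:aadaggeromega} is indispensable.
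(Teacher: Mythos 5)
Your proof is correct, and it takes a genuinely different route from the paper's. The paper first uses Lemma \ref{lemma:b0} to assume WLOG that $a_1=a_2$, derives the block lower-triangular structure of $C=B_1B_2^{-1}$ (with unitary $C_{\hat{1}}$) in two separate lemmas (Lemmas \ref{lemma:B1B2} and \ref{lemma:B1B2inverse}), and then expands $C\,(B_2 2Q(x)\overline{B_2}^\top)\,\overline{C}^\top$ block by block, cancelling the cross terms by hand via $aa^\dagger\omega^\top=\omega^\top$, with a separate case for the balanced situation where $a$ and $\omega$ vanish. You instead extract $P_{12}=\mathbf{0}$ and unitarity of $P_{22}$ in one stroke from the single congruence $PJ\overline{P}^\top=J$ (subsuming both lemmas), and replace the block cancellation by a variational characterization of the pseudoinverse Schur complement, $w_2^\top A_N(G,\sigma,x,B)\overline{w_2}=\min_{w_1}\begin{pmatrix}w_1^\top & w_2^\top\end{pmatrix}\Phi(B)\begin{pmatrix}\overline{w_1}\\ \overline{w_2}\end{pmatrix}$, whose validity you correctly ground in Proposition \ref{prop:aadaggeromega} and the identity (\ref{eq:Schur_sesquilinear_form}); the change of variables $v=P^\top w$, with $v_1$ ranging freely because $P_{11}$ is invertible while $v_2=P_{22}^\top w_2$ stays fixed, then delivers the conjugation identity. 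This buys you a uniform argument with no normalization of $b_0$ and no balanced/unbalanced case split; it is in fact closer in spirit to the paper's \emph{alternative} proof via the curvature tensor (Proposition \ref{prop:tensor_matrix}), where the same minimization-over-extensions idea appears as the optimal choice of $f_{B,0}$, but you isolate the linear-algebra core without the tensor formalism. Two small precisions, neither a gap: the range condition $aa^\dagger\omega^\top=\omega^\top$ is what validates the decomposition (\ref{eq:Schur_sesquilinear_form}) itself, whereas the annihilability of the norm term for fixed $w_2$ already follows from the range of $(a^{1/2})^\dagger$ lying in that of the Hermitian root $a^{1/2}$; and the final passage from equality of the two Hermitian forms for all $w_2$ to equality of the matrices deserves one word (polarization over $\mathbb{C}$, or symmetry of both sides over $\mathbb{R}$).
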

We prepare some lemmas.

\begin{lemma}\label{lemma:b0}
 Let $t_0, b_0, b_1, \ldots, b_m$ be $(m+1)d\times d$ matrices such that both
\begin{equation}
    B_1:=\begin{pmatrix}
t_0^\top \\
b_1^\top\\
\vdots\\
b_m^\top
\end{pmatrix},\,\,\text{and}\,\,B_2:=\begin{pmatrix}
b_0^\top \\
b_1^\top\\
\vdots\\
b_m^\top
\end{pmatrix}
\end{equation}
are nonsingular and satisfy (\ref{eq:B}). Then we have
\begin{equation}\label{eq:Ab}
    A_N(G,\sigma,x, B_1)=A_N(G,\sigma, x, B_2).
\end{equation}
\end{lemma}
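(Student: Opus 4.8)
The plan is to isolate the only part of the curvature matrix that can depend on the first block row of $B$, and then to show that even that part is unaffected by the choice between $t_0$ and $b_0$. First I would recall, from the discussion following (\ref{eq:p_0}), that any admissible first block is a nonsingular multiple of $p_0$: there are nonsingular $d\times d$ matrices $E_d,\widetilde E_d$ with $b_0^\top=E_d p_0^\top$ and $t_0^\top=\widetilde E_d p_0^\top$. Since $B_1$ and $B_2$ share the rows $b_1,\dots,b_m$, the block $(2BQ(x)\overline B^\top)_{\hat 1}$ appearing in (\ref{eq:BQB}) is literally identical for the two matrices; likewise, by (\ref{eq:Gamma_Laplacian}) both $b_0^\top\Delta^\sigma(x)$ and $t_0^\top\Delta^\sigma(x)$ vanish, so the vector $\mathbf{v}_0$ of (\ref{eq:v0}) is determined by $b_1,\dots,b_m$ alone and is the same in both cases. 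Consequently $A_N(G,\sigma,x,B_1)$ and $A_N(G,\sigma,x,B_2)$ can differ only through the correction term $\overline{\omega}a^\dagger\omega^\top$, and it suffices to prove this term is the same.

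Writing $a_0:=2p_0^\top Q(x)\overline{p_0}$ and $\omega_0^\top:=(2p_0^\top Q(x)\overline{b_1},\dots,2p_0^\top Q(x)\overline{b_m})$, the definitions (\ref{eq:a}) and (\ref{eq:omega}) give $a=E_d a_0\overline{E_d}^\top$, $\omega^\top=E_d\omega_0^\top$ and $\overline{\omega}=\overline{\omega_0}\,\overline{E_d}^\top$, so that
\[\overline{\omega}a^\dagger\omega^\top=\overline{\omega_0}\,\overline{E_d}^\top\bigl(E_d a_0\overline{E_d}^\top\bigr)^\dagger E_d\,\omega_0^\top.\]
The naive hope is to cancel the factors $E_d$ against the pseudoinverse and arrive at $\overline{\omega_0}a_0^\dagger\omega_0^\top$, which is manifestly independent of the first block. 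The main obstacle is that this cancellation is \emph{false} at the level of matrices: unlike the true inverse, the Moore–Penrose inverse does not obey $\bigl(E_d a_0\overline{E_d}^\top\bigr)^\dagger=(\overline{E_d}^\top)^{-1}a_0^\dagger E_d^{-1}$, so $\overline{E_d}^\top(E_d a_0\overline{E_d}^\top)^\dagger E_d\neq a_0^\dagger$ in general.

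The resolution is to exploit the range condition from Proposition \ref{prop:aadaggeromega} rather than to cancel the $E_d$'s outright. Taking $b_0=p_0$ produces an admissible matrix $P$ in the sense of (\ref{eq:B}) (it equals $\operatorname{diag}(E_d^{-1},I_{md})B_2$ and hence is nonsingular and satisfies the defining identity), and for this choice the proposition yields $a_0 a_0^\dagger\omega_0^\top=\omega_0^\top$; thus the columns of $\omega_0^\top$ lie in the range of $a_0$, and with $u:=a_0^\dagger\omega_0^\top$ one has $\omega_0^\top=a_0 u$. Using $\overline{a_0}^\top=a_0$ and $a_0 a_0^\dagger a_0=a_0$, the target value reduces to $\overline{\omega_0}a_0^\dagger\omega_0^\top=\overline{u}^\top a_0 u$. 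For the displayed expression I would set $M:=E_d a_0\overline{E_d}^\top=a$ and $s:=E_d\omega_0^\top=\omega^\top$; since $s=E_d a_0 u=M(\overline{E_d}^\top)^{-1}u$, writing $w:=(\overline{E_d}^\top)^{-1}u$ and invoking that $M$ is Hermitian with $MM^\dagger M=M$ gives $\overline{s}^\top M^\dagger s=\overline{w}^\top M w=\overline{u}^\top a_0 u$ as well. Both quantities therefore equal $\overline{u}^\top a_0 u$, so $\overline{\omega}a^\dagger\omega^\top=\overline{\omega_0}a_0^\dagger\omega_0^\top$ is independent of $E_d$. Combining this with the equality of the remaining blocks noted above establishes (\ref{eq:Ab}). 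The only delicate point, and the step I expect to require care, is precisely this interplay between congruence and the pseudoinverse: the equality holds only after sandwiching by $\overline{\omega_0}$ and $\omega_0^\top$, and it is the range condition of Proposition \ref{prop:aadaggeromega} that makes the sandwiched identity go through.
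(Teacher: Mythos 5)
Your proof is correct, and structurally it follows the same reduction as the paper's: both arguments first observe that $(2BQ(x)\overline{B}^\top)_{\hat 1}$ and $\mathbf{v}_0\overline{\mathbf{v}_0}^\top$ depend only on the shared rows $b_1,\dots,b_m$, so that everything hinges on the correction term $\overline{\omega}a^\dagger\omega^\top$. Where you genuinely differ is in how that term is handled. The paper writes $b_0^\top=E_dt_0^\top$ and then disposes of $E_d$ in a single displayed equality, in effect asserting
\begin{equation*}
\overline{\omega_0}\,\overline{E_d}^\top\bigl[E_d\,a_0\,\overline{E_d}^\top\bigr]^\dagger E_d\,\omega_0^\top=\overline{\omega_0}\,a_0^\dagger\,\omega_0^\top
\end{equation*}
by cancelling the factors $E_d$ through the pseudoinverse --- exactly the step you correctly flag as false as a matrix identity (it is legitimate only in special cases, e.g.\ when $a_0$ is invertible or $E_d$ is unitary). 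Your proof supplies the justification that the paper leaves implicit: you verify that $P=\operatorname{diag}(E_d^{-1},I_{md})B_2$ is again nonsingular and satisfies (\ref{eq:B}), so Proposition \ref{prop:aadaggeromega} applies to $P$ and yields the range condition $a_0a_0^\dagger\omega_0^\top=\omega_0^\top$; then writing $\omega_0^\top=a_0u$, the Hermitian symmetry of $a=E_da_0\overline{E_d}^\top$ together with $aa^\dagger a=a$ collapses both sandwiched expressions to $\overline{u}^\top a_0u$, independent of $E_d$. All the intermediate computations in your argument (the identifications $a=E_da_0\overline{E_d}^\top$, $\omega^\top=E_d\omega_0^\top$, $s=Mw$ with $w=(\overline{E_d}^\top)^{-1}u$) check out. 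The trade-off is clear: the paper's proof is a two-line computation whose middle equality is true only because the columns of $\omega^\top$ lie in the range of $a$ --- a fact it proves in the Appendix but never invokes at this point, and which is essential precisely in the degenerate regime where $a_0$ is singular but nonzero --- whereas your version makes this dependence explicit and is the argument one would need to render the paper's display rigorous.
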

\begin{proof}
Recall that
\[A_N(G,\sigma,x, B_2)=(2B_2Q(x)\overline{B_2}^\top)_{\hat{1}}-\overline{\omega} a^\dagger\omega^\top-\frac{2}{N}(B_2\Delta^\sigma(x)\overline{B_2\Delta^\sigma(x)}^\top)_{\hat{1}}.\]
Due to (\ref{eq:BLaplacian}), (\ref{eq:BQB}), we have the two terms
\[(2B_2Q(x)\overline{B_2}^\top)_{\hat{1}}=(2B_1Q(x)\overline{B_1}^\top)_{\hat{1}}\]
and
\[(B_2\Delta^\sigma(x)\overline{B_2\Delta^\sigma(x)}^\top)_{\hat{1}}=(B_1\Delta^\sigma(x)\overline{B_1\Delta^\sigma(x)}^\top)_{\hat{1}}\]
are independent of the choice of $b_0$.

 Recall that both $\overline{b_0}$ and $\overline{t_0}$ consist of linearly independent eigenvectors corresponding to the zero eigenvalue of $2\Gamma^\sigma(x)$. There exists a nonsingular $d\times d$ matrix $E_d$ such that $b_0^\top=E_dt_0^\top$. Then, we derive
\begin{align*}\overline{\omega(x,B_2)}a(x,B_2)^{\dagger}\omega(x,B_2)^\top=\overline{\omega(x,B_1)}a(x,B_1)^{\dagger}\omega(x,B_1)^\top.
\end{align*}
This completes the proof of (\ref{eq:Ab}).
\end{proof}

\begin{lemma}\label{lemma:B1B2}
Let $B_1,B_2$ be two nonsingular matrices satisfying (\ref{eq:B}). Then the $md\times md$ matrix $(B_1B_2^{-1})_{\hat{1}}$ is unitary.
\end{lemma}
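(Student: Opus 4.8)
The plan is to work directly with the transition matrix $P := B_1 B_2^{-1}$ and show that its lower-right $md\times md$ block $(B_1 B_2^{-1})_{\hat 1} = P_{\hat 1}$ is unitary. Write $J := \begin{pmatrix} \mathbf{0}_d & \mathbf{0}_{d\times md} \\ \mathbf{0}_{md\times d} & I_{md} \end{pmatrix}$ for the right-hand side of (\ref{eq:B}). Since both $B_1$ and $B_2$ satisfy (\ref{eq:B}), we have $B_1(2\Gamma^\sigma(x))\overline{B_1}^\top = J = B_2(2\Gamma^\sigma(x))\overline{B_2}^\top$. First I would substitute $B_1 = P B_2$ into the left identity and invoke the right one, using $\overline{B_1}^\top = \overline{B_2}^\top\overline{P}^\top$, to obtain
\[
J = P\,B_2(2\Gamma^\sigma(x))\overline{B_2}^\top\,\overline{P}^\top = P J \overline{P}^\top .
\]
This reduces the whole statement to understanding matrices $P$ obeying $P J \overline{P}^\top = J$, with $J$ the degenerate form above; note that nonsingularity of $B_1,B_2$ makes $P$ nonsingular, though this will only play a background role.

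Next I would decompose $P$ into blocks conformally with $J$, namely $P = \begin{pmatrix} P_{11} & P_{12} \\ P_{21} & P_{22} \end{pmatrix}$ with $P_{11}$ of size $d\times d$ and $P_{22} = P_{\hat 1}$ of size $md\times md$. Since right-multiplication by $J$ annihilates the first $d$ columns, a direct computation gives
\[
P J \overline{P}^\top = \begin{pmatrix} P_{12}\overline{P_{12}}^\top & P_{12}\overline{P_{22}}^\top \\ P_{22}\overline{P_{12}}^\top & P_{22}\overline{P_{22}}^\top \end{pmatrix}.
\]
Comparing this with $J$ block by block, the bottom-right block yields exactly $P_{22}\overline{P_{22}}^\top = I_{md}$, which is the desired relation: for a square matrix the existence of a one-sided inverse forces it to be two-sided, so $P_{22} = (B_1B_2^{-1})_{\hat 1}$ is unitary.

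I do not expect a genuine obstacle here; the argument is essentially the reduction to $PJ\overline{P}^\top = J$ followed by one block multiplication. The only point meriting a word of care is internal consistency of the block equations: the top-left block gives $P_{12}\overline{P_{12}}^\top = \mathbf{0}_d$, and taking the trace (equivalently, reading this as the Frobenius norm of $P_{12}$) forces $P_{12} = \mathbf{0}$, so the off-diagonal blocks $P_{12}\overline{P_{22}}^\top$ vanish automatically. Thus the remaining equations are consistent, but I emphasize that the bottom-right identity $P_{22}\overline{P_{22}}^\top = I_{md}$ already suffices on its own for the conclusion, so the proof needs nothing beyond the defining property (\ref{eq:B}) of $B_1$ and $B_2$.
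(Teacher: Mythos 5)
Your proposal is correct and follows essentially the same route as the paper: both derive $(B_1B_2^{-1})J\overline{B_1B_2^{-1}}^\top=J$ from the two instances of (\ref{eq:B}) and read off the bottom-right block to get $(B_1B_2^{-1})_{\hat{1}}\overline{(B_1B_2^{-1})_{\hat{1}}}^\top=I_{md}$. Your extra consistency check that the trace of $P_{12}\overline{P_{12}}^\top=\mathbf{0}_d$ forces $P_{12}=\mathbf{0}$ is a harmless bonus (the paper obtains this block structure separately in Lemma \ref{lemma:B1B2inverse}), but as you note it is not needed for the conclusion.
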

\begin{proof}
By direct calculation, we have
\begin{align*}
\begin{pmatrix}
    \mathbf{0}_d & \mathbf{0}_{d\times md} \\
    \mathbf{0}_{md\times d} & I_{md}
\end{pmatrix}=&B_12\Gamma^\sigma(x)\overline{B_1}^\top=(B_1B_2^{-1})B_22\Gamma^\sigma(x)\overline{B_2}^\top\overline{B_1B_2^{-1}}^\top\\
=&(B_1B_2^{-1})\begin{pmatrix}
    \mathbf{0}_d & \mathbf{0}_{d\times md} \\
    \mathbf{0}_{md\times d} & I_{md}
\end{pmatrix}\overline{B_1B_2^{-1}}^\top.
\end{align*}
Therefore, we have $(B_1B_2^{-1})_{\hat{1}}\overline{(B_1B_2^{-1})_{\hat{1}}}^\top=I_{md}$.
\end{proof}

\begin{lemma}\label{lemma:B1B2inverse}
For $i=1,2$, let
\[B_i=\begin{pmatrix}
    b_{0,i}^\top \\
    b_{1,i}^\top \\
    \vdots \\
    b_{m,i}^\top
\end{pmatrix}\]
be a nonsingular matrix satisfying (\ref{eq:B}). Then we have
\begin{equation}
    B_1B_2^{-1}=\begin{pmatrix}
        E_d & \mathbf{0}_{d\times md} \\
        \eta & (B_1B_2^{-1})_{\hat{1}}
    \end{pmatrix},
\end{equation}
for some $md\times d$ matrix $\eta$, where $E_d$ is the unique $d\times d$ matrix with $b_{0,1}^\top=E_d b_{0,2}^\top$.
\end{lemma}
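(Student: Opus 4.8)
The plan is to set $M := B_1 B_2^{-1}$ and partition it into blocks conformally with the $d + md$ splitting,
\[M = \begin{pmatrix} M_{11} & M_{12} \\ M_{21} & M_{22} \end{pmatrix},\]
with $M_{11}$ of size $d\times d$ and $M_{22}$ of size $md\times md$. Once I show $M_{12}=\mathbf{0}$ and $M_{11}=E_d$, the remaining blocks are named $\eta:=M_{21}$ and $(B_1B_2^{-1})_{\hat{1}}:=M_{22}$ by the very definition of the $\hat{1}$-notation, and the lemma follows.

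The key structural input I would record first is that, by Proposition \ref{prop:Gamma} together with the discussion following (\ref{eq:B}), the columns of $\overline{b_{0,1}}$ and of $\overline{b_{0,2}}$ each form a basis of the exactly $d$-dimensional kernel of the positive semidefinite Hermitian matrix $2\Gamma^\sigma(x)$. Positive semidefiniteness upgrades membership in the zero eigenspace to the operator identity $2\Gamma^\sigma(x)\overline{b_{0,i}}=\mathbf{0}$, and Hermitian symmetry then gives $b_{0,i}^\top 2\Gamma^\sigma(x)=\mathbf{0}$ as well. Since the two kernels coincide and $b_{0,2}^\top$ has full row rank $d$, there is a unique nonsingular $E_d$ with $b_{0,1}^\top=E_d b_{0,2}^\top$, exactly as in the statement.

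The crux is to compute $B_1(2\Gamma^\sigma(x))\overline{B_2}^\top$ in two ways. Writing $B_1=MB_2$ and applying (\ref{eq:B}) to $B_2$ gives
\[B_1(2\Gamma^\sigma(x))\overline{B_2}^\top = M\begin{pmatrix}\mathbf{0}_d & \mathbf{0}\\ \mathbf{0} & I_{md}\end{pmatrix}=\begin{pmatrix}\mathbf{0} & M_{12}\\ \mathbf{0} & M_{22}\end{pmatrix},\]
so its first block row equals $(\mathbf{0},\,M_{12})$. On the other hand, $b_{0,1}^\top 2\Gamma^\sigma(x)=\mathbf{0}$ forces the entire first block row of $B_1(2\Gamma^\sigma(x))\overline{B_2}^\top$ to vanish. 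Comparing the two shows $M_{12}=\mathbf{0}$. With $M_{12}=\mathbf{0}$ in hand, the top block row of the identity $B_1=MB_2$ collapses to $b_{0,1}^\top=M_{11}b_{0,2}^\top$, and the uniqueness of $E_d$ then gives $M_{11}=E_d$.

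I expect the only delicate point to be the justification that $b_{0,i}$ lying in the zero eigenspace yields the full operator identities $2\Gamma^\sigma(x)\overline{b_{0,i}}=\mathbf{0}$ and $b_{0,i}^\top 2\Gamma^\sigma(x)=\mathbf{0}$; this relies on positive semidefiniteness (so that the kernel coincides with the zero eigenspace) and on the Hermitian symmetry of $2\Gamma^\sigma(x)$. Everything else reduces to the two-sided block computation above, which is routine.
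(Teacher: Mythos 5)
Your proposal is correct and takes essentially the same approach as the paper: both evaluate $B_1(2\Gamma^\sigma(x))\overline{B_2}^\top$ in two ways, once as $B_1B_2^{-1}\diag(\mathbf{0}_d,I_{md})$ via (\ref{eq:B}) and once directly using $b_{0,i}^\top 2\Gamma^\sigma(x)=\mathbf{0}$, to force the top-right block of $B_1B_2^{-1}$ to vanish, and then read off $M_{11}=E_d$ from the top block row of $B_1=(B_1B_2^{-1})B_2$. Your explicit justification that membership of $\overline{b_{0,i}}$ in the zero eigenspace of the positive semidefinite Hermitian matrix $2\Gamma^\sigma(x)$ yields the two-sided annihilation identities is sound and matches the facts the paper invokes.
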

\begin{proof}
By (\ref{eq:B}), we derive
\begin{equation}\label{eq:B1GammaB2}
    B_1B_2^{-1}\begin{pmatrix}
        \mathbf{0}_d & \mathbf{0}_{d\times md} \\
        \mathbf{0}_{md\times d} & I_{md}
    \end{pmatrix}=B_12\Gamma^\sigma(x)\overline{B_2}^\top
    =\begin{pmatrix}
        \mathbf{0}_d & \mathbf{0}_{d\times md} \\
        \mathbf{0}_{md\times d} & \begin{pmatrix}
        b_{1,1}^\top \\ \vdots \\ b_{m,1}^\top
        \end{pmatrix}2\Gamma^\sigma(x)\begin{pmatrix}
            \overline{b_{1,2}} & \cdots & \overline{b_{m,2}}
        \end{pmatrix}
    \end{pmatrix}.
\end{equation}
In the second equality above, we use the fact $2\Gamma^\sigma(x)\overline{b_{0,i}}=0, \,i=1,2.$
Let us write
\[B_1B_2^{-1}=\begin{pmatrix}
    *_1 & *_2 \\
    \eta & (B_1B_2^{-1})_{\hat{1}}
\end{pmatrix}.\]
Then we have
\[B_1B_2^{-1}\begin{pmatrix}
        \mathbf{0}_d & \mathbf{0}_{d\times md} \\
        \mathbf{0}_{md\times d} & I_{md}
    \end{pmatrix}=\begin{pmatrix}
        \mathbf{0}_d & *_2 \\
        \mathbf{0}_{md\times d} & (B_1B_2^{-1})_{\hat{1}}
    \end{pmatrix}.\] Hence, we solve from (\ref{eq:B1GammaB2}) that $*_2=\mathbf{0}$.
    Furthermore, we solve from
    \[B_1=\begin{pmatrix}
    *_1 & \mathbf{0}_{d\times md} \\
    \eta & (B_1B_2^{-1})_{\hat{1}}
\end{pmatrix}B_2\]
that $b_{0,1}^\top=*_1 b_{0,2}^\top$. Therefore, we have $*_1=E_d$.
\end{proof}

\begin{corollary}
\label{coro:transitive}
Let $B_1, B_2, B_3$ be three nonsingular matrices satisfying (\ref{eq:B}). Then, we have
\begin{equation}
    (B_1B_2^{-1})_{\hat{1}}(B_2B_3^{-1})_{\hat{1}}=(B_1B_3^{-1})_{\hat{1}}.
\end{equation}
\end{corollary}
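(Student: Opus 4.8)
The plan is to exploit the block lower-triangular structure of the products $B_iB_j^{-1}$ that is already established in Lemma \ref{lemma:B1B2inverse}. That lemma tells us that for any pair of nonsingular matrices satisfying (\ref{eq:B}), the product $B_iB_j^{-1}$ has the form
\[
B_iB_j^{-1}=\begin{pmatrix}
 E_d^{(i,j)} & \mathbf{0} \\
 \eta^{(i,j)} & (B_iB_j^{-1})_{\hat{1}}
\end{pmatrix},
\]
in which the top-right $d\times md$ block vanishes. The identity to be proved is precisely the statement that the trailing $md\times md$ blocks compose multiplicatively, so the natural strategy is to compute the single matrix $B_1B_3^{-1}$ in two different ways and compare its bottom-right block.

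First I would write the trivial factorization $B_1B_3^{-1}=(B_1B_2^{-1})(B_2B_3^{-1})$ and substitute the block form of each factor coming from Lemma \ref{lemma:B1B2inverse}. Since both factors are block lower-triangular with vanishing top-right block, their product is again block lower-triangular, and its bottom-right $md\times md$ block is the product of the two bottom-right blocks, namely $(B_1B_2^{-1})_{\hat{1}}(B_2B_3^{-1})_{\hat{1}}$. Concretely,
\[
\begin{pmatrix}
 E_d^{(1,2)} & \mathbf{0}\\
 \eta^{(1,2)} & (B_1B_2^{-1})_{\hat{1}}
\end{pmatrix}
\begin{pmatrix}
 E_d^{(2,3)} & \mathbf{0}\\
 \eta^{(2,3)} & (B_2B_3^{-1})_{\hat{1}}
\end{pmatrix}
=\begin{pmatrix}
 E_d^{(1,2)}E_d^{(2,3)} & \mathbf{0}\\
 * & (B_1B_2^{-1})_{\hat{1}}(B_2B_3^{-1})_{\hat{1}}
\end{pmatrix}.
\]

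On the other hand, applying Lemma \ref{lemma:B1B2inverse} directly to the pair $B_1,B_3$ identifies the bottom-right block of $B_1B_3^{-1}$ as $(B_1B_3^{-1})_{\hat{1}}$. Equating the two descriptions of the bottom-right block of the same matrix $B_1B_3^{-1}$ then yields the desired identity. There is essentially no obstacle here: the only point to keep in mind is that the $\hat{1}$ operation of deleting the first $d$ rows and columns coincides with extracting the trailing $md\times md$ block in this $d \oplus md$ decomposition, which is immediate from the block sizes. All the genuine content has already been absorbed into Lemma \ref{lemma:B1B2inverse}, and the corollary follows purely from the compatibility of block-triangular multiplication with extraction of the trailing block.
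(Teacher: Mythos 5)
Your proof is correct and follows essentially the same route as the paper: the paper's proof also writes $B_1B_3^{-1}=(B_1B_2^{-1})(B_2B_3^{-1})$, invokes Lemma \ref{lemma:B1B2inverse} for the block lower-triangular form of each factor, and reads off the bottom-right block from the product of two such block lower-triangular matrices. No gaps.
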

\begin{proof}
This follows directly from Lemma \ref{lemma:B1B2inverse} and the identity below:
\[\begin{pmatrix}
    E_d & \mathbf{0}_{d\times md} \\
    \eta & C
\end{pmatrix}\begin{pmatrix}
    E'_d & \mathbf{0}_{d\times md} \\
    \eta' & C'
\end{pmatrix}=\begin{pmatrix}
    \star & \mathbf{0}_{d\times md} \\
    * & CC'
\end{pmatrix}.\]
\end{proof}


Now, we are prepared for the proof of Proposition \ref{prop:unitarily_equivalent}.
\begin{proof}[Proof of Proposition \ref{prop:unitarily_equivalent}]
Let us denote by $C:=B_1B_2^{-1}$. By Lemma \ref{lemma:B1B2inverse}, we have
$$C=\begin{pmatrix}
     I_{d}          & \mathbf{0}_{d\times md}  \\
      \eta &                       C_{\hat{1}}
\end{pmatrix},$$
where $\eta$ is a $md\times d$ matrix and $C_{\hat{1}}$ is the matrix obtained from $C$ by deleting the first $d$ rows and first $d$ columns.

Now we compare the two curvature matrices \[A_{\infty}(G,\sigma,x,B_i):=(B_i2Q(x)\overline{B_i}^{\top})_{\hat{1}}-\overline{\omega}_ia_i^\dagger\omega_i^{\top},\,\,i=1,2,\]
where $a_i$ and $\omega_i$ are defined by (\ref{eq:a}) and (\ref{eq:omega}) using $B_i$, for $i=1,2$.

By Lemma \ref{lemma:b0}, we can assume $a_1=a_2$ without loss of generality. Let us write $a:=a_1=a_2$ for short.

We start with
\begin{align*}
B_12Q(x)\overline{B_1}^{\top}=&CB_22Q(x)\overline{B_2}^{\top}\overline{C}^{\top}\\
=&\begin{pmatrix}
     I_{d}          & \mathbf{0}_{d\times md}  \\
      \eta &                       C_{\hat{1}}
\end{pmatrix}
\begin{pmatrix}
    a                       &  \omega_{2}^{\top}  \\
    \overline{\omega_{2}}   &  (B_22Q(x)\overline{B_2}^{\top})_{\hat{1}}
\end{pmatrix}
\begin{pmatrix}
    I_{d}                     &  \overline{\eta}^{\top}       \\
    \mathbf{0}_{md\times d}  &  \overline{C_{\hat{1}}}^{\top}
\end{pmatrix}\\
=&\begin{pmatrix}
      a   &   a\overline{\eta}^{\top}+\omega_{2}^{\top}\overline{C_{\hat{1}}}^{\top}  \\
      \eta a+C_{\hat{1}}\overline{\omega_{2}} &  \eta a \overline{\eta}^{\top}+C_{\hat{1}}\overline{\omega_{2}}\overline{\eta}^{\top}+\eta\omega_2^{\top}\overline{C_{\hat{1}}}^{\top}+C_{\hat{1}}(B_22Q(x)\overline{B_2}^{\top})_{\hat{1}}\overline{C_{\hat{1}}}^{\top}
\end{pmatrix}.
\end{align*}
Suppose the local connection structure $B_2^{inc}(x)$ is unbalanced. It is direct to check that the Schur complement of $a$ in $B_{1}2Q(x)\overline{B}^\top_{1}$ satisfies
\begin{align*}
(B_12Q(x)\overline{B_1}^{\top})_{\hat{1}}-\overline{{\omega}_1}a^{\dagger}\omega_1^\top
=C_{\hat{1}}\left[(B_22Q(x)\overline{B_2}^{\top})_{\hat{1}}-\overline{\omega_{2}}a^{\dagger}\omega_{2}^{\top}\right]\overline{C_{\hat{1}}}^{\top}
\end{align*}
That is, we have 
\[A_{\infty}(G,\sigma,x,B_1)=C_{\hat{1}}A_{\infty}(G,\sigma,x,B_{2})\overline{C_{\hat{1}}}^{\top}.\]
For the case that the local connection structure $B_2^{inc}(x)$ is balanced, we have $a=\mathbf{0}_d$ and $\omega_1=\omega_2=\mathbf{0}_{md\times d}$. Hence, $A_{\infty}(G,\sigma,x,B_1)=C_{\hat{1}}A_{\infty}(G,\sigma,x,B_2)\overline{C_{\hat{1}}}^{\top}$ still holds true.

Moreover, we derive that
\begin{align*}
\begin{pmatrix}
  \mathbf{0}_d\\\mathbf{v}_0(B_1)
\end{pmatrix}=B_1\Delta^\sigma(x)=CB_2\Delta^\sigma(x)=\begin{pmatrix}
  I_d & \mathbf{0}_{d\times md}\\
  \eta & C_{\hat{1}}
\end{pmatrix}\begin{pmatrix}
  \mathbf{0}_d \\\mathbf{v}_0(B_2)
\end{pmatrix}=\begin{pmatrix}
  \mathbf{0}_d \\ C_{\hat{1}}\mathbf{v}_0(B_2)
\end{pmatrix}.
\end{align*}
Then, we obtain $\mathbf{v}_0(B_1)=C_{\hat{1}}\mathbf{v}_0(B_2)$, and, hence
\begin{align*}
\mathbf{v}_0(B_1)\overline{\mathbf{v}_0(B_1)}^\top=C_{\hat{1}}\mathbf{v}_0(B_2)\overline{\mathbf{v}_0(B_2)}^\top\overline{C_{\hat{1}}}^\top.
\end{align*}
This completes the proof.
\end{proof}

\begin{remark}\label{remark:B0}
We can choose the following matrix $B_0$ as a canonical choice in calculating the curvature matrices:
\begin{equation}
B_0:=\begin{pmatrix}
  p_0^\top \\ p_1^{\top}\\\vdots \\ p_m^{\top}
\end{pmatrix}=
\begin{pmatrix}\label{eq:canonicalB}
    I_{d} & \overline{\sigma_{xy_1}}                            & \cdots  & \overline{\sigma_{xy_m}} \\
          & \frac{1}{\sqrt{p_{xy_{1}}}}I_{d} &        &       \\
          &                                  & \ddots &       \\
          &                                  &        & \frac{1}{\sqrt{p_{xy_{m}}}}I_{d}
\end{pmatrix}.
\end{equation}
It is straightforward to verify that $B_0$ is a nonsingular matrix that meets condition (\ref{eq:B}). In addition, we have that
\begin{equation}\label{eq:canonicalv0}
\mathbf{v}_0(B_0)=\begin{pmatrix}
  \sqrt{p_{xy_1}}\sigma_{xy_1}^\top \\\vdots\\ \sqrt{p_{xy_m}}\sigma_{xy_m}^\top
\end{pmatrix}.
\end{equation}
\end{remark}
\begin{remark}
Indeed, the matrix $a$ vanishes if the local connection structure $B_2^{inc}(x)$ is balanced (see Proposition \ref{prop:aadaggeromega} in the Appendix \ref{section:appendix}). Whenever each edge is assigned with the connection $1\in O(1)$, our curvature matrix $A_{N}(G,\sigma,x, B_0)$ coincides with the curvature matrix for Bakry--\'Emery curvature of graph Laplacians introduced in \cite{CKLP22} and \cite{Siconolfi}.
\end{remark}
To conclude this section, we present an example that demonstrates how to determine the curvature using the approach discussed above. 
\begin{example}
Consider an unbalanced $U(2)$-connection graph $(G_1,\sigma_1)$ depicted in Figure \ref{fig:local structure G1}, where we have $(\sigma_1)_{23}=
\begin{pmatrix}
    0  & i \\
    -i & 0
\end{pmatrix}$, and $p_{ij}=1$ whenever $i\sim j$ and $0$ otherwise.
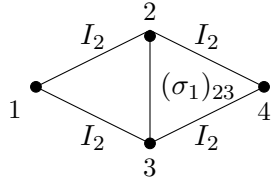
\begin{figure}[!htp]
\centering
\tikzset{vertex/.style={circle, draw, fill=black!20, inner sep=0pt, minimum width=4pt}}
\begin{tikzpicture}[scale=3.0]

\draw (0,0) -- (0.5,0.25) node[midway, above, black]{$I_2$}
		 -- (1,0) node[midway, above, black]{$I_2$};
\draw (0,0) -- (0.5,-0.25) node[midway, below, black]{$I_2$}
		 -- (1,0) node[midway, below, black]{$I_2$};
\draw (0.5,0.25) -- (0.5,-0.25) node[midway, right, black]{$(\sigma_1)_{23}$};

\node at (0,0) [vertex, label={[label distance=0mm]225: \small $1$}, fill=black] {};
\node at (0.5,0.27) [vertex, below, label={[label distance=0mm]90: \small $2$}, fill=black] {};
\node at (0.5,-0.25) [vertex, label={[label distance=0mm]270: \small $3$}, fill=black] {};
\node at (1,0) [vertex, label={[label distance=0mm]270: \small $4$}, fill=black] {};
\end{tikzpicture}
\caption{An unbalanced connection graph $(G_{1},\sigma_1)$.}
\label{fig:local structure G1}
\end{figure}
\par  Let us derive the curvature matrix $A_\infty(G_1,\sigma_1,1,B_0)$ using Definition \ref{def:curvature_matrices} and (\ref{eq:canonicalB}). By \eqref{eq:GammaMatrix} and \eqref{eq:gamma 2 matrix}-\eqref{eq:Gamma_2_last}, we obtain the matrices 
$$2\Gamma^{\sigma_{1}}(1)=\begin{pmatrix}
                          2 & 0 & -1 & 0 & -1 & 0 \\
                          0 & 2 & 0 & -1 & 0 & -1 \\
                          -1 & 0 & 1 & 0 & 0 & 0 \\
                          0 & -1 & 0 & 1 & 0 & 0\\
                          -1 & 0 & 0 & 0 & 1 & 0\\
                          0 & -1 & 0 & 0 & 0 & 1
\end{pmatrix},$$
and
$$2\Gamma^{\sigma_{1}}_{2}(1)=\frac{1}{2}\begin{pmatrix}
                              10 & 0 & -7 & -i & -7 & -i & 2 & 0 \\
                              0 & 10 & i & -7 & i  & -7 & 0 & 2 \\
                              -7 & -i & 10 & 0 & 2 & 4i & -2 & 0 \\
                              i & -7 & 0 & 10 & -4i & 2 & 0 & -2 \\
                              -7 & -i & 2 & 4i & 10 & 0 & -2 & 0 \\
                              i & -7 & -4i & 2 & 0 & 10 & 0 & -2 \\
                              2 & 0 & -2 & 0 & -2 & 0 & 2 & 0\\
                              0 & 2 & 0 & -2 & 0 & -2 & 0 & 2
\end{pmatrix}.$$
Then, the Schur complement $2Q(1)$ of  $2(\Gamma^{\sigma_{1}}_{2}(1))_{S_{2}(1),S_{2}(1)}$ in $2\Gamma^{\sigma_{1}}_{2}(1)$ is
$$2Q(1)=\frac{1}{2}\begin{pmatrix}
           8 & 0 & -5 & -i & -5 & -i \\
           0 & 8 & i & -5 & i & -5 \\
           -5 & -i & 8 & 0 & 0 & 4i \\
           i & -5 & 0 & 8 & -4i & 0 \\
           -5 & -i & 0 & 4i & 8 & 0 \\
           i & -5 & -4i & 0 & 0 & 8
\end{pmatrix}.$$
With the canonical choice of
$$B_{0}=\begin{pmatrix}
        1 & 0 & 1 & 0 & 1 & 0 \\
        0 & 1 & 0 & 1 & 0 & 1 \\
        0 & 0 & 1 & 0 & 0 & 0 \\
        0 & 0 & 0 & 1 & 0 & 0 \\
        0 & 0 & 0 & 0 & 1 & 0 \\
        0 & 0 & 0 & 0 & 0 & 1
\end{pmatrix},$$
we deduce the matrix
$$B_{0}2Q(1)\overline{B_{0}}^\top=\frac{1}{2}
\begin{pmatrix}
    4 & 4i & 3 & 3i & 3 & 3i \\
    -4i & 4 & -3i & 3 & -3i & 3 \\
    3 & 3i & 8 & 0 & 0 & 4i \\
    -3i & 3 & 0 & 8 & -4i & 0 \\
    3 & 3i & 0 & 4i & 8 & 0 \\
    -3i & 3 & -4i & 0 & 0 & 8
\end{pmatrix}.$$
Then, the curvature matrix is the Schur complement of $\frac{1}{2}\begin{pmatrix}
    4 & 4i \\
    -4i &4
\end{pmatrix}$ in $B_02Q(1)\overline{B_0}^\top$.  That is, we have
$$A_{\infty}(G_{1},\sigma_{1},1,B_{0})=\frac{1}{8}\begin{pmatrix}
    23 & -9i & -9 & 7i \\
    9i & 23 & -7i  & -9 \\
    -9 & 7i & 23 & -9i \\
    -7i & -9 & 9i & 23
\end{pmatrix}.$$
The eigenvalues of $A_{\infty}(G_{1},\sigma_{1},1,B_{0})$ are $\{\frac{3}{2},2,2,6\}$. Therefore, we obtain the curvature
\[\K_{G_1,\sigma_1,1}(\infty)=\frac{3}{2}.\]
\end{example}

\section{Curvature tensors of connection graphs}\label{section:tensor}
In order to motivate the definition of discrete Bakry--\'Emery curvature tensors of connection graphs, we first present an alternative proof of Theorem \ref{thm:curvature_eigenvalue}. This alternative proof aligns more closely with the work by Siconolfi \cite{Siconolfi}.
\begin{lemma}\label{lemma:GammaRayleigh}
Let $x$ be a vertex in a connection graph $(G,\sigma)$. Then we have for any $N\in (0,\infty]$ that
\begin{equation}
  \K_{G,\sigma,x}(N)=\inf_{\substack{f:V\to \mathbb{K}^d\\\text{with}\,\,\Gamma^\sigma(f)(x)\neq 0}}\frac{\Gamma_2^\sigma(f)(x)-(1/N)|\Delta^\sigma f(x)|^2}{\Gamma^\sigma(f)(x)}.
\end{equation}
\end{lemma}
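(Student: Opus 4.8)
The plan is to read the statement as a Rayleigh-quotient characterization of the largest constant $K$ for which the curvature dimension inequality $CD^\sigma(K,N)$ holds at $x$, where the only genuinely delicate issue is the degeneracy of $\Gamma^\sigma$ on its $d$-dimensional kernel. Write $K_0:=\K_{G,\sigma,x}(N)$ and let $K^*$ denote the infimum on the right hand side. By Definition \ref{def:BEcurvature}, $K_0$ is the \emph{largest} $K$ with
\[\Gamma_2^\sigma(f)(x)-\tfrac1N|\Delta^\sigma f(x)|^2\ge K\,\Gamma^\sigma(f)(x)\quad\text{for all }f:V\to\mathbb{K}^d,\]
and since the set of admissible $K$ is a closed half-line, this inequality does hold with $K=K_0$. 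For the easy direction $K_0\le K^*$, I would take any $f$ with $\Gamma^\sigma(f)(x)>0$, apply the displayed inequality with $K=K_0$, and divide by the positive quantity $\Gamma^\sigma(f)(x)$; the resulting Rayleigh quotient is then $\ge K_0$, and taking the infimum over all such $f$ yields $K^*\ge K_0$.

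For the reverse inequality $K_0\ge K^*$, the plan is to verify that $CD^\sigma(K^*,N)$ holds for \emph{every} $f$, after which the maximality of $K_0$ forces $K^*\le K_0$. When $\Gamma^\sigma(f)(x)>0$ this is once more immediate: the definition of the infimum gives that the Rayleigh quotient is $\ge K^*$, which multiplied through by the positive denominator is exactly the required inequality. The one remaining, and main, case is a function $f$ with $\Gamma^\sigma(f)(x)=0$, for which the right hand side $K^*\,\Gamma^\sigma(f)(x)$ vanishes and I must establish that the left hand side is nonnegative; this is where the degeneracy of $\Gamma^\sigma$ must be confronted directly.

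I expect the crux to be precisely this degenerate case, and I would dispatch it by a Bochner-type collapse. By Proposition \ref{prop:Gamma}, $\Gamma^\sigma(f)(x)=0$ means $f$ is \emph{flat} at $x$, i.e.\ $\sigma_{xy}f(y)=f(x)$ for every $y\in S_1(x)$. I would then use this flatness twice. First, $\Delta^\sigma f(x)=\sum_{y\sim x}p_{xy}(\sigma_{xy}f(y)-f(x))=0$, so the term $\tfrac1N|\Delta^\sigma f(x)|^2$ vanishes and it suffices to show $\Gamma_2^\sigma(f)(x)\ge0$. Second, in the defining formula (\ref{eq:2Gamma}) every summand of $2\Gamma^\sigma(f,\Delta^\sigma f)(x)$ carries the factor $(\sigma_{xy}f(y)-f(x))=0$, so $\Gamma^\sigma(f,\Delta^\sigma f)(x)=\Gamma^\sigma(\Delta^\sigma f,f)(x)=0$. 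Substituting both vanishings into $2\Gamma_2^\sigma(f,f)=\Delta\Gamma^\sigma(f,f)-\Gamma^\sigma(f,\Delta^\sigma f)-\Gamma^\sigma(\Delta^\sigma f,f)$ collapses it to $2\Gamma_2^\sigma(f)(x)=\Delta\big(\Gamma^\sigma(f,f)\big)(x)$, where $\Delta$ is the scalar Laplacian.

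Finally, since $\Gamma^\sigma(f,f)(x)=0$ while $\Gamma^\sigma(f,f)(y)\ge0$ for all $y$ (again by (\ref{eq:2Gamma}) and Proposition \ref{prop:Gamma}), the scalar Laplacian reduces to an average of nonnegative quantities, $\Delta(\Gamma^\sigma(f,f))(x)=\sum_{y\sim x}p_{xy}\,\Gamma^\sigma(f,f)(y)\ge0$, giving $\Gamma_2^\sigma(f)(x)\ge0$ as needed. This establishes $CD^\sigma(K^*,N)$ for all $f$, hence $K_0\ge K^*$, and combined with the first paragraph yields $K_0=K^*$. I would also note that the presence of at least one neighbor of $x$ guarantees, via Proposition \ref{prop:Gamma}, that $\Gamma^\sigma(x)$ has a positive eigenvalue, so the infimum is taken over a nonempty set and is finite, being squeezed between $K_0$ and the value of any single admissible Rayleigh quotient.
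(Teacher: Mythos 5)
Your proof is correct, and on the one genuinely delicate point---functions with $\Gamma^\sigma(f)(x)=0$---it takes a different route from the paper's. Both arguments reduce the lemma to showing $\Gamma_2^\sigma(f)(x)-(1/N)|\Delta^\sigma f(x)|^2\geq 0$ for $f$ flat at $x$ (i.e.\ $\sigma_{xy_i}f(y_i)=f(x)$ for all $i$), and both note $\Delta^\sigma f(x)=\mathbf{0}$. From there the paper runs through the Schur-complement machinery: via the identity (\ref{eq:Schur_sesquilinear_form}) it bounds $2\Gamma_2^\sigma(f)(x)$ from below by $b_{01}^\top 2Q(x)\overline{b_{01}}$, a diagonal entry of $a=b_0^\top 2Q(x)\overline{b_0}$, and then invokes $a\succeq 0$ from Proposition \ref{prop:aadaggeromega}, whose proof is the long explicit computation in the Appendix. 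You instead exploit flatness a second time, directly in the definition of $\Gamma_2^\sigma$: every summand of $2\Gamma^\sigma(f,\Delta^\sigma f)(x)$ and of $2\Gamma^\sigma(\Delta^\sigma f,f)(x)$ in (\ref{eq:2Gamma}) carries the factor $\sigma_{xy}f(y)-f(x)=\mathbf{0}$ (on the left or, conjugated, on the right), so
\begin{equation*}
2\Gamma_2^\sigma(f)(x)=\Delta\bigl(\Gamma^\sigma(f,f)\bigr)(x)=\sum_{y\sim x}p_{xy}\,\Gamma^\sigma(f,f)(y)\geq 0,
\end{equation*}
the last equality using $\Gamma^\sigma(f,f)(x)=0$ and the pointwise nonnegativity of $\Gamma^\sigma(f,f)$. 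I checked the two vanishings: they hold for arbitrary values of $f$ on $S_2(x)$ and beyond, so no extension or Schur step is needed, and your handling of the sup/inf logic (closed admissible half-line, the two directions of the inequality, nonemptiness of the infimum's domain) is sound. Your argument is more elementary and self-contained---it is the natural extension to flat sections of the classical observation that constants cause no trouble in the unsigned case---and, since $c^\top a\,\overline{c}$ is exactly the infimum over $f_2$ of $2\Gamma_2^\sigma(f)(x)$ for a flat $f$, it even reproves $a\succeq 0$ as a byproduct, bypassing the Appendix for this purpose. What the paper's heavier route buys is reuse: the identity (\ref{eq:Schur_sesquilinear_form}) and Proposition \ref{prop:aadaggeromega} (including the part $aa^\dagger\omega^\top=\omega^\top$, which your argument does not give) are needed anyway for the curvature matrices and Theorem \ref{thm:curvature_eigenvalue}, so once that machinery is in place the lemma comes essentially for free.
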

\begin{proof}
It is enough to show for any $f:V\to \mathbb{K}^d$ with $\Gamma^\sigma(f)(x)=0$ that \[\Gamma_2^\sigma(f)(x)-(1/N)|\Delta^\sigma f(x)|^2\geq 0.\]
By Remark \ref{remark:B0}, it holds for any $f$ with $\Gamma^\sigma(f)(x)=0$ that $f(y_i)=\sigma_{xy_i}^{-1}f(x)$ for any $i=1,\ldots,m$. Then we have
$\Delta^\sigma f (x)=\mathbf{0}$. Next, we show $\Gamma_2^\sigma(f)(x)\geq  0$ for such an $f$.

Let us denote by
\begin{equation*}
b_{01}^\top:=\begin{pmatrix}
  f(x)^\top & f(x)^\top\overline{\sigma_{xy_1}} & \cdots & f(x)^\top\overline{\sigma_{xy_m}}
\end{pmatrix}.
\end{equation*}
By the identity (\ref{eq:Schur_sesquilinear_form}), we estimate
\begin{align*}
2\Gamma_2^\sigma(f)(x)\geq b_{01}^\top 2Q(x)\overline{b_{01}}.
\end{align*}
If $f(x)=\mathbf{0}$, then we have $2\Gamma_2^\sigma(f)(x)\geq 0$. Otherwise, we have $f(x)\neq \mathbf{0}$. Let $\overline{b_{01}},\ldots,\overline{b_{0d}}$ be a basis for the eigenspace of $\Gamma^\sigma(x)$ to its zero eigenvalue. This basis provides
\[b_0^\top=\begin{pmatrix}
  b_{01}^\top \\
  \vdots \\
  b_{0d}^\top
\end{pmatrix}.\]
Recall that we have the matrix $a=b_0^\top2Q(x)\overline{b_0}\succeq 0$ (see Proposition \ref{prop:aadaggeromega} in Appendix \ref{section:appendix}). Observe that $b_{01}^\top 2Q(x)\overline{b_{01}}$ is a diagonal entry of the matrix $a$, and hence, is nonnegative. Therefore, $\Gamma_2^\sigma(f)(x)\geq 0$.
\end{proof}
\begin{proof}[An alternative proof of Theorem \ref{thm:curvature_eigenvalue}]
For a given function $f: V\to \mathbb{K}^d$, we denote by
\begin{equation}\label{eq:f0f1f2}
f_0:=f(x),\,\,f_1:=\begin{pmatrix}
  f(y_1) \\ \vdots \\ f(y_m)
\end{pmatrix}, \,\,\text{and}\,\,f_2:=\begin{pmatrix}
  f(z_1) \\ \vdots \\ f(z_n)
\end{pmatrix}.\end{equation}
Applying Lemma \ref{lemma:GammaRayleigh}, we derive
\begin{align}
&\K_{G,\sigma,x}(N)\notag\\
=&\inf_{\substack{f:V\to \mathbb{K}^d\\\text{with}\,\,\Gamma^\sigma(f)(x)\neq 0}}\frac{\begin{pmatrix}
f_0^\top & f_1^\top & f_2^\top
\end{pmatrix}\left(2\Gamma^\sigma_2(x)-\frac{2}{N}\Delta^\sigma(x)\overline{\Delta^\sigma(x)}^\top\right)\begin{pmatrix}
\overline{f_0} \\ \overline{f_1} \\ \overline{f_2}
\end{pmatrix}}{\begin{pmatrix}
f_0^\top & f_1^\top
\end{pmatrix}2\Gamma^\sigma(x)\begin{pmatrix}
\overline{f_0} \\ \overline{f_1}
\end{pmatrix}}\notag\\
=& \inf_{\substack{f_0,f_1\\\text{with}\,\,\Gamma^\sigma(f)(x)\neq 0}}\inf_{f_2}\frac{\begin{pmatrix}
f_0^\top & f_1^\top
\end{pmatrix}\left(2Q(x)-\frac{2}{N}\Delta^\sigma(x)\overline{\Delta^\sigma(x)}^\top\right)\begin{pmatrix}
\overline{f_0} \\ \overline{f_1}
\end{pmatrix}+\left|T(f_0,f_1,f_2)\right|^2}{\begin{pmatrix}
f_0^\top & f_1^\top
\end{pmatrix}2\Gamma^\sigma(x)\begin{pmatrix}
\overline{f_0} \\ \overline{f_1}
\end{pmatrix}},\label{eq:K_Rayleigh_1}
\end{align}
where \[T(f_0,f_1,f_2)=\left((2\Gamma_2^\sigma(x)_{S_2,S_2})^{1/2}\right)^{-1}(2\Gamma_2^\sigma(x)_{S_2,B_1})\begin{pmatrix}
  \overline{f_0} \\ \overline{f_1}
\end{pmatrix}+\overline{(2\Gamma_2^\sigma(x)_{S_2,S_2})^{1/2}}^\top\overline{f_2}\in \mathbb{K}^{nd}\] is the vector determined by (\ref{eq:Schur_sesquilinear_form}). Applying Lemma \ref{lemma:schur_technical} yields that
\begin{equation}\label{eq:inf_T}
\inf_{f_2}\left|T(f_0,f_1,f_2)\right|^2=0.
\end{equation}

We continue the calculation in (\ref{eq:K_Rayleigh_1}) to derive
\begin{align}
&\K_{G,\sigma,x}(N)\notag\\
=& \inf_{\substack{f_0,f_1\\\text{with}\,\,\Gamma^\sigma(f)(x)\neq 0}}\frac{\begin{pmatrix}
f_0^\top & f_1^\top
\end{pmatrix}\left(2Q(x)-\frac{2}{N}\Delta^\sigma(x)\overline{\Delta^\sigma(x)}^\top\right)\begin{pmatrix}
\overline{f_0} \\ \overline{f_1}
\end{pmatrix}}{\begin{pmatrix}
f_0^\top & f_1^\top
\end{pmatrix}2\Gamma^\sigma(x)\begin{pmatrix}
\overline{f_0} \\ \overline{f_1}
\end{pmatrix}}\notag\\
=& \inf_{\substack{f_0,f_1\\\text{with}\,\,\Gamma^\sigma(f)(x)\neq 0}}\frac{\begin{pmatrix}
f_0^\top & f_1^\top
\end{pmatrix}B^{-1}B\left(2Q(x)-\frac{2}{N}\Delta^\sigma(x)\overline{\Delta^\sigma(x)}^\top\right)\overline{B}^\top\overline{B^{-1}}^\top\begin{pmatrix}
\overline{f_0} \\ \overline{f_1}
\end{pmatrix}}{\begin{pmatrix}
f_0^\top & f_1^\top
\end{pmatrix}B^{-1}B2\Gamma^\sigma(x)\overline{B}^\top\overline{B^{-1}}^\top\begin{pmatrix}
\overline{f_0} \\ \overline{f_1}
\end{pmatrix}},\label{eq:K_Rayleigh_2}
\end{align}
where $B$ is the nonsingular matrix satisfying (\ref{eq:B}). We denote by
\begin{equation}\label{eq:fB}\begin{pmatrix}
f_{B,0}\\f_{B,1}
\end{pmatrix}:=\left(B^{-1}\right)^\top\begin{pmatrix}
f_0 \\f_1
\end{pmatrix}.\end{equation}
Then we derive from (\ref{eq:K_Rayleigh_2}) that
\begin{align}
&\K_{G,\sigma,x}(N)\notag\\
=&\inf_{\substack{f_0,f_1\\\text{with}\,\,f_{B,1}\neq 0}}\frac{\begin{pmatrix}
f_{B,0}^\top & f_{B,1}^\top
\end{pmatrix}B\left(2Q(x)-\frac{2}{N}\Delta^\sigma(x)\overline{\Delta^\sigma(x)}^\top\right)\overline{B}^\top\begin{pmatrix}
\overline{f_{B,0}} \\ \overline{f_{B,1}}
\end{pmatrix}}{\left|f_{B,1}\right|^2},\notag\\
=&\inf_{f_{B,1}\neq 0}\inf_{f_{B,0}}\frac{f_{B,1}^\top A_N(G,\sigma,x,B)\overline{f_{B,1}}+|T_B(f_{B,0},f_{B,1})|^2}{\left|f_{B,1}\right|^2},\label{eq:4.6}
\end{align}
where $T_B(f_{B,0},f_{B,1})=\overline{a^{1/2}}^\top\overline{f_{B,0}}+(a^{1/2})^\dagger\omega^\top\overline{f_{B,1}}\in\mathbb{K}^d$ is the vector determined by the identity (\ref{eq:Schur_sesquilinear_form}). Applying Lemma \ref{lemma:schur_technical} yields that
\begin{equation}\label{eq:inf_TB}\inf_{f_{B,0}}|T_B(f_{B,0},f_{B,1})|^2=0.\end{equation}

Now we continue the calculation in (\ref{eq:4.6}) as follows
\begin{align}
&\K_{G,\sigma,x}(N)=\inf_{f_{B,1}\neq 0}\frac{f_{B,1}^\top A_N(G,\sigma,x,B)\overline{f_{B,1}}}{\left|f_{B,1}\right|^2}=\lambda_{\min}(A_N(G,\sigma,x,B)).
\end{align}
This completes the proof.
\end{proof}

We have the following key observations about the vectors achieving the infimums in (\ref{eq:inf_T}) and (\ref{eq:inf_TB}):

\textbf{Observation 1}: By Lemma \ref{lemma:schur_technical}, the infimum in (\ref{eq:inf_T}) is attained by $f_2$ satisfying
\[2\Gamma^\sigma_2(x)\begin{pmatrix}
  \overline{f_0} \\ \overline{f_1} \\ \overline{f_2}
\end{pmatrix}=\begin{pmatrix}
  *_0 \\ *_1 \\ \mathbf{0}
\end{pmatrix}.\]
Explicitly, the infimum is attained by
\begin{align}\label{eq:f2=f0f1}
f_2=&-(2\Gamma_2^\sigma(x)_{S_2,S_2})^{-1}\overline{2\Gamma_2^\sigma(x)_{S_2,B_1}}\begin{pmatrix}
  f_0 \\ f_1
\end{pmatrix}\notag\\
=&\begin{pmatrix}
 -\frac{\sum_yp_{xy}p_{yz_1}\overline{\sigma_{yz_1}}^\top\overline{\sigma_{xy}}^\top}{\sum_{y}p_{xy}p_{yz_1}} &   \frac{2p_{xy_1}p_{y_1z_1}\overline{\sigma_{y_1z_1}}^\top}{\sum_{y}p_{xy}p_{yz_1}} & \cdots &  \frac{2p_{xy_m}p_{y_mz_1}\overline{\sigma_{y_mz_1}}^\top}{\sum_{y}p_{xy}p_{yz_1}}\\
  \vdots & \vdots & \ddots & \vdots\\
  -\frac{\sum_yp_{xy}p_{yz_n}\overline{\sigma_{yz_n}}^\top\overline{\sigma_{xy}}^\top}{\sum_{y}p_{xy}p_{yz_n}} &  \frac{2p_{xy_1}p_{y_1z_n}\overline{\sigma_{y_1z_n}}^\top}{\sum_{y}p_{xy}p_{yz_n}} & \cdots &  \frac{2p_{xy_m}p_{y_mz_n}\overline{\sigma_{y_mz_n}}^\top}{\sum_{y}p_{xy}p_{yz_n}}
\end{pmatrix}\begin{pmatrix}
  f_0 \\ f_1
\end{pmatrix}.
\end{align}
Notice that, in the above we make use of the fact that $\Gamma_2^\sigma(x)_{S_2,S_2}$ is a real matrix. Indeed, it is a positive diagonal matrix.

\textbf{Observation 2}: The infimum in (\ref{eq:inf_TB}) is attained by a vector $f_{B,0}$ satisfying
\begin{equation}\label{eq:fB0=fB1}B\left(2Q(x)-\frac{2}{N}\Delta^\sigma(x)\overline{\Delta^\sigma(x)}^\top\right)\overline{B}^\top\begin{pmatrix}
  \overline{f_{B,0}}\\
  \overline{f_{B,1}}
\end{pmatrix}=\begin{pmatrix}
  \mathbf{0} \\ *
\end{pmatrix}.\end{equation}
Let us denote the submatrix of $B$ composed of the first $d$ rows by $b_0^\top$. Then combining (\ref{eq:fB}) and (\ref{eq:fB0=fB1}) leads to
\begin{equation}\label{eq:f_0=f_1}
  \mathbf{0}=b_0^\top\left(2Q(x)-\frac{2}{N}\Delta^\sigma(x)\overline{\Delta^\sigma(x)}^\top\right)
  \begin{pmatrix}
    \overline{f_0}\\ \overline{f_1}
  \end{pmatrix}=b_0^\top(2Q(x)) \begin{pmatrix}
    \overline{f_0}\\ \overline{f_1}
  \end{pmatrix}.
\end{equation}
Recall that there exists a nonsingular $d\times d$ matrix $E_d$ such that
\[b_0^\top=E_d p_0^\top:=E_d\begin{pmatrix}
  I_d & \overline{\sigma_{xy_1}} & \cdots &\overline{\sigma_{xy_m}}
\end{pmatrix}.\]
Then we derive from (\ref{eq:f_0=f_1}) that
\begin{equation}\label{eq:f_0_formula_f_1}
p_0^\top2Q(x)\begin{pmatrix}
  \overline{f_0} \\ \overline{f_1}
\end{pmatrix}=\mathbf{0}.
\end{equation}
We point out that the matrix $p_0^\top2Q(x)$ is a zero matrix when the local connection structure $B_2^{inc}(x)$ is balanced (see (\ref{eq:B_0Q(x)}) and (\ref{eq:B_0Q(x)i}) in Appendix \ref{section:appendix}). In that case, the equation (\ref{eq:f_0_formula_f_1}) holds true for any $f_0,f_1$.
%

Based on the above two observations, we introduce the following linear maps.
\begin{definition}\label{def:PhiPsi}
Let $x$ be a given vertex in a connection graph $(G,\sigma)$.
We define a linear map $\Psi_x:\mathbb{K}^{(m+1)d}\to \mathbb{K}^{(m+n+1)d}$ via
\begin{equation}
 \Psi_x(w)=\begin{pmatrix}
  w \\
-(2\Gamma_2^\sigma(x)_{S_2,S_2})^{-1}\overline{2\Gamma_2^\sigma(x)_{S_2,B_1}}w
\end{pmatrix}
\end{equation}
for any $w\in \mathbb{K}^{(m+1)d}$.
\end{definition}
\begin{definition}\label{def:tangent_space}
Let $x$ be a given vertex in a connection graph $(G,\sigma)$. Any two functions $f, f': B_1(x)\to \mathbb{K}^d$ are defined to be equivalent if
\[\begin{pmatrix}
             \sigma_{xy_{1}}f(y_{1})-f(x) \\
             \vdots \\
             \sigma_{xy_{m}}f(y_{m})-f(x)
\end{pmatrix}=\begin{pmatrix}
             \sigma_{xy_{1}}f'(y_{1})-f'(x) \\
             \vdots \\
             \sigma_{xy_{m}}f'(y_{m})-f'(x)
\end{pmatrix}.\]
We denote by $[f]$ the equivalent class of the function $f: B_1(x)\to \mathbb{K}^d$. We define the following linear operations: For any equivalent classes $[f], [f']$ and $\lambda\in \mathbb{K}$,
\[[f]+[f']:=[f+f'],\,\,\lambda[f]:=[\lambda f].\]
We define the \emph{tangent space} $T_x(G,\sigma)$ at the vertex $x$ as the linear space of the equivalent classes $[f]$ of any function $f: B_1(x)\to \mathbb{K}^d$.
\end{definition}
Notice that $T_x(G,\sigma)$ is isomorphic to $\mathbb{K}^{md}$ as a linear space. Indeed, the isomorphic map is given by
\begin{equation*}
\Pi: T_x(G,\sigma)\to \mathbb{K}^{md},
\end{equation*}
such that for any $f:B_1(x)\to \mathbb{K}^d$
\begin{equation*}
  \Pi([f]):=\begin{pmatrix}
    \sigma_{xy_1}f(y_1)-f(x)\\
    \vdots\\
    \sigma_{xy_m}f(y_m)-f(x)
  \end{pmatrix}.
\end{equation*}
In the following, we identify each tangent space $T_x(G,\sigma)$ with $\Pi(T_x(G,\sigma))=\mathbb{K}^{md}$.
\begin{proposition}\label{prop:Phi}
Let $x$ be a given vertex in a connection graph $(G,\sigma)$. Then there exists a linear map
\[\phi_x: T_x(G,\sigma)\to \mathbb{K}^{d},\]
such that
\begin{equation}\label{eq:phiv}
  p_0^\top 2Q(x)\begin{pmatrix}
    \overline{\phi_x(v)}\\
    \sigma_{xy_1}^{\top}(\overline{v_1+\phi_x(v)})\\
    \vdots \\
    \sigma_{xy_m}^{\top}(\overline{v_m+\phi_x(v)})
  \end{pmatrix}=\mathbf{0}
\end{equation}
holds true for any $v=\begin{pmatrix}
v_1\\ \vdots \\ v_m
\end{pmatrix}\in T_x(G,\sigma)$ with each $v_i\in \mathbb{K}^d$.
%
\end{proposition}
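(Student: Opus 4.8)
The plan is to recognize that the equation (\ref{eq:phiv}) is exactly the compatibility condition (\ref{eq:f_0_formula_f_1}) derived in Observation 2, read off for a carefully chosen representative of the tangent vector $v$. Given $v=(v_1,\dots,v_m)^\top\in T_x(G,\sigma)$, I would choose the representative $f:B_1(x)\to\mathbb{K}^d$ with $f(x)=w$ and $f(y_i)=\sigma_{xy_i}^{-1}(v_i+w)$, so that $\sigma_{xy_i}f(y_i)-f(x)=v_i$ for every $i$; thus $f$ indeed represents $v$ under the identification $\Pi$. Because each $\sigma_{xy_i}$ is unitary, $\overline{\sigma_{xy_i}^{-1}}=\sigma_{xy_i}^\top$, so $\overline{f(y_i)}=\sigma_{xy_i}^\top\overline{(v_i+w)}$, and the column vector appearing in (\ref{eq:phiv}) is precisely $(\overline{f_0},\overline{f_1})^\top$ with $f_0=w$ and $f_1=(f(y_1),\dots,f(y_m))^\top$. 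Hence proving the proposition amounts to producing, linearly in $v$, a vector $w=\phi_x(v)$ solving $p_0^\top 2Q(x)\,(\overline{f_0},\overline{f_1})^\top=\mathbf{0}$.

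Next I would isolate the $w$-dependence. Writing the column vector as $\overline{p_0}\,\overline{w}$ plus the $v$-dependent column $(\mathbf{0},\sigma_{xy_1}^\top\overline{v_1},\dots,\sigma_{xy_m}^\top\overline{v_m})^\top$, and adopting the canonical choice $b_0=p_0$, $b_i=p_i$ from (\ref{eq:p_0}) and (\ref{eq:specialb_i}), the equation collapses to the $d$-dimensional linear system
\[
a\,\overline{w}=-\sum_{i=1}^m\sqrt{p_{xy_i}}\,\omega_i^\top\,\sigma_{xy_i}^\top\,\overline{v_i},
\]
where $a=p_0^\top 2Q(x)\overline{p_0}$ is exactly the matrix in (\ref{eq:a}) and $\omega_i^\top:=p_0^\top 2Q(x)\overline{p_i}$ is the $i$-th block of the row $\omega^\top$ in (\ref{eq:omega}). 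The coefficient $\sqrt{p_{xy_i}}\,\omega_i^\top$ is just the $(i+1)$-th block column of $p_0^\top 2Q(x)$, since $\overline{p_i}$ carries $p_{xy_i}^{-1/2}I_d$ in its $(i+1)$-th block.

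Finally I would settle solvability. As $a$ is Hermitian and positive semidefinite (Proposition \ref{prop:aadaggeromega}), the system $a\overline{w}=b$ has a solution if and only if $b\in\operatorname{range}(a)$, equivalently $aa^\dagger b=b$, in which case $\overline{w}=a^\dagger b$ works. Applying $aa^\dagger$ to the right-hand side and using the blockwise identity $aa^\dagger\omega_i^\top=\omega_i^\top$, which is the content of $aa^\dagger\omega^\top=\omega^\top$ in Proposition \ref{prop:aadaggeromega}, shows that the range condition holds for every $v$. I would therefore define $\overline{\phi_x(v)}:=-a^\dagger\sum_{i=1}^m\sqrt{p_{xy_i}}\,\omega_i^\top\sigma_{xy_i}^\top\overline{v_i}$; this is $\mathbb{K}$-linear in $\overline{v}$, hence $\phi_x$ is $\mathbb{K}$-linear in $v$ (conjugating twice preserves $\mathbb{K}$-linearity), and it satisfies (\ref{eq:phiv}) by construction. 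The one genuinely nontrivial ingredient is the range condition $aa^\dagger\omega^\top=\omega^\top$, which is exactly the delicate statement of Proposition \ref{prop:aadaggeromega} proved in the Appendix; granting it, the argument above is pure bookkeeping.
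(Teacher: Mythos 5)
Your proof is correct, but it takes a genuinely different (and more economical) route than the paper's. After the common reduction of (\ref{eq:phiv}) to the $d$-dimensional system $a\,\overline{w}=-\sum_{i=1}^m\sqrt{p_{xy_i}}\,\omega_i^\top\sigma_{xy_i}^\top\overline{v_i}$ — your block identification $(p_0^\top 2Q(x))_i=\sqrt{p_{xy_i}}\,\omega_i^\top$ via $\overline{p_i}$ is accurate, and the factor-of-$2$ normalization between (\ref{eq:a}) and your $a$ is immaterial — the two arguments diverge. The paper does \emph{not} invoke $aa^\dagger\omega^\top=\omega^\top$ here; instead it writes $a=P\,\diag(1,\ldots,1,0,\ldots,0)\,\overline{P}^\top$ and re-runs the structural computation behind Lemma \ref{lemma:schur condition}: using the explicit decompositions (\ref{eq:Appendix_a}) and (\ref{eq:B_0Q(x)})--(\ref{eq:B_0Q(x)i}) ($a=\sum X\overline{X}^\top$, blocks of $p_0^\top 2Q(x)$ of the form $\sum XY$), it shows the rows of $P^{-1}p_0^\top 2Q(x)$ corresponding to the zero diagonal entries vanish, whence solvability, with particular solution $\overline{\phi_x(v)}=-(P\overline{P}^\top)^{-1}p_0^\top 2Q(x)(\cdots)$. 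You instead treat Proposition \ref{prop:aadaggeromega} as a black box: since $a$ is Hermitian positive semidefinite, $aa^\dagger$ is the orthogonal projector onto $\operatorname{range}(a)$, and since your right-hand side is a combination of the blocks $\omega_i^\top$, the already-proved identity $aa^\dagger\omega^\top=\omega^\top$ (which applies blockwise, and holds for the canonical $B_0$ by Remark \ref{remark:B0}) gives the range condition immediately, yielding the closed form $\overline{\phi_x(v)}=-a^\dagger\sum_i\sqrt{p_{xy_i}}\,\omega_i^\top\sigma_{xy_i}^\top\overline{v_i}$. What each buys: the paper's version is self-contained at this point of the text and exposes exactly which structural cancellations make the system consistent (and, as a byproduct, that even the $0$-th block column of $P^{-1}p_0^\top 2Q(x)$ has the vanishing rows); your version avoids duplicating the Appendix computation, is shorter, and produces the canonical pseudoinverse solution, whose conjugate is manifestly $\mathbb{K}$-linear in $v$ as you note. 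Both proofs ultimately rest on the same nontrivial input, namely Proposition \ref{prop:aadaggeromega}.
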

\begin{proof}
We derive from (\ref{eq:phiv}) that
\begin{align}\label{eq:p_02Q(x)}
\mathbf{0}=p_0^\top 2Q(x)\begin{pmatrix}
  \mathbf{0} \\
  \sigma_{xy_1}^{\top}\overline{v_1}\\
  \vdots\\
  \sigma_{xy_m}^\top\overline{v_m}
\end{pmatrix}+p_0^\top 2Q(x)\overline{p_0}\overline{\phi_x(v)}.
\end{align}
By Proposition \ref{prop:aadaggeromega}, we have $a:=p_0^\top 2Q(x)\overline{p_0}\succeq 0$. Hence, there exists a nonsingular matrix $P$ such that
\[a=P\diag (1,\ldots,1,0,\ldots,0) \overline{P}^\top.\]
Then, we have
\begin{align}\label{eq:Pinverse_a}
  \diag (1,\ldots,1,0,\ldots,0) \overline{P}^\top\overline{\phi_x(v)}=-P^{-1}p_0^\top 2Q(x)\begin{pmatrix}
  \mathbf{0} \\
  \sigma_{xy_1}^{\top}\overline{v_1}\\
  \vdots\\
  \sigma_{xy_m}^\top\overline{v_m}
\end{pmatrix}.
\end{align}
By (\ref{eq:Appendix_a}), there exists $d\times d$ matrices $X_{ij,k}$ and $X_{ij}$, $i,j=1,\ldots,m,\,k=1,\ldots,n$ such that
\[a=\sum_{i,j,k}X_{ij,k}\overline{X_{ij,k}}^\top+\sum_{i,j}X_{ij}\overline{X_{ij}}^\top.\]
Observing from (\ref{eq:B_0Q(x)}) and (\ref{eq:B_0Q(x)i}) that there exists $d\times d$ matrices $Y_{ij,k}, Y'_{ij,k}$ and $Y_{ij}, Y_{ij}'$ such that
\[p_0^\top 2Q(x)=\begin{pmatrix}
(p_0^\top 2Q(x))_0 & (p_0^\top 2Q(x))_1 & \cdots & (p_0^\top 2Q(x))_m
\end{pmatrix}\]
with
\[(p_0^\top 2Q(x))_i=\sum_{j,k}X_{ij,k}Y_{ij,k}+\sum_j X_{ij}Y_{ij},\,\,i=1,\ldots,m\]
and
\[(p_0^\top 2Q(x))_0=\sum_{i,j,k}X_{ij,k}Y'_{ij,k}+\sum_{i,j} X_{ij}Y'_{ij}.\]
Then we read from
\begin{align*}
\diag(1,\ldots,1,0,\ldots,0)=P^{-1}a\left(\overline{P}^{\top}\right)^{-1}=\sum_{i,j,k}(P^{-1}X_{ij,k})\overline{P^{-1}X_{ij,k}}^\top+\sum_{i,j}(P^{-1}X_{ij})\overline{P^{-1}X_{ij}}^\top
\end{align*}
that the rows of $P^{-1}X_{ij,k}$ and $P^{-1}X_{ij}$ corresponding to the zero rows in $\diag(1,\ldots,1,0,\ldots,0)$ vanish. Hence, the corresponding rows in the matrix $P^{-1}p_0^\top 2Q(x)$ also vanish. Therefore, the equation (\ref{eq:Pinverse_a}) has solutions. The solution of the following linear equation provides a particular solution of (\ref{eq:Pinverse_a}):
\begin{equation*}
 \overline{P}^\top\overline{\phi_x(v)}=-P^{-1}p_0^\top 2Q(x)\begin{pmatrix}
  \mathbf{0} \\
  \sigma_{xy_1}^{\top}\overline{v_1}\\
  \vdots\\
  \sigma_{xy_m}^\top\overline{v_m}
\end{pmatrix}.
\end{equation*}
That is, we can set
\begin{equation*}
\overline{\phi_x(v)}=-\left(P\overline{P}^\top\right)^{-1}p_0^\top 2Q(x)\begin{pmatrix}
  \mathbf{0} \\
  \sigma_{xy_1}^{\top}\overline{v_1}\\
  \vdots\\
  \sigma_{xy_m}^\top\overline{v_m}
\end{pmatrix},
\end{equation*}
which is a linear map from $T_x(G,\sigma)$ to $\mathbb{K}^{d}$.
\end{proof}

Next, we define discrete Bakry--\'Emery Ricci curvature tensor and the metric tensor.
\begin{definition}[Curvature tensor and metric tensor]\label{def:curvature_metric_tensors}
Let $x$ be a given vertex in a connection graph $(G,\sigma)$.  For any linear map $\phi_x: T_x(G,\sigma)\to \mathbb{K}^d$ satisfying (\ref{eq:phiv}), we define a discrete Bakry--\'Emery Ricci curvature tensor $\mathrm{Ric}^x_\infty$  as a $(0,2)$-tensor, i.e., a sesquilinear map
\begin{equation*}
 \mathrm{Ric}^x_\infty: T_x(G,\sigma)\times T_x(G,\sigma) \to \mathbb{K},
\end{equation*}
by
\begin{equation}
 \mathrm{Ric}^x_\infty(v_1,v_2):=\Psi_x\circ\Phi_x(v_1)^\top 2\Gamma_2^\sigma(x)\overline{\Psi_x\circ\Phi_x(v_2)},
\end{equation}
for any $v_1,v_2\in T_x(G,\sigma)$, where $\Phi_x: T_x(G,\sigma)\to \mathbb{K}^{(m+1)d}$ is the linear map defined as
\begin{equation}\label{eq:Phix}
  \Phi_x(v)=\begin{pmatrix}
    \phi_x(v)\\
    \sigma_{xy_1}^{-1}(v_1+\phi_x(v))\\
    \vdots \\
    \sigma_{xy_m}^{-1}(v_m+\phi_x(v))
  \end{pmatrix},\,\,v=\begin{pmatrix}
    v_1\\\vdots\\v_m
  \end{pmatrix}\in T_x(G,\sigma),
\end{equation}
and $\Psi_x$ is the linear map in Definition \ref{def:PhiPsi}.
For $N\in (0,\infty]$, the discrete $N$-Bakry--\'Emery Ricci curvature tensor is defined by
\begin{equation}
  \mathrm{Ric}^x_N(v_1,v_2):=\mathrm{Ric}^x_\infty(v_1,v_2)-\frac{2}{N}\Phi_x(v_1)^\top\Delta^\sigma(x)\overline{\Delta^\sigma(x)}^\top\overline{\Phi_x(v_2)}.
\end{equation}
 We define a metric tensor $g_x$ as a $(0,2)$-tensor, i.e., a sesquilinear map
\begin{equation*}
 g_x: T_x(G,\sigma)\times T_x(G,\sigma) \to \mathbb{K},
\end{equation*}
by
\begin{equation}
 g_x(v_1,v_2):=\Phi_x(v_1)^\top 2\Gamma^\sigma(x)\overline{\Phi_x(v_2)}.
\end{equation}
for any $v_1,v_2\in T_x(G,\sigma)$.
\end{definition}

\begin{remark}
Notice that the metric tensor is independent of the choices of the linear map $\phi_x$. Indeed, we have
\[g_x(v_1,v_2)=\sum_{i=1}^mp_{xy_i}v_{1i}^\top\overline{v_{2i}},\]
for any $v_t=(v_{t1}^\top,\ldots,v_{tm}^\top)^\top\in T_x(G,\sigma),\,t=1,2$. The metric tensor $g_x$ provides an inner product to the linear space $T_x(G,\sigma)$.
\end{remark}
\begin{proposition}
Let $x$ be a given vertex in a connection graph $(G,\sigma)$ with a balanced local connection structure $B_2^{inc}(x)$. Then the definition of the tensor $\mathrm{Ric}^x_N$ is independent of the choices of the linear map $\phi_x$ satisfying (\ref{eq:phiv}).
\end{proposition}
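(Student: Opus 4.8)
The plan is to show that the linear map $\phi_x$ enters $\Phi_x$, and hence $\mathrm{Ric}^x_N$, only through the column-block matrix $p_0$ (the transpose of (\ref{eq:p_0})), which in the balanced case is annihilated, on the appropriate side, by both $2Q(x)$ and $\Delta^\sigma(x)$. First I would record the decomposition
\[\Phi_x(v) = \Phi_x^0(v) + p_0\,\phi_x(v),\]
where $\Phi_x^0(v)$ denotes the value of (\ref{eq:Phix}) obtained by setting $\phi_x\equiv 0$, so that $\Phi_x^0(v)$ is manifestly independent of $\phi_x$. Since $\sigma_{xy_i}^{-1} = \overline{\sigma_{xy_i}}^{\top}$, the $i$-th block of $p_0$ equals $\sigma_{xy_i}^{-1}$, and therefore $p_0\,\phi_x(v)$ exactly reproduces the $\phi_x$-contribution to every block of $\Phi_x(v)$.

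Next I would reduce $\mathrm{Ric}^x_\infty$ to a form governed by $2Q(x)$. The map $\Psi_x$ of Definition \ref{def:PhiPsi} is built precisely so that its lower block is the minimizer identified in Observation 1 (equivalently, the vector of Lemma \ref{lemma:schur_technical} that makes the norm-square term in (\ref{eq:Schur_sesquilinear_form}) vanish). A direct block computation, using that $2\Gamma_2^\sigma(x)_{S_2,S_2}$ is real and positive definite and that $2\Gamma_2^\sigma(x)$ is Hermitian, then yields for all $w,w'\in\mathbb{K}^{(m+1)d}$ the Schur reduction
\[\Psi_x(w)^\top 2\Gamma_2^\sigma(x)\,\overline{\Psi_x(w')} = w^\top 2Q(x)\,\overline{w'}.\]
Applying this with $w=\Phi_x(v_1)$, $w'=\Phi_x(v_2)$ gives $\mathrm{Ric}^x_\infty(v_1,v_2) = \Phi_x(v_1)^\top 2Q(x)\,\overline{\Phi_x(v_2)}$, so that I may work with the matrix $2Q(x)$ in place of the full $\Gamma_2^\sigma$-form.

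Now I would invoke the balanced hypothesis. By the appendix computation (equations (\ref{eq:B_0Q(x)}) and (\ref{eq:B_0Q(x)i})), balancedness of $B_2^{inc}(x)$ forces $p_0^\top 2Q(x) = \mathbf{0}$; since $2Q(x)$ is Hermitian, taking conjugate transposes also gives $2Q(x)\,\overline{p_0} = \mathbf{0}$. (This is likewise why (\ref{eq:phiv}) holds here for every linear $\phi_x$.) Substituting the decomposition into $\Phi_x(v_1)^\top 2Q(x)\,\overline{\Phi_x(v_2)}$ and expanding, each of the three terms carrying a factor $p_0\phi_x(v_1)$ or $p_0\phi_x(v_2)$ contains either $\phi_x(v_1)^\top\bigl(p_0^\top 2Q(x)\bigr)$ or $\bigl(2Q(x)\,\overline{p_0}\bigr)\overline{\phi_x(v_2)}$ and hence vanishes, leaving $\mathrm{Ric}^x_\infty(v_1,v_2) = \Phi_x^0(v_1)^\top 2Q(x)\,\overline{\Phi_x^0(v_2)}$, which is free of $\phi_x$. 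For the dimension term I would use that $p_0^\top\Delta^\sigma(x) = \mathbf{0}$, which follows from (\ref{eq:Gamma_Laplacian}) applied to the zero-eigenvector block $p_0$, or by the direct identity $-\tfrac{d_x}{\mu_x}I_d + \sum_i p_{xy_i}\,\overline{\sigma_{xy_i}}\,\sigma_{xy_i}^\top = \mathbf{0}$ coming from $\sum_i p_{xy_i}=d_x/\mu_x$ and the unitarity relation $\overline{\sigma_{xy_i}}\,\sigma_{xy_i}^\top = I_d$. Then $\Phi_x(v_1)^\top\Delta^\sigma(x) = \Phi_x^0(v_1)^\top\Delta^\sigma(x)$ and correspondingly on the conjugated side, so the correction $-\tfrac{2}{N}\Phi_x(v_1)^\top\Delta^\sigma(x)\,\overline{\Delta^\sigma(x)}^\top\overline{\Phi_x(v_2)}$ is $\phi_x$-independent as well. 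Combining the two pieces gives the claim for every $N\in(0,\infty]$.

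The step requiring the most care is the reduction identity for $\Psi_x$: one must verify that the conjugation pattern in the lower block of $\Psi_x$, dictated by (\ref{eq:f2=f0f1}) and the reality of $2\Gamma_2^\sigma(x)_{S_2,S_2}$, is exactly what forces the cross terms to cancel so that the full $2\Gamma_2^\sigma$-form collapses to the $2Q(x)$-form. Once this is established, the balanced vanishing $p_0^\top 2Q(x)=\mathbf{0}$ together with the Laplacian orthogonality $p_0^\top\Delta^\sigma(x)=\mathbf{0}$ do all the remaining work, and the argument becomes purely algebraic.
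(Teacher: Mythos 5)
Your proof is correct and follows essentially the same route as the paper's: the paper likewise splits $\Phi_x(v)=u_1+u_2$ with $u_2=p_0\,\phi_x(v)$ and kills every $\phi_x$-dependent term using $p_0^\top 2Q(x)=\mathbf{0}$ (from (\ref{eq:B_0Q(x)})--(\ref{eq:B_0Q(x)i}) in the balanced case) together with $p_0^\top\Delta^\sigma(x)=\mathbf{0}$. The only differences are presentational: you verify the Schur reduction $\Psi_x(w)^\top 2\Gamma_2^\sigma(x)\overline{\Psi_x(w')}=w^\top 2Q(x)\overline{w'}$ explicitly and treat the full sesquilinear form $\mathrm{Ric}^x_N(v_1,v_2)$, whereas the paper uses that reduction implicitly and argues only on the quadratic form $\mathrm{Ric}^x_N(v,v)$.
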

\begin{proof}
In the case of a balanced local connection structure $B^{inc}_{2}(x)$, the signature of any $3$-cycle $x\sim y_{i}\sim y_{j}\sim x$ or any $4$-cycle $x\sim y_{i}\sim z_{k} \sim y_{j} \sim x$ is (the conjugacy class of) $I_{d}$. By Appendix (\ref{eq:appendixa}) and (\ref{eq:B_0Q(x)})-(\ref{eq:B_0Q(x)i}), the coefficient matrices in (\ref{eq:p_02Q(x)}) are $$a=\mathbf{0}_d,\,\,\text{ and} \,\,(p_{0}^{\top}2Q(x))=\begin{pmatrix}
  \mathbf{0}_d & \mathbf{0}_d & \cdots & \mathbf{0}_d
\end{pmatrix}.$$
 And the equation (\ref{eq:p_02Q(x)})
becomes $$\mathbf{0}_{d}\phi_{x}(v)=\mathbf{0}_{d}.$$
Therefore, the vector $\phi_{x}(v)$ can be arbitrarily chosen.

For any vector $v\in \mathbb{K}^{md}$, we have by (\ref{eq:Phix}) that $\Phi_x(v)=u_1+u_2$ with
$$u_{1}:=
\begin{pmatrix}
     \mathbf{0} \\
     \sigma_{xy_{1}}^{-1}v_{1} \\
     \vdots \\
     \sigma_{xy_{m}}^{-1}v_{m}
\end{pmatrix},\,\,u_2:=
\begin{pmatrix}
    \phi_{x}(v) \\
    \sigma_{xy_{1}}^{-1}\phi_{x}(v) \\
    \vdots \\
    \sigma_{xy_{m}}^{-1}\phi_{x}(v)
\end{pmatrix}=p_0 \phi_x(v).$$
By (\ref{eq:Delta_x}), we observe that
\begin{equation*}
u_2^\top\Delta^\sigma(x)=\phi_x(v)^\top p_0^\top \Delta^\sigma(x)=\mathbf{0}_{1\times d}.
\end{equation*}
Since $(p_{0}^{\top}2Q(x))=\mathbf{0}_{d\times md}$, we have that
\begin{equation*}
u_2^\top 2Q(x)=\phi_x(v)^\top(p_{0}^{\top}2Q(x))=\mathbf{0}_{1\times md}.
\end{equation*}
Therefore, we derive by definition that
\begin{align*}
Ric^{x}_{N}(v,v)=&\Phi_{x}(v)^\top(2Q(x)-\frac{2}{N}\Delta^{\sigma}(x)\overline{\Delta^{\sigma}(x)}^\top)\overline{\Phi_{x}(v)}\\
=&(u_{1}+u_{2})^\top(2Q(x)-\frac{2}{N}\Delta^{\sigma}(x)\overline{\Delta^{\sigma}(x)}^\top)\overline{(u_{1}+u_{2})}\\
=&u_{1}^\top(2Q(x)-\frac{2}{N}\Delta^{\sigma}(x)\overline{\Delta^{\sigma}(x)}^\top)\overline{u_{1}}.
\end{align*}
In conclusion, the definition of the tensor $Ric^{x}_{N}$ is independent of the choices of $\phi_x$ in the locally balanced case.
\end{proof}
\begin{proposition}
Let $x$ be a given vertex in a $U(1)$-connection graph $(G,\sigma)$ with an unbalanced local connection structure $B_2^{inc}(x)$.  Then the linear map $\phi_x$ satisfying (\ref{eq:phiv}) is unique.
\end{proposition}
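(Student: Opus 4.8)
The plan is to reduce the uniqueness of $\phi_x$ to the invertibility of the matrix $a=p_0^\top 2Q(x)\overline{p_0}$, and then to exploit the fact that in the $U(1)$ case $a$ is a single nonnegative scalar. First I would observe that expanding (\ref{eq:phiv}) and splitting its argument into the $v$-part and the $\phi_x(v)$-part reproduces exactly the relation (\ref{eq:p_02Q(x)}), namely
\[
a\,\overline{\phi_x(v)}=-p_0^\top 2Q(x)\begin{pmatrix}\mathbf{0}\\ \sigma_{xy_1}^{\top}\overline{v_1}\\ \vdots\\ \sigma_{xy_m}^{\top}\overline{v_m}\end{pmatrix},
\]
where one uses $\overline{p_0}\,\overline{\phi_x(v)}=(\overline{\phi_x(v)},\sigma_{xy_1}^{\top}\overline{\phi_x(v)},\ldots,\sigma_{xy_m}^{\top}\overline{\phi_x(v)})^\top$. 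Since $a\succeq 0$ is real, once $a\neq 0$ the right-hand side (which is linear in $v$) determines $\overline{\phi_x(v)}$, hence $\phi_x(v)$, uniquely, and the resulting assignment $v\mapsto\phi_x(v)$ is automatically linear. Thus everything comes down to proving $a\neq 0$ under the unbalancedness hypothesis.

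For a $U(1)$-connection graph $d=1$, so every block is a scalar and $a$ is a nonnegative real number; invertibility simply means $a>0$. Here I would invoke the Hermitian sum-of-squares decomposition (\ref{eq:Appendix_a}) of the Appendix, which for $d=1$ collapses to
\[
a=\sum_{i,j,k}|X_{ij,k}|^2+\sum_{i,j}|X_{ij}|^2.
\]
Consequently $a=0$ if and only if every scalar $X_{ij,k}$ and every $X_{ij}$ vanishes, so it suffices to show that unbalancedness of $B_2^{inc}(x)$ forces at least one of these to be nonzero.

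The main obstacle, and the step requiring the most care, is to match each summand with a cycle of $B_2^{inc}(x)$: I expect $X_{ij}$ to correspond to the triangle $x\sim y_i\sim y_j\sim x$ and $X_{ij,k}$ to the quadrilateral $x\sim y_i\sim z_k\sim y_j\sim x$, and I would verify from the explicit Appendix formula that, up to a unit-modulus switching phase, each such scalar equals a strictly positive weight times $1-\sigma_C$, where $\sigma_C\in U(1)$ is the signature of the corresponding cycle $C$; in particular $X=0$ exactly when $\sigma_C=1$ (the abelianness of $U(1)$ makes $\sigma_C$ a genuine number rather than a conjugacy class). Granting this, I would finish with a cycle-basis argument: $B_2^{inc}(x)$ is connected, and with respect to the spanning tree given by the edges $x\sim y_i$ together with one edge $y_{i(k)}\sim z_k$ per vertex $z_k\in S_2(x)$, the fundamental cycles are precisely these triangles and quadrilaterals. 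Since the signature is a homomorphism on the cycle space (again using that $U(1)$ is abelian), the vanishing of all $X$'s would make every fundamental cycle, hence every cycle, balanced, contradicting the hypothesis. Therefore some $X\neq 0$, giving $a>0$ and the uniqueness of $\phi_x$. I would also remark that it is exactly the scalar nature of $a$ when $d=1$ that makes a single nonvanishing $X$ sufficient; for $d>1$ one would only conclude $a\neq 0$ rather than that $a$ is invertible, which is why the statement is confined to the $U(1)$ setting.
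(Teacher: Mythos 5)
Your proof is correct and takes essentially the same route as the paper's: both reduce uniqueness of $\phi_x$ to showing $a=p_0^\top 2Q(x)\overline{p_0}>0$ via the sum-of-squares decomposition (\ref{eq:Appendix_a}), whose summands for $d=1$ are positive weights times $|1-\sigma_C|^2$ over the triangles $x\sim y_i\sim y_j\sim x$ and quadrilaterals $x\sim y_i\sim z_k\sim y_j\sim x$ of $B_2^{inc}(x)$. Your spanning-tree/fundamental-cycle argument makes fully explicit a step the paper asserts directly — that unbalancedness of $B_2^{inc}(x)$ forces at least one of these cycles to carry signature $\neq 1$ — so it is a welcome sharpening of the same argument rather than a different approach.
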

\begin{proof}
For the case of $U(1)$-connection, the signature of each edge is a complex number of norm one, i.e. $e^{i\theta}$ for some $\theta\in [0,2\pi)$.
Recall from (\ref{eq:Appendix_a}) that the coefficient $a:=p_{0}^{\top}2Q(x)\overline{p_{0}}$ of $\phi_{x}(v)$ in (\ref{eq:p_02Q(x)}) has the form
$$a=\sum_{i,j,k}X_{ij,k}\overline{X_{ij,k}}+\sum_{i,j}X_{ij}\overline{X_{ij}},$$
where $X_{ij,k}=C_{ij,k}(1-e^{i\theta_{ij,k}})$ and $X_{ij}=C_{ij}(1-e^{i\theta_{ij}}), \,\,i,j=1,\dots,m,\,\,k=1,\dots,n$ and $C_{ij,k}, C_{ij}$ are positive coefficients.

If the local connection structure of the vertex $x$ is unbalanced, then there exists some $\{ij,k\}$ or $\{ij\}$ such that $$e^{i\theta_{ij,k}}=\cos(\theta_{ij,k})+i\sin(\theta_{ij,k})\neq 1,\,\,\text{ or}\,\, e^{i\theta_{ij}}=\cos(\theta_{ij})+i\sin(\theta_{ij})\neq 1,$$
and
$$1-e^{i\theta_{ij,k}}=(1-\cos(\theta_{ij,k}))-i\sin(\theta_{ij,k})\neq 0,\,\,\text{ or}\,\, 1-e^{i\theta_{ij}}=(1-\cos(\theta_{ij}))-i\sin(\theta_{ij})\neq 0.$$

We derive that either $\Re(X_{ij,k})>0$ and $X_{ij,k}\overline{X_{ij,k}}>0$\label{notation:real_part}, or $\Re(X_{ij})>0$ and $X_{ij}\overline{X_{ij}}>0$. Hence, $a>0$
and $\phi_{x}(v)$ can be uniquely determined by the equation (\ref{eq:p_02Q(x)}).
In other words, the linear map $\phi_{x}$ is unique in the case of $U(1)$-connection.
\end{proof}

\begin{theorem}\label{thm:curvature_tensor}
Let $x$ be a vertex in a connection graph $(G,\sigma)$. Then we have for any $N\in (0,\infty]$
\begin{equation}\label{eq:curvature_tensor_identity}
    \K_{G,\sigma,x}(N)=\inf_{v\in T_x(G,\sigma)\setminus\{0\}}\frac{\mathrm{Ric}^x_N(v,v)}{g_x(v,v)}.
\end{equation}
\end{theorem}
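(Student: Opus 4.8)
The plan is to deduce the identity from the variational formula of Lemma \ref{lemma:GammaRayleigh} by matching the tensorial Rayleigh quotient $\mathrm{Ric}^x_N(v,v)/g_x(v,v)$ with the functional Rayleigh quotient appearing there. The bridge is the correspondence $v=\Pi([f])$ between a tangent vector and the class of a function on $B_1(x)$: a representative $f$ with prescribed value $f(x)$ has $B_1$-local vector $\vec f(x)=\Phi_x^{f(x)}(v)$, where I write $\Phi_x^{c}(v)$ for the vector in (\ref{eq:Phix}) with $\phi_x(v)$ replaced by an arbitrary $c\in\mathbb{K}^d$. I first record the metric identity: since $2\Gamma^\sigma(f)(x)=\vec f(x)^\top 2\Gamma^\sigma(x)\overline{\vec f(x)}$ depends only on the differences $\sigma_{xy_i}f(y_i)-f(x)=v_i$, it equals $\sum_i p_{xy_i}|v_i|^2=g_x(v,v)$ independently of $f(x)$; in particular $\Gamma^\sigma(f)(x)\neq 0$ if and only if $v\neq 0$, so the admissible $f$ in Lemma \ref{lemma:GammaRayleigh} correspond exactly to $v\neq 0$.

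For the inequality $\K_{G,\sigma,x}(N)\le \inf_v(\cdots)$, I fix $v\neq 0$ and take the distinguished representative $\tilde f$ whose $B_2$-local vector is $\Psi_x\circ\Phi_x(v)$. Two computations then show that the functional Rayleigh quotient of $\tilde f$ (after clearing a common factor $2$) equals exactly $\mathrm{Ric}^x_N(v,v)/g_x(v,v)$. For the $\Gamma_2$-part, the Schur decomposition (\ref{eq:Schur_sesquilinear_form}) applied to $2\Gamma_2^\sigma(x)$ eliminating the positive definite block $\Gamma_2^\sigma(x)_{S_2,S_2}$ writes $2\Gamma_2^\sigma(\tilde f)(x)$ as $\Phi_x(v)^\top 2Q(x)\overline{\Phi_x(v)}$ plus a norm-square term; the choice of $f_2$ inside $\Psi_x$ is precisely the one from Observation 1, which by Lemma \ref{lemma:schur_technical} makes that term vanish, so $2\Gamma_2^\sigma(\tilde f)(x)=\mathrm{Ric}^x_\infty(v,v)$ by Definition \ref{def:curvature_metric_tensors}. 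For the $N$-part, $\Delta^\sigma\tilde f(x)=\sum_i p_{xy_i}v_i$ depends only on $v$, matching the subtracted term $\tfrac{2}{N}\Phi_x(v)^\top\Delta^\sigma(x)\overline{\Delta^\sigma(x)}^\top\overline{\Phi_x(v)}$. Hence Lemma \ref{lemma:GammaRayleigh} applied to $\tilde f$ gives $\K_{G,\sigma,x}(N)\le \mathrm{Ric}^x_N(v,v)/g_x(v,v)$ for every $v\neq 0$, and taking the infimum over $v$ yields one inequality.

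For the reverse inequality I take an arbitrary $f$ with $\Gamma^\sigma(f)(x)\neq 0$ and set $v=\Pi([f])\neq 0$. Discarding the nonnegative norm-square term in (\ref{eq:Schur_sesquilinear_form}) (exactly as in (\ref{eq:K_Rayleigh_1})) gives $2\Gamma_2^\sigma(f)(x)\ge \vec f(x)^\top 2Q(x)\overline{\vec f(x)}$, while $|\Delta^\sigma f(x)|^2$ again depends only on $v$; thus the numerator is at least $\vec f(x)^\top M\overline{\vec f(x)}$ with $M:=2Q(x)-\tfrac{2}{N}\Delta^\sigma(x)\overline{\Delta^\sigma(x)}^\top$. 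Writing $\vec f(x)=\Phi_x^{f(x)}(v)$ and minimising the Hermitian form over the free value $f(x)$, the critical-point equation is $p_0^\top M\,\overline{\Phi_x^{f(x)}(v)}=\mathbf 0$, which collapses to (\ref{eq:phiv}) because $p_0^\top\Delta^\sigma(x)=\mathbf 0$ by (\ref{eq:Gamma_Laplacian}); its solution is $f(x)=\phi_x(v)$ and the minimal value is $\mathrm{Ric}^x_N(v,v)$. Therefore the Rayleigh quotient of $f$ is $\ge \mathrm{Ric}^x_N(v,v)/g_x(v,v)\ge \inf_{v'}(\cdots)$, and taking the infimum over $f$ in Lemma \ref{lemma:GammaRayleigh} gives $\K_{G,\sigma,x}(N)\ge \inf_v(\cdots)$. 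Combining the two inequalities proves the identity.

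The main obstacle is this last minimisation step: I must guarantee that (\ref{eq:phiv}) is solvable, so that a minimiser $\phi_x(v)$ exists, and that the critical point is a genuine minimum rather than a saddle, with minimal value equal to $\mathrm{Ric}^x_N(v,v)$. Both points rest on $a=p_0^\top 2Q(x)\overline{p_0}\succeq 0$ and the pseudoinverse compatibility $a\,a^\dagger\omega^\top=\omega^\top$ established in Proposition \ref{prop:aadaggeromega} (which is why the Hessian in the $p_0$-direction is $a\succeq 0$, forcing convexity), together with Lemma \ref{lemma:schur_technical}. This is essentially the same two-layer Schur-complement computation as in the alternative proof of Theorem \ref{thm:curvature_eigenvalue}, now read off through the maps $\Phi_x$ and $\Psi_x$; everything else reduces to the bookkeeping identities $g_x(v,v)=2\Gamma^\sigma(f)(x)$ and $\mathrm{Ric}^x_\infty(v,v)=\Phi_x(v)^\top 2Q(x)\overline{\Phi_x(v)}$.
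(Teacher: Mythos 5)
Your proof is correct and follows essentially the same route as the paper: the paper's proof consists of invoking Lemma \ref{lemma:GammaRayleigh} together with the calculations in the alternative proof of Theorem \ref{thm:curvature_eigenvalue} (Observations 1 and 2, resting on the Schur identity (\ref{eq:Schur_sesquilinear_form}), Lemma \ref{lemma:schur_technical} and Proposition \ref{prop:aadaggeromega}) to restrict the infimum to functions whose local vector is $\Psi_x\circ\Phi_x(v)$, which is exactly the two-layer Schur-complement argument you unfold into explicit upper and lower bounds. The only cosmetic difference is that you verify optimality of the choice $f(x)=\phi_x(v)$ directly in the $p_0$-coordinates, via solvability of (\ref{eq:phiv}) and convexity of the quadratic with Hessian $a\succeq 0$, whereas the paper's Observation 2 derives the same equation in the $B$-coordinates from the vanishing norm-square term.
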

\begin{proof}
 Recall from Lemma \ref{lemma:GammaRayleigh} that
\[\K_{G,\sigma,x}(N)=\inf_{\substack{f:V\to \mathbb{K}^d\\\text{with}\,\,\Gamma^\sigma(f)(x)\neq 0}}\frac{\Gamma_2^\sigma(f)(x)-(1/N)|\Delta^\sigma f(x)|^2}{\Gamma^\sigma(f)(x)}.\]
By the calculations in the alternative proof of Theorem \ref{thm:curvature_eigenvalue}, it is enough to take the infimum above over a sub-class of functions:
\[\{f: V\to \mathbb{K}^{d}: \Gamma^\sigma(f)(x)\neq 0, \begin{pmatrix}
  f_0 \\ f_1 \\f_2
\end{pmatrix}=\Psi_x\circ\Phi_x(f_1) \},\]
where we adopt the notations $f_0, f_1, f_2$ given in (\ref{eq:f0f1f2}). By Definition \ref{def:curvature_metric_tensors}, we obtain
\[\K_{G,\sigma,x}(N)=\inf_{\substack{f_1\in \mathbb{K}^{md}\\\text{with}\,\,g_x(f_1,f_1)\neq 0}}\frac{\mathrm{Ric}^x_N(f_1,f_1)}{g_x(f_1,f_1)}.\]
Since $g_x(f_1,f_1)\neq 0$ if and only if $f_1\neq 0$, we arrive at (\ref{eq:curvature_tensor_identity}).
\end{proof}


Notice that $\mathbb{K}^{md}=\{(v^1,\ldots, v^{md})^\top: v_i\in \mathbb{K}, i=1,\ldots,md\}$ is the space of $md$-tuples. It can also be considered as the linear space spanned by the basis $\epsilon_1,\ldots,\epsilon_{md}$, where 
\begin{equation*}\label{eq:standard_coordinate}
\epsilon_i=(0,\ldots,1, \ldots,0)
\end{equation*}
with the only $1$ appearing in the $i$-th position.
Given any orthonormal basis of the inner product space $(\mathbb{K}^{md}, g_x(\cdot,\cdot))$, we have a unique matrix corresponding to the curvature tensor $\mathrm{Ric}_N^x$.

\begin{proposition}\label{prop:tensor_matrix}
 For any nonsingular matrix $B$ satisfying (\ref{eq:B}), there exists an orthonormal basis $e_1,\ldots, e_{md}$ of the inner product space $(\mathbb{K}^{md}, g_x(\cdot, \cdot))$, such that for any $v=\sum_{i=1}^{md}v^i\epsilon_i=\sum_{i=1}^{md}v_{B}^ie_i$,
\begin{equation}\label{eq:tensor_matrix}
 \mathrm{Ric}_N^x(v,v)=\begin{pmatrix}
  v_{B}^1 & \cdots &v_{B}^{md}
\end{pmatrix}A_N(G,\sigma,x,B)\begin{pmatrix}
  \overline{v_{B}^1} \\ \vdots \\\overline{v_{B}^{md}}
\end{pmatrix}.
\end{equation}
\end{proposition}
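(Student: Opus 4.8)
The plan is to push a given tangent vector $v\in T_x(G,\sigma)$ through the coordinate change induced by $B$, collapse the two Schur-complement identities hidden in the definitions of $\Psi_x$ and $\Phi_x$, and thereby read off both the orthonormality of the sought basis and the matrix $A_N(G,\sigma,x,B)$ at once. Throughout I fix the linear map $\phi_x$ satisfying (\ref{eq:phiv}) that is used to define $\mathrm{Ric}_N^x$.

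First I would reduce the curvature tensor to a quadratic form in $\Phi_x(v)$ alone. By construction $\Psi_x$ appends to $w=\Phi_x(v)$ exactly the $S_2$-component $f_2$ of (\ref{eq:f2=f0f1}); by Observation 1, i.e.\ Lemma \ref{lemma:schur_technical}, this is the choice annihilating the norm-square term in the Schur identity (\ref{eq:Schur_sesquilinear_form}) applied to $2\Gamma_2^\sigma(x)$ with $S_{11}$ the positive definite $S_2,S_2$-block (so the hypotheses of (\ref{eq:Schur_sesquilinear_form}) hold trivially). Hence $\mathrm{Ric}_\infty^x(v,v)=\Phi_x(v)^\top 2Q(x)\overline{\Phi_x(v)}$, and adding the Laplacian term of Definition \ref{def:curvature_metric_tensors},
$$\mathrm{Ric}_N^x(v,v)=\Phi_x(v)^\top\Big(2Q(x)-\tfrac{2}{N}\Delta^\sigma(x)\overline{\Delta^\sigma(x)}^\top\Big)\overline{\Phi_x(v)}.$$

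Next I would introduce the $B$-coordinates $\binom{f_{B,0}}{f_{B,1}}:=(B^{-1})^\top\Phi_x(v)$ as in (\ref{eq:fB}), so that $\Phi_x(v)=B^\top\binom{f_{B,0}}{f_{B,1}}$. Substituting into $g_x(v,v)=\Phi_x(v)^\top2\Gamma^\sigma(x)\overline{\Phi_x(v)}$ and using (\ref{eq:B}) collapses the metric to standard form, $g_x(v,v)=\binom{f_{B,0}}{f_{B,1}}^\top\diag(\mathbf{0}_d,I_{md})\overline{\binom{f_{B,0}}{f_{B,1}}}=|f_{B,1}|^2$; the same substitution gives more generally $g_x(v,w)=f_{B,1}(v)^\top\overline{f_{B,1}(w)}$. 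Since $g_x$ is a genuine inner product (positive definite), the linear map $L_B:v\mapsto f_{B,1}(v)$ has trivial kernel, hence is a bijection $T_x(G,\sigma)\to\mathbb{K}^{md}$. I then set $e_i:=L_B^{-1}(\epsilon_i)$, whence $g_x(e_i,e_j)=\epsilon_i^\top\overline{\epsilon_j}=\delta_{ij}$ exhibits the desired orthonormal basis, and for $v=\sum_i v_B^i e_i$ one has $f_{B,1}(v)=\sum_i v_B^i\epsilon_i$, i.e.\ $v_B^i=(f_{B,1}(v))_i$.

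Finally I would apply the Schur identity a second time. Writing $M:=B\big(2Q(x)-\frac{2}{N}\Delta^\sigma(x)\overline{\Delta^\sigma(x)}^\top\big)\overline{B}^\top$, the block formulas (\ref{eq:BLaplacian})--(\ref{eq:BQB}) give $M=\left(\begin{smallmatrix} a & \omega^\top \\ \overline{\omega} & (2BQ(x)\overline{B}^\top)_{\hat 1}-\frac{2}{N}\mathbf{v}_0\overline{\mathbf{v}_0}^\top\end{smallmatrix}\right)$, whose Schur complement of $a$ is exactly $A_N(G,\sigma,x,B)$ of Definition \ref{def:curvature_matrices}. Proposition \ref{prop:aadaggeromega} supplies $a\succeq 0$ and $aa^\dagger\omega^\top=\omega^\top$, so (\ref{eq:Schur_sesquilinear_form}) applies with $S_{11}=a$, and $\mathrm{Ric}_N^x(v,v)=\binom{f_{B,0}}{f_{B,1}}^\top M\overline{\binom{f_{B,0}}{f_{B,1}}}$. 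The crucial point is that the defining property (\ref{eq:phiv}) of $\phi_x$, combined with $b_0^\top=E_dp_0^\top$, the orthogonality $b_0^\top\Delta^\sigma(x)=\mathbf{0}_d$ from (\ref{eq:Gamma_Laplacian}), and the unitarity identity $\overline{\sigma_{xy_i}^{-1}}=\sigma_{xy_i}^\top$, forces the top $d$ rows of $M\,\overline{\binom{f_{B,0}}{f_{B,1}}}$ to vanish; by Lemma \ref{lemma:schur_technical} the norm-square term drops out and $\mathrm{Ric}_N^x(v,v)=f_{B,1}^\top A_N(G,\sigma,x,B)\overline{f_{B,1}}$, which in the coordinates $v_B^i$ is precisely (\ref{eq:tensor_matrix}). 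I expect the main obstacle to be exactly this last verification — checking that $\overline{\Phi_x(v)}$ is the precise argument appearing in (\ref{eq:phiv}) and that the $\frac1N$-term does not disturb the top block, again by (\ref{eq:Gamma_Laplacian}) — since everything else is a repackaging of the computation already carried out in the alternative proof of Theorem \ref{thm:curvature_eigenvalue}.
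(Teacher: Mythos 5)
Your proposal is correct and follows essentially the same route as the paper's proof: you introduce $v_B$ through $(B^{-1})^\top\Phi_x(v)$, read off orthonormality from $g_x(v,v)=|v_B|^2$ together with injectivity of $v\mapsto v_B$, and reduce $\mathrm{Ric}_N^x$ by the same two Schur-complement collapses (first the $S_2,S_2$-block via $\Psi_x$, then the $a$-block via Proposition \ref{prop:aadaggeromega} and Lemma \ref{lemma:schur_technical}). The only difference is presentational: you make explicit — using (\ref{eq:phiv}), $b_0^\top=E_d p_0^\top$, (\ref{eq:Gamma_Laplacian}) and $\overline{\sigma_{xy_i}^{-1}}=\sigma_{xy_i}^\top$ — why the top $d$ rows of $M\,\overline{\Phi_x(v)}$ vanish so the norm-square term drops, a verification the paper's last displayed equality leaves implicit, having prepared it earlier in Observations 1 and 2 and Proposition \ref{prop:Phi}.
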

\begin{proof}
  For any $v=\sum_{i=1}^{md}v^i\epsilon_i$ and the given $B$, we define
\begin{equation}\label{eq:vB_defn}
v_{B}:=\begin{pmatrix}
 v_{B}^1 \\ \vdots \\v_{B}^{md}
\end{pmatrix}=\begin{pmatrix}
  \mathbf{0}_{md\times d} & I_{md}
\end{pmatrix}(B^{-1})^\top \Phi_x(v),\end{equation}
where $\Phi_x$ is the linear map given in (\ref{eq:Phix}).
Then we verify that
\begin{align}
 g_x(v,v)=&\Phi_x(v)^\top B^{-1}B2\Gamma^\sigma(x)\overline{B}^\top\overline{B^{-1}}^\top\overline{\Phi_x(v)}\notag\\
=&\Phi_x(v)^\top B^{-1}\begin{pmatrix}
  \mathbf{0}_{d\times d} & \\
& I_{md}
\end{pmatrix}\overline{B^{-1}}^\top\overline{\Phi_x(v)}\notag\\
=& v_{B}^\top\overline{v_B}.\label{eq:vB}
\end{align}
Let $\xi_B: \mathbb{K}^{md}\to \mathbb{K}^{md}$ be the linear map  defined by $\xi_B(v):=v_{B}$ for any $v\in \mathbb{K}^{md}$. If $\xi_B(v)=v_B=0$, we derive from (\ref{eq:vB}) that $g_x(v,v)=0$. Hence, $v=0$ as $g_x$ is an inner product. That is, the linear map $\xi_B$ is injective. Since $\xi_B$ is a linear map between two linear spaces of the same dimension, it is surjective. Thus, there exists a nonsingular matrix $M_B$ such that
\begin{equation}
\begin{pmatrix}
 v_{B}^1 \\ \vdots \\v_{B}^{md}
\end{pmatrix}=M_B \begin{pmatrix}
 v^1 \\ \vdots \\v^{md}
\end{pmatrix}.
\end{equation}

Then
\begin{equation*}
  \begin{pmatrix}
  e_1 \\ \vdots \\ e_{md}
\end{pmatrix}:=\left(M_B^{-1}\right)^\top\begin{pmatrix}
  \epsilon_1 \\ \vdots \\ \epsilon_{md}
\end{pmatrix}
\end{equation*}
is a basis of $\mathbb{K}^{md}$ such that $v=\sum_{i=1}^{md}v^i\epsilon_i=\sum_{i=1}^{md}v_B^ie_i$. Due to (\ref{eq:vB}), the basis $e_1, \ldots e_{md}$ is an orthonormal basis of the inner product space $(\mathbb{K}^{md}, g_x(\cdot, \cdot))$.

Finally, we check by definition
\begin{align*}
  \mathrm{Ric}_N^x(v,v)=&\Psi_x\circ\Phi_x(v)^\top (2\Gamma_2^\sigma(x)-\frac{2}{N}\Delta^\sigma(x)\overline{\Delta^\sigma(x)}^\top)\overline{\Psi_x\circ\Phi_x(v)}\\
=&\Phi_x(v)^\top (2Q(x)-\frac{2}{N}\Delta^\sigma(x)\overline{\Delta^\sigma(x)}^\top)\overline{\Phi_x(v)}\\
=&\Phi_x(v)^\top B^{-1} B(2Q(x)-\frac{2}{N}\Delta^\sigma(x)\overline{\Delta^\sigma(x)}^\top)\overline{B}^\top\overline{B^{-1}}^\top\overline{\Phi_x(v)}\\
=&v_B^\top A_N(G,\sigma,x,B)\overline{v_B}.
\end{align*}
That is, (\ref{eq:tensor_matrix}) holds true.
\end{proof}
\begin{remark}
In the identity (\ref{eq:tensor_matrix}), the tensor $\mathrm{Ric}^x_N(v,v)$ and the linear map $\xi_B$ sending $v$ to $v_B$ depend on the choices of $\phi_x$, and the matrix $A_N(G,\sigma,x,B)$ is independent of the choices of $\phi_x$.
\end{remark}

In view of Proposition \ref{prop:tensor_matrix}, we give an alternative proof of Proposition \ref{prop:unitarily_equivalent}.
\begin{proof}[An alternative proof of Proposition \ref{prop:unitarily_equivalent}]
For any two nonsingular matrices $B_1$ and $B_2$ satisfying (\ref{eq:B}), the curvature matrices $A_N(G,\sigma,x,B_1)$ and $A_N(G,\sigma,x,B_2)$ are the matrices of the discrete Bakry--\'Emery Ricci curvature tensor with respect to different orthonormal basis of the inner product space $(\mathbb{K}^{md},g_x(\cdot,\cdot))$. Hence, they are unitarily equivalent.

Next, we derive the unitary matrix explicitly. For any $v\in \mathbb{K}^{md}$, let $v_{B_1}$ and $v_{B_2}$ be defined as in (\ref{eq:vB_defn}) with respect to $B_1$ and $B_2$. Then, we calculate
\begin{align*}
 v_{B_2}=&\begin{pmatrix}
  \mathbf{0}_{md\times d} & I_{md}
\end{pmatrix}\left(B_2^{-1}\right)^\top\Phi_x(v)\\
=& \begin{pmatrix}
  \mathbf{0}_{md\times d} & I_{md}
\end{pmatrix}\left(B_1B_2^{-1}\right)^\top(B_1^{-1})^\top\Phi_x(v)\\
=&\begin{pmatrix}
  \mathbf{0}_{md\times d} & \left(B_1B_2^{-1}\right)^\top_{\hat{1}}
\end{pmatrix}\left(B_1^{-1}\right)^\top\Phi_x(v)\\
=& \left(B_1B_2^{-1}\right)^\top_{\hat{1}}v_{B_1}.
\end{align*}
In the third equality above, we have applied Lemma \ref{lemma:B1B2inverse}. Proposition \ref{prop:tensor_matrix} tells that
\begin{equation*}
  \mathrm{Ric}_N^x(v,v)=v_{B_1}^\top A_N(G,\sigma,x,B_1)\overline{v_{B_1}}=v_{B_2}^\top A_N(G,\sigma,x,B_2)\overline{v_{B_2}}.
\end{equation*}
This leads to $A_N(G,\sigma,x,B_1)=(B_1B_2^{-1})_{\hat{1}}A_N(G,\sigma,x,B_2)\overline{(B_1B_2^{-1})_{\hat{1}}}^\top$.
\end{proof}
\begin{remark}
  From the above proof, we see that the matrix $(B_1B_2^{-1})_{\hat{1}}^{-1}$ is actually the transformation matrix between two orthonormal basis of the inner product space $(\mathbb{K}^{md}, g_x(\cdot, \cdot))$. Therefore, it is quite natural to see $(B_1B_2^{-1})_{\hat{1}}$  is unitary (Lemma \ref{lemma:B1B2}) and the transitive property in Corollary \ref{coro:transitive} holds true.
\end{remark}
\section{Properties of curvature functions}\label{section:function}
In this section, we study the properties of curvature functions with the help of curvature matrices.
\begin{theorem}\label{thm:curvature_function}
Let $(G,\sigma)$ be a connection graph and $x$ be a given vertex. Then the curvature function $\K_{G,\sigma,x}: (0,\infty]\to \mathbb{R}$ satisfying the following properties.
\begin{itemize}
  \item [(i)] It is continuous, monotone non-decreasing and concave with \[\lim_{N\to 0}\K_{G,\sigma,x}(N)=-\infty\,\,\text{ and} \,\,\lim_{N\to \infty}\K_{G,\sigma,x}(N)<\infty.\]
  \item [(ii)] If there exist $0<N_1<N_2\leq \infty$ such that $\K_{G,\sigma,x}(N_1)=\K_{G,\sigma,x}(N_2)$, then the function $\K_{G,\sigma,x}$ is constant on the interval $[N_1,\infty]$.
  \item [(iii)] If the multiplicity of $\lambda_{\min}(A_{N_0}(G,\sigma,x,B_0))$ is larger than $d$ for some $N_0\in (0,\infty]$, then the function $\K_{G,\sigma,x}$ is constant on the interval $[N_0,\infty]$.
\end{itemize}
\end{theorem}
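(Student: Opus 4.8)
The plan is to reduce every assertion to the single identity $\K_{G,\sigma,x}(N)=\lambda_{\min}(A_N(G,\sigma,x,B))$ from Theorem \ref{thm:curvature_eigenvalue}, noting that by Proposition \ref{prop:unitarily_equivalent} the spectrum (eigenvalues with multiplicities) is independent of the chosen nonsingular $B$, so I may fix one. Writing $A_N=A_\infty-\frac{2}{N}\mathbf{v}_0\overline{\mathbf{v}_0}^\top$ as in Definition \ref{def:curvature_matrices}, everything is driven by two structural facts about $M:=\mathbf{v}_0\overline{\mathbf{v}_0}^\top$: it is Hermitian positive semidefinite (so $v^\top M\overline v=|\overline{\mathbf{v}_0}^\top\overline v|^2\ge 0$), and since $\mathbf{v}_0$ is an $md\times d$ matrix, $\rank M\le d$. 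With the canonical $B_0$ of Remark \ref{remark:B0} one also reads off $\mathbf{v}_0\neq 0$, so $M$ has a strictly positive eigenvalue.

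For (i) I would use the Rayleigh characterization $\lambda_{\min}(A_N)=\inf_{v\neq 0}\frac{v^\top A_\infty\overline v-(2/N)\,v^\top M\overline v}{v^\top\overline v}$. For each fixed $v$ the numerator, divided by $v^\top\overline v$, is $c_v-\frac{2}{N}d_v$ with $d_v=v^\top M\overline v/(v^\top\overline v)\ge 0$; since $N\mapsto -1/N$ is concave and non-decreasing on $(0,\infty)$, each such function of $N$ is concave and non-decreasing. Taking the infimum over $v$ preserves both properties (an infimum of concave functions is concave, and an infimum of non-decreasing functions is non-decreasing), which gives concavity and monotonicity of $\K_{G,\sigma,x}$. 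Continuity on $(0,\infty)$ follows from concavity (or directly from continuity of eigenvalues in the matrix entries) and extends to $N=\infty$ since $A_N\to A_\infty$. Finally $\lim_{N\to\infty}\K_{G,\sigma,x}(N)=\lambda_{\min}(A_\infty)<\infty$, while testing the Rayleigh quotient against a unit eigenvector $v$ of $M$ with eigenvalue $\mu>0$ yields $\K_{G,\sigma,x}(N)\le v^\top A_\infty\overline v-2\mu/N\to-\infty$ as $N\to 0$.

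Part (ii) is then a soft consequence of concavity and monotonicity. Assuming $\K_{G,\sigma,x}(N_1)=\K_{G,\sigma,x}(N_2)$ with $N_1<N_2<\infty$, for any $N_3>N_2$ I would apply the concavity inequality to the triple $N_1<N_2<N_3$ and substitute the hypothesis $\K(N_1)=\K(N_2)$; the convex-combination terms cancel and the inequality collapses to $\K_{G,\sigma,x}(N_3)\le\K_{G,\sigma,x}(N_1)$, which together with monotonicity forces equality, so $\K_{G,\sigma,x}$ is constant on $[N_1,\infty)$ and hence, by the continuity from (i), on $[N_1,\infty]$. The remaining case $N_2=\infty$ is immediate, since a non-decreasing function whose value at $N_1$ already equals its supremum $\K_{G,\sigma,x}(\infty)$ must be constant on $[N_1,\infty]$.

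The heart of the theorem is (iii), and the rank bound on $M$ is the crucial input; this is the step I expect to be the real obstacle, since it is what genuinely uses the connection dimension $d$. Let $\lambda_0:=\lambda_{\min}(A_{N_0})$ and let $E$ be its eigenspace, so by hypothesis $\dim E>d\ge\rank M$. The restriction $M|_E\colon E\to\mathbb{K}^{md}$ then has rank at most $\rank M<\dim E$, so by rank--nullity it has a nonzero kernel: there is $v\in E\setminus\{0\}$ with $Mv=0$. For such a $v$ one gets $A_\infty v=A_{N_0}v=\lambda_0 v$, hence $A_N v=A_\infty v-\frac{2}{N}Mv=\lambda_0 v$ for \emph{every} $N$; thus $\lambda_0$ is an eigenvalue of $A_N$ for all $N$, giving $\lambda_{\min}(A_N)\le\lambda_0$. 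Combined with the monotonicity from (i), which yields $\lambda_{\min}(A_N)\ge\lambda_{\min}(A_{N_0})=\lambda_0$ for $N\ge N_0$, we conclude $\K_{G,\sigma,x}(N)=\lambda_0$ on $[N_0,\infty]$. The delicate points are precisely the two reductions at the start—reading the multiplicity off a single fixed curvature matrix via Proposition \ref{prop:unitarily_equivalent}, and the bound $\rank(\mathbf{v}_0\overline{\mathbf{v}_0}^\top)\le d$—after which the production of an $N$-independent common eigenvector is purely linear-algebraic.
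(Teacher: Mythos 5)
Your proposal is correct, and it reaches the three properties by a route that differs from the paper's in instructive ways. For (i), the paper's engine is the superadditivity $\lambda_{\min}(M_1+M_2)\geq\lambda_{\min}(M_1)+\lambda_{\min}(M_2)$ of (\ref{eq:sum_trick}), applied with the explicitly computed spectrum of $\mathbf{v}_0(B_0)\overline{\mathbf{v}_0(B_0)}^\top$ (eigenvalue $\sum_y p_{xy}$ with multiplicity $d$, eigenvalue $0$ with multiplicity $(m-1)d$, read off from (\ref{eq:canonicalv0})); you instead realize $\lambda_{\min}(A_N)$ as the pointwise infimum over unit vectors of the functions $N\mapsto c_v-2d_v/N$ with $d_v\geq 0$, each concave and non-decreasing, which is an equally rigorous and arguably more transparent packaging, and your $N\to 0$ limit (testing against an eigenvector of $M=\mathbf{v}_0\overline{\mathbf{v}_0}^\top$ with positive eigenvalue, which exists since $\mathbf{v}_0(B_0)\neq 0$) matches the paper's estimate via $\lambda_{\max}(A_\infty)$. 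The genuine divergence is in (ii): the paper analyzes the equality case of (\ref{eq:monotonicity}) to extract a common eigenvector of $A_{N_1}$ and $M$ corresponding to their minimal eigenvalues and then propagates it to all $N\geq N_1$, whereas you use only the soft fact that a non-decreasing concave function constant on $[N_1,N_2]$ must be constant on $[N_1,\infty)$, plus continuity at $\infty$; this avoids the ``only if'' direction of the equality characterization entirely and would survive any alternative proof of (i). For (iii) your argument is the same idea as the paper's but cleaner: the paper manufactures a vector $w\in E_{\min}(A_{N_0})$ orthogonal to $\mathrm{span}\{v_{01},\ldots,v_{0d}\}$ by hand from $d+1$ linearly independent eigenvectors, which is exactly a rank--nullity count in disguise; your formulation, using only $\dim E_{\min}(A_{N_0})>d\geq\rank(M)$ (valid because $\mathbf{v}_0$ is $md\times d$ for \emph{every} admissible $B$), makes explicit that the exact spectrum of $M$ is never needed for this step, only the rank bound, and then the common eigenvector $w$ with $Mw=0$ and $A_{N_0}w=\lambda_{\min}(A_{N_0})w$ yields $A_Nw=\lambda_{\min}(A_{N_0})w$ for all $N$, closing the argument exactly as in the paper.
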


\begin{proof} The continuity of the curvature function is due to the fact that the zeros of a polynomial are continuous functions of the coefficients of the polynomial. (See the proof of \cite[Proposition 3.1]{CKLP22}.)

For two Hermitian matrices $M_1$ and $M_2$, we recall that
\begin{align}\lambda_{\min}(M_1+M_2)=&\inf_{v\neq 0}\frac{v^\top (M_1+M_2) \overline{v}}{v^\top \overline{v}}\notag\\
\geq &\inf_{v\neq 0}\frac{v^\top M_1 \overline{v}}{v^\top \overline{v}}+\inf_{v\neq 0}\frac{v^\top M_2 \overline{v}}{v^\top \overline{v}}=\lambda_{\min}(M_1)+\lambda_{\min}(M_2),\label{eq:sum_trick}\end{align}
where the inequality holds with equality if and only if $M_1$ and $M_2$ share an eigenvector corresponding to $\lambda_{\min}(M_1)$ and $\lambda_{\min}(M_2)$.

By Proposition \ref{prop:unitarily_equivalent} and Theorem \ref{thm:curvature_eigenvalue}, we consider the minimal eigenvalue of the matrix $A_N(G,\sigma,x,B_0)$ with respect to the canonical choice $B_0$ given in (\ref{eq:canonicalB}). Observe the following facts from (\ref{eq:canonicalv0}) that
\[\mathbf{v}_0(B_0)\overline{\mathbf{v}_0(B_0)}^\top\mathbf{v}_0(B_0)=\sum_{y}p_{xy} \mathbf{v}_0(B_0).\]
Therefore, the matrix $\mathbf{v}_0(B_0)\overline{\mathbf{v}_0(B_0)}^\top$ has eigenvalue $\sum_{y}p_{xy}$ with multiplicity $d$ and eigenvalue $0$ with multiplicity $(m-1)d$.

The monotonicity follows from (\ref{eq:sum_trick}) as follows. For any $0<N_1<N_2\leq \infty$, we have
\begin{align}\label{eq:monotonicity}
\lambda_{\min}(A_{N_2})\geq \lambda_{\min}(A_{N_1})+\lambda_{\min}\left(\left(\frac{2}{N_1}-\frac{2}{N_2}\right)\mathbf{v}_0(B_0)\overline{\mathbf{v}_0(B_0)}^\top\right)=\lambda_{\min}(A_{N_1}),
\end{align}
where we write $A_{N_1}=A_{N_1}(G,\sigma,x,B_0)$ for short.

The concavity can also be derived from (\ref{eq:sum_trick}). Indeed, for any $0<N_1<N_2\leq \infty$ and $\gamma\in (0,1)$, we have
\begin{align*}
  &\lambda_{\min}(A_{\gamma N_1+(1-\gamma)N_2})\\
  =&\lambda_{\min}\left(\gamma A_{N_1}+(1-\gamma)A_{N_2}+\left(\frac{2\gamma}{N_1}+\frac{2(1-\gamma)}{N_2}-\frac{2}{\gamma N_1+(1-\gamma)N_2}\right)\mathbf{v}_0(B_0)\overline{\mathbf{v}_0(B_0)}^\top\right)\\
  \geq &\gamma\lambda_{\min}(A_{N_1})+(1-\gamma)\lambda_{\min}(A_{N_2}).
\end{align*}
We further calculate that
\[\lim_{N\to \infty}\K_{G,\sigma,x}(N)=\K_{G,\sigma,x}(\infty)=\lambda_{\min}(A_\infty)<\infty,\]
and
\begin{align*}\lim_{N\to 0}\K_{G,\sigma,x}(N)=\lim_{N\to 0}\lambda_{\min}(A_N)\leq & \lim_{N\to 0}\left(\lambda_{\max}(A_{\infty})+\lambda_{\min}(-\frac{2}{N}\mathbf{v}_0(B_0)\overline{\mathbf{v}_0(B_0)}^\top)\right)\\
=& \lambda_{\max}(A_{\infty})-\lim_{N\to 0}\frac{2}{N}\sum_{y}p_{xy}=-\infty.
\end{align*}
This completes the proof of the property $(i)$.

Regarding property $(ii)$, the condition $\K_{G,\sigma,x}(N_1)=\K_{G,\sigma,x}(N_2)$ implies that the inequality in (\ref{eq:monotonicity}) is actually tight. By (\ref{eq:sum_trick}), this means that the matrices $A_{N_1}$ and $\mathbf{v}_0(B_0)\overline{\mathbf{v}_0(B_0)}^\top$ share an eigenvector corresponding to their minimal eigenvalues. This further implies that $\lambda_{\min}(A_N)=\lambda_{\min}(A_{N_1})$ for any $N\in [N_1,\infty]$.

Next, we show property $(iii)$. Let us denote by $\mathbf{v}_0(B_0)=\begin{pmatrix}
  v_{01} & \cdots & v_{0d}
\end{pmatrix}$ with each $v_{0i}\in \mathbb{K}^{md}$. By definition, the vectors $v_{0i}$, $i=1,\ldots,d$ are pairwise orthogonal.

We claim that there exists a nonzero vector $w$ in the minimal eigenspace $E_{\min}(A_{N_0})$ \label{notation:eigenspace}which is orthogonal to the space $\mathrm{span}\{v_{01},\ldots,v_{0d}\}$. Indeed, such a vector $w$ can be constructed as follows. Pick $d+1$ linearly independent vectors $\xi_{j}$, $j=1,\ldots,d+1$ in the space $E_{\min}(A_{N_0})$. Then there exist numbers $a_j^i\in \mathbb{K}$, $i=1,\ldots, d$, $j=1,\ldots,d+1$ and vectors $w_j$, $j=1,\ldots,d+1$ orthogonal to the space $\mathrm{span}\{v_{01},\ldots,v_{0d}\}$ such that
\[\xi_{j}=\sum_{i=1}^da_j^iv_{0i}+w_j,\,\,j=1,\ldots, d+1.\]
If there exists a $1\leq j_0\leq d+1$ with $\sum_{i=1}^da_{j_0}^iv_{0i}=0$, we can set $w=w_{j_0}$.
Otherwise, the vectors $\sum_{i=1}^da_j^iv_{0i}, j=1,\ldots,d+1$ are $d+1$ nonzero vectors in the $d$-dimensional space $\mathrm{span}\{v_{01},\ldots,v_{0d}\}$, and hence are linearly dependent. There exist $c_1,\ldots,c_{d+1}\in \mathbb{K}$ such that $\sum_{j=1}^{d+1}c_j\sum_{i=1}^da_j^iv_{0i}=0$. Then we can set
$w=\sum_{j=1}^{d+1}c_j\xi_j=\sum_{j=1}^{d+1}c_jw_j$.

Notice that such a vector $w$ is an eigenvector corresponding to both $\lambda_{\min}(A_{N_0})$ and $\lambda_{\min}\left(\mathbf{v}_0(B_0)\overline{\mathbf{v}_0(B_0)}^\top\right)$. Therefore, the curvature $\K_{G,\sigma,x}(N)=\lambda_{\min}(A_N)$ is constant for $N\in [N_0,\infty]$ by (\ref{eq:sum_trick}).
\end{proof}

\begin{remark}
The proof presented above is largely adapted from the approach in \cite[Section 3]{CKLP22}. In particular, when $d=1$---that is, for connection graphs involving $O(1)$ or $U(1)$ connections---the argument used in \cite[Theorem 1.3]{CKLP22} can be applied directly to establish the following fact:
There exists a unique threshold $N_1\in (0,\infty]$ such that
$\K_{G,\sigma,x}$ is analytic, \emph{strictly} monotone increasing and \emph{strictly} concave on $(0,N_1)$ and constant on $[N_1,\infty]$.
\end{remark}
When the connections are $d$-dimensional with $d\geq 2$, the behavior of the system is far less understood, and several fundamental questions remain unresolved:
\begin{itemize}
  \item [(1)] Is the multiplicity condition in Theorem \ref{thm:curvature_function} (iii) necessary?
  \item [(2)] Does the multiplicity of $\lambda_{\min}(A_N)$ necessarily grow monotonically as $N$ increases?
  \item [(3)] Does there exist a threshold $N_1\in (0,\infty]$ such that the curvature function $\K_{G,\sigma,x}$ is analytic on $(0,N_1)$ and constant on $[N_1,\infty]$?
\end{itemize}
\section{Curvature matrices of Cartesian products}\label{section:product}



 In this section, we study the curvature matrices of Cartesian products of connection graphs.
 \begin{definition}
 Let $(G,\sigma)$ be a connection graph. The signature group $\Sigma$ is defined to be the group generated by the elements of the set $\{\sigma_{xy}: (x,y)\in E^{or}\}$.
 \end{definition}

\begin{definition}\label{def:Cartesian}
Let $(G,\sigma)$ and $(G',\sigma')$ be two connection graphs with \[G=(V, w, \mu), G'=(V', w', \mu')\ \  \text{and}\ \ \sigma:E^{or}\to U(d),\,\sigma': (E')^{or}\to U(d).\] Let $\alpha, \beta$ be two positive real numbers. We define the Cartesian product of these two connection graphs as a connection graph
\begin{align}\label{notation:cartesian_product}
(G,\sigma)\times_{\alpha,\beta}(G',\sigma'):=(G\times_{\alpha,\beta} G', \sigma^\times).
\end{align}
Here, $G\times_{\alpha,\beta} G'$ is the Cartesian product of the two graphs $G$ and $G'$ assigned with the following edge weights and vertex measure: For $x,y\in V$ and $x',y'\in V'$,
\begin{align*}
    w_{(x,x')(y,x')}&=\alpha w_{xy}\mu_{x'},\\
    w_{(x,x')(x,y')}&=\beta w'_{x'y'}\mu_{x},\\
    \mu_{(x,x')}&=\mu_x\mu_{x'}.
\end{align*}
 The connection $\sigma^\times$ \label{notation:cartesian_connection}of each oriented edge in the Cartesian product of $G$ and $G'$ is given as follows:
 \begin{align*}
     \sigma^\times_{(x,x')(y,x')}&= \sigma_{xy},\\
    \sigma^\times_{(x,x')(x,y')}&= \sigma'_{x'y'}.
 \end{align*}
Here we use the same notation $w,\mu$ for the product graph and the graph $G$. The associated graphs can be determined by the vertex input.
\end{definition}
\begin{theorem}\label{thm:Adecomp}
Let $(G,\sigma)$ and $(G',\sigma')$ be two $U(d)$-connection graphs. Suppose that the two signature groups $\Sigma$ and $\Sigma'$ commute. For any $x\in V$ and $x'\in V'$, the curvature matrices with respect to the canonical choice $B_0$ in (\ref{eq:canonicalB}) satisfy
\begin{align}\label{eq:Adecomp}A_{\infty}&(x,x')=
\begin{pmatrix}
    \alpha A_{\infty}(x) &   \\
                      & \beta A_{\infty}(x')
\end{pmatrix}+R(x,x'),
\end{align}
where
\begin{equation}\label{eq:R}
R(x,x'):=\begin{pmatrix}
    \alpha^{3}\overline{\omega}\left((\alpha^2a)^{\dagger}-(\alpha^2a+\beta^2a')^{\dagger}\right)\omega^{\top} & -\alpha^{\frac{3}{2}}\beta^{\frac{3}{2}}\overline{\omega} (\alpha^2a+\beta^2a')^{\dagger}(\omega ')^{\top} \\
    -\alpha^{\frac{3}{2}}\beta^{\frac{3}{2}}\overline{\omega '}(\alpha^2a+\beta^2a')^{\dagger}\omega^{\top}   &   \beta^3\overline{\omega'}\left((\beta^2a')^{\dagger}-(\alpha^2a+\beta^2a')^{\dagger}\right)(\omega')^{\top}
\end{pmatrix}
\end{equation}
is positive semidefinite. Here, $a,a', \omega, \omega'$ are given by (\ref{eq:a}), (\ref{eq:omega}), and we use the notation $A_\infty(x)=A_\infty(G,\sigma,x,B_0)$ for short.
In particular, if one of the two local connection structures $B_2^{inc}(x)$ and $B_2^{inc}(x')$ is balanced, we have $R(x,x')=\mathbf{0}$.
\end{theorem}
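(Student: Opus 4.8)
The plan is to compute the product curvature matrix directly from Definition \ref{def:curvature_matrices} using the canonical matrix $B_0^\times$ of (\ref{eq:canonicalB}) at the product vertex $(x,x')$, and to read off the right-hand side of (\ref{eq:Adecomp}) as the discrepancy between eliminating the two centre-defect blocks separately versus jointly. First I set up the local structure: the neighbours of $(x,x')$ split into the $G$-directions $(y_i,x')$ and the $G'$-directions $(x,y_j')$, while $S_2(x,x')$ consists of the pure second-sphere vertices $(z_k,x')$, $(x,z_\ell')$ together with the mixed corners $(y_i,y_j')$. The hypothesis that $\Sigma$ and $\Sigma'$ commute enters precisely here: each mixed square $(x,x')\sim(y_i,x')\sim(y_i,y_j')\sim(x,y_j')\sim(x,x')$ has signature the commutator $\sigma_{xy_i}\sigma'_{x'y_j'}\sigma_{xy_i}^{-1}(\sigma'_{x'y_j'})^{-1}$, which is trivial, so every mixed square is balanced; there are no mixed $3$-cycles since $(y_i,x')$ and $(x,y_j')$ differ in both coordinates and are non-adjacent. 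The transition rates scale as $p^\times_{(x,x')(y_i,x')}=\alpha p_{xy_i}$ and $p^\times_{(x,x')(x,y_j')}=\beta p'_{x'y_j'}$, which will produce the weights $\alpha,\beta$.

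Next comes the structural computation, which I expect to be the main obstacle. Using the explicit block expressions for $\Gamma_2^\sigma$ (Appendix, extending \cite{LMP}) I decompose $2\Gamma_2^\sigma(x,x')$ into an $\alpha^2$-weighted part supported on the $G$-data, a $\beta^2$-weighted part supported on the $G'$-data, and the mixed-corner contributions. Eliminating $\Gamma_2^\sigma(x,x')_{S_2,S_2}$ by the Schur complement gives $Q(x,x')$; because every mixed square is balanced the corner contributions are phase-free, so that: (i) the centre block satisfies $a_\times:=(p_0^\times)^{\top}2Q(x,x')\overline{p_0^\times}=\alpha^2 a+\beta^2 a'$, with no extra mixed term (balanced corners contribute nothing to the defect, as in the Remark after Definition \ref{def:curvature_matrices}); (ii) the centre--neighbour couplings assemble into $\omega_\times^\top=\begin{pmatrix}\alpha^{3/2}\omega^\top & \beta^{3/2}(\omega')^\top\end{pmatrix}$; and (iii) all coupling between $G$- and $G'$-neighbour directions is channelled through the centre, so the neighbour block is block-diagonal, $(2B_0^\times Q(x,x')\overline{B_0^\times}^\top)_{\hat{1}}=\diag\big(\alpha(2B_0Q(x)\overline{B_0}^\top)_{\hat{1}},\,\beta(2B_0'Q(x')\overline{B_0'}^\top)_{\hat{1}}\big)$. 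Verifying (iii) — that the corner terms leave no residual $G$--$G'$ block after the Schur complement — is the delicate bookkeeping and the crux of the argument. Granting (i)--(iii), $A_\infty(x,x')=(2B_0^\times Q(x,x')\overline{B_0^\times}^\top)_{\hat{1}}-\overline{\omega_\times}a_\times^\dagger\omega_\times^\top$, and subtracting $\diag(\alpha A_\infty(x),\beta A_\infty(x'))=\diag\big(\alpha(2B_0Q(x)\overline{B_0}^\top)_{\hat{1}},\beta(2B_0'Q(x')\overline{B_0'}^\top)_{\hat{1}}\big)-\diag(\alpha\overline\omega a^\dagger\omega^\top,\beta\overline{\omega'}(a')^\dagger(\omega')^\top)$ leaves exactly $R(x,x')=D-\overline{\omega_\times}a_\times^\dagger\omega_\times^\top$, where $D$ is the block-diagonal matrix with blocks $\alpha\overline\omega a^\dagger\omega^\top$, $\beta\overline{\omega'}(a')^\dagger(\omega')^\top$; expanding the blocks reproduces (\ref{eq:R}).

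To prove $R(x,x')\succeq0$ I use a padding-plus-Schur-complement trick. Write $A:=\alpha^2a$, $A':=\beta^2a'$, $M:=A+A'=a_\times$, $u:=\alpha^{3/2}\overline\omega$, $u':=\beta^{3/2}\overline{\omega'}$, so that $\overline{\omega_\times}=\begin{pmatrix}u\\u'\end{pmatrix}$, $\omega_\times^\top=\begin{pmatrix}\overline u^\top & \overline{u'}^\top\end{pmatrix}$ and the diagonal blocks of $D$ are $uA^\dagger\overline u^\top$, $u'(A')^\dagger\overline{u'}^\top$. By Proposition \ref{prop:aadaggeromega} we have $aa^\dagger\omega^\top=\omega^\top$, hence $AA^\dagger\overline u^\top=\overline u^\top$, and the Schur complement of $A$ in $P:=\begin{pmatrix}A & \overline u^\top\\ u & uA^\dagger\overline u^\top\end{pmatrix}$ vanishes; by Theorem \ref{thm:Albert} the Hermitian matrix $P$ is positive semidefinite, and likewise its primed analogue $P'$. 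Embedding $P$ and $P'$ as the principal submatrices on the index sets (centre, $G$-directions) and (centre, $G'$-directions), with zeros elsewhere, and adding yields
\begin{equation*}
\mathcal S:=\begin{pmatrix} M & \overline u^\top & \overline{u'}^\top\\ u & uA^\dagger\overline u^\top & \mathbf{0}\\ u' & \mathbf{0} & u'(A')^\dagger\overline{u'}^\top\end{pmatrix}\succeq0.
\end{equation*}
A direct computation shows $\mathcal S/M=\diag\big(uA^\dagger\overline u^\top,\,u'(A')^\dagger\overline{u'}^\top\big)-\overline{\omega_\times}M^\dagger\omega_\times^\top=R(x,x')$, so Theorem \ref{thm:Albert} applied to $\mathcal S$ (positive semidefiniteness forces $\mathcal S/M\succeq0$) gives $R(x,x')\succeq0$ with no further range checks required.

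Finally, for the balanced case: if $B_2^{inc}(x)$ is balanced then $p_0^\top2Q(x)=\mathbf{0}$ (Appendix (\ref{eq:B_0Q(x)})--(\ref{eq:B_0Q(x)i})), whence $a=\mathbf{0}$ and $\omega=\mathbf{0}$. Substituting $\omega=\mathbf{0}$ annihilates the first block-row and block-column of $R(x,x')$, while $a=\mathbf{0}$ forces $M=\beta^2a'$, so the surviving block $\beta^3\overline{\omega'}\big((\beta^2a')^\dagger-M^\dagger\big)(\omega')^\top$ also vanishes; thus $R(x,x')=\mathbf{0}$. The case where $B_2^{inc}(x')$ is balanced is symmetric.
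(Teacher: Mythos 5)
Your proposal is correct. The first half of your argument coincides with the paper's proof: the paper also computes $B_0 2Q(x,x')\overline{B_0}^\top$ at the product vertex and obtains exactly your (i)--(iii), namely centre block $\alpha^2 a+\beta^2 a'$, couplings $\alpha^{3/2}\omega^\top$ and $\beta^{3/2}(\omega')^\top$, and a block-diagonal neighbour block, so that $A_\infty(x,x')$ is the Schur complement of $\alpha^2a+\beta^2a'$ and $R(x,x')$ drops out as the discrepancy you describe. The bookkeeping you flag as the crux is carried out in the paper by first switching, via Lemma \ref{lemma:commut_switch}, to make all connections on edges incident to $x$ and $x'$ equal to $I_d$ --- this is exactly where commutativity of $\Sigma$ and $\Sigma'$ is used, equivalent to your observation that every mixed square carries the commutator signature --- and then verifying the decomposition of $Q(x,x')$ by direct computation with trivialized local connections (Lemma \ref{lemma:Qdecomp}, details in the Appendix, transported back by Corollary \ref{coro:Qdecomp}); your phase-free-corners description is the same mechanism in different clothing. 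Where you genuinely diverge is the proof that $R(x,x')\succeq 0$. The paper invokes Lemma \ref{lemma:diagonalization} to simultaneously diagonalize $a$ and $a'$ by a congruence $P$, and then factors the middle matrix of $R$ explicitly as $\bigl(\begin{smallmatrix}D_1\\-D_2\end{smallmatrix}\bigr)\bigl(\begin{smallmatrix}D_1&-D_2\end{smallmatrix}\bigr)$ using the scalar identity $\bigl((\alpha^2\mu)^\dagger-(\alpha^2\mu+\beta^2\lambda)^\dagger\bigr)\bigl((\beta^2\lambda)^\dagger-(\alpha^2\mu+\beta^2\lambda)^\dagger\bigr)=\bigl((\alpha^2\mu+\beta^2\lambda)^\dagger\bigr)^2$. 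Your route instead applies Theorem \ref{thm:Albert} twice: the range identity $aa^\dagger\omega^\top=\omega^\top$ from Proposition \ref{prop:aadaggeromega} makes the bordered matrices $P$, $P'$ positive semidefinite with vanishing Schur complements, zero-padding and summing gives $\mathcal S\succeq 0$, and the forward direction of Albert's theorem yields $\mathcal S/M=R(x,x')\succeq 0$ with all range conditions automatic. This is valid and arguably cleaner: it dispenses with Lemma \ref{lemma:diagonalization} altogether and also sidesteps a small subtlety in the paper's computation, where $\overline{P}^\top\diag(\mu_i^\dagger)P$ is only a reflexive generalized inverse of $a$ rather than the Moore--Penrose pseudoinverse unless $P$ is unitary (harmless there because the columns of $\omega^\top$ lie in the range of $a$, but your argument never needs the remark). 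What the paper's factorization buys in exchange is the explicit square-root representation $R=\overline{W}^\top\bigl(\begin{smallmatrix}D_1\\-D_2\end{smallmatrix}\bigr)\bigl(\begin{smallmatrix}D_1&-D_2\end{smallmatrix}\bigr)W$, which is reused later (proofs of Corollaries \ref{coro:U1infty} and \ref{coro:U1N}) to choose the scalars $c,c'$ annihilating the cross term; your argument proves semidefiniteness but does not produce that factorization. Your treatment of the balanced case agrees with the paper's.
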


Before the proof of Theorem \ref{thm:Adecomp}, we first establish several lemmas.
\begin{lemma}\label{lemma:commut_switch}
Consider two connection graphs $(G,\sigma)$ and $(G',\sigma')$. Suppose that the two signature groups $\Sigma$ and $\Sigma'$ commute. Let $\tau: E^{or}\to \Sigma$ and $\tau': (E')^{or}\to \Sigma'$ be two switching functions of $(G,\sigma)$ and $(G',\sigma')$, respectively. Then
$$(G,\sigma^{\tau})\times_{\alpha,\beta} (G',\sigma'^{\tau'})=(G\times_{\alpha,
\beta} G',(\sigma^\times)^{\tau^\times}),$$
where $\tau^\times (x,x'):=\tau(x)\tau'(x')$ for any $x\in V$ and $x'\in V'$.
That is, the Cartesian product $(G,\sigma)$ and $(G',\sigma')$ is switching equivalent to the Cartesian product of $(G,\sigma^\tau)$ and $(G', \sigma'^{\tau'})$.
\end{lemma}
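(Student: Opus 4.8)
The plan is to verify the asserted identity edgewise. Since a switching operation alters only the connection function and leaves the edge weights $w$ and the vertex measure $\mu$ untouched, both sides of the displayed equation have the same underlying weighted graph, namely $G\times_{\alpha,\beta}G'$. Hence it suffices to check that the connection function $(\sigma^\times)^{\tau^\times}$ coincides, on every oriented edge of the product, with the connection of $(G,\sigma^\tau)\times_{\alpha,\beta}(G',\sigma'^{\tau'})$. (Here I read the switching functions as vertex functions $\tau:V\to\Sigma$ and $\tau':V'\to\Sigma'$ taking values in the respective signature groups; this is precisely the feature that makes the commutativity hypothesis applicable.)

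First I would split the oriented edges of $G\times_{\alpha,\beta}G'$ into the two types appearing in Definition \ref{def:Cartesian}. For a first-factor edge $(x,x')\sim(y,x')$ with $x\sim y$ in $G$, the definition of switching gives
\begin{equation*}
(\sigma^\times)^{\tau^\times}_{(x,x')(y,x')}=\tau^\times(x,x')^{-1}\sigma_{xy}\tau^\times(y,x')=\tau'(x')^{-1}\bigl(\tau(x)^{-1}\sigma_{xy}\tau(y)\bigr)\tau'(x').
\end{equation*}
The inner parenthesis equals $\sigma^\tau_{xy}\in\Sigma$, while the outer conjugating factor $\tau'(x')$ lies in $\Sigma'$; since $\Sigma$ and $\Sigma'$ commute, conjugation by $\tau'(x')$ is trivial, so the expression reduces to $\sigma^\tau_{xy}$, which is exactly the product connection on this edge. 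Symmetrically, for a second-factor edge $(x,x')\sim(x,y')$ with $x'\sim y'$ in $G'$,
\begin{equation*}
(\sigma^\times)^{\tau^\times}_{(x,x')(x,y')}=\tau'(x')^{-1}\bigl(\tau(x)^{-1}\sigma'_{x'y'}\tau(x)\bigr)\tau'(y'),
\end{equation*}
and now $\tau(x)\in\Sigma$ commutes with $\sigma'_{x'y'}\in\Sigma'$, so the inner parenthesis collapses to $\sigma'_{x'y'}$ and the whole expression becomes $(\sigma')^{\tau'}_{x'y'}$, once again matching the product connection. This establishes the displayed equality of connection graphs.

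The closing assertion then follows immediately: by definition the connection $(\sigma^\times)^{\tau^\times}$ is switching equivalent to $\sigma^\times$ via the switching function $\tau^\times$, and since the two products share the underlying graph $G\times_{\alpha,\beta}G'$, the connection graphs $(G,\sigma)\times_{\alpha,\beta}(G',\sigma')$ and $(G,\sigma^\tau)\times_{\alpha,\beta}(G',\sigma'^{\tau'})$ are switching equivalent.

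The single subtle point—and the only place where the hypothesis that $\Sigma$ and $\Sigma'$ commute is genuinely used—is the cancellation of the two \emph{cross} conjugations: on a first-factor edge the factor $\tau'(x')$ inherited from the second coordinate must pass through an element of $\Sigma$, and on a second-factor edge the factor $\tau(x)$ must pass through an element of $\Sigma'$. Without elementwise commutativity these conjugations would not simplify and the two connection functions would differ, so I would invoke the commutativity hypothesis explicitly in each of the two cases rather than through a single generic step. Everything else is a routine unwinding of the definitions of Cartesian product and switching.
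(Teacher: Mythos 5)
Your proof is correct and takes essentially the same route as the paper: both arguments verify the identity edgewise on the two types of product edges, using the elementwise commutativity of $\Sigma$ and $\Sigma'$ exactly once per edge type to cancel the cross conjugation (by $\tau'(x')$ on first-factor edges and by $\tau(x)$ on second-factor edges). Your additional remarks---that the switching functions should be read as vertex functions valued in the signature groups, and that weights and measures are untouched so the underlying weighted graphs agree---are correct and, if anything, make explicit two points the paper leaves implicit.
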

\begin{proof}
For any $x,y\in V$ such that $\{x,y\}\in E$ and any $x'\in V'$, we calculate that
\begin{align*}
    \tau^\times(x,x')\sigma^\times_{(x,x')(y,x')} \tau^\times(y,x')^{-1}
    = \tau(x)\tau'(x')\sigma_{xy}\tau'(x')^{-1}\tau(y)^{-1}
    =\tau(x)\sigma_{xy}\tau(y)^{-1},
\end{align*}
where we use the community of $\Sigma$ and $\Sigma'$ in the second equality. Similarly, we have, for any $x\in V$ and any $x',y'\in V'$ such that $\{x',y'\}\in E'$, 
\[\tau^\times(x,x')\sigma^\times_{(x,x')(x,y')} \tau^\times(x,y')^{-1}=\tau(x)\tau'(x')\sigma'_{x'y'}\tau'(y')^{-1}\tau(x)^{-1}=\tau'(x')\sigma'_{x'y'}\tau'(y')^{-1}.\]
This completes the proof.
\end{proof}
\begin{lemma}\label{lemma:Qdecomp}
Let $(G,\sigma)$ and $(G',\sigma')$ be two connection graphs. Consider two vertices $x\in V$ and $x'\in V'$. Suppose that
\begin{equation}\label{eq:localId}
    \sigma_{xy}=I_d, \,\,\text{and}\,\,\sigma'_{x'y'}=I_d,\,\,\text{for any}\,\,y\in S_1(x), y'\in S_1(x').
\end{equation}
 Then we have the following direct sum decomposition for $Q(x,x'):=Q(\Gamma_2^{\sigma^\times}(x,x'))$:
 \begin{equation}\label{eq:Qdecomp}
   Q(x,x')=
\begin{pmatrix}
    \alpha^{2}Q(x)_{x,x}+\beta^{2}Q(x')_{x',x'} & \alpha^{2}Q(x)_{x,S_{1}(x)} & \beta^{2}Q(x')_{x',S_{1}(x')} \\
    \alpha^{2}Q(x)_{S_{1}(x),x} & \alpha^{2}Q(x)_{S_{1}(x),S_{1}(x)} & \mathbf{0} \\
    \beta^{2}Q(x')_{S_{1}(x'),x'} & \mathbf{0} & \beta^{2}Q(x')_{S_{1}(x'),S_{1}(x')}
\end{pmatrix}.
 \end{equation}
In particular, we have
$$Q(x,x')_{\hat{1}}=\alpha^2 Q(x)_{\hat{1}}\oplus\beta^2 Q(x')_{\hat{1}}.$$
\end{lemma}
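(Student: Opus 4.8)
The plan is to compute the Schur complement $Q(x,x')$ through its variational description. Since $\Gamma_2^{\sigma^\times}(x,x')_{S_2,S_2}$ is diagonal and positive definite, identity (\ref{eq:Schur_sesquilinear_form}) together with Lemma \ref{lemma:schur_technical} shows that, writing $f_0=f(x,x')$ for the center value, $f_1$ for the values on $S_1(x,x')$ and $f_2$ for the values on $S_2(x,x')$,
\begin{equation*}
\begin{pmatrix} f_0^\top & f_1^\top\end{pmatrix}2Q(x,x')\begin{pmatrix}\overline{f_0}\\\overline{f_1}\end{pmatrix}=\inf_{f_2}2\Gamma_2^{\sigma^\times}(f)(x,x').
\end{equation*}
So it suffices to evaluate this infimum and read off the Hermitian form in $(f_0,f_1)$. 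First I would record the structure of $B_2^{inc}(x,x')$: the sphere $S_1(x,x')$ is the disjoint union of the $G$-spokes $(y,x')$, $y\in S_1(x)$, and the $G'$-spokes $(x,y')$, $y'\in S_1(x')$, and no $G$-spoke is adjacent to a $G'$-spoke; the sphere $S_2(x,x')$ is the disjoint union of the leaves $(z,x')$, $z\in S_2(x)$, the leaves $(x,z')$, $z'\in S_2(x')$, and the diagonal vertices $(y,y')$, each of which is adjacent inside $B_2^{inc}(x,x')$ only to the two spokes $(y,x')$ and $(x,y')$. Hypothesis (\ref{eq:localId}) makes the connection on every edge incident to the center, as well as on every spoke-to-diagonal edge, equal to $I_d$, while by Definition \ref{def:Cartesian} the product transition rates are $\alpha p_{xy}$ and $\beta p'_{x'y'}$ on the two edge types.

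The heart of the argument is a decomposition by bidegree in $(\alpha,\beta)$. Expanding $2\Gamma_2^{\sigma^\times}(f)(x,x')=\Delta\Gamma^{\sigma^\times}(f)(x,x')-2\operatorname{Re}\Gamma^{\sigma^\times}(f,\Delta^{\sigma^\times}f)(x,x')$ and collecting terms according to how many $G$-rates and $G'$-rates they carry, one gets
\begin{equation*}
2\Gamma_2^{\sigma^\times}(f)(x,x')=\alpha^2\,P_G+\alpha\beta\,P_{\mathrm{mix}}+\beta^2\,P_{G'},
\end{equation*}
where, crucially, these three pieces depend on disjoint sets of $S_2$-variables: $P_G$ only on $\{f(z,x'):z\in S_2(x)\}$, $P_{G'}$ only on $\{f(x,z'):z'\in S_2(x')\}$, and $P_{\mathrm{mix}}$ only on the diagonal values $\{f(y,y')\}$. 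Indeed a leaf $(z,x')$ is reached by two $G$-rates, a leaf $(x,z')$ by two $G'$-rates, while a diagonal vertex $(y,y')$ is always reached by one rate of each type. Consequently the infimum over $f_2$ splits as $\alpha^2\inf P_G+\alpha\beta\inf P_{\mathrm{mix}}+\beta^2\inf P_{G'}$. Identifying the $x'$-slice $w\mapsto f(w,x')$ with a copy of the two-ball of $G$ carrying the bare rates $p_{xy}$ and the connection $\sigma$, and using that $\Gamma_2$ scales quadratically under a uniform rescaling of the rates, one checks that $P_G=2\Gamma_2^{\sigma}(f(\cdot,x'))(x)$, so that $\alpha^2\inf P_G$ is exactly $\alpha^2$ times the Hermitian form determined by $2Q(x)$ in (\ref{eq:Qdef}); symmetrically $\beta^2\inf P_{G'}$ is $\beta^2$ times the form of $2Q(x')$.

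It remains to show $\inf P_{\mathrm{mix}}=0$, which is the decisive point. Because each diagonal value $f(y,y')$ interacts only with $f(y,x')$, $f(x,y')$ and $f(x,x')$, the minimization over the diagonal vertices decouples, the minimizer being $f(y,y')=f(y,x')+f(x,y')-f(x,x')$. Substituting it back, the contribution of the pair $(y,y')$ to $\alpha\beta\,P_{\mathrm{mix}}$ becomes $\kappa_{y,y'}\,\mathcal B_{y,y'}$ with $\kappa_{y,y'}=\alpha\beta\,p_{xy}p'_{x'y'}$, where $\mathcal B_{y,y'}$ is an explicit Hermitian expression in $f(y,x'),f(x,y'),f(x,x')$ gathering the residue left by completing the square in $f(y,y')$ together with the mixed terms produced by the degree factor $\alpha P+\beta P'$ in $\Delta\Gamma^{\sigma^\times}$ and by the cross terms of $\Gamma^{\sigma^\times}(f,\Delta^{\sigma^\times}f)$. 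A direct expansion shows that all monomials in $\mathcal B_{y,y'}$ cancel, so $\mathcal B_{y,y'}\equiv 0$; in particular the only terms coupling a $G$-spoke value with a $G'$-spoke value occur with coefficients $+\kappa_{y,y'}$ from the $\Gamma^{\sigma^\times}(f,\Delta^{\sigma^\times}f)$ cross term and $-\kappa_{y,y'}$ from the optimized diagonal vertex, and cancel; this is exactly what forces the block $Q(x,x')_{S_1(x),S_1(x')}=\mathbf 0$.

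Assembling the three infima shows that the Hermitian form of $2Q(x,x')$ equals $\alpha^2$ times that of $2Q(x)$ on the block spanned by the center and $S_1(x)$, plus $\beta^2$ times that of $2Q(x')$ on the block spanned by the center and $S_1(x')$, with vanishing $S_1(x),S_1(x')$ coupling; this is precisely (\ref{eq:Qdecomp}), and deleting the center row and column yields $Q(x,x')_{\hat 1}=\alpha^2Q(x)_{\hat 1}\oplus\beta^2Q(x')_{\hat 1}$. The main obstacle is exactly the verification that $\mathcal B_{y,y'}$ vanishes identically: all the genuinely mixed contributions, scattered across $\Delta\Gamma^{\sigma^\times}$, across $\Gamma^{\sigma^\times}(f,\Delta^{\sigma^\times}f)$, and in the diagonal-vertex minimization, must be shown to cancel term by term. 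Alternatively, one may write out the blocks of $\Gamma_2^{\sigma^\times}(x,x')$ from the Appendix formulas and Schur-complement the diagonal block $\Gamma_2^{\sigma^\times}(x,x')_{S_2,S_2}$ directly, with the same cancellation reappearing among the diagonal-vertex rows.
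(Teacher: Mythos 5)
Your proof is correct, but it takes a genuinely different route from the paper's. The paper proves this lemma by brute force in the Appendix: it writes out every $d\times d$ block of $4\Gamma_2^{\sigma^\times}(x,x')$ from the general formulas, then explicitly computes the correction matrix $N=\Gamma_2^{\sigma^\times}(x,x')_{B_1,S_2}\bigl(\Gamma_2^{\sigma^\times}(x,x')_{S_2,S_2}\bigr)^{-1}\Gamma_2^{\sigma^\times}(x,x')_{S_2,B_1}$ block by block, and subtracts; the cancellation of all $\alpha\beta$ terms is verified entry by entry. You instead use the variational characterization of the Schur complement (which the paper itself sets up via (\ref{eq:Schur_sesquilinear_form}) and Lemma \ref{lemma:schur_technical}, and exploits in the alternative proof of Theorem \ref{thm:curvature_eigenvalue}), split $2\Gamma_2^{\sigma^\times}(f)(x,x')$ by bidegree in $(\alpha,\beta)$, and observe that the three pieces involve disjoint $S_2$-variables, so the infimum splits. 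I checked your bidegree bookkeeping against the paper's block formulas and it is right: the $\alpha^2$ part is exactly the slice form $\alpha^2\,2\Gamma_2^{\sigma}(f(\cdot,x'))(x)$ (the degree ratio $d_{(x,x')}/\mu_{(x,x')}=\alpha d_x/\mu_x+\beta d_{x'}/\mu_{x'}$ distributes its square correctly), and the mixed part is, under hypothesis (\ref{eq:localId}), the perfect square
\begin{equation*}
\alpha\beta\,P_{\mathrm{mix}}=2\alpha\beta\sum_{i,j}p_{xy_i}p'_{x'y'_j}\,\bigl|f(y_i,y'_j)+f(x,x')-f(y_i,x')-f(x,y'_j)\bigr|^2,
\end{equation*}
which makes your claimed minimizer $f(y,y')=f(y,x')+f(x,y')-f(x,x')$ and the identity $\mathcal B_{y,y'}\equiv 0$ immediate; it would strengthen the write-up to display this identity rather than assert the monomial cancellation. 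What your approach buys is a structural explanation of the decomposition — the diagonal vertices $(y,y')$ absorb all mixed interaction, forcing $Q(x,x')_{S_1(x),S_1(x')}=\mathbf 0$ — in a few lines instead of pages of entry bookkeeping (note that the paper's displayed $N(x,x')_{(x,x')(x,x')}$ even appears to omit the $2\alpha\beta\sum_{i,j}p_{xy_i}p_{x'y'_j}I_d$ term that your square accounts for automatically); what the paper's computation buys is the explicit list of blocks of $\Gamma_2^{\sigma^\times}(x,x')$ and $N$, which it reuses elsewhere. One small point to make explicit: the infimum identity determines only the sesquilinear form of $2Q(x,x')$ on $(f_0,f_1)$, so you should invoke polarization (the matrix is Hermitian) to pass from the equality of forms to the equality of matrices in (\ref{eq:Qdecomp}); this is routine and does not affect correctness.
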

\begin{proof}
It is obtained by a straightforward computation using definitions. The details are presented in the Appendix.
\end{proof}

\begin{corollary}\label{coro:Qdecomp}
Let $(G,\sigma)$ and $(G',\sigma')$ be two connection graphs. If the two signature groups $\Sigma$ and $\Sigma'$ commute, then the matrix $Q(x,x')$ satisfies (\ref{eq:Qdecomp}) for any $x\in V$ and $x'\in V'$.
\end{corollary}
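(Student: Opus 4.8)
The plan is to reduce the statement to Lemma~\ref{lemma:Qdecomp} by switching each factor so that its local connection around the chosen vertex becomes trivial, and then to transport the resulting decomposition back to $\sigma^\times$ by switching invariance. The commutativity of $\Sigma$ and $\Sigma'$ will be used exactly once, through Lemma~\ref{lemma:commut_switch}, to ensure that switching the two factors coincides with switching the Cartesian product.

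First I would fix $x\in V$ and $x'\in V'$ and choose trivializing switching functions $\tau:V\to\Sigma$ and $\tau':V'\to\Sigma'$ by setting $\tau(x)=I_d$, $\tau(y_i)=\sigma_{y_ix}$ for $y_i\in S_1(x)$ (and $\tau\equiv I_d$ otherwise), and analogously $\tau'(x')=I_d$, $\tau'(y'_j)=\sigma'_{y'_jx'}$. Each value lies in the respective signature group, and by construction $\sigma^\tau_{xy_i}=I_d$ and $\sigma'^{\tau'}_{x'y'_j}=I_d$, so the switched factors satisfy hypothesis~(\ref{eq:localId}) at $x$ and $x'$. Since $\Sigma$ and $\Sigma'$ commute, Lemma~\ref{lemma:commut_switch} identifies the product of the switched factors with the switched product $(G\times_{\alpha,\beta}G',(\sigma^\times)^{\tau^\times})$, where $\tau^\times(x,x')=\tau(x)\tau'(x')$. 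The product of the switched factors has trivial local connection at $(x,x')$, so Lemma~\ref{lemma:Qdecomp} applies and gives the decomposition~(\ref{eq:Qdecomp}) for $Q(\Gamma_2^{(\sigma^\times)^{\tau^\times}}(x,x'))$ in terms of $Q(\Gamma_2^{\sigma^\tau}(x))$ and $Q(\Gamma_2^{\sigma'^{\tau'}}(x'))$.

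It then remains to undo the switching. With $T:=\tau^\times_{B_1(x,x')}$, the switching invariance~(\ref{eq:Qswitch}) applied to $\sigma^\times$ and $\tau^\times$ yields
\[Q(x,x')=Q(\Gamma_2^{\sigma^\times}(x,x'))=\overline{T}\,Q(\Gamma_2^{(\sigma^\times)^{\tau^\times}}(x,x'))\,T^\top,\]
since $T$ is a block-diagonal unitary and hence $(T^\top)^{-1}=\overline{T}$. Because $\tau(x)=\tau'(x')=I_d$, the block of $T$ at the centre is $I_d$, while its $S_1(x)$- and $S_1(x')$-direction blocks are $\tau(y_i)$ and $\tau'(y'_j)$; in particular no mixed products of $\tau$- and $\tau'$-values ever appear. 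I would finish by computing $\overline{T}(\cdots)T^\top$ block by block: conjugation by a block-diagonal matrix preserves the zero pattern, so the two cross blocks between the $S_1(x)$- and $S_1(x')$-directions remain $\mathbf{0}$, giving the direct-sum structure; and for each surviving block I would insert the single-factor switching invariance $Q(\Gamma_2^{\sigma^\tau}(x))=\tau_{B_1(x)}^\top Q(x)\overline{\tau_{B_1(x)}}$ (and its primed analogue) and use the unitarity identity $\overline{\tau(y_i)}\,\tau(y_i)^\top=I_d$ together with $\tau(x)=\tau'(x')=I_d$, so that all conjugating factors cancel and the block reduces to exactly the corresponding entry of~(\ref{eq:Qdecomp}) built from the original $Q(x)$ and $Q(x')$. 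Reading off the lower-right $\hat1$ part then gives $Q(x,x')_{\hat1}=\alpha^2Q(x)_{\hat1}\oplus\beta^2Q(x')_{\hat1}$.

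The only delicate point is organizational rather than computational: one must fix $\tau(x)=\tau'(x')=I_d$ when trivializing, so that the centre block of $T$ is the identity. This is precisely what prevents any conjugate of a $\tau'$-value from having to commute past a transpose of a $\tau$-value during the block bookkeeping; the commutativity of $\Sigma$ and $\Sigma'$ is consumed entirely inside Lemma~\ref{lemma:commut_switch} and is not invoked again.
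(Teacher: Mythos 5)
Your proposal is correct and follows essentially the same route as the paper's proof: trivialize the local connections at $x$ and $x'$ by switching functions valued in $\Sigma$ and $\Sigma'$, apply Lemma \ref{lemma:commut_switch} and Lemma \ref{lemma:Qdecomp} to get the decomposition (\ref{eq:Qdecomp}) for $Q(\Gamma_2^{(\sigma^\times)^{\tau^\times}}(x,x'))$, and transport it back via the switching identity (\ref{eq:Qswitch}). Your normalization $\tau(x)=\tau'(x')=I_d$, which makes every block of the conjugating matrix a pure $\tau$- or $\tau'$-value so that commutativity is consumed entirely inside Lemma \ref{lemma:commut_switch}, is a tidy sharpening of the paper's terser final step, where commutativity is invoked a second time when (\ref{eq:Qswitch}) is applied again.
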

\begin{proof}
Let $\tau, \tau'$ be two switching functions such that the connections $\sigma^\tau$ and $\sigma'^{\tau'}$ satisfy (\ref{eq:localId}). Applying Lemma \ref{lemma:commut_switch} and Lemma \ref{lemma:Qdecomp} yields the decomposition (\ref{eq:Qdecomp}) for the matrix $Q(\Gamma_2^{(\sigma^\times)^{\tau^\times}}(x,x'))$. Due to (\ref{eq:Qswitch}), we have
\begin{equation}
    Q(\Gamma_2^{(\sigma^\times)^{\tau^\times}}(x,x'))=(\tau^\times_{B_1(x,x')})^\top Q(\Gamma_2^{(\sigma^\times)}(x,x'))\overline{\tau^\times_{B_1(x,x')}}.
\end{equation}
By community of $\Sigma$ and $\Sigma'$ and applying (\ref{eq:Qswitch}) again, we derive the decomposition (\ref{eq:Qdecomp}) for the matrix
$Q(x,x')=Q(\Gamma_2^{\sigma^\times}(x,x'))$.
\end{proof}
\begin{remark}
If both the local connection structures $B_2^{inc}(x)$ and $B_2^{inc}(x')$ are balanced, then the first $d$ rows and $d$ columns of the matrix $Q(x,x')$ vanish. That is, we have
$$\begin{pmatrix}
    \mathbf{0}_{d} & \mathbf{0} & \mathbf{0} \\
    \mathbf{0} & \alpha^{2}Q(x)_{\hat{1}} & \mathbf{0} \\
    \mathbf{0} & \mathbf{0} & \beta^{2}Q(x')_{\hat{1}}
\end{pmatrix}.$$
\end{remark}

Furthermore, We need the following linear algebraic lemma.
\begin{lemma}\label{lemma:diagonalization}
For two positive semidefinite Hermitian matrices $A$ and $B$, there exists a nonsingular matrix $P$ such that $PA\overline{P}^{\top}$ and $PB\overline{P}^{\top}$ are both diagonal matrices.
\end{lemma}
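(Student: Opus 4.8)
The plan is to reduce the problem to the simultaneous diagonalisation of two \emph{commuting} Hermitian matrices, after first taming the common kernel of $A$ and $B$. First I would pass to the sum $A+B$, which is again positive semidefinite. Reducing a single positive semidefinite Hermitian form to its canonical shape requires only a congruence, so there is a nonsingular matrix $P_1$ with
\[P_1(A+B)\overline{P_1}^\top=\begin{pmatrix} I_r & \mathbf{0}\\ \mathbf{0} & \mathbf{0}\end{pmatrix},\qquad r=\rank(A+B).\]
Writing $A_1:=P_1 A\overline{P_1}^\top\succeq 0$ and $B_1:=P_1 B\overline{P_1}^\top\succeq 0$, their sum has the displayed block form, with diagonal blocks of sizes $r$ and $n-r$.

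The key step is to show that $A_1$ and $B_1$ are in fact block diagonal, supported on the top-left $r\times r$ block. Indeed, the bottom-right blocks $(A_1)_{22}$ and $(B_1)_{22}$ are principal submatrices of positive semidefinite matrices, hence positive semidefinite, and they sum to $\mathbf{0}$; therefore each vanishes. Next I would invoke the elementary fact that a positive semidefinite Hermitian matrix with a vanishing diagonal block has the corresponding off-diagonal blocks equal to zero: if $\begin{pmatrix} P & R\\ \overline{R}^\top & \mathbf{0}\end{pmatrix}\succeq 0$, then evaluating the associated form on $\begin{pmatrix} x\\ -t\,\overline{R}^\top x\end{pmatrix}$ and letting $t\to\infty$ forces $\overline{R}^\top x=\mathbf{0}$ for all $x$, i.e. $R=\mathbf{0}$. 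Applying this to both $A_1$ and $B_1$ yields
\[A_1=\begin{pmatrix} A_{11} & \mathbf{0}\\ \mathbf{0} & \mathbf{0}\end{pmatrix},\quad B_1=\begin{pmatrix} B_{11} & \mathbf{0}\\ \mathbf{0} & \mathbf{0}\end{pmatrix},\quad A_{11}+B_{11}=I_r.\]

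Finally, since $B_{11}=I_r-A_{11}$ is a polynomial in $A_{11}$, the Hermitian matrices $A_{11}$ and $B_{11}$ commute and admit a common orthonormal eigenbasis. By the spectral theorem there is a unitary $W$ with $\overline{W}^\top A_{11}W=D$ diagonal, and then automatically $\overline{W}^\top B_{11}W=I_r-D$ is diagonal as well. Setting $P_2:=\begin{pmatrix}\overline{W}^\top & \mathbf{0}\\ \mathbf{0} & I_{n-r}\end{pmatrix}$, which is unitary hence nonsingular, and $P:=P_2P_1$, both $P A\overline{P}^\top$ and $P B\overline{P}^\top$ are diagonal, and $P$ is nonsingular as a product of nonsingular matrices. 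The main obstacle is precisely the lack of definiteness: when $A+B$ is singular one cannot simply normalise $A+B$ to the identity and unitarily diagonalise, so the block-vanishing argument that splits off the common kernel is the step needing care; once that kernel is removed, the residual blocks sum to $I_r$ and the commuting-matrix diagonalisation is routine.
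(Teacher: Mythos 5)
Your proof is correct and follows essentially the same route as the paper: congruence of $A+B$ to $\operatorname{diag}(I_r,\mathbf{0})$, vanishing of the blocks outside the top-left $r\times r$ corner, and then simultaneous diagonalization of the two top blocks using $A_{11}+B_{11}=I_r$, finishing with $P=\operatorname{diag}(W,I_{n-r})P_1$ up to conjugation conventions. The only cosmetic difference is in killing the off-diagonal blocks: you use a limiting quadratic-form argument with the test vector $(x,-t\,\overline{R}^\top x)$, whereas the paper derives the same fact from the factorization $D=X\overline{X}^\top$ (a zero diagonal entry of a positive semidefinite matrix forces its whole row and column to vanish) --- both are standard and equally valid.
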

\begin{proof}
Before the proof, we prepare two facts about positive semidifinite matrices.
\begin{itemize}
  \item Every diagonal entry of a positive semidifinite matrix is non-negative.
  \item If a diagonal entry is $0$, then all entries in the same row and same column are $0$.
\end{itemize}
Since all principal minors of a positive semidefinite matrix are non-negative, it follows in particular that each diagonal element---being a $1\times 1$ principal minor---must also be non-negative. This shows the first fact.

Next, we show the second fact. Any positive semidifinite matrix $D$ can be written as
\begin{align}\label{eq:positive D}
D=X\overline{X}^\top
\end{align}
for some square matrix $X$. We denote $X$ as
\[X=
\begin{pmatrix}
  x_{1}^\top \\
  x_{2}^\top \\
  \vdots  \\
  x_{n}^\top
\end{pmatrix},
\]
where each $x_{i}^\top$ is a row vector. Then the identity (\ref{eq:positive D}) can be rewritten as
\[\begin{pmatrix}
  x_{1}^\top \\
  x_{2}^\top \\
  \vdots  \\
  x_{n}^\top
\end{pmatrix}
\begin{pmatrix}
   \overline{x_{1}} & \overline{x_{2}} & \cdots & \overline{x_{n}}
\end{pmatrix}=D.
\]
Notice that the diagonal entries are exactly
\[
\begin{pmatrix}
  x_{1}^\top\overline{x_{1}} &  &  &  \\
                            & x_{2}^\top\overline{x_{2}} &           &  \\
                            &                            & \ddots    &   \\
                            &                            &           & x_{n}^\top\overline{x_{n}}
\end{pmatrix}=
\begin{pmatrix}
          d_{11}   &          &        &      \\
                   & d_{22}   &        &      \\
                   &          & \ddots &      \\
                   &          &        & d_{nn}
\end{pmatrix}.\]
If there is a zero diagonal entry, say, $d_{ii}=0=x_{i}^\top\overline{x_{i}}$, we get $x_{i}^\top=
\begin{pmatrix}
  0 & 0 & \cdots & 0
\end{pmatrix}$
and all entries in the $i$-th row and $i$-th column of $D=X\overline{X}^\top$ are zeros.

Now we prove this lemma. Since the matrices $A$ and $B$ are both positive semidefinite, $A+B$ is again positive semidefinite. There exists a nonsingular matrix $P_{1}$ such that $P_{1}(A+B)\overline{P_{1}}^\top=\diag(I_r,\mathbf{0})$, here $r$ is the rank of $A+B$. Then
$$P_{1}A\overline{P_{1}}^\top=\diag(S_{1},\mathbf{0}),\,\,P_{1}B\overline{P_{1}}^\top=\diag(S_{2},\mathbf{0}),$$
where $S_{1}$ and $S_{2}$ are positive semidifinite and satisfy $S_{1}+S_{2}=I_{r}$. Hence, there exists a unitary matrix $T$ such that the matrix $TS_{1}\overline{T}^\top$ is a diagonal matrix. Moreover, the matrix $TS_{2}\overline{T}^\top=I_{r}-TS_{1}\overline{T}^\top$ is also a diagonal matrix. Set $P:=\diag{(T,I_{n-r})}P_{1}$. Then, both $PA\overline{P}^\top$ and $PB\overline{P}^\top$ are diagonal matrices.
\end{proof}


\begin{proof}[Proof of Theorem \ref{thm:Adecomp}]
First observe that for any $x,y\in V$ and any $x',y'\in V'$
\[p_{(x,x')(y,x')}=\alpha p_{xy}, \,\,\text{and}\,\,p_{(x,x')(x,y')}=\beta p_{x'y'}.\]
By (\ref{eq:canonicalB}), the canonical choice of matrix $B$ at $(x,x')$ in the Cartesian product is
\begin{equation*}
    B_0=\begin{pmatrix}
        I_d & \overline{\sigma_{xy_1}} & \cdots &\overline{\sigma_{xy_m}} &  \overline{\sigma'_{x'y'_1}} & \cdots &\overline{\sigma'_{x'y'_{m'}}} \\
        &\frac{1}{\sqrt{\alpha p_{xy_1}}}I_d & & & & & \\
        &&\ddots&&&&\\
        &&&\frac{1}{\sqrt{\alpha p_{xy_m}}}I_d&&&\\
        &&&&\frac{1}{\sqrt{\beta p_{x'y'_1}}}I_d&&\\
        &&&&&\ddots&\\
        &&&&&&\frac{1}{\sqrt{\beta p_{x'y'_{m'}}}}I_d
    \end{pmatrix}.
\end{equation*}
Using Corollary \ref{coro:Qdecomp}, we derive that
\[B_02Q(x,x')\overline{B_0}^\top=\begin{pmatrix}
    \alpha^2 a+\beta^2 a' & \alpha^{\frac{3}{2}}\omega^\top & \beta^{\frac{3}{2}}\omega'^\top \\
    \alpha^{\frac{3}{2}}\overline{\omega}&\alpha \left(B_02Q(x)\overline{B_0}^\top\right)_{\hat{1}} & \\
    \beta^{\frac{3}{2}}\overline{\omega'}&&\beta\left(B_02Q(x')\overline{B_0}^\top\right)_{\hat{1}}
\end{pmatrix}.\]
Here, we use the same notations $B_0, Q$ in $B_02Q(x)\overline{B_0}^\top$, $B_02Q(x')\overline{B_0}^\top$ and $B_02Q(x,x')\overline{B_0}^\top$ for the graphs $(G,\sigma)$, $(G',\sigma')$ and their Cartesian product graph. Those matrices are distinguished by the vertex input and are determined in the corresponding graphs.

By definition, we have the curvature matrix
\begin{align*}
    A_\infty(x,x')=&\begin{pmatrix}
   \alpha (B_02Q(x)\overline{B_0}^\top)_{\hat{1}} & \\
   &\beta(B_02Q(x')\overline{B_0}^\top)_{\hat{1}}
\end{pmatrix}\\
&\ \ \ \ \ \quad \quad -\begin{pmatrix}
 \alpha^{\frac{3}{2}}\overline{\omega}\\   \beta^{\frac{3}{2}}\overline{\omega'}
\end{pmatrix}(\alpha^2a+\beta^2a')^\dagger\begin{pmatrix}
    \alpha^{\frac{3}{2}}\omega^\top & \beta^{\frac{3}{2}}\omega'^\top
\end{pmatrix}\\
=&\begin{pmatrix}
   \alpha A_{\infty}(x) & \\
   &\beta A_{\infty}(x')
\end{pmatrix}+\begin{pmatrix}
   \alpha \overline{\omega} a^\dagger \omega^\top & \\
   &\beta \overline{\omega'} a'^{\dagger}\omega^{\top}
\end{pmatrix}\\
&\ \ \ \ \ \quad \quad -\begin{pmatrix}
 \alpha^{\frac{3}{2}}\overline{\omega}\\   \beta^{\frac{3}{2}}\overline{\omega'}
\end{pmatrix}(\alpha^2a+\beta^2a')^\dagger\begin{pmatrix}
    \alpha^{\frac{3}{2}}\omega^\top & \beta^{\frac{3}{2}}\omega'^\top
\end{pmatrix}.
\end{align*}
Summing up the last two matrices yields the matrix $R(x,x')$.

Next, we show that $R(x,x')$ is positive semidefinite. If one of the two local connection structures $B_2^{inc}(x)$ and $B_2^{inc}(x')$ is balanced,  one of $(a,\omega^\top)$ and $(a',\omega'^\top)$ vanishes (see Proposition \ref{prop:aadaggeromega} in Appendix \ref{section:appendix}), and hence $R(x,x')=\mathbf{0}$. In the following, we assume that both local connection structures $B_2^{inc}(x)$ and $B_2^{inc}(x')$  are unbalanced. In this case, both $a$ and $a'$ in (\ref{eq:Adecomp}) are positive semidifinite (see Proposition \ref{prop:aadaggeromega} in Appendix \ref{section:appendix}). By Lemma \ref{lemma:diagonalization}, there exists a nonsingular matrix $P$ such that
$$Pa\overline{P}^{\top}=\diag(\mu_{1},\ldots,\mu_{d}),\,\,\text{for some}\,\,\mu_{i}\in \mathbb{R}_{\geq 0},i=1,\dots,d.$$
and $$Pa'\overline{P}^{\top}=\diag(\lambda_1,\ldots,\lambda_d), \,\,\text{for some}\,\,\lambda_i\in \mathbb{R}_{\geq 0}, i=1,\ldots,d.$$
Then, we obtain
\begin{align}\label{eq:ainverse}
    a^{\dagger}=\overline{P}^\top \diag(\mu^\dagger_{1},\ldots,\mu^\dagger_{d})P, \,\,\,\,(a')^{\dagger}=\overline{P}^\top \diag(\lambda_1^{\dagger},\ldots,\lambda_d^{\dagger})P
\end{align}
and
\begin{equation}\label{eq:aplusinverse}
  (\alpha^2 a+\beta^2 a')^{\dagger}=\overline{P}^\top \diag((\alpha^2\mu_{1}+\beta^2\lambda_1)^{\dagger},\ldots,(\alpha^2\mu_{d}+\beta^2\lambda_d)^{\dagger})P,
\end{equation}
where we use the notation $\dagger$ for
$$\lambda^\dagger=\left\{
\begin{aligned}
&\frac{1}{\lambda},&\textrm{if} \,\, \lambda\neq 0,\\
&0,&\textrm{otherwise.}
\end{aligned}
\right.
$$
Let us write $R=R(x,x')$ for short. Notice that we have
\begin{equation*}
    R=\begin{pmatrix}
        \alpha^{\frac{3}{2}}\overline{\omega} & \\
        & \beta^{\frac{3}{2}}\overline{\omega'}
    \end{pmatrix}
   \begin{pmatrix}
   (\alpha^2a)^{\dagger}-(\alpha^2a+\beta^2a')^{\dagger} & - (\alpha^2a+\beta^2a')^{\dagger} \\
    -(\alpha^2a+\beta^2a')^{\dagger}   &   (\beta^2a')^{\dagger}-(\alpha^2a+\beta^2a')^{\dagger}
\end{pmatrix}
    \begin{pmatrix}
        \alpha^{\frac{3}{2}}\omega^\top & \\
        & \beta^{\frac{3}{2}}\omega'^\top
    \end{pmatrix}.
\end{equation*}
Employing (\ref{eq:ainverse}) and (\ref{eq:aplusinverse}), we can reformulate $R$ as below:
\begin{equation}\label{eq:Rleftright}
    R=\overline{W}^\top
   \begin{pmatrix}
    \diag((\alpha^{2}\mu_{i})^\dagger-(\alpha^2\mu_{i}+\beta^2\lambda_i)^\dagger)  &   -\diag((\alpha^2\mu_{i}+\beta^2\lambda_i)^\dagger)\\
    -\diag((\alpha^2\mu_{i}+\beta^2\lambda_i)^\dagger)&   \diag((\beta^2\lambda_i)^\dagger-(\alpha^2\mu_{i}+\beta^2\lambda_i)^\dagger)
\end{pmatrix}
    W,
\end{equation}
where we use the notation $\diag(\lambda_i):=\diag(\lambda_1,\ldots,\lambda_d)$ and
\[W:= \begin{pmatrix}
        \alpha^{\frac{3}{2}}P\omega^\top & \\
        & \beta^{\frac{3}{2}}P\omega'^\top
    \end{pmatrix}.\]
We further introduce the notations
\begin{equation*}
   D_1:= \diag\left(\sqrt{(\alpha^2\mu_i)^\dagger-(\alpha^2\mu_i+\beta^2\lambda_i)^\dagger}\right),\,\,\text{and}\,\,
        D_2:=\diag\left(\sqrt{(\beta^2\lambda_i)^\dagger-(\alpha^2\mu_i+\beta^2\lambda_i)^\dagger}\right).
    \end{equation*}
Observe that the middle matrix in (\ref{eq:Rleftright}) can be decomposed as
\begin{equation*}
    \begin{pmatrix}
    \diag((\alpha^{2}\mu_{i})^\dagger-(\alpha^2\mu_{i}+\beta^2\lambda_i)^\dagger)  &   -\diag((\alpha^2\mu_{i}+\beta^2\lambda_i)^\dagger)\\
    -\diag((\alpha^2\mu_{i}+\beta^2\lambda_i)^\dagger)&   \diag((\beta^2\lambda_i)^\dagger-(\alpha^2\mu_{i}+\beta^2\lambda_i)^\dagger)
\end{pmatrix}=\begin{pmatrix}
        D_1\\
        -D_2
    \end{pmatrix}  \begin{pmatrix}
        D_1 &
        -D_2
    \end{pmatrix},
\end{equation*}
and, hence, is positive semidefinite. This implies that
\begin{equation}\label{eq:D}
R=\overline{W}^\top\begin{pmatrix}
        D_1\\
        -D_2
    \end{pmatrix}  \begin{pmatrix}
        D_1 &
        -D_2
    \end{pmatrix}W
\end{equation} is positive semidefinite.
\end{proof}

\begin{corollary}\label{coro:AN}
Let $(G,\sigma)$ and $(G',\sigma')$ be two $U(d)$-connection graphs. Suppose that the two signature groups $\Sigma$ and $\Sigma'$ commute. For any $x\in V$, $x'\in V'$ and any $N,N'\in (0,\infty]$, the matrices $A_{N}(x), A_{N'}(x')$ and $A_{N+N'}(x,x')$ with respect to the canonical choice $B_0$ of matrix $B$ in (\ref{eq:canonicalB}) satisfy
\begin{align}\label{eq:ANdecomp}A_{N+N'}&(x,x')=
\begin{pmatrix}
    \alpha A_{N}(x) &   \\
                      & \beta A_{N'}(x')
\end{pmatrix}+R(x,x')+J(x,x'),
\end{align}
where $R(x,x')$ is given in (\ref{eq:R}) and
\begin{equation}\label{eq:J}
J(x,x'):=\frac{2}{N+N'}\begin{pmatrix}
\alpha\frac{N'}{N}\mathbf{v}_0(x)\overline{\mathbf{v}_0(x)}^\top & -\sqrt{\alpha\beta}\mathbf{v}_0(x)\overline{\mathbf{v}_0(x')}^\top\\
-\sqrt{\alpha\beta}\mathbf{v}_0(x')\overline{\mathbf{v}_0(x)}^\top & \beta\frac{N}{N'}\mathbf{v}_0(x')\overline{\mathbf{v}_0(x')}^\top
\end{pmatrix},
\end{equation}
with $\mathbf{v}_0$ defined in (\ref{eq:canonicalv0}).
Moreover, $J(x,x')$ is positive semidefinite.
\end{corollary}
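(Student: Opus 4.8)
The plan is to reduce everything to Theorem \ref{thm:Adecomp} together with the defining relation $A_N=A_\infty-\frac{2}{N}\mathbf{v}_0\overline{\mathbf{v}_0}^\top$ from Definition \ref{def:curvature_matrices}, so that the only genuinely new inputs are the behaviour of the vector $\mathbf{v}_0$ under Cartesian products and the positive semidefiniteness of $J(x,x')$. First I would compute $\mathbf{v}_0(x,x')$ at the product vertex from the canonical expression (\ref{eq:canonicalv0}). Since the neighbours of $(x,x')$ are the $(y_i,x')$ with transition rate $\alpha p_{xy_i}$ and connection $\sigma_{xy_i}$, together with the $(x,y'_j)$ with transition rate $\beta p_{x'y'_j}$ and connection $\sigma'_{x'y'_j}$, the scalars factor out of (\ref{eq:canonicalv0}) and give
\[
\mathbf{v}_0(x,x')=\begin{pmatrix}\sqrt{\alpha}\,\mathbf{v}_0(x)\\ \sqrt{\beta}\,\mathbf{v}_0(x')\end{pmatrix},
\qquad
\mathbf{v}_0(x,x')\overline{\mathbf{v}_0(x,x')}^\top=\begin{pmatrix}\alpha\,\mathbf{v}_0(x)\overline{\mathbf{v}_0(x)}^\top & \sqrt{\alpha\beta}\,\mathbf{v}_0(x)\overline{\mathbf{v}_0(x')}^\top\\ \sqrt{\alpha\beta}\,\mathbf{v}_0(x')\overline{\mathbf{v}_0(x)}^\top & \beta\,\mathbf{v}_0(x')\overline{\mathbf{v}_0(x')}^\top\end{pmatrix}.
\]

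Next I would substitute and identify $J(x,x')$. Writing $A_{N+N'}(x,x')=A_\infty(x,x')-\frac{2}{N+N'}\mathbf{v}_0(x,x')\overline{\mathbf{v}_0(x,x')}^\top$, applying Theorem \ref{thm:Adecomp} to $A_\infty(x,x')$, and expanding the target diagonal blocks via $A_N(x)=A_\infty(x)-\frac{2}{N}\mathbf{v}_0(x)\overline{\mathbf{v}_0(x)}^\top$ and likewise for $A_{N'}(x')$, the terms $\mathrm{diag}(\alpha A_\infty(x),\beta A_\infty(x'))$ and $R(x,x')$ cancel, leaving
\[
J(x,x')=\begin{pmatrix}\tfrac{2\alpha}{N}\mathbf{v}_0(x)\overline{\mathbf{v}_0(x)}^\top & \\ & \tfrac{2\beta}{N'}\mathbf{v}_0(x')\overline{\mathbf{v}_0(x')}^\top\end{pmatrix}-\tfrac{2}{N+N'}\mathbf{v}_0(x,x')\overline{\mathbf{v}_0(x,x')}^\top.
\]
A block-by-block comparison then uses the elementary identities $\frac1N-\frac1{N+N'}=\frac{N'}{N(N+N')}$ and $\frac1{N'}-\frac1{N+N'}=\frac{N}{N'(N+N')}$ to recover exactly the entries of (\ref{eq:J}); this is routine and also covers the boundary cases $N=\infty$ or $N'=\infty$ with the convention $\frac1\infty=0$.

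Finally, for positive semidefiniteness I would set $\gamma_1:=\frac1N-\frac1{N+N'}\geq 0$ and $\gamma_2:=\frac1{N'}-\frac1{N+N'}\geq 0$ and record the key algebraic identity $\gamma_1\gamma_2=\frac1{(N+N')^2}$, which is precisely what makes a rank factorization close up: with $M:=\begin{pmatrix}\sqrt{2\alpha\gamma_1}\,\mathbf{v}_0(x)\\ -\sqrt{2\beta\gamma_2}\,\mathbf{v}_0(x')\end{pmatrix}$ one verifies $J(x,x')=M\overline{M}^\top$, so $J(x,x')\succeq 0$. Indeed the diagonal blocks of $M\overline{M}^\top$ are $2\alpha\gamma_1\,\mathbf{v}_0(x)\overline{\mathbf{v}_0(x)}^\top$ and $2\beta\gamma_2\,\mathbf{v}_0(x')\overline{\mathbf{v}_0(x')}^\top$, matching (\ref{eq:J}) via $2\alpha\gamma_1=\frac{2}{N+N'}\alpha\frac{N'}{N}$, while the off-diagonal block is $-\,2\sqrt{\alpha\beta\,\gamma_1\gamma_2}\,\mathbf{v}_0(x)\overline{\mathbf{v}_0(x')}^\top=-\frac{2}{N+N'}\sqrt{\alpha\beta}\,\mathbf{v}_0(x)\overline{\mathbf{v}_0(x')}^\top$ exactly because $\sqrt{\gamma_1\gamma_2}=\frac1{N+N'}$.

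I expect the only real subtlety to be this last factorization: spotting that the off-diagonal coefficient $\frac{2}{N+N'}$ equals $\sqrt{(2\alpha\gamma_1)(2\beta\gamma_2)}$ is the whole point of the identity $\gamma_1\gamma_2=(N+N')^{-2}$, and one must also check that $\gamma_1,\gamma_2$ and hence $M$ remain well-defined and nonnegative at the boundary values $N,N'=\infty$, so that $J(x,x')=M\overline{M}^\top\succeq 0$ persists there as well. Everything else is bookkeeping built directly on Theorem \ref{thm:Adecomp} and Definition \ref{def:curvature_matrices}.
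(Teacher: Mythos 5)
Your proposal is correct and follows essentially the same route as the paper: the decomposition is obtained from Theorem \ref{thm:Adecomp} together with $A_N=A_\infty-\frac{2}{N}\mathbf{v}_0\overline{\mathbf{v}_0}^\top$, and your factorization $J(x,x')=M\overline{M}^\top$ with $M=\begin{pmatrix}\sqrt{2\alpha\gamma_1}\,\mathbf{v}_0(x)\\ -\sqrt{2\beta\gamma_2}\,\mathbf{v}_0(x')\end{pmatrix}$ is, after the rescaling $\gamma_1=\frac{N'}{N(N+N')}$, $\gamma_2=\frac{N}{N'(N+N')}$, exactly the paper's decomposition (\ref{eq:Jdecomp}). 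Your only addition is to make explicit the block identity $\mathbf{v}_0(x,x')^\top=\begin{pmatrix}\sqrt{\alpha}\,\mathbf{v}_0(x)^\top & \sqrt{\beta}\,\mathbf{v}_0(x')^\top\end{pmatrix}$, which the paper uses implicitly, so nothing further is needed.
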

\begin{proof}
The decomposition formula (\ref{eq:ANdecomp}) is a direct consequence of Theorem \ref{thm:Adecomp} and the definition of $A_N(x)=A_{\infty}(x)-\frac{2}{N}\mathbf{v}_0(x)\overline{\mathbf{v}_0(x)}^{\top}$. It remains to show that $J(x,x')$ is positive semidefinite. This follows from the following observation:
\begin{align}
&(N+N')J(x,x')\notag\\=&2\begin{pmatrix}
  \mathbf{v}_0(x)& \\
  & \mathbf{v}_0(x')
\end{pmatrix}\begin{pmatrix}
  \sqrt{\alpha\frac{N'}{N}}I_d\\
  -\sqrt{\beta\frac{N}{N'}}I_d
\end{pmatrix}\begin{pmatrix}
    \sqrt{\alpha\frac{N'}{N}}I_d &
  -\sqrt{\beta\frac{N}{N'}}I_d
\end{pmatrix}\begin{pmatrix}
   \overline{\mathbf{v}_0(x)}^\top& \\
  & \overline{\mathbf{v}_0(x')}^\top
\end{pmatrix}.\label{eq:Jdecomp}
\end{align}
\end{proof}

\begin{corollary}\label{coro:Cartesian}
Let $(G,\sigma)$ and $(G',\sigma')$ be two connection graphs. Suppose that the two signature groups $\Sigma$ and $\Sigma'$ commute. For any $x\in V$, $x'\in V'$, and $N, N'\in (0,\infty]$, we have
\begin{equation}\label{eq:coroCartesian}
\K_{G\times_{\alpha,\beta} G',\sigma^\times, (x,x')}(N+N')\geq\min\{\alpha\mathcal{K}_{G,\sigma,x}(N),\beta\mathcal{K}_{G',\sigma',x'}(N')\}.
\end{equation}
Moreover, if one of the two local connection structures $B_2^{inc}(x)$ and $B_2^{inc}(x')$ is balanced, we have
\begin{equation}\label{eq:coroCartesianEquality}
    \K_{G\times_{\alpha,\beta} G',\sigma^\times, (x,x')}(\infty)=\min\{\alpha\K_{G,\sigma,x}(\infty),\beta\K_{G',\sigma',x'}(\infty)\}.
\end{equation}
\end{corollary}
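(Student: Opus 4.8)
The plan is to translate the whole statement into a comparison of smallest eigenvalues of curvature matrices via Theorem~\ref{thm:curvature_eigenvalue}, and then simply read off both conclusions from the decomposition formulas already established in Corollary~\ref{coro:AN} and Theorem~\ref{thm:Adecomp}. Throughout I would work with the canonical choice $B_0$ from Remark~\ref{remark:B0}; this is harmless because by Proposition~\ref{prop:unitarily_equivalent} the smallest eigenvalue of the curvature matrix does not depend on the choice of $B$. The one structural observation I would record first is that for a block diagonal Hermitian matrix the spectrum is the union of the spectra of its blocks, so that, using $\alpha,\beta>0$ and Theorem~\ref{thm:curvature_eigenvalue},
\[\lambda_{\min}\begin{pmatrix}\alpha A_N(x) & \\ & \beta A_{N'}(x')\end{pmatrix}=\min\{\alpha\lambda_{\min}(A_N(x)),\,\beta\lambda_{\min}(A_{N'}(x'))\}=\min\{\alpha\K_{G,\sigma,x}(N),\,\beta\K_{G',\sigma',x'}(N')\}.\]

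For the inequality \eqref{eq:coroCartesian} I would start from the decomposition \eqref{eq:ANdecomp} of Corollary~\ref{coro:AN},
\[A_{N+N'}(x,x')=\begin{pmatrix}\alpha A_N(x) & \\ & \beta A_{N'}(x')\end{pmatrix}+R(x,x')+J(x,x'),\]
and use that both $R(x,x')$ and $J(x,x')$ are positive semidefinite, hence $R(x,x')+J(x,x')\succeq 0$ and its smallest eigenvalue is nonnegative. Applying the eigenvalue subadditivity \eqref{eq:sum_trick} with $M_1$ the block diagonal matrix and $M_2=R(x,x')+J(x,x')$ then gives
\[\K_{G\times_{\alpha,\beta}G',\sigma^\times,(x,x')}(N+N')=\lambda_{\min}(A_{N+N'}(x,x'))\geq \lambda_{\min}\begin{pmatrix}\alpha A_N(x) & \\ & \beta A_{N'}(x')\end{pmatrix},\]
and the displayed identity above rewrites the right-hand side as $\min\{\alpha\K_{G,\sigma,x}(N),\beta\K_{G',\sigma',x'}(N')\}$, which is exactly \eqref{eq:coroCartesian}.

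For the equality \eqref{eq:coroCartesianEquality} I would specialize to $N=N'=\infty$ and appeal directly to Theorem~\ref{thm:Adecomp} rather than to Corollary~\ref{coro:AN}; this deliberately sidesteps the indeterminate ratios $N'/N$ and $N/N'$ appearing in $J(x,x')$. By Theorem~\ref{thm:Adecomp} the matrix $A_\infty(x,x')$ equals the block diagonal matrix $\diag(\alpha A_\infty(x),\beta A_\infty(x'))$ plus $R(x,x')$, and $R(x,x')=\mathbf{0}$ precisely under the hypothesis that one of $B_2^{inc}(x)$, $B_2^{inc}(x')$ is balanced. In that case the decomposition becomes exactly block diagonal, so $\lambda_{\min}(A_\infty(x,x'))$ equals $\min\{\alpha\K_{G,\sigma,x}(\infty),\beta\K_{G',\sigma',x'}(\infty)\}$ by the block-diagonal formula of the first paragraph, establishing \eqref{eq:coroCartesianEquality}.

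The substance of the argument has in fact already been discharged: the positive semidefiniteness of $R$ and $J$, and the balanced-case vanishing of $R$, are the content of Theorem~\ref{thm:Adecomp} and Corollary~\ref{coro:AN}, so I do not expect a genuine obstacle here. The only point that requires a little care is keeping the smallest-eigenvalue bookkeeping consistent, namely invoking the monotonicity of $\lambda_{\min}$ under addition of a positive semidefinite matrix (equivalently \eqref{eq:sum_trick}) together with the fact that scaling by the positive constants $\alpha,\beta$ commutes with $\lambda_{\min}$, and in the equality case quoting the $N=\infty$ decomposition of Theorem~\ref{thm:Adecomp} directly so as to avoid the degeneracy of the $J$-term.
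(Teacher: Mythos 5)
Your proposal is correct and follows essentially the same route as the paper: the inequality \eqref{eq:coroCartesian} is obtained from the decomposition \eqref{eq:ANdecomp} of Corollary~\ref{coro:AN} together with positive semidefiniteness of $R(x,x')$ and $J(x,x')$ and the $\lambda_{\min}$ monotonicity \eqref{eq:sum_trick}, while the equality \eqref{eq:coroCartesianEquality} comes from the block-diagonal form $A_\infty(x,x')=\diag(\alpha A_\infty(x),\beta A_\infty(x'))$ when $R(x,x')=\mathbf{0}$ in the balanced case, exactly as in the paper's proof. Your explicit remark about invoking Theorem~\ref{thm:Adecomp} directly at $N=N'=\infty$ to avoid the indeterminate ratios $N'/N$, $N/N'$ in $J(x,x')$ is a sound clarification of a point the paper passes over silently.
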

\begin{proof}
 By Corollary \ref{coro:AN}, we have the decomposition formula (\ref{eq:ANdecomp}).
Then, we derive from the fact that the matrices $R(x,x')$ and $J(x,x')$ are positive semidefinite and Theorem \ref{thm:curvature_eigenvalue} that
\begin{equation*}
\K_{G\times_{\alpha,\beta} G',\sigma^\times, (x,x')}(N+N')\geq \min\{\alpha\mathcal{K}_{G,\sigma,x}(N),\beta\mathcal{K}_{G',\sigma',x'}(N')\}.
\end{equation*}

If one of the two local connection structures $B_2^{inc}(x)$ and $B_2^{inc}(x')$ is balanced, we have
\[A_{\infty}(x,x')=
\begin{pmatrix}
   \alpha A_{\infty}(x) &   \\
                      &\beta A_{\infty}(x')
\end{pmatrix}.\]
Applying Theorem \ref{thm:curvature_eigenvalue} yields
\begin{equation*}
    \K_{G\times_{\alpha,\beta} G',\sigma^\times, (x,x')}(\infty)=\min\{\alpha\K_{G,\sigma,x}(\infty),\beta\K_{G',\sigma',x'}(\infty)\}.
\end{equation*}
\end{proof}

\begin{remark}
For two connection graphs $(G,\sigma)$ and $(G',\sigma')$ with connections of (possibly) different dimensions, we can define the Cartesian product of them as follows. Suppose that
\[\sigma: E^{or}\to O(d_1)\,\,\text{or}\,\,U(d_1), \,\,\text{and}\,\,\sigma':(E')^{or}\to O(d_2)\,\,\text{or}\,\,U(d_2)\]
for two positive integers $d_1$ and $d_2$. The Cartesian product of $(G,\sigma)$ and $(G',\sigma')$ is the connection graph $(G\times_{\alpha,\beta}G', \sigma^{\otimes})$ with a $d_1d_2$-dimensional connection $\sigma^\otimes$.
 The connection $\sigma^\otimes$ of each oriented edge in the Cartesian product of $G$ and $G'$ is given as below:
 \begin{align*}
     \sigma^\otimes_{(x,x')(y,x')}&= \sigma_{xy}\otimes I_{d_2},\\
    \sigma^\otimes_{(x,x')(x,y')}&=I_{d_1}\otimes\sigma'_{x'y'}.
 \end{align*}
 Notice that $(G\times_{\alpha,\beta}G', \sigma^{\otimes})$ coincides with the Cartesian product of $(G,\sigma\otimes I_{d_2})$ and $(G',I_{d_1}\otimes \sigma')$. This correspondence follows the sense of Definition \ref{def:Cartesian}.  The signature groups of $(G,\sigma\otimes I_{d_2})$ and $(G',I_{d_1}\otimes \sigma')$ always commute. Moreover, we have \cite[Corollary 3.4]{LMP}
 \[\mathcal{K}_{G,\sigma\otimes I_{d_2},x}=\mathcal{K}_{G,\sigma,x},\,\,\mathcal{K}_{G',I_{d_1}\otimes\sigma',x'}=\mathcal{K}_{G',\sigma',x'}.\]
 Therefore, the two estimates in Corollary \ref{coro:Cartesian} holds true for $(G\times_{\alpha,\beta}G', \sigma^{\otimes})$.
\end{remark}

We present an example involving two unbalanced connection graphs whose signature groups, $\Sigma$ and $\Sigma'$, do not commute. In this case, the curvature of their Cartesian product turns out to be smaller than the minimum of the two individual curvatures. This demonstrates that the condition on community of signature groups $\Sigma$ and $\Sigma'$ in Corollary \ref{coro:Cartesian} is indeed essential.
\begin{example}
Let $(G,\sigma)$ and $(G',\sigma')$ be given in Figure \ref{fig:local structure G} and Figure \ref{fig:local structure G'} with \[\sigma_{AC}=\begin{pmatrix}
    0 & i \\
    -i & 0
\end{pmatrix}\,\,\text{ and }\,\,\sigma'_{23}=
\begin{pmatrix}
    1 & 0 \\
    0 & i
\end{pmatrix},\] respectively. The edge weight of each edge and vertex measure at each vertex are set to be $1$. We will assume this choice of edge weights and vertex measure in all examples considered in this article.
\begin{figure}[!htp]
\begin{minipage}[t]{0.5\textwidth}
\centering
\tikzset{vertex/.style={circle, draw, fill=black!20, inner sep=0pt, minimum width=4pt}}
\begin{tikzpicture}[scale=3.0]

\draw (0,0) -- (0.75,0) node[midway, below, black]{$\sigma_{AC}$}
		 -- (0.375,0.375) node[midway, above, black]{$I_{2}$}
         -- (0,0) node[midway, above, black]{$I_{2}$};

\node at (0,0) [vertex, label={[label distance=0mm]225: \small $A$}, fill=black] {};
\node at (0.375,0.375) [vertex, label={[label distance=0mm]90: \small $B$}, fill=black] {};
\node at (0.75,0) [vertex, label={[label distance=0mm]270: \small $C$}, fill=black] {};

\end{tikzpicture}
\caption{The connection graph $(G,\sigma)$.}
\label{fig:local structure G}
\end{minipage}
\begin{minipage}[t]{0.5\textwidth}
\centering
\tikzset{vertex/.style={circle, draw, fill=black!20, inner sep=0pt, minimum width=4pt}}
\begin{tikzpicture}[scale=3.0]

\draw (0,0) -- (0.5,0.25) node[midway, above, black]{$I_2$}
		 -- (1,0) node[midway, above, black]{$I_2$};
\draw (0,0) -- (0.5,-0.25) node[midway, below, black]{$I_2$}
		 -- (1,0) node[midway, below, black]{$I_2$};
\draw (0.5,0.25) -- (0.5,-0.25) node[midway, right, black]{$\sigma'_{23}$};

\node at (0,0) [vertex, label={[label distance=0mm]225: \small $1$}, fill=black] {};
\node at (0.5,0.27) [vertex, below, label={[label distance=0mm]90: \small $2$}, fill=black] {};
\node at (0.5,-0.25) [vertex, label={[label distance=0mm]270: \small $3$}, fill=black] {};
\node at (1,0) [vertex, label={[label distance=0mm]270: \small $4$}, fill=black] {};
\end{tikzpicture}
\caption{The connection graph $(G',\sigma')$.}
\label{fig:local structure G'}
\end{minipage}
\end{figure}
We calculate that \[\mathcal{K}_{G,\sigma,A}(\infty)=\frac{1}{2},\,\,\text{ and }\,\,\K_{G',\sigma',1}=\frac{3}{2}.\] It is clearly that $\sigma_{AC}\sigma'_{23}\neq \sigma'_{23}\sigma_{AC}$. We check the curvature $\K_{G\times G',\sigma^\times, (A,1)}(\infty)<0$ (with $\alpha=\beta=1$). Indeed, the matrix $4\Gamma^{\sigma^\times}_{2}(A,1)$ has a negative eigenvalue $-0.7660<0$.
\end{example}

We now illustrate a case with two connection graphs that are each locally unbalanced, yet the curvature of their Cartesian product turns out to be the minimum of the two individual curvatures. This demonstrates that the criterion stated in Corollary \ref{coro:Cartesian}—namely, that at least one of the graphs must be locally balanced—is a sufficient condition for equality in (\ref{eq:coroCartesianEquality}), but not a required one.
\begin{example}\label{example:Cartesian_small}
Consider again the graphs $G$ and $G'$ depicted in Figure \ref{fig:local structure G} and Figure \ref{fig:local structure G'}, respectively. We assign $O(1)$-connections $\overline{\sigma}$ and $\overline{\sigma}'$ to $G$ and $G'$ by replacing each $I_2$ in $\sigma$ and $\sigma'$ by $+1$ and assigning $\overline{\sigma}_{AC}=-1$ and $\overline{\sigma}'_{23}=-1$.
\par We calculate the curvature
\[\K_{G,\overline{\sigma},A}(\infty)=\frac{1}{2},\,\,\text{and}\,\,\K_{G',\overline{\sigma}',1}(\infty)=\frac{3}{2}.\]
The local connection structure of $(A,1)$ of the Cartesian product of $(G,\overline{\sigma})$ and $(G',\overline{\sigma}')$ is depicted in Figure \ref{fig:cartesian product GG'}, in which edges with signature $+1$ (signature $-1$) are drawn as solid lines (dashed lines).
\begin{figure}[!htp]
\centering
\tikzset{vertex/.style={circle, draw, fill=black!20, inner sep=0pt, minimum width=4pt}}
\begin{tikzpicture}[scale=3.0]
\draw[ line width=0.025cm,
    dash pattern=on 0.1cm off 0.05cm] (1,0.375)  -- (1,0.125);
\draw (0,0) -- (1,0.375) node[midway, below, black]{};
\draw (1,0.125) -- (0,0) node[midway, above, black]{}
         -- (1,-0.125) node[midway, above, black]{}
         -- (1,-0.375) node[midway, above, black]{};
 \draw[ line width=0.025cm,
    dash pattern=on 0.1cm off 0.05cm] (1,-0.375) -- (0,0);
\draw (1,0.375) -- (2,0.5) node[midway, above, black]{}
         -- (1,0.125) node[midway, above, black]{};
 \draw[ line width=0.025cm,
    dash pattern=on 0.1cm off 0.05cm]  (1,0.125) -- (2,-0.5);
\draw (2,-0.5) -- (1,-0.375) node[midway, above, black]{}
         -- (2,0) node[midway, below, black]{};
\draw[ line width=0.025cm,
    dash pattern=on 0.1cm off 0.05cm] (2,0) -- (1,0.375);
\draw (1,0.375) -- (2,0.25) node[midway, above, black]{}
         -- (1,-0.125) node[midway, above, black]{}
         -- (2,-0.25) node[midway, above, black]{};

\node at (0,0) [vertex, label={[label distance=0mm]180: \small $(A,1)$}, fill=black] {};
\node at (1,0.375) [vertex, label={[label distance=0mm]180: \small $(A,2)$}, fill=black] {};
\node at (1,0.125) [vertex, label={[label distance=0mm]180: \small $(A,3)$}, fill=black] {};
\node at (1,-0.125) [vertex, label={[label distance=0mm]180: \small $(B,1)$}, fill=black] {};
\node at (1,-0.375) [vertex, label={[label distance=0mm]180: \small $(C,1)$}, fill=black] {};
\node at (2,0.5) [vertex, label={[label distance=0mm]0: \small $(A,4)$}, fill=black] {};
\node at (2,0.25) [vertex, label={[label distance=0mm]0: \small $(B,2)$}, fill=black] {};
\node at (2,0) [vertex, label={[label distance=0mm]0: \small $(C,2)$}, fill=black] {};
\node at (2,-0.25) [vertex, label={[label distance=0mm]0: \small $(B,3)$}, fill=black] {};
\node at (2,-0.5) [vertex, label={[label distance=0mm]0: \small $(C,3)$}, fill=black] {};

\end{tikzpicture}
\caption{The local connection structure of $(A,1)$ in $(G,\overline{\sigma})\times(G',\overline{\sigma}')$.}
\label{fig:cartesian product GG'}
\end{figure}
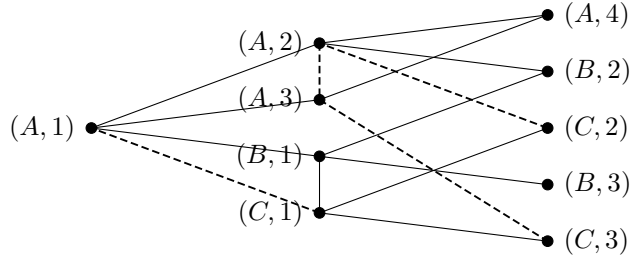

We check that the curvature at the vertex $(A,1)$ is $\frac{1}{2}$, which is equal to the minimum of $\K_{G,\overline{\sigma}, A}(\infty)$ and $ \K_{G',\overline{\sigma}',1}(\infty)$.
\end{example}
In the following, we present an example of two signed graphs, for which the curvature of the Cartesian product lies strictly in between the two individual curvatures.
\begin{example}
Consider the signed graph $(G,\overline{\sigma})$ given in Example \ref{example:Cartesian_small}, and the signed graph $(G_2,\sigma_2)$ given in Figure \ref{fig:local structure G2}. Notice that we only label the negative edges in the Figure \ref{fig:local structure G2} for simplicity.
\begin{figure}[!htp]
\centering
\tikzset{vertex/.style={circle, draw, fill=black!20, inner sep=0pt, minimum width=4pt}}
\begin{tikzpicture}[scale=3.0]

\draw (0,0) -- (0.5,0.5) node[midway, below, black]{}
		 -- (1,0.5) node[midway, above, black]{}
         -- (0.5,-0.5) node[midway, above, black]{}
         -- (0,0) node[midway, above, black]{};
\draw (0,0) -- (0.5,0) node[midway, above, black]{}
		 -- (0.5,0.5) node[midway, right, black]{$-$};
\draw (0.5,0) -- (1,-0.5) node[midway, right, black]{};

\node at (0,0) [vertex, label={[label distance=0mm]225: \small $1$}, fill=black] {};
\node at (0.5,0.5) [vertex, above, label={[label distance=0mm]90: \small $2$}, fill=black] {};
\node at (0.5,0) [vertex, label={[label distance=0mm]135: \small $3$}, fill=black] {};
\node at (0.5,-0.5) [vertex, label={[label distance=0mm]90: \small $4$}, fill=black] {};
\node at (1,0.5) [vertex, label={[label distance=0mm]90: \small $5$}, fill=black] {};
\node at (1,-0.5) [vertex, label={[label distance=0mm]225: \small $6$}, fill=black] {};

\end{tikzpicture}
\caption{The signed graph $(G_{2},\sigma_{2})$.}
\label{fig:local structure G2}
\end{figure}
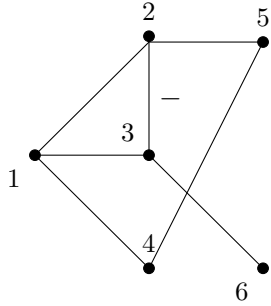

Then the curvature matrix at the vertex $1$ is \[A_{\infty}(G_2,\sigma_2,1)=\frac{1}{4}\begin{pmatrix}
    5 & 3 & 0\\
    3 & 1 & 4 \\
    0 & 4 & 6
\end{pmatrix}.\]Thus, the curvature is $\K_{G_{2},\sigma_{2},1}(\infty)\approx -0.5502$. Recall that the curvature at the vertex $A$ is $\K_{G_,\overline{\sigma}, A}=\frac{1}{2}$.

The local structure of the Cartesian product at the vertex $(1,A)$ is shown in Figure \ref{fig:cartesian product G2G}. Again, for clarity, the edges with signature $+1$ (signature $-1$) are drawn as solid lines (dashed lines).
\begin{figure}[!ht]
\centering
\tikzset{vertex/.style={circle, draw, fill=black!20, inner sep=0pt, minimum width=4pt}}
\begin{tikzpicture}[scale=3.0]

\draw (0,0) -- (1,0.5) node[midway, below, black]{}
		 -- (1,0.25) node[midway, left, black]{};
\draw[ line width=0.025cm,
    dash pattern=on 0.1cm off 0.05cm] (1,0.25) -- (0,0);
\draw (0,0) -- (1,0) node[midway, above, black]{};
\draw[ line width=0.025cm,
    dash pattern=on 0.1cm off 0.05cm] (1,0) -- (1,-0.25);
\draw (1,-0.25) -- (0,0) node[midway, above, black]{};
\draw (0,0) -- (1,-0.5) node[midway, above, black]{};
\draw[ line width=0.025cm,
    dash pattern=on 0.1cm off 0.05cm](1,-0.5)  -- (2,-0.875);
\draw (2,-0.875) -- (1,0.25) node[midway,below, black]{}
         -- (2,0.875) node[midway, above, black]{};
\draw[ line width=0.025cm,
    dash pattern=on 0.1cm off 0.05cm] (2,0.875)  -- (1,-0.25);
\draw (1,0.5) -- (2,0.625) node[midway, above, black]{}
         -- (1,-0.25) node[midway, above, black]{};
 \draw[ line width=0.025cm,
    dash pattern=on 0.1cm off 0.05cm]  (1,0) -- (2,0.375);
\draw (2,0.375) -- (1,0.25) node[midway, above, black]{}
            -- (2,-0.875) node[midway, above, black]{};
\draw (1,0.5) -- (2,0.125) node[midway, above, black]{}
              -- (1,0) node[midway, above, black]{};
\draw (1,0) -- (2,-0.125) node[midway, above, black]{}
            -- (1,-0.5) node[midway, above, black]{};
\draw (1,-0.25) -- (2,-0.375) node[midway, above, black]{};
\draw (1,0.5) -- (2,-0.625) node[midway, above, black]{}
              -- (1,-0.5) node[midway, above, black]{};

\node at (0,0) [vertex, label={[label distance=0mm]180: \small $(1,A)$}, fill=black] {};
\node at (1,0) [vertex, label={[label distance=0mm]180: \small $(2,A)$}, fill=black] {};
\node at (1,0.25) [vertex, label={[label distance=0mm]180: \small $(1,C)$}, fill=black] {};
\node at (1,0.5) [vertex, label={[label distance=0mm]180: \small $(1,B)$}, fill=black] {};
\node at (1,-0.25) [vertex, label={[label distance=0mm]180: \small $(3,A)$}, fill=black] {};
\node at (1,-0.5) [vertex, label={[label distance=0mm]180: \small $(4,A)$}, fill=black] {};
\node at (2,0.875) [vertex, label={[label distance=0mm]0: \small $(3,C)$}, fill=black] {};
\node at (2,0.625) [vertex, label={[label distance=0mm]0: \small $(3,B)$}, fill=black] {};
\node at (2,0.375) [vertex, label={[label distance=0mm]0: \small $(2,C)$}, fill=black] {};
\node at (2,0.125) [vertex, label={[label distance=0mm]0: \small $(2,B)$}, fill=black] {};
\node at (2,-0.125) [vertex, label={[label distance=0mm]0: \small $(5,A)$}, fill=black] {};
\node at (2,-0.375) [vertex, label={[label distance=0mm]0: \small $(6,A)$}, fill=black] {};
\node at (2,-0.625) [vertex, label={[label distance=0mm]0: \small $(4,B)$}, fill=black] {};
\node at (2,-0.875) [vertex, label={[label distance=0mm]0: \small $(4,C)$}, fill=black] {};
\end{tikzpicture}
\caption{The local connection structure of $(1,A)$ in $(G_2,\sigma_2)\times (G,\overline{\sigma})$.}
\label{fig:cartesian product G2G}
\end{figure}
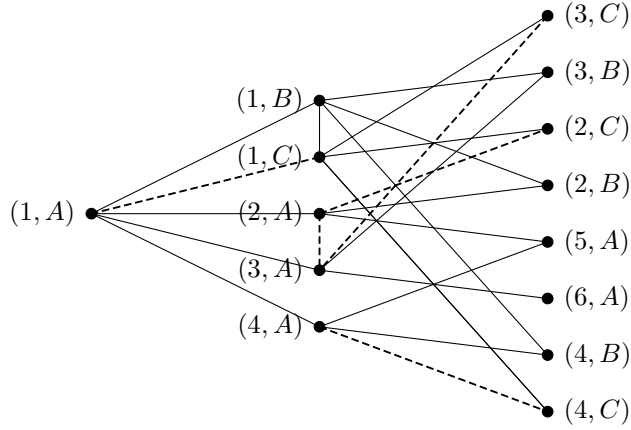

The curvature matrix at the vertex $(1,A)$ is \[A_{\infty}(1,A)=\frac{1}{8}
\begin{pmatrix}
    19 & -15 & -9 & -9 & 0 \\
    -15 & 19 & 9 & 9 & 0 \\
    -9 & 9 & 19 & 15 & 0 \\
    -9 & 9 & 15 & 11 & 8 \\
    0 & 0 & 0 & 8 & 12
\end{pmatrix}.
 \]Hence the curvature is about $-0.454$, which satisfies $$-0.5502\approx \textrm{min}\{\K_{G_2,\sigma_2,1},\K_{G,\overline{\sigma},A}\}< -0.454<\textrm{max}\{\K_{G_2,\sigma_2,1},\K_{G,\overline{\sigma},A}\}=\frac{1}{2}.$$
\end{example}
Next, we restrict to the case that the connections lie in $O(1)$ or $U(1)$.
\begin{corollary}\label{coro:U1infty}
Let $(G,\sigma)$ and $(G',\sigma')$ be two connection graphs with $1$-dimensional connections.  For $x\in V$ and $x'\in V'$, we have
\begin{equation*}
\min\{\alpha\mathcal{K}_{G,\sigma,x}(\infty),\beta\mathcal{K}_{G',\sigma',x'}(\infty)\}\leq\K_{G\times_{\alpha,\beta} G',\sigma^\times, (x,x')}(\infty)\leq\max\{\alpha\mathcal{K}_{G,\sigma,x}(\infty),\beta\mathcal{K}_{G',\sigma',x'}(\infty)\}.
\end{equation*}
\end{corollary}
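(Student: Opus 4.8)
The plan is to extract both inequalities from the decomposition $A_\infty(x,x')=\diag(\alpha A_\infty(x),\beta A_\infty(x'))+R(x,x')$ provided by Theorem~\ref{thm:Adecomp}, using that for one-dimensional connections the error term $R(x,x')$ collapses to a rank-at-most-one matrix. First I would check that Theorem~\ref{thm:Adecomp} applies: since $d=1$, the signature groups $\Sigma,\Sigma'$ are subgroups of the abelian group $U(1)$ (recall $O(1)\subset U(1)$), hence commute, so \eqref{eq:Adecomp} holds at $(x,x')$ for the canonical choice $B_0$. Abbreviate $M:=\diag(\alpha A_\infty(x),\beta A_\infty(x'))$ and, via Theorem~\ref{thm:curvature_eigenvalue}, $\kappa_1:=\alpha\mathcal{K}_{G,\sigma,x}(\infty)=\alpha\lambda_{\min}(A_\infty(x))$ and $\kappa_2:=\beta\mathcal{K}_{G',\sigma',x'}(\infty)$. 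The lower bound is then immediate (and is Corollary~\ref{coro:Cartesian} specialized to $N=N'=\infty$): since $R(x,x')\succeq 0$ we have $A_\infty(x,x')\succeq M$, so $\lambda_{\min}(A_\infty(x,x'))\ge\lambda_{\min}(M)=\min\{\kappa_1,\kappa_2\}$.

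The heart of the argument is the upper bound, whose key step is the rank bound on $R(x,x')$. I would revisit the factorization \eqref{eq:D} from the proof of Theorem~\ref{thm:Adecomp}, $R=\overline{W}^\top\begin{pmatrix}D_1\\-D_2\end{pmatrix}\begin{pmatrix}D_1&-D_2\end{pmatrix}W$, and observe that when $d=1$ each $D_i$ is a single nonnegative real scalar. Consequently $\begin{pmatrix}D_1&-D_2\end{pmatrix}W$ is a single row vector $u$ and $R=\overline{u}^\top u$ is positive semidefinite of rank at most one. (If one of the two local structures is balanced then $R=\mathbf{0}$ and the bound below is trivial.)

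To conclude I would run a Courant--Fischer argument tailored to a rank-one positive semidefinite perturbation. Let $S$ be the two-dimensional space spanned by eigenvectors of $M$ for its two smallest eigenvalues, so that $\sup_{0\ne w\in S}\frac{w^\top M\overline{w}}{w^\top\overline{w}}=\lambda_2(M)$. Because $w\mapsto u\overline{w}$ is a single linear functional, its kernel meets the two-dimensional space $S$ in a nonzero vector $w_0$, and for this $w_0$ the perturbation contributes nothing: $w_0^\top R\overline{w_0}=|u\overline{w_0}|^2=0$. Hence $\lambda_{\min}(A_\infty(x,x'))=\lambda_{\min}(M+R)\le\frac{w_0^\top(M+R)\overline{w_0}}{w_0^\top\overline{w_0}}=\frac{w_0^\top M\overline{w_0}}{w_0^\top\overline{w_0}}\le\lambda_2(M)$. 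Finally, since $\kappa_1$ and $\kappa_2$ are eigenvalues of $M$ arising from its two diagonal blocks, at least two eigenvalues of $M$ (with multiplicity) are $\le\max\{\kappa_1,\kappa_2\}$, so $\lambda_2(M)\le\max\{\kappa_1,\kappa_2\}$; combining the two estimates yields $\lambda_{\min}(A_\infty(x,x'))\le\max\{\kappa_1,\kappa_2\}$, as required.

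The main obstacle is the rank-one reduction of $R(x,x')$: one must trace the scalars $D_1,D_2$ through the block factorization \eqref{eq:D} to see that the middle factor has rank one precisely because $d=1$. Once that is secured, the remaining steps are the standard rank-one interlacing inequality $\lambda_{\min}(M+R)\le\lambda_2(M)$ and the elementary observation that two of the eigenvalues of $M$ lie below $\max\{\kappa_1,\kappa_2\}$.
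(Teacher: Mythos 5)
Your proposal is correct, and it is built on the same foundation as the paper's proof: Theorem~\ref{thm:Adecomp} for $A_\infty(x,x')=\operatorname{diag}(\alpha A_\infty(x),\beta A_\infty(x'))+R(x,x')$, the lower bound via $R\succeq 0$ (Corollary~\ref{coro:Cartesian}, noting that for $d=1$ the signature groups automatically commute), and, for the upper bound, the crucial observation that in the factorization (\ref{eq:D}) the matrices $D_1,D_2$ are scalars when $d=1$, so the $R$-constraint is a single linear functional. Where you diverge is in how that rank-one obstruction is neutralized. The paper works inside the two-dimensional space $\operatorname{span}\{(cu,0)^\top,(0,c'u')^\top\}$ spanned by the minimizers $u,u'$ of the two diagonal blocks, solves the scalar equation $\zeta=0$ for $(c,c')$, and bounds $\lambda_{\min}(A_\infty(x,x'))$ by the convex combination $\bigl(|c|^2\alpha\lambda_{\min}(A_\infty(x))+|c'|^2\beta\lambda_{\min}(A_\infty(x'))\bigr)/(|c|^2+|c'|^2)\leq\max\{\kappa_1,\kappa_2\}$. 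You instead make the rank-one structure of $R=\overline{u}^\top u$ explicit and invoke the standard interlacing fact $\lambda_{\min}(M+R)\leq\lambda_2(M)$ for a PSD rank-one perturbation, via a null vector of $w\mapsto u\overline{w}$ inside the span of the two lowest eigenvectors of $M$; then $\lambda_2(M)\leq\max\{\kappa_1,\kappa_2\}$ because $\kappa_1$ and $\kappa_2$, coming from distinct blocks, account for two eigenvalues of $M$ with multiplicity. Your intermediate bound $\lambda_2(M)$ is in fact weakly sharper than the paper's convex-combination bound, and your argument is a cleaner instance of a general principle; the paper's explicit $(c,c')$-construction has the advantage that it is reused verbatim in Corollary~\ref{coro:U1N} to kill the term $J(x,x')$ --- though your interlacing route would carry over there as well, since by (\ref{eq:Jdecomp}) the matrix $J(x,x')$ is likewise rank one when $d=1$. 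Both routes exploit the same mechanism (a codimension-one constraint met inside a two-dimensional test space), so the difference is one of packaging rather than substance, but your packaging is the more general and arguably more transparent one.
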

\begin{proof}
Since the signature groups always commute in the case of $d=1$, the lower bound estimate follows directly from Corollary \ref{coro:Cartesian}. Next, we show the upper bound estimate. Let $u$ and $u'$ be the eigenvectors with unit norm corresponding to  $\lambda_{\min}(A_{\infty}(x))$ and $\lambda_{\min}(A_{\infty}(x'))$, respectively. Let  $c$ and $c'$ be two numbers to be determined. Applying Theorem \ref{thm:Adecomp}, we estimate
$$\lambda_{\min}(A_{\infty}(x,x'))\leq\frac{
\begin{pmatrix}
    \overline{cu}^{\top} & \overline{c'u'}^{\top}
\end{pmatrix}
\begin{pmatrix}
    \alpha A_{\infty}(x) &   \\
                      & \beta A_{\infty}(x')
\end{pmatrix}
\begin{pmatrix}
    cu \\
    c'u'
\end{pmatrix}
+
\begin{pmatrix}
    \overline{cu}^{\top} & \overline{c'u'}^{\top}
\end{pmatrix}
R
\begin{pmatrix}
    cu \\
    c'u'
\end{pmatrix}
}{|c|^2+|c'|^2}.
$$
Applying (\ref{eq:D}), we derive
\begin{equation*}
 \begin{pmatrix}
    \overline{cu}^{\top} & \overline{c'u'}^{\top}
\end{pmatrix}
R
\begin{pmatrix}
    cu \\
    c'u'
\end{pmatrix}=   \begin{pmatrix}
    \overline{cu}^{\top} & \overline{c'u'}^{\top}
\end{pmatrix}
\overline{W}^\top\begin{pmatrix}
        D_1\\
        -D_2
    \end{pmatrix}  \begin{pmatrix}
        D_1 &
        -D_2
    \end{pmatrix}W
\begin{pmatrix}
    cu \\
    c'u'
\end{pmatrix}=\overline{\zeta}^\top\zeta,
\end{equation*}
where 
\[\zeta:=\begin{pmatrix}
        D_1 &
        -D_2
    \end{pmatrix}W
\begin{pmatrix}
    cu \\
    c'u'
\end{pmatrix}=c\alpha^{\frac{3}{2}}D_1P\omega^\top u-c'\beta^{\frac{3}{2}}D_2P\omega'^\top.\]
Notice that $\zeta$ is a number due to $d=1$. We choose $c$ and $c'$ such that $\zeta=0$.

Then we estimate
\begin{align*}
\lambda_{\min}(A_{\infty}(x,x'))
\leq &\frac{1}{|c|^{2}+|c'|^{2}}\left(|c|^{2}\alpha\lambda_{\min}(A_{\infty}(x))+|c'|^{2}\beta\lambda_{\min}(A_{\infty}(x'))\right)\\
\leq&\max\{\alpha\lambda_{\min}(A_{\infty}(x)),\beta\lambda_{\min}(A_{\infty}(x'))\}.
\end{align*}
This completes the proof.
\end{proof}
%

Let us recall the following definition of \emph{star product} of functions from \cite{CLP}.
\begin{definition}\cite[Definition 7.1]{CLP}\label{def:star_product}
Let $f_1,f_2: (0,\infty]\to \mathbb{R}$ be continuous and monotone increasing functions with $\lim_{t\to 0}f_i(t)=-\infty$, $i=1,2$. Then the star product of $f_1$ and $f_2$ is defined as a function $f_1*f_2:(0,\infty]\to \mathbb{R}$ \label{notation:star_product}given by
\[f_1*f_2(t):=f_1(t_1)=f_2(t_2),\]
where $t_1+t_2=t$ such that $f_1(t_1)=f_2(t_2)$.
\end{definition}

Notice that the star product is commutative and associative \cite[Propositions 7.5 and 7.6]{CLP}.

\begin{corollary}\label{coro:U1N}
Let $(G,\sigma)$ and $(G',\sigma')$ be two connection graphs with $1$-dimensional connections. Let $x\in V$, $x'\in V'$ be two vertices. Assume that one of the two local connection structures $B_2^{inc}(x)$ and $B_2^{inc}(x')$ is balanced. For any $N,N'\in (0,\infty]$, we have
\begin{equation*}
\min\{\alpha\mathcal{K}_{G,\sigma,x}(N),\beta\mathcal{K}_{G',\sigma',x'}(N')\}\leq\K_{G\times_{\alpha,\beta} G',\sigma^\times, (x,x')}(N+N')\leq\max\{\alpha\mathcal{K}_{G,\sigma,x}(N),\beta\mathcal{K}_{G',\sigma',x'}(N')\}.
\end{equation*}
Consequently, the curvature functions in this case satisfy
\begin{equation*}
\K_{G\times_{\alpha,\beta} G',\sigma^\times, (x,x')}=\left(\alpha\mathcal{K}_{G,\sigma,x}\right)* \left(\beta\mathcal{K}_{G',\sigma',x'}\right).
\end{equation*}
\end{corollary}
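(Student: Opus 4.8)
The plan is to combine the product decomposition of Corollary~\ref{coro:AN} with a trial-vector (Rayleigh quotient) estimate, and then read off the star-product identity. Since $d=1$, the signature groups $\Sigma,\Sigma'$ commute automatically, so Corollary~\ref{coro:AN} applies and yields, with respect to the canonical matrix $B_0$,
$$A_{N+N'}(x,x')=\begin{pmatrix}\alpha A_N(x) & \\ & \beta A_{N'}(x')\end{pmatrix}+R(x,x')+J(x,x').$$
Because one of $B_2^{inc}(x),B_2^{inc}(x')$ is balanced, Theorem~\ref{thm:Adecomp} forces $R(x,x')=\mathbf{0}$, so the only perturbation of the block-diagonal matrix is the positive semidefinite matrix $J(x,x')$. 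The lower bound in the statement is then immediate from Corollary~\ref{coro:Cartesian}, whose hypothesis holds for $d=1$; equivalently it follows from $J(x,x')\succeq 0$ and the superadditivity of $\lambda_{\min}$ in (\ref{eq:sum_trick}).

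The heart of the proof is the upper bound $\K_{G\times_{\alpha,\beta} G',\sigma^\times,(x,x')}(N+N')\leq\max\{\alpha\K_{G,\sigma,x}(N),\beta\K_{G',\sigma',x'}(N')\}$, which I would establish by choosing a good test vector. Let $u,u'$ be unit eigenvectors for $\lambda_{\min}(A_N(x))$ and $\lambda_{\min}(A_{N'}(x'))$ (these equal $\K_{G,\sigma,x}(N)$ and $\K_{G',\sigma',x'}(N')$ by Theorem~\ref{thm:curvature_eigenvalue}), and set $v=(c\,u^\top,\,c'\,u'^\top)^\top$ with scalars $c,c'$ to be fixed. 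The block-diagonal part contributes $\alpha|c|^2\lambda_{\min}(A_N(x))+\beta|c'|^2\lambda_{\min}(A_{N'}(x'))$ to the Hermitian form $v^\top A_{N+N'}(x,x')\overline{v}$. The decisive observation is that, by the factorization (\ref{eq:Jdecomp}) and $d=1$, the matrix $J(x,x')$ has rank one, $J(x,x')=\tfrac{2}{N+N'}\xi\overline{\xi}^\top$ with $\xi=(\sqrt{\alpha N'/N}\,\mathbf{v}_0(x)^\top,\,-\sqrt{\beta N/N'}\,\mathbf{v}_0(x')^\top)^\top$, so that $v^\top J(x,x')\overline{v}=\tfrac{2}{N+N'}|v^\top\xi|^2$ with $v^\top\xi$ a single scalar. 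Hence I can pick $(c,c')\neq(0,0)$ solving the one linear equation $v^\top\xi=0$ (if both coefficients $u^\top\mathbf{v}_0(x)$ and $u'^\top\mathbf{v}_0(x')$ vanish the term is already zero). Killing the $J$-term gives
$$\lambda_{\min}(A_{N+N'}(x,x'))\leq\frac{\alpha|c|^2\lambda_{\min}(A_N(x))+\beta|c'|^2\lambda_{\min}(A_{N'}(x'))}{|c|^2+|c'|^2}\leq\max\{\alpha\lambda_{\min}(A_N(x)),\beta\lambda_{\min}(A_{N'}(x'))\},$$
which is the desired bound. This is the exact analogue of the computation in Corollary~\ref{coro:U1infty}, with the roles interchanged: there the leftover term was $R$ (and $J=\mathbf{0}$ since $N=N'=\infty$), here it is $J$ (and $R=\mathbf{0}$ by balancedness).

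With the two-sided estimate in hand, the star-product identity follows formally. By Theorem~\ref{thm:curvature_function}(i), the functions $\alpha\K_{G,\sigma,x}$ and $\beta\K_{G',\sigma',x'}$ are continuous, monotone non-decreasing, tend to $-\infty$ as the argument tends to $0$, and are bounded above, so $(\alpha\K_{G,\sigma,x})*(\beta\K_{G',\sigma',x'})$ is well defined in the sense of Definition~\ref{def:star_product}. For any finite $t$ the intermediate value theorem furnishes a split $t=N+N'$ with $\alpha\K_{G,\sigma,x}(N)=\beta\K_{G',\sigma',x'}(N')$; for this split the lower and upper bounds coincide, forcing $\K_{G\times_{\alpha,\beta} G',\sigma^\times,(x,x')}(t)$ to equal the common value, which is precisely $(\alpha\K_{G,\sigma,x})*(\beta\K_{G',\sigma',x'})(t)$. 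The endpoint $t=\infty$ is handled directly by the equality (\ref{eq:coroCartesianEquality}) of Corollary~\ref{coro:Cartesian}, matching the star-product value $\min\{\alpha\K_{G,\sigma,x}(\infty),\beta\K_{G',\sigma',x'}(\infty)\}$.

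The main obstacle is the upper bound, and more precisely recognizing why balancedness is indispensable: it is exactly the condition $R(x,x')=\mathbf{0}$ that leaves a single cross term (from $J$), which one scalar ratio $c:c'$ can annihilate. Were both local structures unbalanced, $R$ and $J$ would produce two independent cross terms that cannot in general be killed simultaneously by one degree of freedom. The remainder is bookkeeping: normalizing $u,u'$, tracking the conjugations in the Hermitian forms, and treating the degenerate case of the linear equation for $(c,c')$.
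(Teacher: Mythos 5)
Your proposal is correct and follows essentially the same route as the paper: the decomposition $A_{N+N'}(x,x')=\diag(\alpha A_N(x),\beta A_{N'}(x'))+J(x,x')$ with $R=\mathbf{0}$ by balancedness, the test vector $(cu^\top,c'u'^\top)^\top$ with $(c,c')$ chosen to annihilate the rank-one (for $d=1$) term $J$ via (\ref{eq:Jdecomp}), and the two-sided bound yielding the star product. The only differences are that you spell out explicitly the one-equation choice of $(c,c')$, which the paper defers to the argument of Corollary \ref{coro:U1infty}, and you replace the citation of \cite[Proposition 7.3]{CLP} by a short intermediate-value argument; both are faithful expansions rather than a different method.
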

\begin{proof}
  The lower bound follows from Corollary \ref{coro:Cartesian} since the signature groups commute. Due to one of the two local connection structures $B_2^{inc}(x)$ and $B_2^{inc}(x')$ being balanced, we have $R(x,x')=\mathbf{0}$ in (\ref{eq:ANdecomp}). Let $u$ and $u'$ be the eigenvectors with unit norm corresponding to  $\lambda_{\min}(A_{N}(x))$ and $\lambda_{\min}(A_{N'}(x'))$, respectively. Let $c$ and $c'$ be two numbers such that
  \[\begin{pmatrix}
    \overline{cu}^{\top} & \overline{c'u'}^{\top}
\end{pmatrix}
J(x,x')
\begin{pmatrix}
    cu \\
    c'u'
\end{pmatrix}=0.\]
Due to the decomposition formula (\ref{eq:Jdecomp}) for $J(x,x')$, the existence of $c$ and $c'$ can be shown by a similar argument as in the proof of Corollary \ref{coro:U1infty}.
Then we estimate
\begin{align*}
\lambda_{\min}(A_{N+N'}(x,x'))
\leq &\frac{1}{|c|^{2}+|c'|^{2}}\left(|c|^{2}\alpha\lambda_{\min}(A_{N}(x))+|c'|^{2}\beta\lambda_{\min}(A_{N'}(x'))\right)\\
\leq&\max\{\alpha\lambda_{\min}(A_{N}(x)),\beta\lambda_{\min}(A_{N'}(x'))\}.
\end{align*}
This proves the curvature upper bound estimate. The identity of curvature functions follows by applying Theorem \ref{thm:curvature_function} $(i)$ and \cite[Proposition 7.3]{CLP}.
\end{proof}
\section{How the curvature changes under local operations}\label{section:operation}
In this section, we study how the curvature at a given vertex changes under operations on the local connection structure $B_2^{inc}$. We discuss two operations:
\begin{itemize}
\item[(i)] Add a new spherical edge in $S_1(x)$;
\item[(ii)] Merge two vertices in $S_2(x)$ which have no common neighbors.
\end{itemize}
\begin{theorem}\label{thm:sphericaloperation}
Let $(G,\sigma)$ be a connection graph and $x\in V$ be a given vertex. Assume that $x$ is $S_1$-in regular, i.e., $p_{yx}$ is independent of $y\in S_1(x)$. Suppose $y_1,y_2\in S_1(x)$ are non-adjacent, i.e., $w_{y_1y_2}=0$. Denote by $\widetilde{G}$ the weighted graph obtained from $G$ by assigning a positive $\tilde{w}_{y_1y_2}>0$ and keeping other edge weights. The new connection $\tilde{\sigma}$ coincides with $\sigma$ except that
\begin{equation}\label{eq:sigmaassumption}\tilde{\sigma}_{y_1y_2}:=\sigma_{y_1x}\sigma_{xy_2},\,\,\text{and}\,\,\sigma_{y_2y_1}:=\sigma_{y_1y_2}^{-1}.\end{equation}
That is, we add a spherical edge in $S_1(x)$ which produces a balanced triangle.
Then, we have for any $N\in (0,\infty]$,
\[\mathcal{K}_{\widetilde{G},\tilde{\sigma},x}(N)\geq \mathcal{K}_{G,\sigma,x}(N).\]
\end{theorem}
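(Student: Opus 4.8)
The plan is to establish the inequality directly at the level of the Bakry-\'Emery operators, without ever forming curvature matrices. By Definition \ref{def:BEcurvature}, $\mathcal{K}_{G,\sigma,x}(N)$ is the largest $K$ for which $\Gamma_2^\sigma(f,f)(x)\geq \frac1N|\Delta^\sigma f(x)|^2+K\,\Gamma^\sigma(f,f)(x)$ holds for all $f:V\to\mathbb{K}^d$. Hence it suffices to prove two facts about the passage from $(G,\sigma)$ to $(\widetilde G,\tilde\sigma)$: first, that $\Delta^{\tilde\sigma}f(x)=\Delta^\sigma f(x)$ and $\Gamma^{\tilde\sigma}(f,f)(x)=\Gamma^\sigma(f,f)(x)$ for every $f$; and second, that $\Gamma_2^{\tilde\sigma}(f,f)(x)\geq\Gamma_2^\sigma(f,f)(x)$ for every $f$. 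Granting these, any $K$ feasible for $(G,\sigma)$ is feasible for $(\widetilde G,\tilde\sigma)$, and the claimed monotonicity follows at once, uniformly in $N$.

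The first fact is immediate. The connection Laplacian $\Delta^\sigma f(x)$ and, by (\ref{eq:2Gamma}), the form $\Gamma^\sigma(f,f)(x)$ depend only on the oriented edges incident to $x$, their connections $\sigma_{xy}$, and the transition rates $p_{xy}$ with $y\sim x$. The inserted edge $\{y_1,y_2\}$ is not incident to $x$, and since the vertex measure $\mu$ is kept fixed none of the rates $p_{xy}$ ($y\sim x$) change; both quantities are therefore preserved.

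For the second fact I would compute the increment of $\Gamma_2^\sigma$ from $4\Gamma_2^\sigma(f,f)=\Delta\!\bigl(2\Gamma^\sigma(f,f)\bigr)-2\,\mathrm{Re}\!\bigl(2\Gamma^\sigma(f,\Delta^\sigma f)\bigr)$, which uses $\Gamma^\sigma(\Delta^\sigma f,f)=\overline{\Gamma^\sigma(f,\Delta^\sigma f)}$. Inserting $\{y_1,y_2\}$ perturbs only $2\Gamma^\sigma(f,f)$ at $y_1,y_2$ (entering $\Delta(2\Gamma^\sigma(f,f))(x)$) and $\Delta^\sigma f$ at $y_1,y_2$ (entering $\Gamma^\sigma(f,\Delta^\sigma f)(x)$), because the new edge meets no other vertex of $B_1(x)$ while $\Delta^\sigma f(x)$ and $\Gamma^\sigma(f,f)(x)$ are already fixed; in particular nothing at $S_2(x)$ is reached. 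Writing $r_i:=\sigma_{xy_i}f(y_i)-f(x)$, the balanced-triangle prescription (\ref{eq:sigmaassumption}) yields $\tilde\sigma_{y_1y_2}f(y_2)-f(y_1)=\sigma_{xy_1}^{-1}(r_2-r_1)$ and $\tilde\sigma_{y_2y_1}f(y_1)-f(y_2)=\sigma_{xy_2}^{-1}(r_1-r_2)$, so by unitarity each new spherical gradient has squared norm $|r_2-r_1|^2$, and $\sigma_{xy_i}$ applied to the change of $\Delta^\sigma f(y_i)$ returns $\pm p_{y_iy_j}(r_2-r_1)$. Collecting the contributions gives
\[4\bigl(\Gamma_2^{\tilde\sigma}(f,f)(x)-\Gamma_2^\sigma(f,f)(x)\bigr)=(c_1+c_2)\,|r_2-r_1|^2-2\,\mathrm{Re}\bigl[c_1\,r_1^\top\overline{(r_2-r_1)}+c_2\,r_2^\top\overline{(r_1-r_2)}\bigr],\]
where $c_i:=p_{xy_i}p_{y_iy_j}>0$.

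The hypothesis that $x$ is $S_1$-in regular is exactly what closes the argument: since $c_1/c_2=(w_{xy_1}/\mu_{y_1})/(w_{xy_2}/\mu_{y_2})=p_{y_1x}/p_{y_2x}=1$, we have $c_1=c_2=:c$. Invoking the elementary identity $\mathrm{Re}\bigl[r_1^\top\overline{(r_2-r_1)}+r_2^\top\overline{(r_1-r_2)}\bigr]=-|r_2-r_1|^2$, the displayed increment collapses to $4c\,|r_2-r_1|^2$, whence $\Gamma_2^{\tilde\sigma}(f,f)(x)-\Gamma_2^\sigma(f,f)(x)=c\,|r_2-r_1|^2\geq0$, which is the second fact. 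I expect the main obstacle to be precisely the bookkeeping in this step: one must identify which summands of $\Gamma_2^\sigma(f,f)(x)$ are disturbed and check that the first-order (cross) terms, rather than cancelling the manifestly nonnegative quadratic term, in fact reinforce it. This is where the two assumptions conspire — the balanced triangle re-expresses every new quantity through $r_1,r_2$ alone, and $S_1$-in regularity equalizes $c_1,c_2$ so that the linear terms assemble into $+c\,|r_2-r_1|^2$; absent the regularity, the coefficients differ and positivity can fail.
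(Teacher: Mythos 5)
Your proof is correct and follows essentially the same route as the paper: both rest on the two observations that $\Delta^{\sigma}(x)$ and $\Gamma^{\sigma}(x)$ are untouched by the new spherical edge and that the increment of $\Gamma_2^{\sigma}$ at $x$ is positive semidefinite, with $S_1$-in regularity forcing $c_1=c_2$ and the balanced triangle reducing every new term to a multiple of $|r_2-r_1|^2$. The difference is only presentational: you compute the increment as a sesquilinear form directly from the operator definitions, letting unitarity of the connections absorb the signatures, whereas the paper first switches all $\sigma_{xy_i}$ (and hence $\tilde{\sigma}_{y_1y_2}$) to $I_d$ and reads the same quantity off the Appendix formulas as the explicit difference matrix $c_1\bigl(\begin{smallmatrix}4I_d & -4I_d\\ -4I_d & 4I_d\end{smallmatrix}\bigr)$ in the $(y_1,y_2)$ block, which your $4c\,|r_2-r_1|^2$ reproduces exactly.
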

\begin{proof}
First, observe that the matrices $\Delta^{\sigma}(x)$ and $\Gamma^{\sigma}(x)$ stay put under the operation of adding spherical edges. We only need to consider how the matrix $\Gamma^{\sigma}_{2}(x)$ changes. By Proposition \ref{prop:Switching}, the curvature is unchanged under switching the connections. We can switch all the connections of the edges $\{(x,y_i): i=1,\ldots,m\}$ to be $I_d$ by a switching function $\tau$ with $\tau(x)=I_d$ and $\tau(y_i)=\sigma_{xy_i}^{-1}$, $i=1,\ldots, m$.
By \eqref{eq:gamma 2 matrix}-\eqref{eq:Gamma_2_last}, the difference matrix is
$$4\Gamma^{\tilde{\sigma}}_{2}(x)-4\Gamma^{\sigma}_{2}(x)=
\begin{pmatrix}
    \mathbf{0}_d      & c_2\overline{\tilde{\sigma}_{y_2y_1}}-c_1I_d      &  c_1\overline{\tilde{\sigma}_{y_1y_2}}-c_2I_d     & \dots  & \mathbf{0}_d        \\
    c_2\tilde{\sigma}_{y_2y_1}^\top-c_1I_d      & 3c_1I_d+c_2I_d      & -2(c_1+c_2)\overline{\tilde{\sigma}_{y_1y_2}}     & \dots  & \mathbf{0}_d        \\
    c_1\tilde{\sigma}_{y_1y_2}^\top-c_2I_d      & -2(c_1+c_2)\overline{\tilde{\sigma}_{y_2y_1}}      &  3c_2I_d+c_1I_d      & \dots  & \mathbf{0}_d        \\
    \vdots & \vdots & \vdots & \ddots & \vdots \\
    \mathbf{0}_d       & \mathbf{0}_d        & \mathbf{0}_d       & \dots  & \mathbf{0}_d
\end{pmatrix}
,$$
where we use the notations $c_1:=p_{xy_1}\tilde{p}_{y_1y_2}$ and $c_2:=p_{xy_2}\tilde{p}_{y_2y_1}$.
By the $S_1(x)$-in regularity at $x$, we have
\[c_1=p_{xy_1}\tilde{p}_{y_1y_2}=\frac{w_{xy_1}\tilde{w}_{y_1y_2}}{\mu(x)\mu(y_1)}=\frac{\tilde{w}_{y_1y_2}}{\mu(x)}p_{y_1x}=\frac{\tilde{w}_{y_1y_2}}{\mu(x)}p_{y_2x}=c_2.\]

 By our assumption (\ref{eq:sigmaassumption}), the switching function $\tau$ will switch $\tilde{\sigma}_{y_1y_2}$ to be $I_d$ too. Then, the difference matrix becomes
$$4\Gamma^{\tilde{\sigma}}_{2}(x)-4\Gamma^{\sigma}_{2}(x)=
c_1\begin{pmatrix}
    \mathbf{0}_d      & \mathbf{0}_d     &  \mathbf{0}_d     & \dots  & \mathbf{0}_d        \\
    \mathbf{0}_d      & 4I_d      & -4I_d    & \dots  & \mathbf{0}_d        \\
    \mathbf{0}_d      & -4I_d    &  4I_d      & \dots  & \mathbf{0}_d        \\
    \vdots & \vdots & \vdots & \ddots & \vdots \\
    \mathbf{0}_d       & \mathbf{0}_d        & \mathbf{0}_d       & \dots  & \mathbf{0}_d
\end{pmatrix},$$
which is clearly positive semidefinite.
\end{proof}
If, instead, we add a new spherical edge between $y_1$ and $y_2$ with a connection $\tilde{\sigma}_{y_1y_2}$ which produces an \emph{unbalanced} triangle, the difference matrix, after switching all connections $\sigma_{xy_i}$, $i=1,\ldots,m$ to be $I_d$, is
\begin{equation}\label{eq:unbalacneddifference}4\Gamma^{\tilde{\sigma}}_{2}(x)-4\Gamma^{\sigma}_{2}(x)=
c_1\begin{pmatrix}
    \mathbf{0}_d      & \overline{\tilde{\sigma}_{y_2y_1}}-I_d      &  \overline{\tilde{\sigma}_{y_1y_2}}-I_d     & \dots  & \mathbf{0}_d        \\
    \tilde{\sigma}_{y_2y_1}^\top-I_d      & 4I_d      & -4\overline{\tilde{\sigma}_{y_1y_2}}     & \dots  & \mathbf{0}_d        \\
    \tilde{\sigma}_{y_1y_2}^\top-I_d      & -4\overline{\tilde{\sigma}_{y_2y_1}}      &  4I_d      & \dots  & \mathbf{0}_d        \\
    \vdots & \vdots & \vdots & \ddots & \vdots \\
    \mathbf{0}_d       & \mathbf{0}_d        & \mathbf{0}_d       & \dots  & \mathbf{0}_d
\end{pmatrix},\end{equation}
which is indefinite. Indeed, the unbalancedness of the new triangle is equivalent to the fact that $\tilde{\sigma}_{y_1y_2}\neq I_d$ in the above matrix. That is, there exists $v\in \mathbb{K}^d$ such that $\tilde{\sigma}_{y_2y_1}^\top\overline{v}=\lambda\overline{v}$ where $\lambda\in \mathbb{K}$ satisfying $\mathrm{Re}(\lambda)<1$. Then, we calculate for $c\in\mathbb{R}$
\[\begin{pmatrix}
  cv^\top & v^\top
\end{pmatrix}\begin{pmatrix}
  \mathbf{0}_d & \overline{\tilde{\sigma}_{y_2y_1}}-I_d \\
  \tilde{\sigma}_{y_2y_1}^\top-I_d      & 4I_d
\end{pmatrix}\begin{pmatrix}
  c\overline{v}\\\overline{v}
\end{pmatrix}=2\left(c(\mathrm{Re}(\lambda)-1)+2\right)v^\top\overline{v},\]
which is negative for large enough $c$. This implies that the matrix (\ref{eq:unbalacneddifference}) is indefinite.

 Below, we give three examples of adding a spherical edge which produces an unbalanced triangle, for which the curvature increases, stays put and decreases, respectively. The connection graphs in those examples are signed graphs, i.e., graphs with $O(1)$-connections. For simplicity, we only label the edges with $-1$ in the figures and the unlabeled edges have the connection $+1$.
\begin{example}
Let $(G_{3},\sigma_3)$ be the signed graph given in Figure \ref{fig:G3}.
\begin{figure}[!htp]
\begin{minipage}[t]{0.5\textwidth}
\centering
\tikzset{vertex/.style={circle, draw, fill=black!20, inner sep=0pt, minimum width=4pt}}
\begin{tikzpicture}[scale=3.0]

\draw (0,0) -- (0.5,0.5) node[midway, below, black]{}
		 -- (1,0.5) node[midway, above, black]{$-$}
         -- (0.5,-0.5) node[midway, above, black]{}
         -- (0,0) node[midway, above, black]{};
\draw (0,0) -- (0.5,0) node[midway, above, black]{}
		 -- (0.5,0.5) node[midway, right, black]{$-$};
\draw (0.5,0) -- (1,-0.5) node[midway, right, black]{};

\node at (0,0) [vertex, label={[label distance=0mm]225: \small $1$}, fill=black] {};
\node at (0.5,0.5) [vertex, above, label={[label distance=0mm]90: \small $2$}, fill=black] {};
\node at (0.5,0) [vertex, label={[label distance=0mm]135: \small $3$}, fill=black] {};
\node at (0.5,-0.5) [vertex, label={[label distance=0mm]90: \small $4$}, fill=black] {};
\node at (1,0.5) [vertex, label={[label distance=0mm]90: \small $5$}, fill=black] {};
\node at (1,-0.5) [vertex, label={[label distance=0mm]225: \small $6$}, fill=black] {};

\end{tikzpicture}
\caption{The signed graph $(G_{3},\sigma_{3})$.}
\label{fig:G3}
\end{minipage}
\begin{minipage}[t]{0.5\textwidth}
\centering
\tikzset{vertex/.style={circle, draw, fill=black!20, inner sep=0pt, minimum width=4pt}}
\begin{tikzpicture}[scale=3.0]

\draw (0,0) -- (0.5,0.5) node[midway, below, black]{}
		 -- (1,0.5) node[midway, above, black]{$-$}
         -- (0.5,-0.5) node[midway, above, black]{}
         -- (0,0) node[midway, above, black]{};
\draw (0,0) -- (0.5,0) node[midway, above, black]{}
		 -- (0.5,0.5) node[midway, right, black]{$-$};
\draw (0.5,0) -- (1,-0.5) node[midway, right, black]{};
\draw (0.5,0) -- (0.5,-0.5) node[midway, left, black]{$-$};

\node at (0,0) [vertex, label={[label distance=0mm]225: \small $1$}, fill=black] {};
\node at (0.5,0.5) [vertex, label={[label distance=0mm]90: \small $2$}, fill=black] {};
\node at (0.5,0) [vertex, label={[label distance=0mm]135: \small $3$}, fill=black] {};
\node at (0.5,-0.5) [vertex, above, label={[label distance=0mm]270: \small $4$}, fill=black] {};
\node at (1,0.5) [vertex, label={[label distance=0mm]90: \small $5$}, fill=black] {};
\node at (1,-0.5) [vertex, label={[label distance=0mm]225: \small $6$}, fill=black] {};

\end{tikzpicture}
\caption{The signed graph $(\widetilde{G}_3,\tilde{\sigma}_3)$.}
\label{fig:G3tilde}
\end{minipage}
\end{figure}
%
%
%

By definition, the curvature matrix at the vertex $1$ is \[A_{\infty}(G_3,\sigma_3,1)=\frac{1}{5}
\begin{pmatrix}
    3 & 6 & 12 \\
    6 & 7 & 4 \\
    12 & 4 & 13
\end{pmatrix}.\] Hence the curvature is $\mathcal{K}_{G_3,\sigma_{3},1}(\infty)\approx -0.569$.
\par We add a negative edge between the vertices $3$ and $4$ and obtain the signed graph $(\widetilde{G}_3,\tilde{\sigma}_3)$ shown in Figure \ref{fig:G3tilde}.
%
%
%
Then we have the curvature matrix \[A_{\infty}(\widetilde{G}_3,\tilde{\sigma}_3,1)=\frac{1}{9}
\begin{pmatrix}
    31 & 6 & 4 \\
    6 & 9 & 6 \\
    4 & 6 & 31
\end{pmatrix},\]
and, therefore, the curvature $\mathcal{K}_{\widetilde{G}_{3},\tilde{\sigma}_{3},1}(\infty)\approx 0.36>\mathcal{K}_{G_3,\sigma_{3},1}(\infty)$.
\end{example}

\begin{example}
Consider the signed graph $(G_{4},\sigma_{4})$ given in Figure \ref{fig:G4}. The signed graph $(\widetilde{G}_4,\tilde{\sigma}_4)$ in Figure \ref{fig:G4tilde} is obtained by adding a negative edge between the vertices $2$ and $3$.
\begin{figure}[!htp]
\begin{minipage}[t]{0.5\textwidth}
\centering
\tikzset{vertex/.style={circle, draw, fill=black!20, inner sep=0pt, minimum width=4pt}}
\begin{tikzpicture}[scale=3.0]

\draw (0,0) -- (0.5,0.5) node[midway, below, black]{}
		 -- (1,0.5) node[midway, above, black]{};
\draw (0,0) -- (0.5,0) node[midway, above, black]{}
		 -- (1,0) node[midway, right, black]{};

\node at (0,0) [vertex, label={[label distance=0mm]225: \small $1$}, fill=black] {};
\node at (0.5,0.5) [vertex, label={[label distance=0mm]90: \small $2$}, fill=black] {};
\node at (0.5,0) [vertex, label={[label distance=0mm]270: \small $3$}, fill=black] {};
\node at (1,0.5) [vertex, label={[label distance=0mm]90: \small $4$}, fill=black] {};
\node at (1,0) [vertex, label={[label distance=0mm]270: \small $5$}, fill=black] {};

\end{tikzpicture}
\caption{The signed graph $(G_4,\sigma_4)$}
\label{fig:G4}
\end{minipage}
\begin{minipage}[t]{0.5\textwidth}
\centering
\tikzset{vertex/.style={circle, draw, fill=black!20, inner sep=0pt, minimum width=4pt}}
\begin{tikzpicture}[scale=3.0]

\draw (0,0) -- (0.5,0.5) node[midway, below, black]{}
		 -- (1,0.5) node[midway, above, black]{};
\draw (0,0) -- (0.5,0) node[midway, above, black]{}
		 -- (1,0) node[midway, right, black]{};
\draw (0.5,0) -- (0.5,0.5) node[midway, right, black]{$-$};

\node at (0,0) [vertex, label={[label distance=0mm]225: \small $1$}, fill=black] {};
\node at (0.5,0.5) [vertex, label={[label distance=0mm]90: \small $2$}, fill=black] {};
\node at (0.5,0) [vertex, label={[label distance=0mm]270: \small $3$}, fill=black] {};
\node at (1,0.5) [vertex, label={[label distance=0mm]90: \small $4$}, fill=black] {};
\node at (1,0) [vertex, label={[label distance=0mm]270: \small $5$}, fill=black] {};

\end{tikzpicture}
\caption{The signed graph $(\widetilde{G}_4,\tilde{\sigma}_4)$}
\label{fig:G4tilde}
\end{minipage}
\end{figure}
%
We check directly that
$\K_{G_{4},\sigma_{4},1}(\infty)=0=\mathcal{K}_{\widetilde{G}_4,\tilde{\sigma}_4,1}(\infty)$. That is, the curvature stays put.
%
%
%
\end{example}

\begin{example}
Consider the signed graph $(G_{5},\sigma_{5})$ given in Figure \ref{fig:G5}.
\begin{figure}[!htp]
\begin{minipage}[t]{0.5\textwidth}
\centering
\tikzset{vertex/.style={circle, draw, fill=black!20, inner sep=0pt, minimum width=4pt}}
\begin{tikzpicture}[scale=3.0]

\draw (0,0) -- (0.5,0.5) node[midway, below, black]{}
		 -- (1,0) node[midway, above, black]{}
         -- (0.5,-0.5) node[midway, above, black]{}
         -- (0,0) node[midway, above, black]{};

\node at (0,0) [vertex, label={[label distance=0mm]225: \small $1$}, fill=black] {};
\node at (0.5,0.5) [vertex, label={[label distance=0mm]90: \small $2$}, fill=black] {};
\node at (1,0) [vertex, label={[label distance=0mm]270: \small $4$}, fill=black] {};
\node at (0.5,-0.5) [vertex, label={[label distance=0mm]270: \small $3$}, fill=black] {};

\end{tikzpicture}
\caption{The signed graph $(G_5,\sigma_5)$}
\label{fig:G5}
\end{minipage}
\begin{minipage}[t]{0.5\textwidth}
\centering
\tikzset{vertex/.style={circle, draw, fill=black!20, inner sep=0pt, minimum width=4pt}}
\begin{tikzpicture}[scale=3.0]

\draw (0,0) -- (0.5,0.5) node[midway, below, black]{}
		 -- (1,0) node[midway, above, black]{}
         -- (0.5,-0.5) node[midway, above, black]{}
         -- (0,0) node[midway, above, black]{};
\draw (0.5,0.5) -- (0.5,-0.5) node[midway, right, black]{$-$};

\node at (0,0) [vertex, label={[label distance=0mm]225: \small $1$}, fill=black] {};
\node at (0.5,0.5) [vertex, label={[label distance=0mm]90: \small $2$}, fill=black] {};
\node at (1,0) [vertex, label={[label distance=0mm]270: \small $4$}, fill=black] {};
\node at (0.5,-0.5) [vertex, label={[label distance=0mm]270: \small $3$}, fill=black] {};

\end{tikzpicture}
\caption{The signed graph $(\widetilde{G}_5,\tilde{\sigma}_5)$}
\end{minipage}
\end{figure}
%
%
%
We have the curvature $\K_{G_{5},\sigma_{5},1}=2$. We add a negative edge between the vertices $2$ and $3$.
%
%
%
The curvature becomes $\K_{\widetilde{G}_{5},\tilde{\sigma}_{5},1}(\infty)=\frac{3}{2}<\K_{G_5,\sigma_5,1}(\infty)$.
\end{example}

\par In the following, we study the operation of \emph{merging two vertices} in the 2-sphere $S_{2}(x)$ which have no common neighbors. We define the operation of merging two vertices $z_k,z_\ell$ with no common neighbors in $(G,\sigma)$ as follows. Derive from $(G,\sigma)$ a new connection graph $(G',\sigma')$: Identify the two vertices $z_k$ and $z_\ell$ as a new vertex $z$, i.e., set $V':=(V\setminus\{z_k,z_\ell\})\bigcup\{z\}$. We further set
$w'_{uz}:=w_{uz_k}+w_{uz_\ell}$, $w'_{uv}=w_{uv}$, $\mu'(z)=\mu(z_k)+\mu(z_\ell)$ and $\mu'(u)=\mu(u)$ for any $u,v\in V\setminus\{z_k,z_\ell\}$. The connection $\sigma'$ is given by
\[(\sigma'_{zu})^{-1}=\sigma'_{uz}:=\sigma_{uz_k}, \,\,\text{for any}\,\,u\in V\setminus\{z_k,z_\ell\}\,\,\text{with}\,\,w_{uz_k}\neq 0,\]
\[(\sigma'_{zu})^{-1}=\sigma'_{uz}:=\sigma_{uz_\ell}, \,\,\text{for any}\,\,u\in V\setminus\{z_k,z_\ell\}\,\,\text{with}\,\,w_{uz_\ell}\neq 0,\]
and $\sigma'_{uv}=\sigma_{uv}$ for any $u,v\in V\setminus\{z_k,z_\ell\}$ with $w'_{uv}\neq 0$.
\begin{theorem}\label{thm:mergeoperation}
Let $(G,\sigma)$ be a connection graph and $x$ be a given vertex. Let $z_k,z_\ell$ be two vertices in $S_2(x)$ without common neighbors and $(G',\sigma')$ be the new connection graph obtained by merging the two vertices $z_{k}$ and $z_{\ell}$. Then, we have for any $N\in (0,\infty]$
$$\K_{G',\sigma',x}(N)\geq\K_{G,\sigma,x}(N).$$
\end{theorem}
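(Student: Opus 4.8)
The plan is to argue variationally, through the Rayleigh quotient characterization of the curvature in Lemma \ref{lemma:GammaRayleigh}. Since $\K_{G,\sigma,x}(N)$ is the infimum of $(\Gamma_2^\sigma(f)(x)-(1/N)|\Delta^\sigma f(x)|^2)/\Gamma^\sigma(f)(x)$ over test functions $f$ with $\Gamma^\sigma(f)(x)\neq 0$, to prove $\K_{G',\sigma',x}(N)\geq \K_{G,\sigma,x}(N)$ it suffices to attach to every admissible test function $g$ on $(G',\sigma')$ an admissible test function $f$ on $(G,\sigma)$ whose Rayleigh quotient is no larger. First I would define $f$ by the \emph{diagonal splitting}: put $f(v)=g(v)$ for every vertex $v$ common to both graphs (in particular on all of $B_1(x)$ and on $S_2(x)\setminus\{z_k,z_\ell\}$), and set $f(z_k)=f(z_\ell)=g(z)$ at the two vertices produced by un-merging $z$.

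The easy half of the verification is that the denominator and the $1/N$-term are unchanged. The merge alters neither $x$, nor $S_1(x)$, nor the weights $w_{xy_i}$, nor the connections $\sigma_{xy_i}$ on the edges incident to $x$; since $\Gamma^\sigma(f)(x)$ and $\Delta^\sigma f(x)$ depend only on this $B_1(x)$-data (see (\ref{eq:2Gamma}) and (\ref{eq:Delta_x})), we get $\Gamma^\sigma(f)(x)=\Gamma^{\sigma'}(g)(x)$ and $\Delta^\sigma f(x)=\Delta^{\sigma'}g(x)$. In particular the admissibility condition $\Gamma^\sigma(f)(x)\neq 0$ is inherited from $g$, and it remains only to compare the numerators $\Gamma_2^\sigma(f)(x)$ and $\Gamma_2^{\sigma'}(g)(x)$.

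The heart of the argument, and the step I expect to require the most bookkeeping, is an additive edge-decomposition of $\Gamma_2^\sigma$ over the edges joining $S_1(x)$ to $S_2(x)$. Writing $Y(z)\subseteq S_1(x)$ for the set of $S_1$-neighbours of an $S_2$-vertex $z$, I would expand $2\Gamma_2^\sigma(f)(x)=\Delta\Gamma^\sigma(f)(x)-\Gamma^\sigma(f,\Delta^\sigma f)(x)-\Gamma^\sigma(\Delta^\sigma f,f)(x)$ and collect the part $\Psi_z$ attributable to the edges $\{yz:y\in Y(z)\}$. Two structural facts make this decomposition usable: $\Gamma_2^\sigma(f)(x)$ involves only the transition rates $p_{xy}$ and $p_{yz}$ emanating from $x$ and from $S_1(x)$ (never the rate $p_{zy}$ leaving an $S_2$-vertex, nor the measure $\mu(z)$ --- this is exactly why the block $(\Gamma_2^\sigma)_{S_2,S_2}$ is diagonal), and, since $\Gamma^\sigma(f)(y_i)$ and $\Delta^\sigma f(y_i)$ are sums over the edges at $y_i$, distinct $S_2$-vertices never interact in $\Gamma_2^\sigma(f)(x)$. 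Thus $2\Gamma_2^\sigma(f)(x)$ equals a $z$-independent remainder plus $\sum_{z\in S_2(x)}\Psi_z$.

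Finally I would check that the no-common-neighbour hypothesis makes the two sides match exactly. Because $Y(z_k)$ and $Y(z_\ell)$ are disjoint, the merge leaves each weight $w_{yz}$ (hence each rate $p_{yz}=p'_{yz}$) and each connection $\sigma_{yz}$ unchanged --- no weights are summed at a shared neighbour --- and the merged contribution splits as $\Psi_z^{G'}=\Psi_{z_k}^{G}+\Psi_{z_\ell}^{G}$ over the disjoint neighbourhoods once one substitutes $g(z)=f(z_k)=f(z_\ell)$. Comparing the two edge-decompositions term by term then yields the exact identity $\Gamma_2^\sigma(f)(x)=\Gamma_2^{\sigma'}(g)(x)$, so the two Rayleigh quotients coincide. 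Hence every quotient on $(G',\sigma')$ equals an admissible quotient on $(G,\sigma)$ and is therefore $\geq \K_{G,\sigma,x}(N)$; taking the infimum over $g$ proves $\K_{G',\sigma',x}(N)\geq \K_{G,\sigma,x}(N)$. The explicit $\Gamma_2^\sigma$-expression in the Appendix would be used to certify both the additivity of $\Psi_z$ and its independence of $\mu(z)$ and $p_{zy}$; intuitively, the result says that $(G',\sigma')$-functions are precisely the $(G,\sigma)$-functions that are constant on $\{z_k,z_\ell\}$, and optimizing over the larger class can only lower the infimum.
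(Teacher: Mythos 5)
Your proof is correct and is essentially the paper's argument in dual form: your diagonal splitting $f(z_k)=f(z_\ell)=g(z)$ is exactly the action of the transpose of the merging matrix $C'$ used in the paper, so your key identity $\Gamma_2^{\sigma}(f)(x)=\Gamma_2^{\sigma'}(g)(x)$, together with the invariance of $\Gamma^{\sigma}(x)$ and $\Delta^{\sigma}(x)$, is the quadratic-form restatement of the paper's congruences $\Gamma_2^{\sigma'}(x)=C'\,\Gamma_2^{\sigma}(x)\,(C')^{\top}$ and their analogues for $\Gamma^{\sigma}(x)$ and $\Delta^{\sigma}(x)\overline{\Delta^{\sigma}(x)}^{\top}$. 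The paper concludes via congruence-invariance of positive semidefiniteness in the $CD^{\sigma}(\K,N)$ inequality rather than through the Rayleigh quotient of Lemma \ref{lemma:GammaRayleigh}, but these formulations are equivalent, and your verification (additivity over the disjoint neighbourhoods, vanishing $(z_k,z_\ell)$ interaction, independence of $\mu(z)$ and of the rates leaving $S_2(x)$) is precisely the Appendix computation the paper invokes.
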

\begin{proof}
Without loss of generality, we assume that the two vertices to be merged are indexed as $z_{n}$ and $z_{n-1}$ in $S_2(x)$. Recall we denote $S_2(x)=\{z_1,\ldots,z_n\}$, where $n$ is the number of vertices in the 2-sphere $S_{2}(x)$. Observe that the merging operation keeps the matrices $\Delta^{\sigma}(x)$ and $\Gamma^{\sigma}(x)$ unchanged.

Notice that the new matrix $\Gamma_2^{\sigma'}(x)$ has a smaller size than the matrix $\Gamma_2^{\sigma}(x)$. Indeed, we have by \eqref{eq:gamma 2 matrix}-\eqref{eq:Gamma_2_last},
$$\Gamma^{\sigma'}_{2}(x)=(C')\Gamma^{\sigma}_{2}(x)(C')^{\top},$$
 where
$$
C'=\begin{pmatrix}
    I_{d(|B_{2}(x)|-2)}           & \mathbf{0}_{d(|B_2(x)|-2)\times d} & \mathbf{0}_{d(|B_2(x)|-2)\times d} \\
    \mathbf{0}_{d\times d(|B_2(x)|-2)} & I_d                       & I_d
\end{pmatrix}.
$$
is a matrix of the size $d(|B_{2}(x)|-1)\times d|B_{2}(x)|$.

It is straightforward to verify that
$$(C')\Delta^{\sigma}(x)\overline{\Delta^{\sigma}(x)}^\top(C')^{\top}=\Delta^{\sigma'}(x)\overline{\Delta^{\sigma'}(x)}^\top,$$
and
$$(C')\Gamma^{\sigma}(x)(C')^{\top}=\Gamma^{\sigma'}(x),$$
where we extend the sizes of $\Delta^{\sigma}(x)\overline{\Delta^{\sigma}(x)}^\top$ and $\Gamma^\sigma(x)$ to be $d|B_2(x)|\times d|B_2(x)|$ by $zeros$.

Therefore, we have for any $N\in(0,\infty]$,
\begin{align*}
&\Gamma^{\sigma'}_{2}(x)-\frac{1}{N}\Delta^{\sigma'}(x)\overline{\Delta^{\sigma'}(x)}^\top-\K_{G,\sigma,x}(N)\Gamma^{\sigma'}(x)\\
=&C'\left(\Gamma^{\sigma}_{2}(x)-\frac{1}{N}\Delta^{\sigma}(x)\overline{\Delta^{\sigma}(x)}^\top-\K_{G,\sigma,x}(N)\Gamma^{\sigma}(x)\right)(C')^{\top}\geq 0.
\end{align*}
That is, we have $\K_{G',\sigma',x}(N)\geq \K_{G,\sigma,x}(N)$.
\end{proof}
\begin{remark}
  Notice that the operation of merging two vertices in $S_2(x)$ which have no common neighbors produces a new $4$-cycle. In contrast to the $3$-cycle case in Theorem \ref{thm:sphericaloperation}, the curvature never decreases no matter this new $4$-cycle is balanced or not.
\end{remark}

\section{An infinite connection graph with positive Bakry--\'Emery curvature lower bound}\label{section:counterexample}
In closing, we provide a striking example that contrasts sharply with the setting of balanced connections. It is known that for a locally finite graph with bounded vertex degrees, a positive lower bound on curvature forces the graph to be finite \cite{LMP18}. However, the example below demonstrates the existence of an infinite, 4-regular signed graph—unbalanced—whose curvature remains a positive constant.

\begin{example}
Consider the infinite signed graph in Figure \ref{fig:infinitepositive}.
\begin{figure}[!htp]
\centering
\tikzset{vertex/.style={circle, draw, fill=black!20, inner sep=0pt, minimum width=4pt}}
\begin{tikzpicture}[scale=3.0]

\draw (-1.5,0) -- (-1,0) node[midway, below, black]{$-$}
		 -- (-0.5,0) node[midway, below, black]{$-$}
         -- (0,0) node[midway, below, black]{$-$}
         -- (0.5,0) node[midway, below, black]{$-$}
         -- (1,0) node[midway, below,black]{$-$};
\draw (-1.75,0.5) -- (-1.25,0.5) node[midway, above, black]{$-$}
                    -- (-0.75,0.5) node[midway, above, black]{$-$}
                    -- (-0.25,0.5) node[midway, above, black]{$-$}
                    -- (0.25,0.5) node[midway, above, black]{$-$}
                    -- (0.75,0.5) node[midway, above, black]{$-$}
                    -- (1.25,0.5) node[midway, above, black]{$-$};
\draw (-1.25,0.5) -- (-1,0) node[midway, above, black]{}
                    -- (-0.75,0.5) node[midway, above, black]{}
                    -- (-0.5,0) node[midway, above, black]{}
                    -- (-0.25,0.5) node[midway, above, black]{}
                    -- (0,0) node[midway, above, black]{}
                    -- (0.25,0.5) node[midway, above, black]{}
                    -- (0.5,0) node[midway, above, black]{}
                    -- (0.75,0.5) node[midway, above, black]{}
                    -- (1,0) node[midway, below, black]{}
                    -- (1.5,0) node[midway, below, black]{$-$};

\node at (0,0) [vertex, label={[label distance=0mm]270: \small $1$}, fill=black] {};
\node at (-0.5,0) [vertex, label={[label distance=0mm]270: \small $2$}, fill=black] {};
\node at (-0.25,0.5) [vertex, label={[label distance=0mm]90: \small $3$}, fill=black] {};
\node at (0.25,0.5) [vertex, label={[label distance=0mm]90: \small $4$}, fill=black] {};
\node at (0.5,0) [vertex, label={[label distance=0mm]270: \small $5$}, fill=black] {};
\node at (-1,0) [vertex, label={[label distance=0mm]270: \small $6$}, fill=black] {};
\node at (-0.75,0.5) [vertex, label={[label distance=0mm]90: \small $7$}, fill=black] {};
\node at (0.75,0.5) [vertex, label={[label distance=0mm]90: \small $8$}, fill=black] {};
\node at (1,0) [vertex, label={[label distance=0mm]270: \small $9$}, fill=black] {};
\node at (1.5,0) [vertex, label={[label distance=0mm]90: \small }, fill=black] {};
\node at (-1.75,0.5) [vertex, label={[label distance=0mm]90: \small }, fill=black] {};
\node at (-1.25,0.5) [vertex, label={[label distance=0mm]90: \small }, fill=black] {};

\end{tikzpicture}
\caption{The infinite graph with positive curvature.}
\label{fig:infinitepositive}
\end{figure}
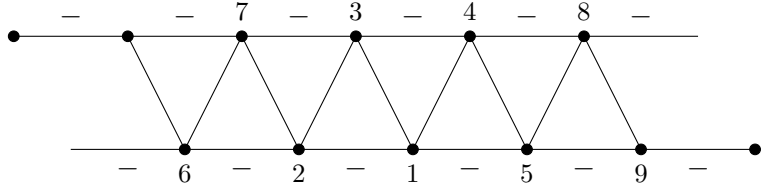
Notice that the local connection structures at every vertex are identical. The local connection structure at $1$ is depicted in Figure \ref{fig:local structure of 1 in infinite graph}.
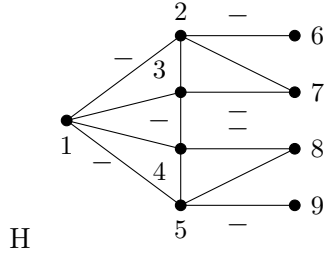
\begin{figure}[!htp]
\centering
\tikzset{vertex/.style={circle, draw, fill=black!20, inner sep=0pt, minimum width=4pt}}
\begin{tikzpicture}[scale=3.0]

\draw (0,0) -- (0.5,0.375) node[midway, above, black]{$-$}
		 -- (1,0.375) node[midway, above, black]{$-$};
\draw (0,0) -- (0.5,0.125) node[midway, below, black]{}
         -- (1,0.125) node[midway, below, black]{$-$}
         -- (0.5,0.375) node[midway, below,black]{};
\draw (0,0) -- (0.5,-0.125) node[midway, above, black]{}
         -- (1,-0.125) node[midway, above, black]{$-$}
         -- (0.5,-0.375) node[midway, above, black]{};
\draw (0,0) -- (0.5,-0.375) node[midway, left, black]{$-$}
         -- (1,-0.375) node[midway, below, black]{$-$};
\draw (0.5,0.375) -- (0.5,0.125) node[midway, below, black]{}
                  -- (0.5,-0.125) node[midway, left, black]{$-$}
                  -- (0.5,-0.375) node[midway, below,black]{};

\node at (0,0) [vertex, label={[label distance=0mm]270: \small $1$}, fill=black] {};
\node at (0.5,0.375) [vertex, label={[label distance=0mm]90: \small $2$}, fill=black] {};
\node at (0.5,0.125) [vertex, label={[label distance=0mm]135: \small $3$}, fill=black] {};
\node at (0.5,-0.125) [vertex, label={[label distance=0mm]225: \small $4$}, fill=black] {};
\node at (0.5,-0.375) [vertex, label={[label distance=0mm]270: \small $5$}, fill=black] {};
\node at (1,0.375) [vertex, label={[label distance=0mm]0: \small $6$}, fill=black] {};
\node at (1,0.125) [vertex, label={[label distance=0mm]0: \small $7$}, fill=black] {};
\node at (1,-0.125) [vertex, label={[label distance=0mm]0: \small $8$}, fill=black] {};
\node at (1,-0.375) [vertex, label={[label distance=0mm]0: \small $9$}, fill=black] {};

\end{tikzpicture}
\caption{The local connection structure of $1$.}
\label{fig:local structure of 1 in infinite graph}
\end{figure}

%
%
%

The matrices $4\Gamma^{\sigma}_{2}(1)$ and $4\Gamma^{\sigma}(1)$ are as follows:
$$4\Gamma^{\sigma}(1)=
\begin{pmatrix}
    8  & 2 & -2 & -2 & 2 \\
    2  & 2 & 0  & 0  & 0 \\
    -2 & 0 & 2  & 0  & 0 \\
    -2 & 0 & 0  & 2  & 0 \\
    2  & 0 & 0  & 0  & 2
\end{pmatrix};
$$
$$4\Gamma^{\sigma}_{2}(1)=
\begin{pmatrix}
    28  & 11 & -12 & -12 & 11 & 1 & -2 & -2 & 1 \\
    11  & 11 & -6  & -2  & 2  & 2 & -2 & 0  & 0 \\
    -12 & -6 & 12  & 6   & -2 & 0 & 2  & 0  & 0 \\
    -12 & -2 & 6   & 12  & -6 & 0 & 0  & 2  & 0 \\
    11  & 2  & -2  & -6  & 11 & 0 & 0  & -2 & 2 \\
    1   & 2  & 0   & 0   & 0  & 1 & 0  & 0  & 0 \\
    -2  & -2 & 2   & 0   & 0  & 0 & 2  & 0  & 0 \\
    -2  & 0  & 0   & 2   & -2 & 0 & 0  & 2  & 0 \\
    1   & 0  & 0   & 0   & 2  & 0 & 0  & 0  & 1
\end{pmatrix}.
$$

Using the canonical choice
\[B_0=\begin{pmatrix}
1 & -1 & 1 &1 &-1\\
0 &1 &0&0&0\\
0&0&1&0&0\\
0&0&0&1&0\\
0&0&0&0&1
\end{pmatrix},\]
we obtain the curvature matrix
\[A_{\infty}(1,B_0)=\frac{1}{4}\begin{pmatrix}
7 & -2 & 2 & 1\\
-2 & 8 & 0 &2 \\
2 & 0 &8 &-2 \\
1 &2 & -2 & 7
\end{pmatrix}.\]
Therefore, the curvature \[\mathcal{K}_{1}(\infty)=\lambda_{\min} (A_{\infty}(1,B_0))=\frac{7-\sqrt{17}}{4}>0.\]
\end{example}

\section*{Acknowledgement}
We are very grateful to the anonymous referee for suggestions that greatly improved the readability of the paper.
This work is supported by the National Key R and D Program of China 2020YFA0713100, the National Natural Science Foundation of China (No. 12031017), and Innovation Program for Quantum Science and Technology 2021ZD0302902.
\appendix
\section{Explicit calculations of the matrices \texorpdfstring{$\Gamma_2^{\sigma(x)}$}{Gamma2(sigma(x))} and \texorpdfstring{$Q(x)$}{Q(x)}}\label{section:appendix}
This appendix details the computation of the matrices $\Gamma_2^{\sigma}(x)$ and the associated Schur complement
\[
Q(x) := \Gamma_2^\sigma(x) \big/ \Gamma_2^\sigma(x)_{S_2, S_2}
\]
for a given vertex $x$ in the connection graph $(G, \sigma)$. Note that for any oriented edge from $x$ to $y$ with label $\sigma_{xy}$, the inverse is given by
\[
\sigma^{-1}_{xy} = \overline{\sigma_{xy}}^\top.
\]
 Recall that $p_{xy}\neq 0$ if and only if $x\sim y$. For simplicity, we will use the following notation. 
 \begin{equation*}
 d_{\mu}(x):=\frac{d_x}{\mu_x},\ \ p_{uv}^{(2)}:=\sum_{w\in V}p_{uw}p_{wv}, \ \ \text{for}\ u,v\in V,
 \end{equation*}
 and 
 \begin{equation*}
p_{xy}^\sigma:=p_{xy}\overline{\sigma_{xy}}, \ \ \text{for}\ x\sim y,  \ \ p_{uv}^{(2),\sigma}:=\sum_{w\in V}p_{uw}p_{wv}\overline{\sigma_{uw}\sigma_{wv}}, \ \ \text{for}\ u,v\in V.
 \end{equation*} 

We first present the explicit expression of the matrix $\Gamma_2^\sigma(x)$ given in (\ref{eq:Gamma2}).
We denote by
\[S_1(x)=\{y_1,\ldots,y_m\},\,\,\text{and}\,\,S_2(x)=\{z_1,\ldots,z_n\}.\]
We use the indices $i,j=1,\ldots, m$ and $k,\ell=1,\ldots,n$. 
The matrix $\Gamma_2^\sigma(x)$ is an Hermitian matrix such that
\begin{align}\label{eq:gamma 2 matrix}
&4\Gamma_2^\sigma(x)_{x,x}=\left(3p_{xx}^{(2)}+d_{\mu}(x)^{2}\right)I_{d},\\
&4\Gamma_2^\sigma(x)_{x,y_i}=p_{xy_i}^{(2),\sigma}-\left(2p_{y_ix}+d_{\mu}(y_i)+d_{\mu}(x)\right)p_{xy_i}^{\sigma},\,\forall \ i,\\
&4\Gamma_2^\sigma(x)_{x,z_k}=p_{xz_k}^{(2),\sigma},\,\forall\  k,\\
&4\Gamma_2^\sigma(x)_{y_i,y_i}=p_{xy_i}^{(2)}I_{d}+\left(2p_{xy_i}+3d_{\mu}(y_i)-d_{\mu}(x)\right)p_{xy_i}I_d,\,\forall\ i,\\
&4\Gamma_2^\sigma(x)_{y_{i},y_{j}}=-2(p_{xy_{i}}p_{y_iy_{j}}+p_{xy_j}p_{y_jy_i})\overline{\sigma_{y_{i}y_{j}}}+2\overline{p_{xy_i}^{\sigma}}^\top p_{xy_j}^{\sigma},\,\forall \ i, \,\forall\ j\neq i,\\
&4\Gamma_2^\sigma(x)_{y_i,z_k}=-2p_{xy_i}p_{y_iz_k}\overline{\sigma_{y_iz_k}},\,\forall\,i,k,\\
&4\Gamma_2^\sigma(x)_{z_{k},z_{k}}=p_{xz_k}^{(2)}I_{d},\,\forall\ k,\\
&4\Gamma_2^\sigma(x)_{z_{k},z_{\ell}}=\mathbf{0}_d,\,\forall\ k, \,\forall \ \ell\neq k.\label{eq:Gamma_2_last}
\end{align}
In particular, the matrix block
\begin{equation}\label{eq:Gamma2S2S2}
4\Gamma^\sigma_2(x)_{S_2,S_2}=\begin{pmatrix}
 p_{xz_1}^{(2)}I_{d} & & \\
&\ddots & \\
& &  p_{xz_n}^{(2)}I_{d}
\end{pmatrix}
\end{equation}
is real, diagonal, and invertible.

Next, we present the explicit expression for the associated Schur complement
\begin{align*}Q(x):=\Gamma_2^\sigma(x)/\Gamma_2^\sigma(x)_{S_2,S_2}=\Gamma_2^\sigma(x)_{B_1,B_1}-\Gamma_2^\sigma(x)_{B_1,S_2}\Gamma_2^\sigma(x)_{S_2,S_2}^{-1}\Gamma_2^\sigma(x)_{S_2,B_1}.
\end{align*}
The matrix $Q(x)$ is an Hermitian matrix such that
\begin{align}\label{eq:schur complement of gamma 2 matrix}
&4Q(x)_{x,x}=3p_{xx}^{(2)}I_{d}+d_{\mu}(x)^{2}I_{d}-\sum_{k=1}^n\frac{p_{xz_k}^{(2),\sigma}\overline{p_{xz_k}^{(2),\sigma}}^\top}{p_{xz_{k}}^{(2)}},\\
&4Q(x)_{x,y_i}=p_{xy_i}^{(2),\sigma}-\left(2p_{y_ix}+d_{\mu}(y_i)+d_{\mu}(x)\right)p_{xy_i}^{\sigma}+2\sum_{k=1}^n\frac{p_{xz_k}^{(2),\sigma}p_{xy_{i}}p_{y_{i}z_{k}}\sigma_{y_{i}z_{k}}^\top}{p_{xz_k}^{(2)}},\,\forall\ i,\\
&4Q(x)_{y_{i},y_{i}}=p_{xy_i}^{(2)}I_{d}+\left(2p_{xy_i}+3d_{\mu}(y_i)-d_{\mu}(x)\right)p_{xy_i}I_d-4\sum_{k=1}^n\frac{p_{xy_{i}}^2p_{y_{i}z_{k}}^2}{p_{xz_k}^{(2)}}I_{d},\,\forall \ i,\\
&4Q(x)_{y_{i},y_j}=-2(p_{xy_{i}}p_{y_iy_{j}}+p_{xy_j}p_{y_jy_i})\overline{\sigma_{y_{i}y_{j}}}+2\overline{p_{xy_{i}}^{\sigma}}^\top p_{xy_{j}}^{\sigma}\notag\\
&\hspace{6.7cm}-4\sum_{k=1}^n\frac{p_{xy_{i}}p_{xy_j}p_{y_{i}z_{k}}^{\sigma}\overline{p_{y_jz_{k}}^{\sigma}}^\top}{p_{xz_k}^{(2)}},\,\forall\ i,\,\forall\ j\neq i.\label{eq:Q13}
\end{align}
Next, we show the following key property.
\begin{proposition}\label{prop:aadaggeromega}
Let $x$ be a given vertex in a connection graph $(G,\sigma)$. For any nonsingular matrix $B$ satisfying (\ref{eq:B}), let $a:=a(G,\sigma,x,B)$ and $\omega^\top:=\omega^\top(G,\sigma,x,B)$ be given in (\ref{eq:a}) and (\ref{eq:omega}), respectively. Then we have
\begin{equation*}
 a\succeq 0, \,\,\text{and}\,\,\, aa^\dagger\omega^\top=\omega^\top.
\end{equation*}
Moreover, we have $a=\mathbf{0}_d, \omega=\mathbf{0}_{md \times d}$ when the local connection structure $B_2^{inc}(x)$ is balanced.
\end{proposition}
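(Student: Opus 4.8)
The plan is to reduce everything to the canonical row block $p_0^\top$ of (\ref{eq:p_0}) and then to exhibit $a$ explicitly as a sum of Hermitian squares. First I would record that, since $\overline{b_0}$ spans the kernel of $2\Gamma^\sigma(x)$, there is a nonsingular $d\times d$ matrix $E_d$ with $b_0^\top=E_d p_0^\top$, exactly as in the discussion around (\ref{eq:p_0}). Writing $a_0:=p_0^\top 2Q(x)\overline{p_0}$ and $\omega_0^\top:=p_0^\top 2Q(x)\begin{pmatrix}\overline{b_1}&\cdots&\overline{b_m}\end{pmatrix}$, this gives $a=E_d a_0\overline{E_d}^\top$ and $\omega^\top=E_d\omega_0^\top$. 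Because $E_d$ is invertible, $a\succeq 0\iff a_0\succeq 0$, and the range condition $aa^\dagger\omega^\top=\omega^\top$ (i.e.\ $\mathrm{range}(\omega^\top)\subseteq\mathrm{range}(a)$) is equivalent to $\mathrm{range}(\omega_0^\top)\subseteq\mathrm{range}(a_0)$. Thus it suffices to treat $b_0=p_0$. I would additionally apply a switching function $\tau$ with $\tau(x)=I_d$ and $\tau(y_i)=\sigma_{xy_i}^{-1}$ (Proposition \ref{prop:Switching} together with (\ref{eq:Qswitch})) to reduce to the case $\sigma_{xy_i}=I_d$; since $\tau_{B_1(x)}$ is block-diagonal unitary, the resulting congruence preserves both semidefiniteness and the range inclusion.

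The technical heart is the explicit evaluation of the row block $p_0^\top 2Q(x)$ via the formulas (\ref{eq:schur complement of gamma 2 matrix})--(\ref{eq:Q13}) for the blocks of $Q(x)$. Carrying out $(p_0^\top 2Q(x))_i=2Q(x)_{x,y_i}+\sum_j\overline{\sigma_{xy_j}}\,2Q(x)_{y_j,y_i}$ (and analogously for the $x$-block), I expect the degree terms, the off-diagonal signature terms and the Schur subtraction to organize themselves, after cancellation, into two families of deviation matrices: a family $X_{ij}$ indexed by the triangles $x\sim y_i\sim y_j\sim x$ measuring how far the triangle signature $\sigma_{xy_i}\sigma_{y_iy_j}\sigma_{y_jx}$ is from $I_d$, and a family $X_{ij,k}$ indexed by the $4$-cycles $x\sim y_i\sim z_k\sim y_j\sim x$ (these carry the denominators $\sum_h p_{xy_h}p_{y_hz_k}$) measuring the deviation of the $4$-cycle signature from $I_d$. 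Concretely, I would show each block has the form $(p_0^\top 2Q(x))_i=\sum_{j,k}X_{ij,k}Y_{ij,k}+\sum_j X_{ij}Y_{ij}$ for suitable coefficient blocks $Y$, with the crucial feature that contracting against $\overline{p_0}$ collapses the right factors, so that
\[
a_0=p_0^\top 2Q(x)\overline{p_0}=\sum_{i,j,k}X_{ij,k}\overline{X_{ij,k}}^\top+\sum_{i,j}X_{ij}\overline{X_{ij}}^\top.
\]

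From this factorization the three claims follow at once. Positive semidefiniteness $a_0\succeq 0$ is clear, being a sum of Hermitian squares. For the range condition, each block $(p_0^\top 2Q(x))_i$ is a left $X$-combination, so $\mathrm{range}(p_0^\top 2Q(x))$ is contained in the span of the columns of the $X_{ij,k},X_{ij}$; conversely $\mathrm{range}(a_0)=\sum\mathrm{range}(X_\bullet\overline{X_\bullet}^\top)=\sum\mathrm{range}(X_\bullet)$ equals exactly that span, using $\mathrm{range}(M+M')=\mathrm{range}(M)+\mathrm{range}(M')$ for positive semidefinite $M,M'$ and $\mathrm{range}(X\overline{X}^\top)=\mathrm{range}(X)$. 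Since every column of $\omega_0^\top$ lies in $\mathrm{range}(p_0^\top 2Q(x))$, we obtain $\mathrm{range}(\omega_0^\top)\subseteq\mathrm{range}(a_0)$, hence $a_0a_0^\dagger\omega_0^\top=\omega_0^\top$. Finally, if $B_2^{inc}(x)$ is balanced then (after switching to $\sigma_{xy_i}=I_d$) every triangle and $4$-cycle signature is literally $I_d$, so all the deviation matrices $X_{ij},X_{ij,k}$ vanish; therefore $a_0=\mathbf{0}$ and, block by block, $p_0^\top 2Q(x)=\mathbf{0}$, giving $\omega_0=\mathbf{0}$ as well.

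The main obstacle is the bookkeeping in the second paragraph: one must verify that the degree terms, the triangle terms and the $z_k$-Schur terms combine, after multiplication by $p_0^\top$ and $\overline{p_0}$, into precisely the clean sum of squares above with no residual cross terms. I would handle this by grouping the computation $z_k$ by $z_k$, each $z_k$ contributing a weighted graph-Laplacian-type quadratic form whose semidefiniteness and kernel are transparent, and separately grouping the pure $B_1$ triangle terms, so that the identification of the $X_{ij,k}$ and $X_{ij}$ is forced by matching coefficients.
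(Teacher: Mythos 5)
Your proposal is correct, and its computational core coincides with the paper's own proof in the Appendix: the paper likewise reduces to the canonical block $p_0^\top$, evaluates $p_0^\top 4Q(x)$ explicitly in (\ref{eq:B_0Q(x)})--(\ref{eq:B_0Q(x)i}), and arrives at exactly the factorization you predict, $2a(B_0)=\sum_{i,j,k}X_{ij,k}\overline{X_{ij,k}}^\top+\sum_{i,j}X_{ij}\overline{X_{ij}}^\top$ in (\ref{eq:Appendix_a}), with $X_{ij}$, $X_{ij,k}$ the triangle and $4$-cycle deviation matrices (the conjugate-transpose pairing of the two deviation factors that you rely on is precisely the observation recorded after (\ref{eq:appendixa})), plus the left-$X$-combination structure of the $\omega_i^\top$ and the balanced case via vanishing of all $X$'s. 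Where you genuinely diverge is the finishing mechanism: the paper proves a bespoke pseudoinverse identity (Lemma \ref{lemma:schur condition}: $AA^\dagger B=B$ whenever $A=\sum X_i\overline{X_i}^\top$ and $B=\sum X_jY_j$) by diagonalizing $A$ and checking that the rows of $P^{-1}X_i$ vanish wherever the eigenvalues do, and then re-runs that lemma for general $B$ after conjugating the $X$'s by $E_d$; you instead invoke range calculus --- $aa^\dagger$ is the orthogonal projector onto $\mathrm{range}(a)$, $\mathrm{range}(M+M')=\mathrm{range}(M)+\mathrm{range}(M')$ for positive semidefinite summands, $\mathrm{range}(X\overline{X}^\top)=\mathrm{range}(X)$ --- and dispose of general $B$ upfront, since the congruences $a=E_da_0\overline{E_d}^\top$ and $\omega^\top=E_d\omega_0^\top$ transform both ranges by the invertible $E_d$ and hence preserve the inclusion. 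Your route is more conceptual and makes transparent what the lemma really says (the columns of $\omega^\top$ lie in the column space of $a$), at the cost of quoting standard pseudoinverse facts, while the paper's is self-contained at the matrix-entry level; your extra switching normalization $\sigma_{xy_i}=I_d$ is sound (via (\ref{eq:Qswitch}) with $\tau(x)=I_d$, $\tau(y_i)=\sigma_{xy_i}^{-1}$, a block-diagonal unitary congruence) but unnecessary, as the Appendix computes with general $\sigma_{xy_i}$. The one part you leave as a plan --- the bookkeeping producing the $X$'s --- is exactly what the Appendix carries out, including the fact, needed by your range argument, that the block $(p_0^\top 4Q(x))_0$ is itself a left-$X$-combination.
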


We first prepare the following key lemma for the proof of Proposition \ref{prop:aadaggeromega}
\begin{lemma}\label{lemma:schur condition}
Let $X_{i}, i=1,\ldots,m$ and $Y_{j}, j=1,\ldots,\ell$ be $n\times n$ square matrices, where $m\geq \ell$.
Define $A:=\sum_{i=1}^{m}X_{i}\overline{X_{i}}^\top$ and $B:=\sum_{j=1}^{\ell}X_{j}Y_{j}$.  Then we have $$AA^\dagger B=B.$$
\end{lemma}
\begin{proof}
Notice that $A\geq 0$. Thus there exists a nonsingular matrix $P$ such that
\begin{align}\label{eq:positive semidifinite}
A=P\begin{pmatrix}
    \lambda_{1}  &         &    \\
                 & \ddots  &    \\
                 &         & \lambda_{n}
\end{pmatrix}\overline{P}^\top,
\,\,\text{ with } \,\,\lambda_{k}\geq 0 ,\,\,k=1,\dots,n.
\end{align}
Denote for any real number $\lambda$ that
$$\lambda^\dagger=\left\{
\begin{aligned}
&\frac{1}{\lambda},&\textrm{if} \,\, \lambda\neq 0;\\
&0,&\textrm{otherwise.}
\end{aligned}
\right.
$$
Then we calculate
\begin{align*}
AA^{\dagger}B&=
=P\sum_{j=1}^{\ell}\begin{pmatrix}
   \lambda_{1}\lambda^{\dagger}_{1}  &         &      \\
                                     & \ddots  &      \\
                                     &         & \lambda_{n}\lambda^{\dagger}_{n}
\end{pmatrix}P^{-1}X_{j}Y_{j}.
\end{align*}

We claim that
\begin{equation}\label{eq:PinverseX}
\begin{pmatrix}
    \lambda_{1}\lambda^{\dagger}_{1}  &         &      \\
                                      & \ddots  &      \\
                                      &         & \lambda_{n}\lambda^{\dagger}_{n}
\end{pmatrix}P^{-1}X_{i}=P^{-1}X_{i},\,\,\text{for each}\ i=1,\dots,m.
\end{equation}

To show this claim, we first deduce from (\ref{eq:positive semidifinite}) that
$$\begin{pmatrix}
    \lambda_{1}  &         &       \\
                 & \ddots  &       \\
                 &         & \lambda_{n}
\end{pmatrix}=P^{-1}A\left(\overline{P}^\top\right)^{-1}=\sum_{i=1}^{m}(P^{-1}X_{i})\overline{(P^{-1}X_{i})}^\top.$$
Denote by
$[P^{-1}X_{i}]_{k}$ \label{notation:matrix_rows} the $k$-th row of $P^{-1}X_i$. Then, we reformulate the above identity as
 \[\sum_{i=1}^{m}[P^{-1}X_{i}]_{k}\overline{[P^{-1}X_{i}]_{k}}^\top=\lambda_{k}, \ k=1,\dots,n.\] 
 Observe that $\lambda_{k}=0$ if and only if 
$[P^{-1}X_{i}]_{k}=0$ for every $i=1,\dots,m$.
Therefore, we obtain
$$\lambda_{k}\lambda^{\dagger}_{k}[P^{-1}X_{i}]_{k}=[P^{-1}X_{i}]_{k},\ i=1,\ldots,n, \ k=1,\ldots,n.$$
That is, (\ref{eq:PinverseX}) holds true.

Applying (\ref{eq:PinverseX}), we continue our calculation as
$$AA^{\dagger}B
=P\sum_{j=1}^{\ell}(P^{-1}X_{j})Y_{j}=\sum_{j=1}^{\ell}X_{j}Y_{j}=B.$$
This finishes the proof.
\end{proof}
\begin{proof}[Proof of Proposition \ref{prop:aadaggeromega}]
Consider the nonsingular matrix
\[B_{0}=\begin{pmatrix}
p_0^\top \\
p_1^\top \\
\vdots\\
p_m^\top
\end{pmatrix}\]
satisfying (\ref{eq:B}) with $p_0^\top=\begin{pmatrix}
  I_d & \overline{\sigma_{xy_1}} & \cdots & \overline{\sigma_{xy_m}}
\end{pmatrix}$ and $p_{i}, i=1,\ldots,m$ given in  (\ref{eq:specialb_i}). We first calculate the $d\times (m+1)d$ matrix $p_{0}^\top4Q(x)$ by (\ref{eq:schur complement of gamma 2 matrix})-(\ref{eq:Q13}). Denote it as
$$p_{0}^\top4Q(x)=\begin{pmatrix}
  (p_{0}^\top4Q(x))_{0} & (p_{0}^\top4Q(x))_{1} & \cdots & (p_{0}^\top4Q(x))_{m}
\end{pmatrix}.$$
Then the first $d\times d$ block $(p_{0}^\top4Q(x))_{0}$ is 
\begin{align}\label{eq:B_0Q(x)}
&(p_{0}^\top4Q(x))_{0}=4Q(x)_{xx}+\sum_{j=1}^m\overline{\sigma_{xy_j}}4Q(x)_{y_jx}\notag\\=&\sum_{k=1}^{n}\frac{p_{xz_k}^{(2),\sigma}\overline{p_{xz_k}^{(2),\sigma}}^\top}{p_{xz_k}^{(2)}}
-\sum_{k=1}^{n}p_{xz_k}^{(2)}I_{d}+\sum_{i=1}^{m}\sum_{j\neq i}p_{xy_{j}}p_{y_{j}y_{i}}\overline{\sigma_{xy_{i}}}\overline{\sigma_{y_{i}y_{j}}}\overline{\sigma_{y_{j}x}}-\sum_{i=1}^{m}p_{xy_i}^{(2)}I_{d}\notag\\
=&-\sum_{k=1}^n\frac{\left(p_{xz_k}^{(2)}I_d\right)^2-p_{xz_k}^{(2),\sigma}\overline{p_{xz_k}^{(2),\sigma}}^\top}{p_{xz_k}^{(2)}}-\sum_{i=1}^{m}\sum_{j\neq i}p_{xy_{j}}p_{y_{j}y_{i}}(I_{d}-\overline{\sigma_{xy_{i}}}\overline{\sigma_{y_{i}y_{j}}}\overline{\sigma_{y_{j}x}})\notag\\
=&-\sum_{k=1}^n\frac{1}{p_{xz_k}^{(2)}}\sum_{i<j}p_{xy_i}p_{y_iz_k}p_{xy_j}p_{y_jz_k}\left(I_d-\overline{\sigma_{xy_j}}\overline{\sigma_{y_jz_k}}\sigma_{y_iz_k}^\top\sigma_{xy_i}^\top\right)\left(I_d-\overline{\sigma_{xy_i}}\overline{\sigma_{y_iz_k}}\sigma_{y_jz_k}^\top\sigma_{xy_j}^\top\right)\notag\\
&\hspace{0.5cm}-\sum_{i=1}^{m}\sum_{j\neq i}p_{xy_{j}}p_{y_{j}y_{i}}(I_{d}-\overline{\sigma_{xy_{i}}}\overline{\sigma_{y_{i}y_{j}}}\overline{\sigma_{y_{j}x}}).
\end{align}
In the above, we write $\sum_{i<j}:=\sum_{i=1}^m\sum_{j=i+1}^m$ for short. And we have for each $i=1,\ldots,m$,
\begin{align}
&(p_{0}^\top4Q(x))_{i}=4Q(x)_{xy_i}+\sum_{j=1}^m\overline{\sigma_{xy_j}}4Q(x)_{y_jy_i}.\notag\\
=&\sum_{j=1}^mp_{xy_{j}}p_{y_{j}y_{i}}(I_{d}-\overline{\sigma_{xy_{j}}}\overline{\sigma_{y_{j}y_{i}}}\overline{\sigma_{y_{i}x}})\overline{\sigma_{xy_{i}}}+2\sum_{j=1}^mp_{xy_{i}}p_{y_{i}y_{j}}(I_{d}-\overline{\sigma_{xy_{j}}}\overline{\sigma_{y_{j}y_{i}}}\overline{\sigma_{y_{i}x}})\overline{\sigma_{xy_{i}}}\notag\\
&\hspace{0.5cm}+2\sum_{k=1}^{n}p_{xy_{i}}p_{y_{i}z_{k}}\left(I_{d}-\frac{p_{xz_k}^{(2),\sigma}\sigma_{y_{i}z_{k}}^\top\sigma_{xy_{i}}^\top}{p_{xz_k}^{(2)}}\right)\overline{\sigma_{xy_{i}}}.\notag
\end{align}
Observe that 
\begin{align*}
    I_{d}-\frac{p_{xz_k}^{(2),\sigma}\sigma_{y_{i}z_{k}}^\top\sigma_{xy_{i}}^\top}{p_{xz_k}^{(2)}}
=&\sum_{j=1}^m\frac{p_{xy_j}p_{y_jz_k}}{p_{xz_k}^{(2)}}\left(I_{d}-\overline{\sigma_{xy_{j}}}\overline{\sigma_{y_{j}z_{k}}}\sigma_{y_{i}z_{k}}^\top\sigma_{xy_{i}}^\top\right).
\end{align*}
Therefore, we have
\begin{align}\label{eq:B_0Q(x)i}
(p_{0}^\top4Q(x))_{i}=&\sum_{j\neq i}p_{xy_{j}}p_{y_{j}y_{i}}(I_{d}-\overline{\sigma_{xy_{j}}}\overline{\sigma_{y_{j}y_{i}}}\overline{\sigma_{y_{i}x}})\overline{\sigma_{xy_{i}}}+2\sum_{j\neq i}p_{xy_{i}}p_{y_{i}y_{j}}(I_{d}-\overline{\sigma_{xy_{j}}}\overline{\sigma_{y_{j}y_{i}}}\overline{\sigma_{y_{i}x}})\overline{\sigma_{xy_{i}}}\notag\\
&+2\sum_{k=1}^{n}p_{xy_{i}}p_{y_{i}z_{k}}\left(\sum_{j\neq i}\frac{p_{xy_{j}}p_{y_{j}z_{k}}}{p_{xz_k}^{(2)}}\left(I_{d}-\overline{\sigma_{xy_{j}}}\overline{\sigma_{y_{j}z_{k}}}\sigma_{y_{i}z_{k}}^\top\sigma_{xy_{i}}^\top\right)\right)\overline{\sigma_{xy_{i}}}.
\end{align}
Next, we calculate by applying (\ref{eq:B_0Q(x)}) and (\ref{eq:B_0Q(x)i}) that
\begin{align}
&2a(B_{0}):=2a(G,\sigma,x,B_{0})=p_0^\top 4Q(x)\overline{p_0}\notag\\
=&\sum_{k=1}^n\frac{1}{p_{xz_k}^{(2)}}\sum_{i<j}p_{xy_i}p_{y_iz_k}p_{xy_j}p_{y_jz_k}\left(I_d-\overline{\sigma_{xy_j}}\overline{\sigma_{y_jz_k}}\sigma_{y_iz_k}^\top\sigma_{xy_i}^\top\right)\left(I_d-\overline{\sigma_{xy_i}}\overline{\sigma_{y_iz_k}}\sigma_{y_jz_k}^\top\sigma_{xy_j}^\top\right)\notag\\
&+\sum_{i=1}^{m}\sum_{j\neq i}p_{xy_{j}}p_{y_{j}y_{i}}\left(I_{d}-\overline{\sigma_{xy_{j}}}\overline{\sigma_{y_{j}y_{i}}}\overline{\sigma_{y_{i}x}}\right)\left(I_d-\overline{\sigma_{xy_{i}}}\overline{\sigma_{y_{i}y_{j}}}\overline{\sigma_{y_{j}x}}\right).\label{eq:appendixa}
\end{align}
Let us denote
\begin{align*}
\omega^\top(B_{0}):=&\omega^\top(G,\sigma,x,B_{0})
=\begin{pmatrix}
     \omega^{\top}_{1} & \omega^{\top}_{2} & \cdots & \omega^{\top}_{m}
\end{pmatrix}.
\end{align*}
We further calculate for each $i=1,\ldots,m$ that
\begin{align*}
2\omega^{\top}_{i}(B_0)&=p_{0}^\top4Q(x)\overline{p_{i}}=\sum_{j\neq i}\left(\frac{1}{\sqrt{p_{xy_{i}}}}p_{xy_{j}}p_{y_{j}y_{i}}+2\sqrt{p_{xy_{i}}}p_{y_{i}y_{j}}\right)\left(I_{d}-\overline{\sigma_{xy_{j}}}\overline{\sigma_{y_{j}y_{i}}}\overline{\sigma_{y_{i}x}}\right)\overline{\sigma_{xy_{i}}}\\
&\hspace{2.5cm}+2\sum_{k=1}^{n}\sqrt{p_{xy_{i}}}p_{y_{i}z_{k}}\sum_{j\neq i}\frac{p_{xy_{j}}p_{y_{j}z_{k}}}{p_{xz_k}^{(2)}}\left(I_{d}-\overline{\sigma_{xy_{j}}}\overline{\sigma_{y_{j}z_{k}}}\sigma_{y_{i}z_{k}}^\top\sigma_{xy_{i}}^\top\right)\overline{\sigma_{xy_{i}}}.
\end{align*}
Notice that the matrices $I_{d}-\overline{\sigma_{xy_{j}}}\overline{\sigma_{y_{j}y_{i}}}\overline{\sigma_{y_{i}x}}$ and $I_{d}-\overline{\sigma_{xy_{i}}}\overline{\sigma_{y_{i}y_{j}}}\overline{\sigma_{y_{j}x}}$ are conjugate to each other. The same relation holds between $I_{d}-\overline{\sigma_{xy_{j}}}\overline{\sigma_{y_{j}z_{k}}}\sigma_{y_{i}z_{k}}^\top\sigma_{xy_{i}}^\top$ and $I_{d}-\overline{\sigma_{xy_{i}}}\overline{\sigma_{y_{i}z_{k}}}\sigma_{y_{j}z_{k}}^\top\sigma_{xy_{j}}^\top$. Hence, we can reformulate  $2a(B_{0})$ as follows,
\begin{equation}\label{eq:Appendix_a}
2a(B_{0})=\sum_{i,j,k}X_{ij,k}\overline{X_{ij,k}}^\top+\sum_{i,j}X_{ij}\overline{X_{ij}}^\top,
\end{equation}
where we have for each $i,j=1,\ldots,m$ and each $k=1,\ldots,n$ that
$$X_{ij,k}=\sqrt{\frac{p_{xy_{i}}p_{y_{i}z_{k}}p_{xy_{j}}p_{y_{j}z_{k}}}{p_{xz_k}^{(2)}}}(I_{d}-\overline{\sigma_{xy_{j}}}\overline{\sigma_{y_{j}z_{k}}}\sigma_{y_{i}z_{k}}^\top\sigma_{xy_{i}}^\top), X_{ij}=\sqrt{p_{xy_{j}}p_{y_{j}y_{i}}}(I_{d}-\overline{\sigma_{xy_{j}}}\overline{\sigma_{y_{j}y_{i}}}\overline{\sigma_{y_{i}x}}).$$
In particular, we have $a(B_0)\succeq 0$ and $a(B_0)=\mathbf{0}_d$ if the local connection structure $B_2^{inc}(x)$ is balanced.
The blocks $2\omega^\top_{i}(B_{0})$ can be formulated into the following form
\begin{equation}\label{eq:Appendix_omega}
    2\omega^\top_{i}(B_{0})=\sum_{j,k}X_{ij,k}Y_{ij,k}+\sum_{j}X_{ij}Y_{ij}
\end{equation}
for some suitable choices of matrices $Y_{ij,k}$ and $Y_{ij}$, $i,j=1,\dots,m, k=1,\dots,n$. We observe particularly that $\omega(B_0)=\mathbf{0}_{md\times d}$ if the local connection structure $B_2^{inc}(x)$ is balanced.
Applying Lemma \ref{lemma:schur condition} to $2a(B_{0})$ and $2\omega^\top_{i}(B_{0})$ yields $$a(B_{0})a(B_{0})^\dagger2\omega^\top_{i}(B_{0})=2\omega^\top_{i}(B_{0}),\,\,\text{for every}\  i=1,\dots,m.$$
Therefore, we have 
$a(B_{0})a(B_{0})^\dagger\omega^\top(B_{0})=\omega^\top(B_{0}).$

To finish the proof, let us consider a general nonsingular matrix $B$ satisfying (\ref{eq:B}) with the form (\ref{eq:generalB}). We divide each $b_{i}^\top$, $i=1,\ldots, m$, into $(m+1)$ blocks of size $d\times d$ as follows
$$b_{i}^\top=\begin{pmatrix}
  b_{i,0}^\top & b_{i,1}^\top & \cdots & b_{i,m}^\top
\end{pmatrix}.$$
Then we have \begin{align*}
2a(B):=2a(G,\sigma,x,B)=&b_{0}^\top4Q(x)\overline{b_{0}}=E_{d}p_{0}^\top4Q(x)\overline{p_{0}}\overline{E_{d}}^\top=E_{d}2a(B_0)\overline{E_{d}}^\top\\
=&\sum_{i,j,k}(E_{d}X_{ij,k})\overline{(E_{d}X_{ij,k})}^\top+\sum_{i,j}(E_{d}X_{ij})\overline{(E_{d}X_{ij,k})}^\top.
\end{align*}
In particular, we observe that $a(B)\succeq 0$ and $a(B)=\mathbf{0}_d$ if and only if $a(B_0)=\mathbf{0}_d$.
Moreover, we observe for each $i=1,\ldots,m$ that
\begin{align}\label{eq:omega_i}
2\omega^\top_{i}(B)=b_{0}^\top4Q(x)\overline{b_{i}}=&(b_{0}^\top4Q(x))_0\overline{b_{i,0}}+\sum_{j=1}^m(b_{0}^\top4Q(x))_j\overline{b_{i,j}}\notag\\
=& (E_dp_{0}^\top4Q(x))_0\overline{b_{i,0}}+\sum_{j=1}^m(E_dp_{0}^\top4Q(x))_j\overline{b_{i,j}}.
\end{align}
Due to \eqref{eq:Appendix_omega}, we have for each $i=1,\ldots,m$
$$2\omega^\top_{i}(B)=\sum_{j,k}(E_{d}X_{ij,k})\tilde{Y}_{ij,k}+\sum_{j}(E_{d}X_{ij})\tilde{Y}_{ij}$$
 for suitable $\tilde{Y}_{ij,k}s$ and $\tilde{Y}_{ij}s$.
 Therefore, we can apply Lemma \ref{lemma:schur condition} to derive
$$a(B)a(B)^\dagger\omega^\top(B)=\omega^\top(B).$$
We also mention that $\omega(B)=\mathbf{0}_{md\times d}$ if the local connection structure $B_2^{inc}(x)$ is balanced. This completes the proof.
\end{proof}

To conclude, we consider the Cartesian product of two connection graphs $(G,\sigma)$ and $(G',\sigma')$ and provide a proof for  Lemma \ref{lemma:Qdecomp}. For vertices $x\in V$ and $x'\in V'$, we use the notation
$$S_{1}(x)=\{y_{1},y_{2},\dots,y_{m}\},\,\, S_{2}(x)=\{z_{1},z_{2},\dots,z_{n}\}$$and
$$ S_{1}(x')=\{y'_{1},y'_{2},\dots,y'_{m'}\},\,\, S_{2}(x')=\{z'_{1},z'_{2},\dots,z'_{n'}\}.$$
Next, we give the proof of Lemma \ref{lemma:Qdecomp}.
\begin{proof}[Proof of Lemma \ref{lemma:Qdecomp}]
Consider two vertices $x$ and $x'$ in the connection graphs $(G,\sigma)$ and $(G',\sigma')$, respectively. Keep in mind that we are working under the following assumption:
\begin{equation}\label{eq:localId2}
    \sigma_{xy}=I_d, \,\,\text{and}\,\,\sigma'_{x'y'}=I_d,\,\,\text{for any}\,\,y\in S_1(x), y'\in S_1(x').
\end{equation}
Applying (\ref{eq:gamma 2 matrix})-(\ref{eq:Gamma_2_last}), we derive the Hermitian matrix $4\Gamma^{\sigma^\times}_{2}(x,x')$ as follows.
%
The $d\times d$ blocks in the rows of $4\Gamma^{\sigma^\times}_{2}(x,x')$ corresponding to the vertex $(x,x')$ are
\begin{align*}
4\Gamma^{\sigma^\times}_{2}(x,x')_{(x,x')(x,x')}=\alpha^{2}4\Gamma^{\sigma}_{2}(x)_{x,x}+\beta^{2}4\Gamma^{\sigma'}_{2}(x')_{x',x'}+2\alpha\beta\sum_{i=1}^{m}\sum_{j=1}^{m'}p_{xy_{i}}p_{x'y'_{j}}I_{d},
\end{align*}
\begin{align*}
&4\Gamma^{\sigma^\times}_{2}(x,x')_{(x,x')(y_{i},x')}=\alpha^{2}4\Gamma^{\sigma}_{2}(x)_{x,y_{i}}-2\alpha\beta p_{xy_{i}}\sum_{j=1}^{m'}p_{x'y'_{j}}I_{d},\,\,\forall\,\,i=1,\dots,m,\\
&4\Gamma^{\sigma^\times}_{2}(x,x')_{(x,x')(x,y'_{i})}=\beta^{2}4\Gamma^{\sigma'}_{2}(x')_{x',y'_{i}}-2\alpha\beta p_{x'y'_{i}}\sum_{j=1}^{m}p_{xy_{j}}I_{d},\,\,\forall\,\,i=1,\dots,m',\\
&4\Gamma^{\sigma^\times}_{2}(x,x')_{(x,x')(z_{k},x')}=\alpha^{2}4\Gamma^{\sigma}_{2}(x)_{x,z_{k}},\,\,\forall\,\,k=1,\dots,n,\\
&4\Gamma^{\sigma^\times}_{2}(x,x')_{(x,x')(y_i,y'_{j})}=2\alpha\beta p_{xy_{i}}p_{x'y'_{j}}I_{d},\,\,\forall\,\,i=1,\dots,m,\,\,\forall\,\,j=1,\dots,m',\\
&4\Gamma^{\sigma^\times}_{2}(x,x')_{(x,x')(x,z'_{k})}=\beta^{2}4\Gamma^{\sigma'}_{2}(x')_{x',z'_{k}},\,\,\forall\,\,k=1,\dots,n'.
\end{align*}
The $d\times d$ blocks in the rows of $4\Gamma^{\sigma^\times}_{2}(x,x')$ corresponding to the vertices $\{(y_{1},x'),\dots,(y_{m},x')\}$ are
\begin{align*}
&4\Gamma^{\sigma^\times}_{2}(x,x')_{(y_{i},x')(y_{i},x')}=\alpha^{2}4\Gamma^{\sigma}_{2}(x)_{y_{i},y_{i}}+2\alpha\beta p_{xy_{i}}\sum_{j=1}^{m'}p_{x'y'_{j}}I_{d},\,\,\forall\,\,i=1,\dots,m,\\
&4\Gamma^{\sigma^\times}_{2}(x,x')_{(y_{i},x')(y_{j},x')}=\alpha^{2}4\Gamma^{\sigma}_{2}(x)_{y_{i},y_{j}},\,\,\forall\,\,i,j=1,\dots,m,\,\,\text{with}\,\,j\neq i,\\
&4\Gamma^{\sigma^\times}_{2}(x,x')_{(y_{i},x')(x,y'_{j})}=2\alpha\beta p_{xy_{i}}p_{x'y'_{j}}I_{d},\,\,\forall\,\,i=1,\dots,m,\,\,\forall\,\,j=1,\dots,m',\\
&4\Gamma^{\sigma^\times}_{2}(x,x')_{(y_{i},x')(z_{k},x')}=\alpha^{2}4\Gamma^{\sigma}_{2}(x)_{y_{i},z_{k}},\,\,\forall\,\,i=1,\dots,m,\,\,\forall\,\,k=1,\dots,n,\\
&4\Gamma^{\sigma^\times}_{2}(x,x')_{(y_{i},x')(y_{i},y'_{j})}=-2p_{xy_{i}}p_{xy'_{j}}I_{d},\,\,\forall\,\,i=1,\dots,m,\,\,\forall\,\,j=1,\dots,m',\\
&4\Gamma^{\sigma^\times}_{2}(x,x')_{(y_{i},x')(y_{r},y'_{j})}=\mathbf{0}_d,\,\,\forall\,\,i,r=1,\dots,m,\,\,\text{with}\,\,i\neq r,\,\,\forall\,\,j=1,\dots,m',\\
&4\Gamma^{\sigma^\times}_{2}(x,x')_{(y_{i},x')(x,z'_{k})}=\mathbf{0}_d,\,\,\forall\,\,i=1,\dots,m,\,\,\forall\,\,k=1,\dots,n'.
\end{align*}
The $d\times d$ blocks in the rows of $4\Gamma^{\sigma^\times}_{2}(x,x')$ corresponding to the vertices $\{(x,y'_{1}),\dots,(x,y'_{m'})\}$ are
\begin{align*}
&4\Gamma^{\sigma^\times}_{2}(x,x')_{(x,y'_{i})(x,y'_{i})}=\beta^{2}4\Gamma^{\sigma'}_{2}(x')_{y'_{i},y'_{i}}+2\alpha\beta p_{x'y'_{i}}\sum_{j=1}^{m}p_{xy_{j}}I_{d},\,\,\forall\,\,i=1,\dots,m',\\
&4\Gamma^{\sigma^\times}_{2}(x,x')_{(x,y'_{i})(x,y'_{j})}=\beta^{2}4\Gamma^{\sigma'}_{2}(x')_{y'_{i},y'_{j}},\,\,\forall\,\,i,j=1,\dots,m',\,\,\text{with}\,\,i\neq j,\\
&4\Gamma^{\sigma^\times}_{2}(x,x')_{(x,y'_{i})(z_{k},x')}=\mathbf{0}_d,\,\,\forall\,\,i=1,\dots,m',\,\,\forall\,\,k=1,\dots,n,\\
&4\Gamma^{\sigma^\times}_{2}(x,x')_{(x,y'_{i})(y_{j},y'_{i})}=-2p_{xy_{j}}p_{x'y'_{i}}I_{d},\,\,\forall\,\,j=1,\dots,m,\,\,\forall\,\,i=1,\dots,m',\\
&4\Gamma^{\sigma^\times}_{2}(x,x')_{(x,y'_{i})(y_{j},y'_{r})}=\mathbf{0}_d,\,\,\forall\,\,j=1,\dots,m,\,\,\forall\,\,i,r=1,\dots,m'\,\,\text{with}\,\,i\neq r,\\
&4\Gamma^{\sigma^\times}_{2}(x,x')_{(x,y'_{i})(x,z'_{k})}=\beta^{2}4\Gamma^{\sigma'}_{2}(x')_{y'_{i},z'_{k}},\,\,\forall\,\,i=1,\dots,m',\,\,\forall\,\,k=1,\dots,n'.
\end{align*}

The $d\times d$ blocks in the rows of $4\Gamma^{\sigma^\times}_{2}(x,x')$ corresponding to the vertices set $\{(z_{1},x'),\dots,(z_{n},x')\}$ are
\begin{align*}
&4\Gamma^{\sigma^\times}_{2}(x,x')_{(z_{k},x')(z_{k},x')}=\alpha^{2}4\Gamma^{\sigma}_{2}(x)_{z_{k},z_{k}},\,\,\forall\,\,k=1,\dots,n,\\
&4\Gamma^{\sigma^\times}_{2}(x,x')_{(z_{k},x')(z_{\ell},x')}=\mathbf{0}_d,\,\,\forall\,\,k,\ell=1,\dots,n\,\,\text{with}\,\,k\neq\ell,\\
&4\Gamma^{\sigma^\times}_{2}(x,x')_{(z_{k},x')(y_{i},y'_{j})}=\mathbf{0}_d,\,\,\forall\,\,k=1.\dots,n,\,\,\forall\,\,i=1,\dots,m,\,\,\forall\,\,j=1,\dots,m',\\
&4\Gamma^{\sigma^\times}_{2}(x,x')_{(z_{k},x')(x,z'_{\ell})}=\mathbf{0}_d,\,\,\forall\,\,k=1,\dots,n,\,\,\forall\,\,\ell=1,\dots,n'.
\end{align*}
The $d\times d$ block in the rows of $4\Gamma^{\sigma^\times}_{2}(x,x')$ corresponding to the vertices $\{(y_{1},y'_{1}),\dots,(y_{m},y'_{m'})\}$ are
\begin{align*}
&4\Gamma^{\sigma^\times}_{2}(x,x')_{(y_{i},y'_{j})(y_{i},y'_{j})}=2\alpha\beta p_{xy_{i}}p_{x'y'_{j}}I_{d},\,\,\forall\,\,i=1,\dots,m,\,\,\forall\,\,j=1,\dots,m',\\
&4\Gamma^{\sigma^\times}_{2}(x,x')_{(y_{i},y'_{j})(y_{r},y'_{s})}=\mathbf{0}_d,\,\,\forall\,\,i,r=1,\dots,m,\,\,\forall\,\,j,s=1,\dots,m',\,\,\text{with}\,\,i\neq r\,\,\text{or}\,\,j\neq s,\\
&4\Gamma^{\sigma^\times}_{2}(x,x')_{(y_{i},y'_{j})(x,z'_{k})}=\mathbf{0}_d,\,\,\forall\,\,i=1,\dots,m,\,\,\forall\,\,j=1,\dots,m',\,\,\forall\,\,k=1,\dots,n'.
\end{align*}
The $d\times d$ blocks in the rows of $4\Gamma^{\sigma^\times}_{2}(x,x')$ corresponding to the vertices $\{(x,z'_{1}),\dots,(x,z'_{n'})\}$ are
\begin{align*}
&4\Gamma^{\sigma^\times}_{2}(x,x')_{(x,z'_{k})(x,z'_{k})}=\beta^{2}4\Gamma^{\sigma'}_{2}(x')_{z'_{k},z'_{k}},\,\,\forall\,\,k=1,\dots,n',\\
&4\Gamma^{\sigma^\times}_{2}(x,x')_{(x,z'_{k})(x,z'_{\ell})}=\mathbf{0}_d,\,\,\forall\,\,k,\ell=1,\dots,n',\,\,\text{with}\,\,k\neq\ell.
\end{align*}


Then we calculate the matrix $L(x,x'):=4\Gamma^{\sigma^\times}_{2}(x,x')_{B_{1},S_{2}}(4\Gamma^{\sigma^\times}_{2}(x,x')_{S_{2},S_{2}})^{-1}4\Gamma^{\sigma^\times}_{2}(x,x')_{S_{2},B_{1}}$.
The $d\times d$ blocks in the rows of $L(x,x')$ corresponding to the vertex $(x,x')$ are
\begin{align*}
&L(x,x')_{(x,x')(x,x')}=\sum_{k=1}^n\frac{p_{xz_k}^{(2),\sigma}\overline{p_{xz_k}^{(2),\sigma}}^\top}{p_{xz_k}^{(2)}}+\sum_{k=1}^{n'}\frac{p_{x'z_k'}^{(2),\sigma}\overline{p_{x'z_k'}^{(2),\sigma}}^\top}{p_{x'z_k'}^{(2)}},\\
&L(x,x')_{(x,x')(y_{i},x')}=-2\alpha\beta p_{xy_{i}}\sum_{j=1}^{m'}p_{x'y'_{j}}I_{d}-2\alpha^{2}\sum_{k=1}^{n}\frac{p_{xy_{i}}p_{y_{i}z_{k}}(\sum_{j=1}^{m}p_{xy_{j}}p_{y_{j}z_{k}}\overline{\sigma_{y_{j}z_{k}}})}{p_{xz_k}^{(2)}},\notag\\
&\hspace{11cm}\,\,\forall\,\,i=1,\dots,m,\\
&L(x,x')_{(x,x')(x,y'_{i})}=-2\alpha\beta p_{x'y'_{i}}\sum_{j=1}^{m}p_{xy_{j}}I_{d}-2\beta^{2}\sum_{k=1}^{n'}\frac{p_{x'y'_{i}}p_{y'_{i}z'_{k}}\sum_{j=1}^{m'}p_{x'y'_{j}}p_{y'_{j}z'_{k}}\overline{\sigma_{y'_{j}z'_{k}}}}{p_{x'z_k'}^{(2)}},\notag\\
&\hspace{11cm}\,\,\forall\,\,i=1,\dots,m'.
\end{align*}
The $d\times d$ blocks in the rows of $L(x,x')$ corresponding to the vertices $\{(y_{1},x'),\dots,(y_{m},x')\}$ are
\begin{align*}
&L(x,x')_{(y_{i},x')(x,x')}=\overline{N(x,x')_{(x,x')(y_{i},x')}}^\top,\,\,\forall\,\,i=1,\dots,m,\\
&L(x,x')_{(y_{i},x')(y_{i},x')}=2\alpha\beta p_{xy_{i}}\sum_{j=1}^{m'}p_{x'y'_{j}}I_{d}+4\alpha^{2}\sum_{k=1}^{n}\frac{(p_{xy_{i}}p_{y_{i}z_{k}})^{2}}{p_{xz_k}^{(2)}},\,\,\forall\,\,i=1,\dots,m,\\
&L(x,x')_{(y_{i},x')(y_{j},x')}=4\alpha^{2}\sum_{k=1}^{n}\frac{p_{xy_{i}}p_{y_{i}z_{k}}p_{xy_{j}}p_{y_{j}z_{k}}\overline{\sigma_{y_{i}z_{k}}}\sigma_{y_{j}z_{k}}^\top}{p_{xz_k}^{(2)}},\,\,\forall\,\,i,j=1\dots,m,\,\,\text{with}\,\,j\neq i,\\
&L(x,x')_{(y_{i},x')(x,y'_{j})}=2\alpha\beta p_{xy_{i}}p_{x'y'_{j}}I_{d},\,\,\forall\,\,i=1,\dots,m,\,\,\forall\,\,j=1,\dots,m'.
\end{align*}
The $d\times d$ blocks in the rows of $L(x,x')$ corresponding to the vertices $\{(x,y'_{1}),\dots,(x,y'_{m'})\}$ are
\begin{align*}
&L(x,x')_{(x,y'_{i})(x,x')}=\overline{N(x,x')_{(x,x')(x,y'_{i})}}^\top,\,\,\forall\,\,i=1,\dots,m',\\
&L(x,x')_{(x,y'_{j})(y_{i},x)}=\overline{N(x,x')_{(y_{i},x')(x,y'_{j})}}^\top,\,\,\forall\,\,i=1,\dots,m,\,\,\forall\,\,j=1,\dots,m',
\end{align*}
\begin{align*}
&L(x,x')_{(x,y'_{i})(x,y'_{i})}=2\alpha\beta p_{x'y'_{i}}\sum_{j=1}^{m}p_{xy_{j}}I_{d}+4\beta^{2}\sum_{k=1}^{n'}\frac{(p_{x'y'_{i}}p_{y'_{i}z'_{k}})^{2}}{p_{x'z_k'}^{(2)}},\,\,\forall\,\,i=1,\dots,m',\\
&L(x,x')_{(x,y'_{i})(x,y'_{j})}=4\beta^{2}\sum_{k=1}^{n'}\frac{p_{x'y'_{i}}p_{y'_{i}z'_{k}}p_{x'y'_{j}}p_{y'_{j}z'_{k}}\overline{\sigma_{y'_{i}z'_{k}}}\sigma_{y'_{j}z'_{k}}^{\top}}{p_{x'z_k'}^{(2)}},\,\,\forall\,\,i,j=1,\dots,m',\,\,\text{with}\,\,j\neq i.
\end{align*}
Finally we derive the matrix $Q(x,x')$:
\begin{align*}
&Q(x,x')_{(x,x')(x,x')}=Q(x)_{x,x}+Q(x')_{x',x'},\\
&Q(x,x')_{(x,x')(y_{i},x')}=Q(x)_{x,y_{i}},\,\,\forall\,\,i=1,\dots,m,\\
&Q(x,x')_{(x,x')(x,y'_{i})}=Q(x')_{x',y'_{i}},\,\,\forall\,\,i=1,\dots,m',\\
&Q(x,x')_{(y_{i},x')(x,x')}=\overline{Q(x,x')_{(x,x')(y_{i},x')}}^\top,\,\,\forall\,\,i=1,\dots,m,\\
&Q(x,x')_{(x,y'_{i})(x,x')}=\overline{Q(x,x')_{(x,x')(x,y'_{i})}}^\top,\,\,\forall\,\,i=1,\dots,m',\\
&Q(x,x')_{(y_{i},x')(y_{i},x')}=Q(x)_{y_{i},y_{i}},\,\,\forall\,\,i=1,\dots,m,\\
&Q(x,x')_{(y_{i},x')(y_{j},x')}=Q(x)_{y_{i},y_{j}},\,\,\forall\,\,i,j=1,\dots,m,\,\,\text{with}\,\,j\neq i,\\
&Q(x,x')_{(x,y'_{i})(x,y'_{i})}=Q(x')_{y'_{i},y'_{i}},\,\,\forall\,\,i=1,\dots,m',\\
&Q(x,x')_{(x,y'_{i})(x,y'_{j})}=Q(x')_{y'_{i},y'_{j}},\,\,\forall\,\,i,j=1,\dots,m',\,\,\text{with}\,\,j\neq i.
\end{align*}
That is, we obtain the identity (\ref{eq:Qdecomp}).
\end{proof}

\section{Glossary of notations}\label{section:notation}
\begin{center}
\begin{longtable}{c|c|c}
 Notations & Definitions & The first-\\
     &     & appearing page\\
 \hline

$\sigma_0\equiv-1$ & The anti-balanced signature & \pageref{sym:antibalanced}\\

 $\Gamma^\sigma_2(x)$ & The matrix corresponding to the & \pageref{notation:gamma_2} \\
  &   sesquilinear form of $\Gamma^\sigma_2(f,f)(x)$. & \\
  
$Q(x)$ & The Schur complement $\Gamma^\sigma_2(x)_{B_1,B_1}/\Gamma^{\sigma}_2(x)_{S_2,S_2}$ & \pageref{notation:schur_complement}\\

 $\sigma_{xy}:=\sigma((x,y))$ & A connection signature on a directed edge $(x,y)$ & \pageref{sym:connection}\\
$B_r(x)$ & A ball center at $x$ with radius $r$. & \pageref{notation:Br_x}\\
$S_r(x)$ & The $r$-sphere of $x$. & \pageref{notation:Sr_x} \\

$p_{xy}$ & Transition rate from $x$ to $y$, $p_{xy}:=\frac{w_{xy}}{\mu(x)}$. & \pageref{notation:transition_rate} \\


 $\sigma^\tau_{xy}:=\tau(x)^{-1}\sigma_{xy}\tau(y)$ & The connection $\sigma$ switched by $\tau$ on the edge $(x,y)$ &  \pageref{notation:switching} \\

$\mathbb{K}=\mathbb{R}$ or $\mathbb{C}$ & The real or complex space & \pageref{notation:real_complex_space} \\

$D(\tau)$ & The diagonal matrix of the function $\tau$ & \pageref{notation:diagonal_tau} \\

$S_1(x)=\{y_1,\cdots,y_m\}$ & The neighbors of $x\in V$ & \pageref{notation:x_S2}\\

$S_2(x)=\{z_1,\cdots,z_n\}$ & The vertices in $2$-sphere of $x$ & \pageref{notation:x_S2}\\
  
 $\Gamma^\sigma_2(x)_{*,\star}$ & The block submatrix with rows and columns & \pageref{eq:Gamma2} \\
  & corresponding to vertex set $*$ and $\star$, respectively & \\
  
   $B_2^{inc}(x)$ & The local structure of incomplete $2$-ball around $x$ & \pageref{eq:Gamma2} \\  

$S_{11}^{\dagger}$ & The pseudoinverse of the matrix $S_{11}$ & \pageref{notation:S_dagger}\\

$S/S_{11}$ & The Schur complement of $S_{11}$ in & \pageref{eq:schur_complement} \\
   &  $S=([S_{11},S_{12}],[S_{21},S_{22}])$ &  \\
   
   $S\succeq 0$ & The symmetric matrix
$S$ is positive semidefinite & \pageref{notation:positive_semidefinite} \\

$S^{1/2}_{11}$ & The square root of the matrix $S_{11}$,& \pageref{notation:S_square_root} \\
  &  i.e. $S^{1/2}_{11}\overline{(S^{1/2}_{11})}^{\top}=S_{11}$ &   \\

  $\mathbf{0}_d$ & The zero matrix of size $d \times d$ & \pageref{notation:zero_matrix} \\

$I_{d}$ & The identity matrix of size $d\times d$ & \pageref{notation_identity_matrix}\\

    $(A)_{\hat 1}$ & The matrix obtained by removing & \pageref{notation:removing_d_rows} \\
      &  the first $d$ rows and the first $d$ columns of $A$ & \\

$A_{\infty}(G,\sigma,x,B)$ & The $md\times md$ $\infty$-dimensional curvature matrix & \pageref{eq:curvature_matrix}\\
  &  at $x$ depending on $B$ & \\

  $A_N(G,\sigma,x,B)$ & The $md \times md $ $N$-dimensional curvature matrix   & \pageref{eq:curvature_matrix_N} \\
    & at $x$ depending on $B$  & \\

     $\K_{G,\sigma,x}(N)$ & $N$-dimensional curvature at $x$ in $(G,\sigma)$ & \pageref{notation:curvature} \\

   $\Re(\cdot)$ & The real part of a complex number & \pageref{notation:real_part} \\

   $\{\epsilon_i\}_{i=1}^d$ & The standard orthonormal basis of $\mathbb{K}^d$ & \pageref{eq:standard_coordinate}\\
      &  $\epsilon_i:=(0,\cdots,1,\cdots, 0)$  &  \\

   $E_{\min}(\cdot)$ & The minimal eigenspace of a matrix & \pageref{notation:eigenspace}\\
 
   $G\times_{\alpha,\beta}G'$ & Cartesian product of $G$ and $G'$ assigned with weights & \pageref{notation:cartesian_product}\\
     &  depending on $\alpha$ and $\beta$ & \\
     
   $\sigma^\times$ & Connection in the Cartesian product of two graphs & \pageref{notation:cartesian_connection}\\

     $f_1*f_2$ & The star product of functions $f_1$ and $f_2$ & \pageref{notation:star_product}\\

   $[A]_i$ & The $i$-row vector of the matrix $A$ & \pageref{notation:matrix_rows}\\

   \end{longtable}
    \label{tab:my_label}
\end{center}


\end{document}